\pdfoutput=1
\documentclass[hidelinks, reqno]{amsart}
\usepackage[english]{babel}
\usepackage{graphicx}
\usepackage{subcaption}
\usepackage{tabularx}
\usepackage{booktabs}
\usepackage{array}
\usepackage{amsmath}
\usepackage{amsfonts}
\usepackage{amssymb}
\usepackage{amsthm}
\usepackage{appendix}
\usepackage{mathtools}
\usepackage{dsfont}
\usepackage[scr]{rsfso}
\usepackage{enumitem}
\usepackage{permute}
\usepackage{slashed}
\usepackage[dvipsnames]{xcolor}
\usepackage[pagebackref = true, colorlinks, linkcolor = Red, citecolor = Green, bookmarksdepth=2, linktocpage=true]{hyperref}
\usepackage[capitalize]{cleveref}
\usepackage{comment}
\usepackage{changepage}
\usepackage{nameref}
\usepackage{xspace}

\usepackage[T1]{fontenc}
\usepackage{makecell}

\usepackage{tikz}
\usetikzlibrary{calc}

\allowdisplaybreaks


\DeclareMathOperator{\interior}{int}

\DeclareMathOperator{\Id}{Id}
\DeclareMathOperator{\tr}{tr}

\DeclareMathOperator{\vol}{vol}
\DeclareMathOperator{\supp}{supp}

\DeclareMathOperator{\SL}{SL}

\DeclareMathOperator{\SO}{SO}

\DeclareMathOperator{\PSL}{PSL}

\DeclareMathOperator{\RP}{\mathbb{R}P}
\DeclareMathOperator{\CP}{\mathbb{C}P}

\DeclareMathOperator{\T}{T}

\DeclareMathOperator{\diag}{diag}

\DeclareMathOperator{\ad}{ad}
\DeclareMathOperator{\Ad}{Ad}

\DeclareMathOperator{\inj}{inj}

\DeclareMathOperator{\Fix}{Fix}

\newcommand{\N}{\mathbb{N}}
\newcommand{\Z}{\mathbb{Z}}

\newcommand{\R}{\mathbb{R}}
\newcommand{\C}{\mathbb{C}}

\newcommand{\LieG}{\mathfrak{g}}
\newcommand{\LieA}{\mathfrak{a}}

\newcommand{\LieU}{\mathfrak{u}}
\newcommand{\LieK}{\mathfrak{k}}
\newcommand{\LieM}{\mathfrak{m}}
\newcommand{\LieP}{\mathfrak{p}}
\newcommand{\LieQ}{\mathfrak{q}}

\newcommand{\LieSL}{\mathfrak{sl}}

\newcommand{\LieH}{\mathfrak{h}}

\newcommand{\rankG}{\mathsf{r}}
\newcommand{\rankH}{{\mathsf{r}_H}}

\newcommand{\Fboundary}{\mathcal{F}}

\newcommand{\involution}{\mathsf{i}}

\makeatletter
\newcommand{\mytag}[2]{%
\text{#1}%
\@bsphack
\begingroup
\@onelevel@sanitize\@currentlabelname
\edef\@currentlabelname{%
\expandafter\strip@period\@currentlabelname\relax.\relax\@@@%
}%
\protected@write\@auxout{}{%
\string\newlabel{#2}{%
{#1}%
{\thepage}%
{\@currentlabelname}%
{\@currentHref}{}%
}%
}%
\endgroup
\@esphack
}
\makeatother


\DeclareFontFamily{U}{mathb}{\hyphenchar\font45}
\DeclareFontShape{U}{mathb}{m}{n}{
<5> <6> <7> <8> <9> <10> gen * mathb
<10.95> mathb10 <12> <14.4> <17.28> <20.74> <24.88> mathb12
}{}
\DeclareSymbolFont{mathb}{U}{mathb}{m}{n}
\DeclareMathSymbol{\bigast}{2}{mathb}{"06}


\def\XXint#1#2#3{{\setbox0=\hbox{$#1{#2#3}{\int}$}
\vcenter{\hbox{$#2#3$}}\kern-.5\wd0}}


\theoremstyle{plain}
\newtheorem{theorem}{Theorem}[section]
\newtheorem{proposition}[theorem]{Proposition}
\newtheorem{lemma}[theorem]{Lemma}
\newtheorem{corollary}[theorem]{Corollary}

\theoremstyle{definition}
\newtheorem{definition}[theorem]{Definition}

\theoremstyle{remark}
\newtheorem{remark}[theorem]{Remark}
\newtheorem{notation}[theorem]{Notation}


\Crefname{enumi}{Property}{Properties}
\Crefname{alternativei}{Alternative}{Alternatives}
\Crefname{subsection}{Subsection}{Subsections}


\begin{document}
\selectlanguage{english}

\title[Finitary estimates for the distribution of lattice orbits]{Finitary estimates for the distribution of lattice orbits in homogeneous spaces I: Riemannian metric}

\author{Zuo Lin}
\address{Department of Mathematics, UC San Diego, 9500 Gilman Drive, La Jolla, CA 92093, USA}
\email{zul003@ucsd.edu}

\author{Pratyush Sarkar}
\address{Department of Mathematics, UC San Diego, 9500 Gilman Drive, La Jolla, CA 92093, USA}
\email{psarkar@ucsd.edu}

\date{\today}

\begin{abstract}
Let $H < G$ both be noncompact connected semisimple real algebraic groups where the former is maximal proper and $\Gamma < G$ be a lattice. Building on the work of Gorodnik--Weiss, we refine their techniques and obtain effective results. More precisely, we prove effective convergence of the distribution of dense $\Gamma$-orbits in $G/H$ to some limiting density on $G/H$ assuming effective equidistribution of regions of maximal horospherical orbits under one-parameter diagonal flows inside a dense $H$-orbit in $\Gamma \backslash G$. The significance of the effectivized argument is due to the recent effective equidistribution results of Lindenstrauss--Mohammadi--Wang for $\Delta(\SL_2(\R)) < \SL_2(\R) \times \SL_2(\R)$ and $\SL_2(\R) < \SL_2(\C)$ and arithmetic lattices $\Gamma$, and future generalizations in that direction.
\end{abstract}

\maketitle

\setcounter{tocdepth}{1}
\tableofcontents

\section{Introduction}
Let $G$ be a noncompact connected semisimple real algebraic group of rank $\rankG$ and $K < G$ be a maximal compact subgroup. Let $G$ be endowed with the left $G$-invariant and right $K$-invariant Riemannian metric induced by the Killing form. Let $\Gamma < G$ be a lattice. Let $H < G$ be a closed Lie subgroup. A central topic in homogeneous dynamics is the orbit counting problem. This involves investigating the asymptotic behavior, effectively if possible, of counting functions associated to discrete $\Gamma$-orbits in $G/H$. There has been extensive research on this problem especially on the whole group $G$ and the symmetric space $G/K$ where all $\Gamma$-orbits are discrete (see \cite{BO12} and references therein). However, there has been research to a lesser extent on non-discrete $\Gamma$-orbits.

Let us restrict our attention and let $H < G$ be a noncompact semisimple proper Lie subgroup and consider dense $\Gamma$-orbits. One has to now measure the size of balls in $\Gamma$ rather than in $G/H$ in order to make sense of the orbit counting problem. In another perspective giving more information, one can study the limiting distribution of dense $\Gamma$-orbits according to some natural limiting process. A major work in this direction was done by Gorodnik--Weiss \cite{GW07}. Let us recount their main theorem. We denote the measure induced by the Riemannian metric on $G$ on any space $X$ by $\mu_X$. We normalize $\mu_{\Gamma \backslash G}$ to a probability measure $\hat{\mu}_{\Gamma \backslash G}$. We denote by $B_r^X(x) \subset X$ the open ball of radius $r > 0$ centered at $x \in X$ with respect to the metric induced by the Riemannian metric on $G$. We write $H_T := H \cap KB_T^G(e)K$ and $\Gamma_T := \Gamma \cap KB_T^G(e)K$ for all $T > 0$. See \cref{sec:Preliminaries} for more details. For all $y_0 \in G/H$, there is a canonical measure $\nu_{y_0} \ll \mu_{G/H}$ as defined in \cite[Eq. (12) and Proposition 5.1]{GW07} (see \cref{sec:LimitingDensity}) which we normalize as $\hat{\nu}_{y_0} := \mu_{\Gamma \backslash G}(\Gamma \backslash G)^{-1}\nu_{y_0}$. In \cite{GW07}, they proved that $\hat{\nu}_{y_0}$ is the limiting density of the orbit $\Gamma y_0 \subset G/H$ whenever it is dense in the following sense.

\begin{theorem}[{\cite[Theorem 1.1]{GW07}}]
\label{thm:GWTheorem}
Let $y_0 \in G/H$ such that $\Gamma y_0 \subset G/H$ is dense. Then, for all $\psi \in C_{\mathrm{c}}(G/H)$, we have
\begin{align*}
\lim_{T \to +\infty}\frac{1}{\mu_H(H_T)} \sum_{\gamma \in \Gamma_T} \psi(\gamma y_0) = \int_{G/H} \psi \, d\hat{\nu}_{y_0}.
\end{align*}
\end{theorem}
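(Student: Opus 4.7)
The plan is to follow the duality-plus-equidistribution scheme of Gorodnik--Weiss, which extends to non-symmetric $H$ the arguments of Eskin--McMullen and Duke--Rudnick--Sarnak. Fix a lift $g_0 \in G$ with $y_0 = g_0 H$ and choose a bump $\phi \in C_{\mathrm{c}}^\infty(\Gamma \backslash G, \R)$ of integral one supported in a small neighborhood of $\Gamma g_0$ adapted to a splitting of $G$ along $H$ and a transversal. By unfolding the sum against $\phi$ one rewrites
\begin{align*}
\frac{1}{\mu_H(H_T)} \sum_{\gamma \in \Gamma_T} \psi(\gamma y_0) = \int_{\Gamma \backslash G} \phi(x) \cdot \frac{1}{\mu_H(H_T)} \int_{H_T} \tilde{\psi}(xh) \, d\mu_H(h) \, d\hat{\mu}_{\Gamma \backslash G}(x) + o_\phi(1),
\end{align*}
where $\tilde{\psi}$ is the natural lift of $\psi$ via the transversal parametrization and the error measures the failure of $\phi$ to be concentrated at a point. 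This reduces the theorem to the convergence of the inner $H_T$-average as $T \to +\infty$.

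Next, I expand this inner average in polar $K_H A_H^+ K_H$-coordinates on $H$, whose Jacobian $\delta(a_{tv})$ concentrates sharply near the tip of the positive Weyl chamber $\LieA_H^+$ as $T \to +\infty$. I decompose the $A_H^+$-integration into thin sectors indexed by unit vectors $v \in \interior(\LieA_H^+)$. The sectors near the walls $\{v : \min_{\alpha \in \Phi_H^+} \alpha(v) \leq \epsilon\}$ contribute $o_\epsilon(1)$ by the standard volume estimate on $\delta$. On each remaining sector, one absorbs the $K_H$-factors using left/right invariance and reduces the two-sided $K_H$-average to a single $U_H$-average via a Mautner-type argument, using that $U_H$ is the maximal $a_{tv}$-expanding horospherical subgroup of $H$ precisely when $v$ lies in the $\epsilon$-interior of $\LieA_H^+$. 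The resulting object is
\begin{align*}
\int_{B_1^{U_H}(e)} \tilde{\psi}(xu a_{tv}) \, d\hat{\mu}_{U_H}(u),
\end{align*}
which is exactly the quantity controlled by \cref{hyp:EffectiveEquidistribution}.

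To invoke alternative (1) of \cref{hyp:EffectiveEquidistribution}, I must rule out alternative (2) for almost every $x$ in the support of $\phi$, uniformly in $v$. This is where the density hypothesis $\overline{\Gamma y_0} = G/H$ enters: by the standard orbit-closure duality, $\overline{\Gamma y_0} = G/H$ in $G/H$ if and only if $\overline{\Gamma g_0 H} = \Gamma \backslash G$ in $\Gamma \backslash G$, so the dual orbit is not trapped near any periodic $H$-orbit of bounded volume. Quantitatively, the set of $x$ within $R^{c_0} t^{c_0} e^{-t}$ of a periodic $H$-orbit of volume at most $R$ has $\hat{\mu}_{\Gamma \backslash G}$-measure that vanishes as $t \to +\infty$ for each fixed $R$ (using finiteness of periodic orbits of bounded volume from \cite[Theorem 5.1]{DM93}), so it can be absorbed into the $\phi$-integration. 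Hence alternative (1) applies and gives pointwise convergence of the $U_H$-averages to $\int_{\Gamma \backslash G} \tilde{\psi} \, d\hat{\mu}_{\Gamma \backslash G}$. Re-integrating this limit over $v$ against the polar Jacobian and then against $\phi$ yields an integral of $\psi$ on $G/H$ against a transversal measure that, by direct comparison with the construction in \cite[Eq.~(12) and Proposition~5.1]{GW07}, coincides with $\hat{\nu}_{y_0}$.

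The main obstacle is the uniformity required in the third step: one must convert the single-basepoint, single-direction dichotomy of \cref{hyp:EffectiveEquidistribution} into a statement valid uniformly for $x$ in a neighborhood of $\Gamma g_0$ and for $v$ throughout the $\epsilon$-interior of $\LieA_H^+$, while simultaneously allowing $R = R(t)$ to grow slowly enough that the periodic-orbit exceptional set is negligible yet quickly enough that $t \geq c_0 \log(R) + c_\epsilon$ still produces a nontrivial error in alternative (1). A secondary technical point is the Mautner reduction to $U_H$-averages, which degenerates as $v$ approaches the walls; this is precisely why the $\epsilon$-truncation in the second step is essential and why the wall contribution must be dispatched by a purely volumetric argument divorced from dynamics.
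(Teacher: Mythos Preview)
The paper does not prove this theorem; it is quoted from \cite{GW07} as background, and the paper's contribution is the \emph{effective} version in \cref{thm:MainTheorem}. So there is no in-paper proof to compare against directly, though \cref{subsec:Outline} sketches the noneffective Gorodnik--Weiss argument.

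Your proposal has a genuine gap: you drive the argument with \cref{hyp:EffectiveEquidistribution}, but \cref{thm:GWTheorem} does \emph{not} assume that hypothesis. \Cref{hyp:EffectiveEquidistribution} is a conjectural effective input, verified only in the handful of cases recorded in \cref{thm:LMW_For_SL2C} and \cite{LMWY23}; the Gorodnik--Weiss theorem, by contrast, holds unconditionally for every $(G,H,\Gamma)$ in the paper's setup and every $y_0$ with dense orbit. As written, your argument only proves the statement conditionally on \cref{hyp:EffectiveEquidistribution}, which is strictly weaker than what is claimed. The correct dynamical input is Shah's \cite[Corollary~1.2]{Sha96} (equidistribution of $K_H$-translates under $a_{tv}$), which rests on \cite[Theorem~1.4]{Sha96} and ultimately on Ratner's measure classification. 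Shah's result has no dichotomy: once $x_0H$ is dense in $\Gamma\backslash G$---equivalent by duality to density of $\Gamma y_0$---the translates equidistribute outright. Your attempt to ``rule out alternative~(2) for almost every $x$'' is therefore both unnecessary (the ineffective route has no such alternative) and insufficient (you have not established \cref{hyp:EffectiveEquidistribution} in general).

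A secondary point: your unfolding identity, integrating $\phi(x)$ over $\Gamma\backslash G$ against an $H_T$-average of $\tilde\psi(xh)$, resembles the Eskin--McMullen mixing template more than the Gorodnik--Weiss scheme actually used here. In the latter (see \cref{sec:EffectiveDuality}), one builds functions $\phi_{\delta,j}$ on $\Gamma\backslash G$ from $\psi$ and a transversal bump, and applies equidistribution of Riemannian \emph{skew} balls $H_T[g_0,b_j^{-1}]$ at the single basepoint $x_0=\Gamma g_0$; there is no outer $\hat\mu_{\Gamma\backslash G}$-integration over varying $x$. The overall architecture you sketch (Cartan polar coordinates, wall truncation, $K_H\to U_H$ reduction) does match the shape of the argument, but the engine must be Ratner/Shah, not the effective hypothesis.
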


\begin{remark}
Their original theorem is actually for connected simple $H$ and their more general theorems are for connected semisimple $H$ satisfying the so-called \emph{balanced} property. However, it was overlooked that when using the Riemannian metric for skew balls, any $H$ as above is balanced.
\end{remark}

The above theorem can be viewed as a kind of ergodic theorem for a nonamenable group $\Gamma$ which is in general difficult to obtain. We mention here some other related works \cite{Oh05,GO07,GN14}.

Let us restrict our attention further and let $H < G$ be a noncompact semisimple maximal proper Lie subgroup. Note that as a consequence of Ratner's theorem (conjectured by Raghunathan and Dani), we have the dichotomy that $\Gamma$-orbits in $G/H$ are either discrete or dense. The main objective of this paper is to study the dense $\Gamma$-orbits in $G/H$ in an \emph{effective} fashion.

We first present our results in two concrete settings where the theorems are unconditional. We then turn to the general setting where theorems are (at the moment) conditional on a hypothesis. In the introduction we prefer to write the simplified versions of the theorems. The full versions are stated in \cref{sec:GeneralTheorems}.

\subsection{\texorpdfstring{Special theorems: counting oriented circles in $\mathbb{S}^2 \cong \CP^1$ and $\mathbb T^2 \cong \RP^1 \times \RP^1$}{Special theorems: counting oriented circles in \unichar{"1D54A}² ≅ ℂP¹ and \unichar{"1D54B}² ≅ ℝP¹×ℝP¹}}
\label{sec:Example}
In this subsection, let
\begin{align*}
(G, H) &= (\SL_2(\C), \SL_2(\R)) \qquad \text{or} \qquad (G, H) &= (\SL_2(\R) \times \SL_2(\R), \Delta(\SL_2(\R))),
\end{align*}
and $\Gamma < G$ be an arithmetic lattice. Considering the standard actions $\SL_2(\C) \curvearrowright \CP^1$ or $\SL_2(\R) \times \SL_2(\R) \curvearrowright \RP^1 \times \RP^1$, the $G$-space $G/H$ can be naturally identified with the $G$-space $\mathcal{C}$ of oriented circles in $\mathbb{S}^2$ or $\mathbb T^2 \cong \RP^1 \times \RP^1$, respectively. Fix any Riemannian metric on $\mathcal{C}$ and denote by $C^{0, \chi}(\mathcal{C})$ the corresponding space of $\chi$-H\"{o}lder continuous real-valued functions on $\mathcal{C}$. Our first theorem is an orbit counting result with a power saving error term for $(G, H) = (\SL_2(\C), \SL_2(\R))$. It is a special case of \cref{thm:MainTheoremCountingCircles} below, greatly simplifying the statement.

\begin{theorem}
\label{thm:MainTheoremNoClosedOrbitCountingCircles}
Let $\Gamma < \SL_2(\mathbb{C})$ be an arithmetic lattice such that $\Gamma \backslash \SL_2(\mathbb{C})$ contains no periodic $\SL_2(\R)$-orbits. Then, there exists $\kappa \in \bigl(0, \frac{1}{2}\bigr)$ such that for all $\chi \in (0, 1]$, $\psi \in C_{\mathrm{c}}^{0, \chi}(\mathcal{C})$, oriented circles $\mathsf{c} \in \mathcal{C}$, and $T > 0$, we have
\begin{align*}
\sum_{\gamma \in \Gamma_T} \psi(\gamma \mathsf{c})
= 16\pi^2\hat{\nu}_{\mathsf{c}}(\psi) e^{\frac{1}{2}T} + O_{\psi, \mathsf{c}}\bigl(e^{(\frac{1}{2} - \chi\kappa)T}\bigr).
\end{align*}
\end{theorem}

\begin{remark}
The hypothesis that $\Gamma \backslash \SL_2(\mathbb{C})$ contains no periodic $\SL_2(\R)$-orbits is justified due to theorems of Maclachlan--Reid \cite[Corollary 6]{MR87} and Reid \cite{Rei91} which provide infinitely many such arithmetic lattices $\Gamma < \SL_2(\mathbb{C})$. We refer the reader to \cite[Chapter 5, \S 5.3 and Chapter 9, \S 9.5]{MR03} for details.
\end{remark}

More generally, we have \cref{thm:MainTheoremCountingCircles} where we need to take into account the periodic $H$-orbits. It follows by combining \cref{thm:LMW_For_SL2C,thm:MainTheoremSimplified}. For all $\psi \in C_{\mathrm{c}}^{0, \chi}(\mathcal{C})$, we define a corresponding constant
\begin{align*}
D_\psi := \inf\bigl\{r > 0: \supp(\psi) \subset B_r^G(e) \cdot H \subset G/H \cong \mathcal{C}\bigr\} > 0.
\end{align*}

\begin{theorem}
\label{thm:MainTheoremCountingCircles}
Suppose that
\begin{align*}
(G, H) &=
\begin{cases}
(\SL_2(\C), \SL_2(\R)) & \text{or} \\
(\SL_2(\R) \times \SL_2(\R), \Delta(\SL_2(\R))) & \text{resp.}
\end{cases}
\end{align*}
and $\Gamma < G$ is an arithmetic lattice. There exists $\kappa > 0$ such that the following holds. Let $\chi \in (0, 1]$, $\psi \in C_{\mathrm{c}}^{0, \chi}(\mathcal{C})$, $g_0 \in G$, $x_0 = \Gamma g_0 \in \Gamma \backslash G$, and $\mathsf{c} = g_0 H \in G/H \cong \mathcal{C}$. There exists $M_{\psi, g_0} > 0$ such that for all $R \gg_{\Gamma, g_0, \chi} 1$ and $T \gg_\Gamma \log(R) + M_{\psi, g_0}$, at least one of the following holds.
\begin{enumerate}
\item We have
\begin{align*}
\sum_{\gamma \in \Gamma_T} \psi(\gamma \mathsf{c}) = 16\pi^2\hat{\nu}_{\mathsf{c}}(\psi)e^{\frac{1}{2}T} +
\begin{cases}
O_{D_\psi}\bigl(\|\psi\|_{C^{0, \chi}} R^{-\chi\kappa}e^{\frac{1}{2}T}\bigr) & \text{or} \\
O_{D_\psi}\bigl(\|\psi\|_{C^{0, \chi}} \bigl(T^{-\frac{1}{17}}e^{\frac{1}{2}T} + R^{-\chi\kappa}e^{\frac{1}{2}T}\bigr)\bigr) & \text{resp.}
\end{cases}
\end{align*}
\item There exists $x \in \Gamma \backslash G$ with
\begin{align*}
d(x_0, x) \leq e^{-\frac{1}{8}T}
\end{align*}
such that $xH$ is periodic with $\vol(xH) \leq R$.
\end{enumerate}
\end{theorem}

\begin{remark}
In \cref{thm:MainTheoremCountingCircles}, we have also used the volume formula from \cref{thm:SkewBallVolumeAsymptotic} to obtain a more explicit asymptotic orbit counting formula in both cases of $(G, H)$. In the first case, $H = \SL_2(\R)$ and we recall that $K_H = \SO(2) \cong \mathbb{S}^1$ and $M_H = \{I, -I\} < K_H$. Taking the generator $J =
\bigl(
\begin{smallmatrix}
0 & 1 \\
-1 & 0
\end{smallmatrix}
\bigr)
\in
\LieK
$, we parametrize $K_H = \{e^{tJ}: 0 \leq t \leq 2\pi\}$. We calculate using the \emph{Killing form of $\LieG$} that $\|J\| = 4$. Thus, $\mu_{K_H}(K_H) = 2\pi\|J\| = 8\pi$ and $\mu_{M_H}(M_H) = 2$ which gives the coefficient $\frac{\mu_{K_H}(K_H)^2}{2\mu_{M_H}(M_H)} = 16\pi^2$. Similarly, we calculate using the \emph{Killing form of $\LieG$} that $\delta_{2\rho_H} = \frac{1}{2}$. Calculations for the second case is similar and yield identical numbers.
\end{remark}

\subsection{General theorem: simplified version}
Let us return to the general setting. Recall that $H < G$ is a noncompact semisimple maximal proper Lie subgroup. Let $A_H < H$ be a maximal subgroup consisting of semisimple elements and $\LieA_H$ be its Lie algebra endowed with the inner product and norm induced by the Killing form. Let $\Phi_H^+$ be a corresponding choice of a set of positive roots and $\LieA_H^+ \subset \LieA_H$ be the corresponding closed positive Weyl chamber. We write $a_v := \exp(v)$ for all $v \in \LieA_H$. We will consider the flow on $\Gamma \backslash G$ given by the right translation action of $\{a_{tv}\}_{t \in \R}$ for various unit vectors $v \in \interior(\LieA_H^+)$. Let $U_H < H$ be a maximal expanding horospherical subgroup. We normalize $\hat{\mu}_{U_H} := \mu_{U_H}\bigl(B_1^{U_H}(e)\bigr)^{-1}\mu_{U_H}$. Denote $\inj_{\Gamma \backslash G}(x_0) > 0$ for the injectivity radius at $x_0 \in \Gamma \backslash G$. We denote by $\mathcal{S}^\ell(\cdot)$ the $L^2$ Sobolev norm of order $\ell \in \N$. See \cref{sec:Preliminaries} for more details. We make the following fundamental hypothesis, which can be regarded as an effective version of Shah's theorem \cite[Theorem 1.4]{Sha96}, throughout the paper. We call it \emph{Hypothesis Shah Effective Equidistribution}, or \emph{Hypothesis Shah-EE} for short.

\newtheoremstyle{named}{}{}{\itshape}{}{\bfseries}{.}{ }{#1 \thmnote{#3}}
\theoremstyle{named}
\newtheorem{namedhypothesis}{Hypothesis}
\newcommand{\ShahEE}{Hypothesis~\nameref{hyp:EffectiveEquidistribution}\xspace}
\begin{namedhypothesis}[Shah-EE]
\label{hyp:EffectiveEquidistribution}
There exist $\kappa_0 > 0$, $\varrho_0 > 0$, a decreasing family $\{c_\varsigma\}_{\varsigma > 0} \subset \R_{>0}$, and $\ell \in \mathbb N$ such that for all $x_0 \in \Gamma \backslash G$, $R \gg_{G, \Gamma} \inj_{\Gamma \backslash G}(x_0)^{-\varrho_0}$, $v \in \interior(\LieA_H^+)$ with $\|v\| = 1$ and $\varsigma := \min_{\alpha \in \Phi_H^+} \alpha(v)$, and $t \geq c_\varsigma\log(R)$, at least one of the following holds.
\begin{enumerate}
\item For all $\phi \in C_{\mathrm{c}}^\infty(\Gamma \backslash G)$, we have
\begin{align*}
\left|\int_{B_1^{U_H}(e)} \phi(x_0 u a_{tv}) \, d\hat{\mu}_{U_H}(u) - \int_{\Gamma \backslash G} \phi \, d\hat{\mu}_{\Gamma \backslash G} \right| \leq \mathcal{S}^\ell(\phi) R^{-\varsigma\kappa_0}.
\end{align*}
\item There exists $x \in \Gamma \backslash G$ with
\begin{align*}
d(x_0, x) \leq R^{c_\varsigma} t^{c_\varsigma} e^{-\varsigma t}
\end{align*}
such that $xH$ is periodic with $\vol(xH) \leq R$.
\end{enumerate}
The constants $\kappa_0$, $\varrho_0$, and $\{c_\varsigma\}_{\varsigma > 0}$ depend only on $(G, H, \Gamma)$, and $\ell$ depends only on $\dim(G)$.
\end{namedhypothesis}

\begin{remark}
The explicit dependence on the injectivity radius in the condition $R \gg_{G, \Gamma} \inj_{\Gamma \backslash G}(x_0)^{-\varrho_0}$ in the above hypothesis is as in \cite[Eq. (14.2)]{LMW22} (though it is not explicitly included in \cite[Theorem 1.1]{LMW22}). This explicit formula is required in the proof of \cref{thm:K_EffectiveEquidistribution}. See also \cref{rem:DependenceOfInjectivityRadiusInProof}.
\end{remark}

\begin{remark}
There are only finitely many periodic $H$-orbits $xH$ with $\vol(xH) \leq R$; see \cite[Theorem 5.1]{DM93}. For a quantitative results, see \cite[Theorem 5]{SS22} and \cite{EMV09}. See also \cite[Corollary 10.7 and Remark 10.11]{MO23} for its generalization to geometrically finite $3$-dimensional hyperbolic manifolds. 
\end{remark}

\begin{notation}
Throughout the paper, whenever we assume that \ShahEE holds, we keep the same notation for the various constants without further comments. Also, we take $\ell \in \N$ provided by the Sobolev embedding theorem and hence depends only on $\dim(G)$.
\end{notation}

The main objective of this paper is to study the dense $\Gamma$-orbits in $G/H$ in an \emph{effective} fashion, provided we have the effective equidistribution in $\Gamma \backslash G$ of regions of maximal horospherical orbits under one-parameter diagonal flows inside a dense $H$-orbit in $\Gamma \backslash G$. More precisely, the main theorem we prove is the following effective version of \cref{thm:GWTheorem} whenever \ShahEE holds. Here, we fix any Riemannian metric on $G/H$ and denote by $C^{0, \chi}(G/H)$ the corresponding space of $\chi$-H\"{o}lder continuous real-valued functions on $G/H$. For all $\psi \in C_{\mathrm{c}}^{0, \chi}(G/H)$, we define a corresponding constant
\begin{align*}
D_\psi := \inf\bigl\{r > 0: \supp(\psi) \subset B_r^G(e) \cdot H \subset G/H\bigr\} > 0.
\end{align*}
We also define $\varsigma_H := \frac{1}{2}\min_{\alpha \in \Phi_H^+} \alpha(v_{2\rho_H}) > 0$ where $v_{2\rho_H} = \nabla\rho_H/\|\nabla\rho_H\|$ and $\rho_H$ is half the sum of positive roots in $\Phi_H^+$ with multiplicity (see \cref{sec:Preliminaries}).

\begin{theorem}
\label{thm:MainTheoremSimplified}
Suppose \ShahEE holds. There exists $\kappa > 0$ such that the following holds. Let $\chi \in (0, 1]$, $\psi \in C_{\mathrm{c}}^{0, \chi}(G/H)$, $g_0 \in G$, $x_0 = \Gamma g_0 \in \Gamma \backslash G$, and $y_0 = g_0 H \in G/H$. There exists $M_{\psi, g_0} > 0$ such that for all $R \gg_{G, \Gamma, g_0, \chi} 1$ and $T \gg_{G, H, \Gamma} \log(R) + M_{\psi, g_0}$, at least one of the following holds.
\begin{enumerate}
\item We have:
\begin{enumerate}
\item if $\rankG = 1$, then
\begin{align*}
\left|\frac{1}{\mu_H(H_T)} \sum_{\gamma \in \Gamma_T} \psi(\gamma y_0) - \int_{G/H} \psi \, d\hat{\nu}_{y_0} \right| \ll_{D_\psi} \|\psi\|_{C^{0, \chi}} R^{-\chi\kappa};
\end{align*}
\item if $\rankG \geq 2$, then
\begin{align*}
\left|\frac{1}{\mu_H(H_T)} \sum_{\gamma \in \Gamma_T} \psi(\gamma y_0) - \int_{G/H} \psi \, d\hat{\nu}_{y_0} \right| \ll_{D_\psi} \|\psi\|_{C^{0, \chi}} \bigl(T^{-\frac{1}{2\dim(G) + 5}} + R^{-\chi\kappa}\bigr).
\end{align*}
\end{enumerate}
\item There exists $x \in \Gamma \backslash G$ with
\begin{align*}
d(x_0, x) \leq e^{-\frac{\varsigma_H}{2}T}
\end{align*}
such that $xH$ is periodic with $\vol(xH) \leq R$.
\end{enumerate}
\end{theorem}

The above theorem is a simplified version of \cref{thm:MainTheoremEpsilon}. The full versions of the main theorem can be found in \cref{sec:GeneralTheorems} with many more details on the error terms, and various constants and their estimates. In fact, a lot of work was done to obtain such precise information.

\subsection{\texorpdfstring{On \ShahEE}{On Hypothesis~\nameref{hyp:EffectiveEquidistribution}}}
Thanks to breakthroughs involving many authors, \ShahEE is known to hold in some cases. Of course, this is really the motivation for introducing \ShahEE which conjecturally holds in general.

The first known instances of $(G, H, \Gamma)$ for which \ShahEE holds is due to the recent work of Lindenstrauss--Mohammadi--Wang \cite{LMW22} where they proved the following. We denote by $\Delta: \SL_2(\R) \to \SL_2(\R) \times \SL_2(\R)$ the diagonal embedding.

\begin{theorem}[{\cite[Theorem 1.1]{LMW22}}]
\label{thm:LMW_For_SL2C}
Suppose that either
\begin{align*}
(G, H) &= (\SL_2(\C), \SL_2(\R)) & &\text{or} & (G, H) &= (\SL_2(\R) \times \SL_2(\R), \Delta(\SL_2(\R))),
\end{align*}
and $\Gamma < G$ is an arithmetic lattice. Then, \ShahEE holds.
\end{theorem}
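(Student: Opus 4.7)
The plan is to observe that for the two cases listed, the hypothesis is essentially the content of \cite[Theorem 1.1]{LMW22} after matching notation, so the work is almost entirely bookkeeping. Note first that in both cases $\rank(H) = 1$, so $\LieA_H$ is one-dimensional, $\Phi_H^+$ consists of a single root $\alpha$, and the unit vector $v \in \interior(\LieA_H^+)$ is uniquely determined; consequently the quantifier over $v$ and the Weyl chamber condition $\min_{\alpha \in \Phi_H^+} \alpha(v) > \epsilon$ are vacuous for any fixed $\epsilon < \alpha(v)$, and the $\epsilon$-dependence in $c_\epsilon$ may be absorbed into a single constant.

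First I would recall the precise form of \cite[Theorem 1.1]{LMW22}: for a smooth compactly supported test function $\phi$ on $\Gamma\backslash G$, a basepoint $x_0 \in \Gamma \backslash G$, and a sufficiently large time $t$, the average $\int_{B_1^{U_H}(e)} \phi(x_0 u a_{tv}) \, d\hat\mu_{U_H}(u)$ either approximates $\int_{\Gamma\backslash G}\phi \, d\hat\mu_{\Gamma\backslash G}$ with polynomial rate in a Sobolev norm of $\phi$, or else $x_0$ is polynomially close to a point on a periodic $H$-orbit of controlled volume. This is already in the dichotomy shape of Alternative (1) versus Alternative (2) of \cref{hyp:EffectiveEquidistribution}.

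Next I would extract the constants to match those of \cref{hyp:EffectiveEquidistribution}: $\kappa_0$ is the polynomial exponent given by \cite{LMW22}; $c_0$ is taken large enough to dominate the coefficient of $\log(R)$ in the time threshold as well as the exponents $R^{c_0}$ and $t^{c_0}$ in the closing alternative; and the Sobolev order $\ell$ depending only on $\dim(G)$ is read off directly. The crucial point is $\varrho_0$: the injectivity-radius condition $R \gg_{G,\Gamma} \inj_{\Gamma\backslash G}(x_0)^{-\varrho_0}$ is not stated in \cite[Theorem 1.1]{LMW22} itself but is explicit in \cite[Eq.~(14.2)]{LMW22}, which has precisely the required form, as already flagged in the remark following \cref{hyp:EffectiveEquidistribution}.

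The main (and only) obstacle is careful constant bookkeeping: one must verify that the various polynomial factors, Sobolev losses, and thresholds appearing in the proof of \cite[Theorem 1.1]{LMW22} can all be consolidated into the single exponent $\kappa_0$, the single constant $c_0$, and the single injectivity-radius exponent $\varrho_0$ tolerated by \cref{hyp:EffectiveEquidistribution}, and that the exceptional nearby orbit is produced in the quantitative form $d(x_0, x) \leq R^{c_0} t^{c_0} e^{-t}$. All of this follows by a line-by-line reading of \cite[Section 14]{LMW22}, so no new ideas are required --- the role of \cref{thm:LMW_For_SL2C} here is simply to record that the input hypothesis of the paper is a theorem (rather than a conjecture) in these two cases.
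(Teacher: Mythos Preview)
The paper does not prove this statement at all: it is simply quoted as \cite[Theorem~1.1]{LMW22}, with the surrounding remarks (in particular the one pointing to \cite[Eq.~(14.2)]{LMW22} for the injectivity-radius exponent $\varrho_0$) doing exactly the notational matching you outline. Your proposal is therefore correct and in the same spirit---it just makes explicit the bookkeeping that the paper leaves implicit in the citation.
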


\begin{remark}
More generally, the results of Lindenstrauss--Mohammadi--Wang hold when $\Gamma$ is a lattice with algebraic entries.
\end{remark}

Even more recently, Lindenstrauss--Mohammadi--Wang--Yang also prove the following.

\begin{theorem}[{\cite[Theorem 1.3]{LMWY23}}]
Suppose that
\begin{align*}
(G, H) = (\SL_3(\R), \SO_{Q}(\R)^\circ)
\end{align*}
where we take the quadratic form $Q(x_1, x_2, x_3) = x_2^2 - 2x_1x_3$, and $\Gamma < \SL_3(\R)$ is any lattice. Then, \ShahEE holds.
\end{theorem}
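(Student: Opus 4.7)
The plan is to follow the strategy of Lindenstrauss--Mohammadi--Wang--Yang, adapting to $\SL_3(\R)$ the ``effective closing + polynomial divergence'' machinery that Lindenstrauss--Mohammadi--Wang developed for the rank-one ambient cases $(\SL_2(\C), \SL_2(\R))$ and $(\SL_2(\R)\times\SL_2(\R), \Delta\SL_2(\R))$. First I would identify the subgroups concretely: $H = \SO_Q(\R)^\circ$ is the image of $\PSL_2(\R)$ under the irreducible three-dimensional representation, so $H$ is rank-one, the Cartan $A_H$ is one-dimensional, the Weyl chamber condition $\min_{\alpha\in\Phi_H^+}\alpha(v) > \epsilon$ is automatic once $\|v\|=1$, and the maximal expanding horospherical $U_H < H$ is the one-parameter unipotent generated by a single nilpotent $u$ acting irreducibly on $\R^3$.

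Next I would set up the main dichotomy. Given $x_0 \in \Gamma\backslash G$, one studies the pushed horocycle segment $\{x_0 u_s a_{tv} : s \in [-1,1]\}$ and asks whether it $\hat\mu_{\Gamma\backslash G}$-equidistributes with a polynomial rate in $R$. If not, the formal argument is to extract from the failure of equidistribution a pair of nearby returns to a Bowen ball whose displacement, after conjugation by $a_{tv}$, polynomially produces a new (transverse) almost-invariance direction. Iterating this ``from discrepancy to extra invariance'' bootstrap, one either exhausts the Lie algebra of $\SL_3(\R)$, which closes the equidistribution alternative (1), or algebraically forces the existence of an intermediate subgroup $H \subset L \subsetneq G$ through which $x_0$ is close to a closed $L$-orbit. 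The classification of subgroups of $\SL_3(\R)$ containing the principal $\PSL_2(\R)$, together with the Borel--Harish-Chandra / Dani--Margulis volume-counting for periodic $H$-orbits, lets one promote this to alternative (2): proximity to a single periodic $H$-orbit with $\vol(xH) \leq R$ and $d(x_0,x) \leq R^{c_0}t^{c_0}e^{-t}$.

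The hard part, and the content that goes beyond the earlier LMW paper, is making the ``extra invariance'' step go through inside $\SL_3(\R)$ with explicit polynomial dependence on $R$ (yielding $\kappa_0$), on $\inj_{\Gamma\backslash G}(x_0)^{-\varrho_0}$, and on the waiting time $c_0\log R + c_\epsilon$. This requires three ingredients working in concert: Kleinbock--Margulis quantitative nondivergence to keep a definite proportion of the horocycle away from the cusp; a Margulis-function / random walk argument to control recurrence uniformly in the injectivity radius (this is precisely where the power $\varrho_0$ appears, following the explicit form in \cite[Eq. (14.2)]{LMW22}); and, crucially in the rank-two ambient setting, handling the fact that $a_{tv}$ now depends on a direction $v$ rather than a single one-parameter flow, which forces the commutator/Jordan-block analysis between $u$ and $a_{tv}$ to be performed uniformly for $v$ in the interior of $\LieA_H^+$ with the $\epsilon$-separation from the walls.

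Finally I would translate the output of this dynamical argument into the precise quantitative statement of \cref{hyp:EffectiveEquidistribution}, reading off $\kappa_0$ from the polynomial-divergence exponent, $\varrho_0$ from the injectivity radius loss, $c_0$ from the log-time budget of the bootstrap, $c_\epsilon$ from the Weyl chamber separation, and choosing $\ell$ by the usual Sobolev embedding so that test functions are controlled uniformly in $\dim(G)$. The main obstacle throughout is not any single estimate but coordinating the polynomial bounds so that the bootstrap actually \emph{closes}, i.e.\ so that each iteration gains a definite amount of new invariance without losing more than a small power of $R$; this is what forces the restriction to arithmetic-type lattices and is the chief technical contribution one would be importing from \cite{LMWY23}.
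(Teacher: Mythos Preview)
The paper does not prove this statement at all: it is simply a citation of an external result, \cite[Theorem 1.3]{LMWY23}, quoted to record that \cref{hyp:EffectiveEquidistribution} is known in this case. There is no proof in the paper to compare your proposal against.

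Your proposal is a reasonable high-level sketch of the strategy of \cite{LMWY23} itself, but that is a different task from what the paper undertakes here. One small point: you write that the bootstrap ``forces the restriction to arithmetic-type lattices,'' whereas the theorem is stated for \emph{any} lattice $\Gamma < \SL_3(\R)$. This is reconciled by the remark immediately following the theorem: by Margulis' arithmeticity theorem, every lattice in $\SL_3(\R)$ is automatically arithmetic, so no restriction is being imposed. Also, since $\rankH = 1$ here, $\LieA_H^+$ is a half-line and the condition $\min_{\alpha \in \Phi_H^+}\alpha(v) > \epsilon$ with $\|v\|=1$ is vacuous for small $\epsilon$; there is no genuine ``Weyl chamber separation'' issue for $H$, so your comment about handling $v$ uniformly away from the walls of $\LieA_H^+$ is not where the difficulty lies.
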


\begin{remark}
Recall that a lattice $\Gamma < \SL_3(\R)$ is automatically arithmetic by Margulis’ arithmeticity theorem. Also, observe that $\SO_Q(\R)^\circ \cong \SO(2, 1)^\circ \cong \PSL_2(\R)$.
\end{remark}

\subsection{\texorpdfstring{Outline of the proof of \cref{thm:MainTheorem}}{Outline of the proof of Theorem~\ref{thm:MainTheorem}}}\label{subsec:Outline}
The proof of the main theorem is composed of five major parts and one minor part. The five major parts in totality amount to showing that the ``$\Gamma$-average'' is asymptotic (in an effective fashion) to the ``$G$-average'': for all $y_0 \in G/H$, we have
\begin{align}
\label{eqn:GammaAverageGAverage}
\biggl|\frac{1}{\mu_H(H_T)} \sum_{\gamma \in \Gamma_T} \phi(\gamma y_0) - \frac{1}{\mu_H(H_T)\mu_{\Gamma \backslash G}(\Gamma \backslash G)} \int_{G_T} \phi(g y_0) \, d\mu_G(g)\biggr| \to 0
\end{align}
with an explicit error term, provided a necessary technical assumption on avoidance of periodic $H$-orbits is satisfied. The minor part relates the ``$G$-average'' to a limiting density: there exists a canonical $\nu_{y_0} \ll \mu_{G/H}$ such that
\begin{align*}
\biggl|\frac{1}{\mu_H(H_T)\mu_{\Gamma \backslash G}(\Gamma \backslash G)} \int_{G_T} \phi(g y_0) \, d\mu_G(g) - \int_{G/H} \phi \, d\hat{\nu}_{y_0}\biggr| \to 0
\end{align*}
again with an explicit error term. The minor part is not difficult once we have precise asymptotic formulas for the volume of the so-called Riemannian skew balls which is also used in the other parts.

The five major parts correspond to \cref{sec:EstimatesForTheBusemannFunction,sec:VolumeCalculations,sec:K-Equidistribution,sec:EquidistributionOfRiemannianSkewBalls,sec:EffectiveDuality} which take up the bulk of the paper. Let us outline these parts below, not necessarily in a linear order. We often compare with the noneffective arguments of Gorodnik--Weiss \cite{GW07} and Shah \cite{Sha96}, and for simplicity, we defer mentioning the technical assumption on avoidance of periodic $H$-orbits and how we carry it through.

\begin{enumerate}[label=Part \arabic*.]
\item Let us recall a part of the argument in \cite{GW07} to prove (the noneffective) \cref{eqn:GammaAverageGAverage}. This requires using the duality between $\Gamma \backslash G$ and $G/H$. One first shows that the sum in \cref{eqn:GammaAverageGAverage} is asymptotic to an average of an associated function $\phi$ on $\Gamma \backslash G$ over a Riemannian skew ball of $H$ in $G$. Next, one uses the equidistribution of Riemannian skew balls to show that this average is asymptotic to a (normalized) integral of $\phi$ over $\Gamma \backslash G$. Finally, one shows that the integral in \cref{eqn:GammaAverageGAverage} is also asymptotic to the same (normalized) integral of $\phi$ over $\Gamma \backslash G$.

In the effectivized argument, we need to use the effectivized version of equidistribution of Riemannian skew balls not only in the second step of the above argument but also for the \emph{error term} in the first step of the argument. We also need to use the precise asymptotic formulas for the volume of Riemannian skew balls to deal with the error terms.

\item We found in Part~1 that we need effective equidistribution of Riemannian skew balls. For the sake of simplicity, let us ignore the complication due to the ``skewness'' (though it is the main difficulty) and assume that we are dealing with the usual Riemannian balls $H_T$. In this part, we then need to prove:
\begin{align*}
\left| \frac{1}{\mu_{H}(H_T)} \int_{H_T} \phi(x_0 h) \, d\mu_H(h) - \int_{\Gamma \backslash G} \phi \, d\hat{\mu}_{\Gamma \backslash G} \right| \to 0
\end{align*}
with an explicit error term. One can use an integral formula associated to the Cartan decomposition $H = K_H A_H^+ K_H$ to write
\begin{align}
\label{eqn:H_TVolumeFormula}
\begin{aligned}
&\int_{H_T} \phi(x_0 h) \, d\mu_H(h) \\
={}&\frac{1}{\mu_{M_H}(M_H)}\int_{K_H} \int_{(\LieA_H^+)_T} \int_{K_H} \phi(x_0 k_1 a_v k_2) \xi_H(v) \, dv \, d\mu_{K_H}(k_1) \, d\mu_{K_H}(k_2).
\end{aligned}
\end{align}
The $\LieA_H^+$-coordinate gives the radial component and the first $K_H$-coordinate gives the angular component, both measured in the locally symmetric space $\Gamma \backslash G/K$. For example, if $(G,H) = (\SL_2(\C), \SL_2(\R))$, the locally symmetric space is a $3$-dimensional hyperbolic manifold and the $H_T$-orbit is an immersed $2$-dimensional hyperbolic ball. It turns out, as in hyperbolic manifolds, that the volume of an $H_T$-orbit is concentrated near its boundary. Moreover, in the $\LieA_H^+$-coordinate, the volume of an $H_T$-orbit is also concentrated near $\R_{> 0} v_{2\rho_H}$ where $v_{2\rho_H} \in \interior(\LieA_H^+)$ is the maximal growth direction of the sum of positive roots $2\rho_H$ of $\LieA_H$. Thus, instead of the equidistribution of $H_T$ on the left hand side of \cref{eqn:H_TVolumeFormula}, one can focus on that of ``a sector of a Riemannian annulus''. Transferring to the right hand side of \cref{eqn:H_TVolumeFormula}, we need the equidistribution of $K_H$-orbits under one-parameter diagonal flows along directions near $v_{2\rho_H}$. In the noneffective argument in \cite{GW07}, the latter is provided by \cite[Corollary 1.2]{Sha96}).

The full effective argument is very technical and heavily relies on the precise asymptotic formulas for the volume of Riemannian skew balls.

\item In this part, we prove effective equidistribution of $K_H$-orbits, which is needed in Part~2, \emph{assuming} \ShahEE regarding effective equidistribution of $U_H$-orbits (the effective version of \cite[Theorem 1.4]{Sha96}). A little more precisely, we prove
\begin{align*}
\left|\int_{K_H} \phi(x_0 k a_{tv}) \varphi(k) \, d\mu_{K_H}(k) - \int_{\Gamma \backslash G} \phi \, d\hat{\mu}_{\Gamma \backslash G} \cdot \int_{K_H} \varphi \, d\mu_{K_H} \right| \to 0
\end{align*}
with an explicit error term, provided that we have
\begin{align*}
\left|\int_{B_1^{U_H}(e)} \phi(x_0 u a_{tv}) \, d\hat{\mu}_{U_H}(u) - \int_{\Gamma \backslash G} \phi \, d\hat{\mu}_{\Gamma \backslash G} \right| \to 0
\end{align*}
with an explicit error term. The proof of this part relies on the fact that $K_H$-orbits can be approximated by small pieces of $U_H$-orbits. The geometric picture of this for $(G,H) = (\SL_{2}(\C), \SL_{2}(\R))$ is that large hyperbolic circles can be approximated by a collection of small pieces of horocycles. The noneffective version of the argument appears in \cite{Sha96} which is the passage from \cite[Theorem 1.4]{Sha96} to \cite[Corollary 1.2]{Sha96}. The effectivized argument for the special case $(G, H) = (\SL_2(\R) \times \SL_2(\R), \Delta(\SL_2(\R)))$ appears in \cite[Theorem 1.4]{LMW23}.

A key ingredient in the \emph{general} effectivized argument is an orthogonal decomposition $\LieK_H = \LieK_H^\star \oplus \LieM_H$ where $\LieK_H$ is the Lie algebra of $K_H$ and $\LieM_H$ is the Lie algbera of $M_H = Z_{K_H}(A_H)$. We use the fact that the orthogonal complement $\LieK_H^\star$ is naturally isomorphic to $\LieU_H$, the Lie algebra of $U_H$, as vector spaces. We also use related estimates for maps coming from the local product structure $K_H \subset H \approx U_H^+U_H^-A_HM_H$.

\item In this part, we seek to develop the necessary asymptotic formulas for the volume of Riemannian skew balls. This is absolutely necessary for Parts~1 and 2 and the minor part mentioned above. First, we stick to the usual Riemannian balls for simplicity. As in \cite{GW07}, we begin the proof by using the volume formula
\begin{align*}
\mu_H(H_T) &= \frac{1}{\mu_{M_H}(M_H)}\int_{K_H} \int_{(\LieA_H^+)_T} \int_{K_H} \xi_H(v) \, d\mu_{K_H}(k_1) \, dv \, d\mu_{K_H}(k_2) \\
&= \frac{\mu_{K_H}(K_H)^2}{\mu_{M_H}(M_H)} \int_{(\LieA_H^+)_T} \xi_H(v) \, dv
\end{align*}
associated to the Cartan decomposition $H = K_H A_H^+ K_H$. Here, approximately $\xi_H(v) \approx e^{2\rho_H(v)}$ and hence we are lead to investigate the precise asymptotic formulas for general integrals of the form $\int_{V_T} e^{\lambda(v)} \, dv$ for a normed vector space $(V, \|\cdot \|)$ and a nonzero linear form $\lambda \in V^*$, strengthening \cite[Theorem 9.3]{GW07}.

To deal with nontrivial Riemannian skew balls, the above tools are not enough due to ``varying radius'' and the fact that the above integral is also concentrated near the boundary. In this case $(\LieA_H^+)_T$ is replaced with a ``skew ball in $\LieA_H^+$'' denoted by $(\LieA_H^+)_T[g_1, g_2]$ for which precise asymptotic formulas are intractable to calculate directly. Instead, we do the following. We first restrict the integral to a cone $\mathcal{C}_\tau^{\LieA_H} \subset \interior(\LieA_H^+)$ of size $\tau > 0$ containing $\R_{> 0} v_{2\rho_H}$ due to the fact that the above integral is concentrated near $\R_{> 0} v_{2\rho_H}$. Our region is then $\mathcal{C}_\tau^{\LieA_H} \cap (\LieA_H^+)_T[g_1, g_2]$ which can then be approximated by sandwiching it between the intersection of the cone with two balls of similar radius:
\begin{align*}
\mathcal{C}_\tau^{\LieA_H} \cap (\LieA_H^+)_{T + T_{g_1, g_2} - \delta} \subset \mathcal{C}_\tau^{\LieA_H} \cap (\LieA_H^+)_T[g_1, g_2] \subset \mathcal{C}_\tau^{\LieA_H} \cap (\LieA_H^+)_{T + T_{g_1, g_2} + \delta}
\end{align*}
where $T_{g_1, g_2}$ is some constant depending on $g_1$ and $g_2$. Applying the techniques from above to the sandwiching balls give asymptotic formulas for the integral over the intersection of the balls with the cone while the intersection of the balls with the complement of the cone give error terms---the larger the cone, the smaller the error terms. However, this procedure is very delicate since there is also \emph{multiplicative} error coming from the approximate balls which is a function of $\delta$---the larger the cone, the \emph{larger} the multiplicative error. It turns out that $T_{g_1, g_2} \pm \delta$ is closely related to the Busemann function and to precisely approximate $\delta$ as a function of $\tau$ requires precise estimates for the Busemann function which is the purpose of Part~5. Due to the opposing forces for the aformentioned errors, we need to very carefully shrink $\tau$ as a function of $T$ at an appropriate rate and keep track of the error terms.

\item In this part, we develop the necessary Lie theoretic tools and find precise estimates for the Busemann function for a general semisimple Lie group. The overall idea is to use definitions and then use asymptotics for expressions such as $d(o, a_{tv}uo)$ for unit vectors in the Weyl chamber $v \in \LieA^+$ and $u \in U$. For $v \in \interior(\LieA^+)$, simply by triangle inequality, $d(o, a_{tv}uo)$ is asymptotic to $tv$ with an exponential error term $d(o, a_{tv}ua_{-tv} o) = O(e^{-\eta t})$ for some $\eta > 0$, for small $u$. However, in higher rank, it is possible to have $v \in \partial\LieA^+$ in the walls of the Weyl chamber in which case, the behavior can be different. For example, if $\log(u)$ is in an appropriate root space, then $a_{tv}$ and $u$ commute. In this case the behavior is Euclidean and $d(o, a_{tv}uo)$ is asymptotic to $tv$ with an error term $O(t^{-1})$. Deriving these types of asymptotics requires studying the Cartan projection of such commuting elements. Also, we actually need to deal with a \emph{neighborhood} of $\partial\LieA^+$ \emph{uniformly} so that certain constant coefficients are uniform---the issue is that the exponential rate $\eta > 0$ goes to $0$ as $v$ goes to the walls of the Weyl chamber $\partial\LieA^+$.
\end{enumerate}

Let us say a few words about carrying through the condition on avoidance of periodic $H$-orbits. This is also a delicate business and is essential for each of the arguments in Parts~1-3 above. For Part~3, avoidance of periodic $H$-orbits for a point $x_0 \in \Gamma \backslash G$ implies the same for \emph{all} points in the $K_H$-orbit $x_0K_H$ by right $K$-invariance of the metric. Miraculously, there is an appropriate scale for $Z$ so that this can be upgraded to all points in the translate $x_0K_Ha_{Zv}$ while \emph{simultaneously} ensuring that the estimates in the argument for Part~3 hold. For Part~2, we have a similar situation where all the arguments work once we choose an appropriate scale for the Riemannian skew annulus. The argument for Part~1 is similar.

\subsection{\texorpdfstring{Big $O$, $\Omega$, and Vinogradov notations}{Big O, Ω, and Vinogradov notations}}
Throughout the paper, we often use the big $O$, $\Omega$, and Vinogradov notations to write inequalities succinctly and manipulate them efficiently. For any functions $f: \R \to \R$ and $g: \R \to \R_{> 0}$ (or quantities where $f$ is implicitly a function of $g$), we write $f = O(g)$ to mean that there exists an implicit constant $C > 0$ such that $|f| \leq Cg$. We say $f(x) = O(g(x))$ as $x \to \pm\infty$ if the previous inequality holds for $x$ sufficiently positively/negatively large. We similarly say $f(x) = O(g(x))$ as $x \to 0$. It will also be convenient for us to write $f = \Omega(g)$ to mean that there exists an implicit constant $C > 0$ such that $f \geq Cg$, and similar variants as above. Note that this is the notation of Knuth and not Hardy--Littlewood. We often simply use the symbols $O(g)$ and $\Omega(g)$ in an expression to stand for such types of quantities. We also write $f \ll g$ and $g \gg f$ which is equivalent to $f = O(g)$. If $f \ll g$ and $f \gg g$, then we write $f \asymp g$. For a normed vector space $(V, \|\cdot\|)$, we also use these symbols in the natural way for $V$-valued functions or quantities. We put subscripts on $O$, $\Omega$, $\ll$, $\gg$, and $\asymp$ to indicate other quantities which the implicit constant may depend on.

\begin{remark}
Throughout the paper, except in the introduction, we view $(G, H, \Gamma)$ as fixed and also view implicit constants depending on those groups or induced spaces as absolute. Consequently, we often omit writing those groups or induced spaces in the subscript of $O$, $\Omega$, $\ll$, or $\gg$. However, the dependence of the implicit constants on $(G, H, \Gamma)$ should be clear if one wishes to trace it in the proofs.
\end{remark}

\subsection{Organization of the paper}
In \cref{sec:Preliminaries}, we first give the necessary background for the rest of the paper. We then develop various tools regarding the Busemann function and Riemannian skew balls in \cref{sec:EstimatesForTheBusemannFunction,sec:EquidistributionOfRiemannianSkewBalls}. In \cref{sec:K-Equidistribution}, we focus on the effective equidistribution hypothesis for regions of maximal horospherical orbits in a $H$-orbit and use it to derive a similar effective equidistribution result for orbits of the maximal compact subgroup of $H$. We then use that to derive effective equidistribution of Riemannian skew balls in \cref{sec:EquidistributionOfRiemannianSkewBalls}. In \cref{sec:EffectiveDuality}, we use duality of double quotient spaces to relate the $\Gamma$-orbit count to the $G$-orbit integral in an effective fashion. In \cref{sec:LimitingDensity}, we relate the $G$-orbit integral to the known limiting density in an effective fashion. Finally, in \cref{sec:GeneralTheorems}, we assemble all the pieces and state our general theorems with full details.

\subsection*{Acknowledgements}
We thank Amir Mohammadi for suggesting this problem, explaining his work, and many other useful conversations to troubleshoot technical difficulties. We thank Hee Oh for references to prior related results and to related volume formulas. We finally thank Ralf Spatzier for the earlier reference to Knieper's volume formula.

\section{Preliminaries}
\label{sec:Preliminaries}
Let $G$ be a noncompact connected semisimple real algebraic group, i.e., a Lie group which is the identity component of the group of real points of a semisimple linear algebraic group defined over $\R$. Let $\LieG = \T_e(G)$ be its Lie algebra. We similarly use corresponding Fraktur letters for the Lie algebras of other Lie groups throughout the paper. Let $B: \LieG \times \LieG \to \R$ be the Killing form. Let $\theta: \LieG \to \LieG$ be a Cartan involution, i.e., the symmetric bilinear form $B_\theta: \LieG \times \LieG \to \R$ defined by $B_\theta(x, y) = -B(x, \theta(y))$ for all $x, y \in \LieG$ is positive definite. Then we have the decomposition $\LieG = \LieK \oplus \LieP$ into the eigenspaces of $\theta$ corresponding to the eigenvalues $+1$ and $-1$ respectively. Let $K < G$ be the maximal compact subgroup whose Lie algebra is $\mathfrak{k}$. Let $\LieA \subset \LieP$ be a maximal abelian subalgebra and $\Phi \subset \LieA^*$ be the associated restricted root system. Let $\Phi^\pm \subset \Phi$ be sets of positive and negative roots with respect to some lexicographic order on $\LieA^*$ and $\Pi \subset \Phi^+$ be the set of simple roots. We similarly use superscripts $\pm$ for other root systems as long as the order is clear. We can identify $\LieA \cong \LieA^*$ via the Killing form. Let $\LieA^+ \subset \LieA$ be the corresponding closed positive Weyl chamber. Then, we have the restricted root space decomposition
\begin{align*}
\LieG = \LieA \oplus \LieM \oplus \LieU^+ \oplus \LieU^- = \LieA \oplus \LieM \oplus \bigoplus_{\alpha \in \Phi} \LieG_\alpha
\end{align*}
where $\LieM = Z_{\LieK}(\LieA) \subset \LieK$ and $\LieU^\pm = \bigoplus_{\alpha \in \Phi^+} \LieG_{\mp\alpha}$. Define the Lie subgroups
\begin{align}
\label{eqn:SubgroupsOfG}
A &= \exp(\LieA) < G, & U^\pm &= \exp(\LieU^\pm) < G, & M = Z_K(A) < K < G.
\end{align}
Note that the latter need not be connected. The first subgroup in \cref{eqn:SubgroupsOfG} is a maximal $\R$-split torus of $G$ and $\rankG := \dim(\LieA)$ is the rank of $G$. Define the closed subset $A^+ = \exp(\LieA^+) \subset A$. Denote
\begin{align*}
a_v = \exp(v) \in A \qquad \text{for all $v \in \LieA$}.
\end{align*}
The middle two subgroups in \cref{eqn:SubgroupsOfG} are the maximal expanding and contracting horospherical subgroups, i.e.,
\begin{align*}
U^\pm = \Bigl\{u^\pm \in G: \lim_{t \to \pm\infty} a_{tv}u^\pm a_{-tv} = e\Bigr\}
\end{align*}
for any $v \in \interior(\LieA^+)$. We often denote $U := U^+$. We denote the corresponding minimal parabolic subgroups by $P^\pm = MAU^\pm$. Recall the following useful decompositions of $G$ using the above subgroups: the Cartan decomposition $G = KA^+K$; the Iwasawa decompositions $G = KAU^\pm$; the Bruhat decompositions $G = \bigsqcup_{w \in N_K(A)/M} P^\pm w P^\pm$ where $N_K(A)/M$ is the Weyl group, which also gives the dense subgroup $MAU^+U^- \subset G$. The Furstenberg boundary of $G$ is $\Fboundary = G/P^- \cong K/M$ where we have used the Iwasawa decomposition.

Let $w_0 \in K$ be a representative of the element in the Weyl group $N_K(A)/M$ such that $\Ad_{w_0}(\LieA^+) = -\LieA^+$.  We also use the notation $g^+ = gP^- \in \Fboundary$ and $g^- = gw_0P^- \in \Fboundary$ for all $g \in G$. We define the \emph{opposition involution} $\involution = -\Ad_{w_0}: \LieA \to \LieA$ so that $\involution(\LieA^+) = \LieA^+$ and $\involution^2 = \Id_{\LieA}$. Note that we have the property
\begin{align*}
a_{-v} = w_0a_{-\Ad_{w_0}(v)}w_0^{-1} = w_0a_{\involution(v)}w_0^{-1} \qquad \text{for all $v \in \LieA^+$}.
\end{align*}

We fix the left $G$-invariant and right $K$-invariant Riemannian metric on $G$ induced by $B_\theta$ and denote the corresponding inner product and norm on any of its tangent spaces by $\langle \cdot, \cdot \rangle$ and $\|\cdot\|$ respectively. We use the same notations for the induced inner products and norms for any of its induced spaces. In particular, we have an inner product (which is just the Killing form $B$) and norm on $\LieA$ which is invariant under the Weyl group $N_K(A)/M$. Identifying $\LieA^* \cong \LieA$ using the inner product on $\LieA$ gives an inner product and norm on $\LieA^*$. Note that this norm coincides with the norm of the gradient and the operator norm:
\begin{align*}
\|\alpha\| = \|\nabla \alpha\| = \|\alpha\|_{\mathrm{op}} = \sup_{v \in \LieA, \|v\| = 1} \alpha(v) \qquad \text{for all $\alpha \in \LieA^*$}.
\end{align*}
In the $\rankG = 1$ case, we fix the constant
\begin{align}
\label{eqn:Eta1}
\eta_1 := \|\alpha\| > 0
\end{align}
for the simple root $\alpha \in \Pi$. We have that $\theta$ is an orthogonal involution and the decomposition $\LieG = \LieK \oplus \LieP$ is orthogonal, both with respect to $\langle \cdot, \cdot \rangle = B_\theta$. The restricted root space decomposition is also orthogonal with respect to $\langle \cdot, \cdot \rangle = B_\theta$ by \cite[Chapter VI, \S 4, Proposition 6.40]{Kna02}. The Riemannian symmetric space associated to $G$ is $G/K$. We fix the reference point $o = K \in G/K$.

We denote by $d_G$ the metric on $G$. We denote by $d_X$ the metric on any induced space $X$ obtained from the Riemannian metric on $X$. Henceforth, we will drop the subscript only for $G$, $\Gamma \backslash G$, and $G/K$ for brevity. We denote by $B_r^X(x) \subset X$ the open ball of radius $r > 0$ centered at $x \in X$, and more generally, we denote by $B_r^X(S) \subset X$ the open $r$-neighborhood of the subset $S \subset X$.

Let us characterize the metric on $G/K$. Recall that for any $g \in G$, if $g = k_1a_vk_2 \in KA^+K$ is its Cartan decomposition, then $v \in \LieA^+$ is unique, called the Cartan projection of $g$, and we have the distance $d(o, go) = \|v\|$. Applying the adjoint representation gives a singular value decomposition $\Ad_g = \Ad_{k_1}\Ad_{a_v}\Ad_{k_2}$ and hence we get
\begin{align}
\label{eqn:DistanceFormulaByOperatorNorm}
d(o, go) = \|v\| \asymp \log(\sigma_1) = \log\|\Ad_g\|_{\mathrm{op}}
\end{align}
where $\sigma_1 = \max_{\alpha \in \Phi^+} e^{\alpha(v)}$ is the maximum singular value of $\Ad_g$.

Let $(X, d)$ be any metric space and let $C(X) := C(X, \R)$ be the space of continuous real-valued functions on $X$. Since, we will only work with real-valued functions in this paper, we will drop the codomain $\R$ in all kinds of function spaces. For all $\phi \in C(X)$, the $\chi$-H\"{o}lder seminorm and norm of $\phi$ are defined by
\begin{align*}
|\phi|_{C^{0, \chi}} &= \sup_{d(x, y) \leq 1} \frac{|\phi(x) - \phi(y)|}{d(x, y)^{\chi}}, & \|\phi\|_{C^{0, \chi}} &= \|\phi\|_{\infty} + |\phi|_{C^{0, \chi}}.
\end{align*}
Define the Banach space of $\chi$-H\"{o}lder continuous functions $C^{0, \chi}(X) = \{\phi \in C(X): \|\phi\|_{C^{0, \chi}} < \infty\}$. If $X$ is a Riemannian manifold and $\phi \in C^\infty(X)$, the $L^2$ Sobolev norm of order $\ell \in \N$ is defined by
\begin{align*}
\mathcal{S}^\ell(\phi) = \left(\sum_{j = 0}^\ell \int_X |\nabla^j \phi|^2 \, d\vol\right)^{\frac{1}{2}}
\end{align*}
where $\nabla$ is the Levi-Civita connection and $\vol$ is the volume form.

We denote the Haar measure on $G$ compatible with the fixed Riemannian metric on $G$ by $\mu_G$, and the induced measure on any induced space $X$ by $\mu_X$. We often denote the Lebesgue measure $\mu_\LieA$ simply by $dv$. We also use similar notations for the Lebesgue measure on other subspaces of $\LieG$. Note that $\mu_M$ and $\mu_K$ are not normalized and not necessarily probability measures. For convenience, we normalize $\mu_{\Gamma \backslash G}$ to a probability measure $\hat{\mu}_{\Gamma \backslash G}$ and $\hat{\mu}_{U_H} := \mu_{U_H}\bigl(B_1^{U_H}(e)\bigr)^{-1}\mu_{U_H}$. Using the Cartan decomposition, we have the following integral formula with respect to $\mu_G$ (see \cite[Chapter I, \S 5, Theorem 5.8]{Hel00} and also its proof for the correct constant coefficient). For all $\alpha \in \Phi^+$, we denote its multiplicity by $m_\alpha := \dim(\LieG_\alpha)$. We denote their sum by $m_{\Phi^+} := \sum_{\alpha \in \Phi^+} m_\alpha$. Denote by $\rho := \frac{1}{2} \sum_{\alpha \in \Phi^+} m_\alpha \alpha$ half the sum of positive roots with multiplicity. Define the function $\xi: \LieA^+ \to \R$ by
\begin{align*}
\xi(v) = \prod_{\alpha \in \Phi^+} \sinh^{m_\alpha}(\alpha(v)) \qquad \text{for all $v \in \LieA^+$}.
\end{align*}
Then, we have
\begin{align}
\label{eqn:IntegralFormulaForG}
\int_G f \, d\mu_G = \frac{1}{\mu_M(M)}\int_K \int_{\LieA^+} \int_K f(k_1 a_v k_2) \cdot \xi(v) \, d\mu_K(k_1) \, dv \, d\mu_K(k_2)
\end{align}
for all $f \in C_{\mathrm{c}}(G)$.

\subsection{Busemann function}
\label{subsec:BusemannFunction}
We also need the Busemann function for the symmetric space $G/K$ of arbitrary rank $\rankG$. The Iwasawa decomposition actually gives a $C^\infty$-diffeomorphism $K \times \LieA \times U^- \rightarrow G$ defined by $(k,v,u) \mapsto ka_vu$. We use this to make the following definitions.

\begin{definition}[Iwasawa cocycle]
The \emph{Iwasawa cocycle} $\sigma: G \times \Fboundary \rightarrow \LieA$ gives the unique element $\sigma(g,\xi)$ such that $gk \in Ka_{\sigma(g,\xi)}U^-$ and satisfies the cocycle relation $\sigma(gh,\xi) = \sigma(g,h\xi) + \sigma(h,\xi)$ for all $g, h \in G$ and $\xi = kM \in \Fboundary \cong K/M$.
\end{definition}

\begin{definition}[Busemann function]
The \emph{Busemann function} $\beta: \Fboundary \times G/K \times G/K \to \LieA$ is defined by
\begin{align*}
\beta_\xi(x, y) = \sigma(g^{-1}, \xi) - \sigma(h^{-1}, \xi)
\end{align*}
for all $\xi \in \Fboundary$, $x = go \in G/K$, and $y = ho \in G/K$.
\end{definition}

For all $\xi \in \Fboundary$, and $x = go, y, z \in G/K$, and $h \in G$, the Busemann function satisfies the properties
\begin{enumerate}
\item $\beta_\xi(x, o) = \sigma(g^{-1}, \xi)$,
\item $\beta_{h\xi}(hx, hy) = \beta_\xi(x, y)$,
\item $\beta_\xi(x, z) = \beta_\xi(x, y) + \beta_\xi(y, z)$
\end{enumerate}
which are derived from the properties of the Iwasawa cocycle. We have the following geometric characterization of the Busemann function. For all $\xi = kM \in \Fboundary \cong K/M$, and $x, y \in G/K$, and $v \in \LieA^+$ with $\|v\| = 1$, we have
\begin{align*}
\langle \beta_\xi(x, y), v\rangle = \lim_{t \to +\infty} (d(x, \xi_t) - d(y, \xi_t))
\end{align*}
where $\xi_{\boldsymbol{\cdot}}: \R \to G/K$ is any geodesic in the direction $v$ whose forward limit point is $\xi$, say, $\xi_t = k a_{tv}o \in G/K$ for all $t \in \R$. In fact, we can prove a more precise version given in \cref{pro:PreciseGeometricBusemannFunction}.

\subsection{\texorpdfstring{The subgroup $H < G$}{The subgroup H < G}}
\label{subsec:TheSubgroupH}
Let $H < G$ be a noncompact semisimple maximal proper Lie subgroup of rank $\rankH \leq \rankG$. In the subsequent paragraphs, we discuss some related properties. We also give reasons why this class of Lie subgroups is natural and how to find numerous examples.

Recall that a maximal proper Lie subgroup of a semisimple Lie group is either a maximal parabolic subgroup or a maximal reductive subgroup \cite[Chapter VIII, \S 10, Corollary 1]{Bou05}. Thus, as long as we avoid parabolic subgroups, it is not hard for a maximal proper Lie subgroup to be semisimple. Indeed, such subgroups have been completely classified in the works of Dynkin \cite{Dyn51,Dyn52a,Dyn52b} and Malcev \cite{Mal44}. However, non-semisimple reductive maximal proper Lie subgroups may also exist (see \cite[Chapter 6, \S 1.6, Theorem 1.9]{VGO90}, \cite{Mos61}, and \cite{Hm66}).

Many examples of the desired $H < G$ come from the class of symmetric subgroups. We provide the necessary background here and refer the reader to Loos' series of books \cite{Loo69a,Loo69b} and \cite{Sch84} for further details about such subgroups. A Lie subgroup $H < G$ is said to be symmetric if $H^\circ < H < H^\Sigma$ where the latter is the (necessarily closed) subgroup of fixed points of an involutive automorphism $\Sigma: G \to G$. In terms of Lie algebras, $H < G$ is any (necessarily closed) subgroup corresponding to the Lie subalgebra obtained as the eigenspace of the involutive automorphism $\sigma := (d\Sigma)_e: \LieG \to \LieG$ corresponding to the eigenvalue $+1$. In fact, there is a generalized Cartan decomposition $\LieG = \LieH \oplus \LieQ$ where $\LieQ$ is the eigenspace of $\sigma$ corresponding to the eigenvalue $-1$. The associated homogeneous space $G/H$ is called an affine symmetric space. Berger \cite{Ber57} called them \emph{irreducible} whenever $(\ad_\LieH, \LieQ)$ is an irreducible representation. It is well-known that any symmetric subgroup $H < G$ is reductive \cite[Lemma E]{Koh65}, ruling out the possibility of being maximal parabolic by the above discussion. We now address the question of maximality. Clearly, $H < G$ is maximal proper if and only if $\LieH \subset \LieG$ is a maximal proper Lie subalgebra and hence $H = H^\Sigma$. The latter is simply a condition to be imposed so we focus on the former. We have the following proposition which is essentially known in the literature (see \cite[Chapter IV, Corollary 2]{Loo69a}, \cite[Appendix]{Bor95}, and \cite[Proposition 5.2(2)]{Sir08}).

\begin{proposition}
\label{pro:MaximalityAndIrreducibility}
The Lie subalgebra $\LieH \subset \LieG$ is maximal proper if and only if $(\ad_\LieH, \LieQ)$ is an irreducible representation.
\end{proposition}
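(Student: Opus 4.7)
The plan is to prove both directions by contrapositive, exploiting the generalized Cartan decomposition $\LieG = \LieH \oplus \LieQ$ and the bracket relations $[\LieH, \LieH] \subset \LieH$, $[\LieH, \LieQ] \subset \LieQ$, $[\LieQ, \LieQ] \subset \LieH$ that follow from $\sigma$ being an involutive automorphism.

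For the easy direction ($\Leftarrow$ of the biconditional as stated, i.e., maximality implies irreducibility), I would argue contrapositively. Suppose $(\ad_\LieH, \LieQ)$ admits a nonzero proper $\ad_\LieH$-invariant subspace $\LieQ_0 \subsetneq \LieQ$. Then I would set $\LieL := \LieH \oplus \LieQ_0$ and verify directly that $\LieL$ is a Lie subalgebra using the three bracket relations above: brackets of $\LieH$ with $\LieH$ land in $\LieH$, brackets of $\LieH$ with $\LieQ_0$ land in $\LieQ_0$ by $\ad_\LieH$-invariance, and brackets of $\LieQ_0$ with $\LieQ_0$ land in $\LieH \subset \LieL$. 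This exhibits $\LieL$ as a proper intermediate subalgebra, contradicting maximality of $\LieH$.

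For the other direction ($\Rightarrow$ as stated, irreducibility implies maximality), again by contrapositive, suppose $\LieH \subsetneq \LieL \subsetneq \LieG$ is a proper intermediate subalgebra. The key idea is that taking $\LieL \cap \LieQ$ might give zero, so instead I would consider the projection $\pi_\LieQ(\LieL) \subset \LieQ$ onto the $\LieQ$-factor of the decomposition. It is nonzero since $\LieL$ strictly contains $\LieH$, and it is proper in $\LieQ$: if $\pi_\LieQ(\LieL) = \LieQ$ then every $Y \in \LieQ$ would be of the form $W - X$ for some $W \in \LieL$ and $X \in \LieH \subset \LieL$, forcing $\LieQ \subset \LieL$ and $\LieL = \LieG$. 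The decisive step is $\ad_\LieH$-invariance of $\pi_\LieQ(\LieL)$: for $X \in \LieH$ and $W = W_\LieH + W_\LieQ \in \LieL$, the bracket $[X, W] \in \LieL$ projects as $\pi_\LieQ([X, W]) = [X, W_\LieQ]$ because $[X, W_\LieH] \in \LieH$ while $[X, W_\LieQ] \in \LieQ$; hence $[X, \pi_\LieQ(W)] \in \pi_\LieQ(\LieL)$. This produces a proper nonzero $\ad_\LieH$-invariant subspace of $\LieQ$, contradicting irreducibility.

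There is no real obstacle beyond the small subtlety of choosing the projection $\pi_\LieQ(\LieL)$ instead of the intersection $\LieL \cap \LieQ$, so that the argument works without assuming a priori that $\LieL$ is $\sigma$-invariant. The whole proof should be short (under half a page) and uses only the bracket relations of a symmetric decomposition together with elementary linear algebra; no deeper structure theory of $\LieG$ is needed.
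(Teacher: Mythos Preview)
Your proposal is correct and follows essentially the same argument as the paper. The only difference is cosmetic: where you use the projection $\pi_\LieQ(\LieL)$, the paper uses the intersection $\LieL \cap \LieQ$ and writes $\LieL = \LieH \oplus (\LieL \cap \LieQ)$; these coincide precisely because $\LieH \subset \LieL$ (for any $W \in \LieL$ the $\LieH$-component already lies in $\LieL$, hence so does the $\LieQ$-component), so your worry about the intersection possibly vanishing is unfounded in this setting.
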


In \cite{Ber57}, Berger completely classified affine symmetric spaces and compiled them in \cite[Tableau II]{Ber57}. His table indicates when certain properties, including irreducibility, are satisfied. Thus, in light of \cref{pro:MaximalityAndIrreducibility}, one can verify the maximality condition for $H < G$ simply by checking the irreducibility condition in \cite[Tableau II]{Ber57} which turns out to be satisfied for all but a handful of exceptions. Moreover, of the remaining instances, many are noncompact semisimple as well; note that we certainly need to omit some which are maximal compact which come from the Cartan involution and we also need to omit a few which are non-semisimple reductive. Alternatively, Borel \cite[Appendix]{Bor95} gave an abstract proof of the following supposedly well-known theorem in the theory of affine symmetric spaces.

\begin{theorem}
Suppose $\LieG$ is simple and $\LieH$ is semisimple. Then, the Lie subalgebra $\LieH \subset \LieG$ is maximal proper.
\end{theorem}

In light of the above discussion, symmetric subgroups provide a large number of examples of the desired $H < G$, where the associated homogeneous space $G/H$ is an affine symmetric space.

Now, in the above fashion as with $G$, we have objects associated to the noncompact semisimple Lie group $H$. We will decorate them by a subscript $H$ to distinguish them from the objects associated to $G$ unless otherwise stated. We can also guarantee a few additional properties. Since $H < G$ is semisimple, it is unimodular and hence there exists an induced left $G$-invariant measure $\mu_{G/H}$ on $G/H$ such that
\begin{align*}
d\mu_G = d\mu_H \, d\mu_{G/H}.
\end{align*}
Also, there exists a simultaneous Cartan decomposition $H = K_HA_H^+K_H$ for a compact subgroup $K_H < H$ and a $\R$-split torus $A_H < H$, i.e., it satisfies
\begin{align*}
A_H &< A, & A_H^+ &\subset A^+, & K_H &< K
\end{align*}
(see \cite[Theorem 6]{Mos55} and \cite{Kar53}).

As in the introduction and in \cite{GW07}, we introduce a type of open subsets of $H$, called Riemannian skew balls of $H$ in $G$. For all $g_1, g_2 \in G$ and $T > 0$, we define
\begin{align*}
H_T[g_1, g_2] := H \cap g_1^{-1} KB_T^G(e)K g_2^{-1} = \{h \in H: d(o, g_1hg_2o) < T\}.
\end{align*}
This generalizes the Riemannian balls of $H$ centered at $e$ defined using the Riemannian metric on $G/K$ which we simply denote by
\begin{align*}
H_T := H_T[e, e] = H \cap KB_T^G(e)K = \{h \in H: d(o, ho) < T\} \qquad \text{for all $T > 0$}.
\end{align*}
We call sets of the form $H_{T_2}[g_1, g_2] \setminus \overline{H_{T_1}[g_1, g_2]}$ for any $g_1, g_2 \in G$ and $T_2 > T_1 > 0$ a Riemannian skew annulus.

As it will be used extensively, let us record the integral formula with respect to $\mu_H$. In accordance with the above convention, let $m_{H, \alpha} := \dim(\LieH_\alpha)$ be the multiplicity for all $\alpha \in \Phi_H^+$, $m_{\Phi_H^+} := \sum_{\alpha \in \Phi_H^+} m_{H, \alpha}$ be their sum, and $\rho_H = \frac{1}{2} \sum_{\alpha \in \Phi_H^+} m_{H, \alpha} \alpha$ half the sum of positive roots with multiplicity. Define the function $\xi_H: \LieA_H^+ \to \R$ by
\begin{align*}
\xi_H(v) = \prod_{\alpha \in \Phi_H^+} \sinh^{m_{H, \alpha}}(\alpha(v)) \qquad \text{for all $v \in \LieA_H^+$}.
\end{align*}
Then, according to \cref{eqn:IntegralFormulaForG}, we have
\begin{align}
\label{eqn:IntegralFormulaForH}
\int_H f \, d\mu_H = \frac{1}{\mu_{M_H}(M_H)}\int_{K_H} \int_{\LieA_H^+} \int_{K_H} f(k_1 a_v k_2) \cdot \xi_H(v) \, d\mu_{K_H}(k_1) \, dv \, d\mu_{K_H}(k_2)
\end{align}
for all $f \in C_{\mathrm{c}}(H)$.

\subsection{Lattices}
Finally, we introduce the primary object of study in this paper. Let $\Gamma < G$ be a lattice, i.e., $\Gamma \backslash G$ has a finite right $G$-invariant measure. Namely, the Haar measure $\mu_G$ on $G$ descends to a such a measure $\mu_{\Gamma \backslash G}$ on $\Gamma \backslash G$. We normalize it to a probability measure $\hat{\mu}_{\Gamma \backslash G}$. As explained in the introduction, our main objective is to study dense $\Gamma$-orbits in $G/H$ in an effective fashion.

\section{Estimates for the Busemann function}
\label{sec:EstimatesForTheBusemannFunction}
We prove \cref{pro:PreciseGeometricBusemannFunction} which gives fundamental estimates for the Busemann function (see \cref{subsec:BusemannFunction} for the definition). \Cref{cor:PreciseGeometricBusemannFunction} is a simplified statement which follows immediately from the proposition. This is required in \cref{sec:VolumeCalculations} where we derive precise asymptotic formulas for the volume of Riemannian skew balls of $H$ in $G$. We prepare with many lemmas and propositions which provide useful Lie theoretic bounds and fundamental Lie theoretic identities.

\subsection{Lie theoretic bounds and identities}
We fix the positive constant
\begin{align}
\label{eqn:Constant_cPhi}
c_{\Phi} = \max_{\alpha \in \Phi} \|\alpha\| > 0
\end{align}
for the rest of the paper. Recall that the induced norm on $\LieA^*$ coincides with the operator norm: $\|\alpha\| = \|\alpha\|_{\mathrm{op}}$ for all $\alpha \in \LieA^*$. Recall also that $\|\Ad_g\|_{\mathrm{op}} = \sup_{w \in \LieG, \|w\| = 1} \|\Ad_g(w)\|$ and $\|\Ad_g^{-1}\|_{\mathrm{op}}^{-1} = \inf_{w \in \LieG, \|w\| = 1} \|\Ad_g(w)\|$.

\begin{lemma}
\label{lem:NormOfAdjointEstimate}
For all $g \in G$, we have
\begin{align*}
\|\Ad_g\|_{\mathrm{op}} &\leq e^{c_{\Phi}d(o, go)}, & \|\Ad_g^{-1}\|_{\mathrm{op}}^{-1} &\geq e^{-c_{\Phi}d(o, go)}.
\end{align*}
\end{lemma}

\begin{proof}
Since the Riemannian metric on $G$ is left $G$-invariant and right $K$-invariant, $\Ad_K$ acts orthogonally with respect to $\langle \cdot, \cdot \rangle = B_\theta$ on $\LieG$. Therefore, $\|\Ad_k\|_{\mathrm{op}} = 1$ for all $k \in K$.

Let $g \in G$. It suffices to prove only the first inequality since the second follows from the first applied to $g^{-1}$. Using the Cartan decomposition, we write $g = k_1 a_v k_2$ for some $k_1, k_2 \in K$ and a unique $v \in \LieA^+$. Then
\begin{align*}
\|\Ad_g\|_{\mathrm{op}} = \|\Ad_{k_1} \Ad_{a_v} \Ad_{k_2}\|_{\mathrm{op}} \leq \|\Ad_{a_v}\|_{\mathrm{op}}
\end{align*}
by the above fact. Using $C_{g'} \circ \exp = \exp \circ \Ad_{g'}$ for all $g' \in G$, where $C_{g'}: G \to G$ denotes the conjugation map by $g'$, and $\Ad_{\exp(v')} = \exp(\ad_{v'})$ for all $v' \in \LieG$, we calculate that
\begin{align}
\label{eqn:logOfConjugation}
\begin{aligned}
\Ad_{a_v}\Biggl(\sum_{\alpha \in \Phi \cup \{0\}} w_\alpha\Biggr) &= \sum_{\alpha \in \Phi \cup \{0\}}e^{\ad_v}(w_\alpha) = \sum_{\alpha \in \Phi \cup \{0\}}\sum_{j = 0}^\infty \frac{1}{j!} (\ad_v)^j(w_\alpha) \\
&= \sum_{\alpha \in \Phi \cup \{0\}}\sum_{j = 0}^\infty \frac{1}{j!} \alpha(v)^j w_\alpha = \sum_{\alpha \in \Phi \cup \{0\}}e^{\alpha(v)}w_\alpha
\end{aligned}
\end{align}
for all $w = \sum_{\alpha \in \Phi \cup \{0\}} w_\alpha \in \LieG = \sum_{\alpha \in \Phi \cup \{0\}} \LieG_\alpha$. Recall from \cref{sec:Preliminaries} that the restricted root space decomposition is orthogonal. Also, $d(o, go) = \|v\|$. So, we conclude that
\begin{align*}
\|\Ad_g\|_{\mathrm{op}} = \|\Ad_{a_v}\|_{\mathrm{op}} \leq e^{c_{\Phi}\|v\|} = e^{c_{\Phi}d(o, go)}.
\end{align*}
\end{proof}

For the following lemma, recall that $\log$ is well-defined on all of $U^\pm$. Note also that we can identify $\T_o(G/K) \cong \LieG/\LieK \cong \LieP$. By abuse of notation, $\exp_{\mathrm{Rem}}: \LieG \to G$ and $\exp_{\mathrm{Rem}}: \LieP \to G/K$ denote the Riemannian exponential maps.

\begin{lemma}
\label{lem:LieTheoreticBoundsI}
The following holds.
\begin{enumerate}
\item Let $\epsilon_G \in (0, 1)$ such that $\exp, \exp_{\mathrm{Rem}}: \LieG \to G$ and $\exp_{\mathrm{Rem}}: \LieP \to G/K$ are injective on $B_{\epsilon_G}^\LieG(0)$ and $B_{\epsilon_G}^\LieP(0)$, respectively. There exists $C_G \geq 1$ such that:
\begin{enumerate}
\item $C_G^{-1}\|\log(g)\| \leq d(e, g)$ for all $g \in \exp\bigl(B_{\epsilon_G}^\LieG(0)\bigr)$;
\item for any Lie subgroup $\tilde{G} < G$, we have $d_{\tilde{G}}(e, g) \leq \|\log(g)\|$ for all $g \in \exp(\tilde{\LieG})$ so that $\log(g)$ is defined.
\end{enumerate}
\item For all $x = ho \in G/K$ and $w \in \LieG$ such that $\|\Ad_{h^{-1}}(w)\| = 1$ and $\Ad_{h^{-1}}(w)$ forms an acute/right angle of $\omega \in \bigl(0, \frac{\pi}{2}\bigr]$ with $\LieK$, there exists $C_\omega \geq 1$ depending continuously on $\omega$ such that:
\begin{enumerate}
\item $d(x, gx) \leq \min\bigl\{e^{c_{\Phi}d(o, x)} d(e, g), 2d(o, x) + d(o, go)\bigr\}$ for all $g \in G$;
\item $C_\omega^{-1} e^{-c_\Phi d(o, x)} d(e, \exp(tw)) \leq d(x, \exp(tw)x)$ for all $t \in (0, \epsilon_G)$.
\end{enumerate}
\item We have $d(o, u^\pm o) \asymp \log(1 + \|\log(u^\pm)\|)$ for all $u^\pm \in U^\pm$.
\end{enumerate}
\end{lemma}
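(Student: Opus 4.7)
The plan is to prove the three items in turn, using \cref{lem:NormOfAdjointEstimate} and the distance--operator norm formula \cref{eqn:DistanceFormulaByOperatorNorm} as the main inputs.

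For~(1), assertion~(b) is the elementary length computation: in any left-invariant Riemannian metric on $\tilde{G}$, the curve $t\mapsto\exp(tv)$ has speed $\|dL_{\exp(tv)}v\|_{\exp(tv)} = \|v\|$, hence length $\|v\|$ over $t\in[0,1]$, so $d_{\tilde G}(e,\exp v) \leq \|v\|$. For~(a), the group exponential $\exp$ and the Riemannian exponential $\exp_{\mathrm{Rem}}\colon\LieG\to G$ are both smooth local diffeomorphisms on $B^{\LieG}_{\epsilon_G}(0)$ with differential $\Id_{\LieG}$ at $0$, so $\exp_{\mathrm{Rem}}^{-1}\circ\exp$ is bi-Lipschitz there; combined with the radial isometry $d(e,\exp_{\mathrm{Rem}}(v))=\|v\|$ this yields $\|\log g\| \leq C_G\,d(e,g)$.

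For~(2)(a), the second bound reduces to the triangle inequality $d(x,gx)\leq d(x,o)+d(o,go)+d(go,gx)$ together with the isometric $G$-action giving $d(go,gx)=d(o,x)$. For the first bound, write $x=ho$ and take a minimizing geodesic $\gamma$ from $e$ to $g$ in $G$; the right-translated curve $R_h\gamma$ connects $h$ to $gh$, and the identity $dR_h\circ dL_{g'}=dL_{g'h}\circ\Ad_{h^{-1}}$ on left-invariant frames shows its length is at most $\|\Ad_{h^{-1}}\|_{\mathrm{op}}\cdot d_G(e,g)$. Since $\pi\colon G\to G/K$ is a Riemannian submersion, $d(x,gx)\leq d_G(h,gh)\leq\|\Ad_{h^{-1}}\|_{\mathrm{op}}\,d(e,g)$, and \cref{lem:NormOfAdjointEstimate} supplies $\|\Ad_{h^{-1}}\|_{\mathrm{op}}\leq e^{c_\Phi d(o,x)}$.

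The main obstacle is part~(2)(b), where the lower bound must respect the angle $\omega$ and degenerate only as $\omega\to 0^+$. The plan is to transfer to the basepoint via left-invariance: $d(x,\exp(tw)x)=d(o,\exp(tw')o)$ with $w'=\Ad_{h^{-1}}(w)$ a unit vector making angle $\omega\in(0,\pi/2]$ with $\LieK$. Under $\pi$, the orbit $t\mapsto\exp(tw')o$ has initial velocity $d\pi(w')=w'_{\LieP}$ of norm $\sin(\omega)$; for $t\in(0,\epsilon_G)$, the group-exponential orbit is $C^1$-close to the Riemannian exponential $\exp_{\mathrm{Rem}}(tw'_{\LieP})$, and local injectivity of $\exp_{\mathrm{Rem}}$ on $B^{\LieP}_{\epsilon_G}(0)$ together with smooth dependence yields $d(o,\exp(tw')o)\geq C_\omega^{-1}t$ for some $C_\omega$ continuous in $\omega$ (diverging as $\omega\to 0^+$). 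Finally, the normalization $\|w'\|=1$ together with \cref{lem:NormOfAdjointEstimate} gives $\|w\|=\|\Ad_h w'\|\leq e^{c_\Phi d(o,x)}$, whence~(1)(b) yields $d(e,\exp(tw))\leq t\|w\|\leq te^{c_\Phi d(o,x)}$, so $t\geq e^{-c_\Phi d(o,x)}d(e,\exp(tw))$ and the claim follows.

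For~(3), combine \cref{eqn:DistanceFormulaByOperatorNorm}, giving $d(o,u^\pm o)\asymp\log\|\Ad_{u^\pm}\|_{\mathrm{op}}$, with the nilpotency of $\ad_{\log u^\pm}$ on $\LieG$. Since $\ad_{\log u^\pm}$ is nilpotent of class at most some bounded $N$, $\Ad_{u^\pm}=\exp(\ad_{\log u^\pm})=\sum_{j=0}^{N}\frac{1}{j!}\ad_{\log u^\pm}^j$ is a polynomial of bounded degree in $\log u^\pm$, so $\|\Ad_{u^\pm}\|_{\mathrm{op}}$ is polynomially comparable to $1+\|\log u^\pm\|$. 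The upper bound is immediate from $\|\ad_X\|_{\mathrm{op}}\lesssim\|X\|$ term by term; for the lower bound, note that for each unit $X_0\in\LieU^\pm$ the scalar polynomial $t\mapsto\|\Ad_{\exp(tX_0)}\|_{\mathrm{op}}$ has degree at least one (since $\ad_{X_0}\neq 0$ by semisimplicity of $\LieG$), and a compactness argument on the unit sphere of $\LieU^\pm$ upgrades this to a uniform polynomial lower bound. Taking logarithms yields $d(o,u^\pm o)\asymp\log(1+\|\log u^\pm\|)$.
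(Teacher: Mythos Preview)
Your arguments for parts~(1) and~(2) are correct and essentially coincide with the paper's proof; the only cosmetic difference is that for~(2)(a) you right-translate a minimizing geodesic by $h$ whereas the paper conjugates by $h^{-1}$, but these are equivalent via left-invariance.

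For part~(3), your reduction via \cref{eqn:DistanceFormulaByOperatorNorm} to the comparison $\log\|\Ad_{u^\pm}\|_{\mathrm{op}} \asymp \log(1+\|\log(u^\pm)\|)$ and the upper bound from the finite sum $\Ad_{u^\pm}=\sum_{j\le N}\frac{1}{j!}\ad_{\log u^\pm}^j$ both match the paper. The gap is in the lower bound. First, $t\mapsto\|\Ad_{\exp(tX_0)}\|_{\mathrm{op}}$ is not a polynomial in $t$; only, say, the Hilbert--Schmidt norm squared is. More seriously, the ``compactness argument on the unit sphere'' is not a routine step: even knowing that for each unit $X_0$ this function grows at least linearly, a \emph{uniform} lower bound $\ge c\,t^{\alpha}$ does not follow by compactness alone, because the effective degree and leading coefficient vary with $X_0$. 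One can construct families of everywhere-positive polynomials $Q_s(t)$ of bounded degree with continuous coefficients, each non-constant, for which $\inf_{s,t\ge 1} Q_s(t)/t^\alpha = 0$ for every $\alpha>0$ (e.g.\ $Q_s(t)=1+s^{-1}+s^{-2}(t-s)^2$ along $s\to\infty$), so extra structure must be used. The paper supplies that structure: it fixes an orthonormal basis in which $\Ad_{U^+}$ is upper-triangular unipotent, writes $\tr(\Ad_{u^+}^*\Ad_{u^+})$ explicitly as a sum of squares of the entries $W_{j,k}$, and then argues by contradiction via an induction along the root heights that all entries $w_{j,k}$ of $[\ad_{\log u^+}]$ would have to be too small, contradicting $\max_{j,k}|w_{j,k}|\asymp\|\log u^+\|$.

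A cleaner fix than either your sketch or the paper's computation is simply to invert your upper-bound argument: since $\Ad_{u^\pm}-I$ is nilpotent of bounded index $N$, the matrix logarithm is also a polynomial,
\[
\ad_{\log u^\pm}=\log(\Ad_{u^\pm})=\sum_{j=1}^{N}\frac{(-1)^{j+1}}{j}(\Ad_{u^\pm}-I)^j,
\]
whence $\|\log u^\pm\|\asymp\|\ad_{\log u^\pm}\|_{\mathrm{op}}\ll(1+\|\Ad_{u^\pm}-I\|_{\mathrm{op}})^N\ll(1+\|\Ad_{u^\pm}\|_{\mathrm{op}})^N$. This gives $\|\Ad_{u^\pm}\|_{\mathrm{op}}\gg\|\log u^\pm\|^{1/N}$ for $\|\log u^\pm\|\ge 1$, which is exactly the missing uniform lower bound.
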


\begin{proof}
Property~(1)(a) of the lemma follows from the fact that $\exp: \LieG \to G$ is smooth and $d\exp_0 = \Id_\LieG$. For property~(1)(b), let $\tilde{G} < G$ be a Lie subgroup and $g = \exp(v)$ for some $v \in \tilde{\LieG}$. Now, $[0, 1] \to \tilde{G}$ defined by $t \mapsto \exp(tv)$ is a curve from $e$ to $g$. Thus, by left $G$-invariance of the Riemannian metric on $G$, we have
\begin{align*}
d_{\tilde{G}}(e, g) &\leq \int_0^1 \Bigl\|\frac{d}{dt}\Bigr|_{t = s}\exp(tv)\Bigr\| \, ds = \int_0^1 \Bigl\|\frac{d}{dt}\Bigr|_{t = s}\exp(-sv)\exp(tv)\Bigr\| \, ds \\
&= \int_0^1 \Bigl\|\frac{d}{dt}\Bigr|_{t = s} \exp((t - s)v)\Bigr\| \, ds = \int_0^1 \|d\exp_0(v)\| \, ds = \int_0^1 \|\Id_\LieG(v)\| \, ds \\
&= \|v\| = \|\log(g)\|.
\end{align*}

For property~(2), let $x = ho$, $w$, $g$, and $t$ be as in the lemma. We first prove property~(2)(a). The second bound follows from triangle inequality and left $G$-invariance of the metric. Denote by $\pi_{G/K}: G \to G/K$ the quotient map which we note is smooth. Observe that $d(\pi_{G/K})_e: \LieG \to \LieP$ is an orthogonal projection with respect to $\langle \cdot, \cdot \rangle = B_\theta$. By left $G$-invariance of the Riemannian metric on both $G$ and $G/K$, we can deduce the operator norm $\|d(\pi_{G/K})_{g'}\|_{\mathrm{op}} = 1$ for all $g' \in G$. Let $m_{g'}^{\mathrm{L}}: G \to G$ denote the left multiplication map by $g' \in G$ and $C_{g'}: G \to G$ denote the conjugation map by $g' \in G$. Using the identity $C_{h^{-1}} \circ m_{g'}^{\mathrm{L}} = m_{h^{-1}g'h}^{\mathrm{L}} \circ C_{h^{-1}}$, left $G$-invariance of the Riemannian metric on $G$, and the upper bound in \cref{lem:NormOfAdjointEstimate}, we have
\begin{align}
\label{eqn:ConjugationDifferential}
\|d(C_{h^{-1}})_{g'}\|_{\mathrm{op}} = \|d(C_{h^{-1}})_e\|_{\mathrm{op}} = \|\Ad_{h^{-1}}\|_{\mathrm{op}} \leq e^{c_{\Phi}d(o, ho)} = e^{c_{\Phi}d(o, x)}
\end{align}
for all $g' \in G$. Thus, using left $G$-invariance of the metric and both operator norm calculations, we have
\begin{align*}
d(x, gx) = d(o, h^{-1}gho) \leq d(e, h^{-1}gh) \leq e^{c_{\Phi}d(o, x)}d(e, g).
\end{align*}

Now, we prove property~(2)(b). Note that the derivative of the curve $t \mapsto \exp(t\Ad_{h^{-1}}(w))o$ at $0$ is $d(\pi_{G/K})_e(\Ad_{h^{-1}}(w)) \in \LieP$. We use the hypothesis that $\Ad_{h^{-1}}(w)$ forms an acute/right angle of $\omega \in \bigl(0, \frac{\pi}{2}\bigr]$ and also the maps $\exp_{\mathrm{Rem}}|_{B_{\epsilon_G}^\LieP(0)}$ and $\exp_{\mathrm{Rem}}|_{B_{\epsilon_G}^\LieG(0)}$ which are diffeomorphisms onto their images. Also, we use the lower bound in \cref{lem:NormOfAdjointEstimate}. We obtain
\begin{align*}
d(x, \exp(tw)x) &= d(o, h^{-1}\exp(tw)ho) = d(o, \exp(t\Ad_{h^{-1}}(w))o) \\
&\gg \|td(\pi_{G/K})_e(\Ad_{h^{-1}}(w))\| \gg_\omega \|t\Ad_{h^{-1}}(w)\| \\
&\gg d(e, \exp(t\Ad_{h^{-1}}(w))) = d(e, h^{-1}\exp(tw)h) \\
&\geq e^{-c_\Phi d(o, ho)}d(e, \exp(tw)) = e^{-c_\Phi d(o, x)}d(e, \exp(tw))
\end{align*}
where the second implicit constant is in $(0, 1]$ and depends continuously on $\omega$.

For property~(3), we focus on $u^+ \in U^+$ since the case $u^- \in U^-$ is similar. When $u^+ \in U^+ \cap \exp\bigl(B_{\epsilon_G}^\LieG(0)\bigr)$, the desired inequality follows from the other proven properties since
\begin{align}
\label{eqn:DistanceIn_G_and_G/K}
\log(1 + \|\log(u^+)\|) \asymp \|\log(u^+)\| \asymp d(e, u^+) \asymp d(o, u^+o).
\end{align}
Note that the last relation above is trivial for $u^+ = e$ and otherwise, by continuity of $C_{\omega(w)}$ in $w \in \LieU^+$ and compactness of the unit sphere centered at $0$ in $\LieU^+$, we can use the constant $\max_{w \in \LieU^+, \|w\| = 1} C_{\omega(w)}$, where $\omega(w)$ is the acute/right angle formed by $w \in \LieU^+ \setminus \{0\} \subset \LieG \setminus \LieK$ with $\LieK$.

Now suppose $u^+ \in U^+ \setminus \exp\bigl(B_{\epsilon_G}^\LieG(0)\bigr)$. Note that the inequality $d(o, u^+o) \ll \log(1 + \|\log(u^+)\|)$ is easier to prove. One way is by using the triangle inequality and left $G$-invariance of the metric on $d(o, u^+o) = d(o, a_{-tv}\tilde{u}^+a_{tv}o)$ with a fixed unit vector $v \in \interior(\LieA^+)$, $\tilde{u}^+ \in U^+ \cap \exp\bigl(B_{\epsilon_G}^\LieG(0)\bigr)$, and $t \asymp \log(\|\log(u^+)\|)$. We give another proof since the tools are needed for the harder reverse inequality. We use $d(o, u^+o) \asymp \log\|\Ad_{u^+}\|_{\mathrm{op}}$ from \cref{eqn:DistanceFormulaByOperatorNorm}. Let us write $n := \dim(G)$. Recall that the operator norm $\|\Ad_{u^+}\|_{\mathrm{op}}$ can be calculated as the square root of the maximum eigenvalue, $\sqrt{\lambda_1} > 0$, of the self-adjoint positive semidefinite operator $\Ad_{u^+}^*\Ad_{u^+}$ which has nonnegative eigenvalues $\lambda_1 \geq \lambda_2 \geq \dotsb \geq \lambda_n \geq 0$ with $\lambda_1 > 0$. Observe that $\lambda_1 = \max_{1 \leq j \leq n}\lambda_j \asymp \sum_{j = 1}^n \lambda_j = \tr(\Ad_{u^+}^*\Ad_{u^+})$ by comparing the max norm with the $L^1$ norm. All in all, it suffices to show that
\begin{align*}
\log\tr(\Ad_{u^+}^*\Ad_{u^+}) \asymp \log\|\log(u^+)\|.
\end{align*}
To this end, we first show that $\tr(\Ad_{u^+}^*\Ad_{u^+}) \ll \|\log(u^+)\|^{2(n - 1)}$ and then show that $\tr(\Ad_{u^+}^*\Ad_{u^+}) \gg \|\log(u^+)\|^{\frac{2}{n - 1}}$. Recall $\Ad_{u^+} = \exp\bigl(\ad_{\log(u^+)}\bigr)$. Also recall $\ad: \LieG \to \LieSL(\LieG)$ and $\Ad: G \to \SL(\LieG)$ since $G$ is semisimple. We can then fix an appropriate \emph{orthonormal} basis $\beta \subset \LieG$ such that elements of $\ad_{\log(\LieU^+)}$ and $\Ad_{\LieU^+}$ are all upper triangular nilpotent and unipotent matrices with respect to $\beta$, respectively. Note that the entries of $[\ad_{\log(u^+)}]_\beta$ are $O(\|\log(u^+)\|)$. Now, we use the fact that $\exp|_{\ad(\LieU^+)}$ is a \emph{polynomial} map of degree at most $n - 1$ since $\ad(\LieU^+)$ is a nilpotent Lie algebra. Then, the entries of $[\Ad_{u^+}]_\beta = [\exp\bigl(\ad_{\log(u^+)}\bigr)]_\beta = \exp[\ad_{\log(u^+)}]_\beta$ are $O(\|\log(u^+)\|^{n - 1})$ and the entries of $[\Ad_{u^+}^*\Ad_{u^+}]_\beta$ are $O(\|\log(u^+)\|^{2(n - 1)})$. This implies
\begin{align*}
\tr(\Ad_{u^+}^*\Ad_{u^+}) \ll \|\log(u^+)\|^{2(n - 1)}
\end{align*}
as desired. For the reverse inequality, we need to compute the trace more explicitly. We write the upper triangular matrices
\begin{align*}
[\ad_{\log(u^+)}]_\beta &= (w_{j, k})_{1 \leq j \leq n, 1 \leq k \leq n} =
\begin{pmatrix}
0 & w_{1, 2} & \dotsb & w_{1, n} \\
& \ddots & \ddots & \vdots \\
& & \ddots & w_{n - 1, n} \\
& & & 0
\end{pmatrix}
, \\
[\Ad_{u^+}]_\beta &= (W_{j, k})_{1 \leq j \leq n, 1 \leq k \leq n} =
\begin{pmatrix}
1 & W_{1, 2} & \dotsb & W_{1, n} \\
& \ddots & \ddots & \vdots \\
& & \ddots & W_{n - 1, n} \\
& & & 1
\end{pmatrix}
\end{align*}
with $w_{j, k} = 0$ for all $1 \leq k \leq j \leq n$, and $W_{j, k} = 0$ for all $1 \leq k < j \leq n$, and $W_{j, j} = 1$ for all $1 \leq j \leq n$. Let $\bigl(e_1, e_2, \dots, e_{n}\bigr)$ be the standard basis for $\R^{n}$. Then the restricted root system for $\LieSL(\LieG)$ is isomorphic to $\{e_j - e_k: 1 \leq j < k \leq n\}$ which is generated by the set of simple roots $\{e_j - e_{j + 1}: 1 \leq j \leq n - 1\}$. By explicitly calculating $[\Ad_{u^+}]_\beta = \exp[\ad_{\log(u^+)}]_\beta$, for all $1 \leq j < k \leq n$, we obtain the entry
\begin{align*}
W_{j, k} = \sum_{d = 1}^{k - j} \frac{1}{d!} \sum_{\substack{\{(j'_p, k'_p)\}_{p = 1}^d:\\ 1 \leq j'_p < k'_p \leq n \, \forall 1 \leq p \leq d \\ \sum_{p = 1}^d (e_{j'_p} - e_{k'_p}) = e_j - e_k}} \prod_{p = 1}^d w_{j'_p, k'_p}
\end{align*}
where the condition $\sum_{p = 1}^d (e_{j'_p} - e_{k'_p}) = e_j - e_k$ is equivalent to conditions $j'_1 = j$ and $k'_d = k$ and $j'_{p + 1} = k'_p$ for all $1 \leq p \leq d - 1$. Consequently, we note that the number of terms in the second sum above is the number of partitions of $k - j$ into $d$ parts. For example, for $n = 4$, we have the entry
\begin{align*}
W_{1, 4} = w_{1, 4} + \frac{1}{2!}(w_{1, 2}w_{2, 4} + w_{1, 3}w_{3, 4}) + \frac{1}{3!}w_{1, 2}w_{2, 3}w_{3, 4}.
\end{align*}
For $1 \leq k \leq n$, the $k$-th diagonal entry, i.e., the $(k, k)$ entry, of $[\Ad_{u^+}^*\Ad_{u^+}]_\beta = [\Ad_{u^+}]_\beta^{\mathrm{T}} [\Ad_{u^+}]_\beta$ is $1 + \sum_{j = 1}^{k - 1} W_{j, k}^2$. Thus, we obtain the trace
\begin{align*}
\tr(\Ad_{u^+}^*\Ad_{u^+}) &= \sum_{k = 1}^{n} \Biggl(1 + \sum_{j = 1}^{k - 1} W_{j, k}^2\Biggr) = n + \sum_{k = 2}^{n} \sum_{j = 1}^{k - 1} W_{j, k}^2 \\
&= n + \sum_{k = 2}^{n}\sum_{j = 1}^{k - 1} \left(\sum_{d = 1}^{k - j} \frac{1}{d!} \sum_{\substack{\{(j'_p, k'_p)\}_{p = 1}^d:\\ 1 \leq j'_p < k'_p \leq n \, \forall 1 \leq p \leq d \\ \sum_{p = 1}^d (e_{j'_p} - e_{k'_p}) = e_j - e_k}} \prod_{p = 1}^d w_{j'_p, k'_p}\right)^2.
\end{align*}
For example, for $n = 3$, we obtain the trace
\begin{align*}
\tr(\Ad_{u^+}^*\Ad_{u^+}) = 3 + w_{1, 2}^2 + w_{2, 3}^2 + \biggl(w_{1, 3} + \frac{1}{2}w_{1, 2}w_{2, 3}\biggr)^2.
\end{align*}
For the sake of contradiction, suppose that for all $C > 0$, there exists a corresponding $u^+ \in U^+ \setminus \exp\bigl(B_{\epsilon_G}^\LieG(0)\bigr)$ such that $\tr(\Ad_{u^+}^*\Ad_{u^+}) \leq C\|\log(u^+)\|^{\frac{2}{n - 1}}$. Let $C > 0$ (to be determined later) and take such a corresponding $u^+ \in U^+ \setminus \exp\bigl(B_{\epsilon_G}^\LieG(0)\bigr)$. Since the formula for the trace involves a sum of squares, we immediately get $|W_{j, k}| \leq C\|\log(u^+)\|^{\frac{1}{n - 1}}$ for all $1 \leq j < k \leq n$. In particular, we have $|w_{j, j + 1}| = |W_{j, j + 1}| \leq C\|\log(u^+)\|^{\frac{1}{n - 1}}$ for all $1 \leq j \leq d - 1$. By induction on the root system, we conclude that $|w_{j, k}| \leq C_{j, k}\|\log(u^+)\|^{\frac{k - j}{n - 1}} \leq C_{j, k}\|\log(u^+)\|$ where $C_{j, k}C^{-1}$ is some absolute constant depending only on $j$ and $k$, for all $1 \leq j < k \leq n$. This contradicts the fact that $\max_{1 \leq j < k \leq n} |w_{j, k}| \asymp \|\log(u^+)\|$ by choosing $C > 0$ sufficiently small so that $\max_{1 \leq j < k \leq n} C_{j, k}$ is smaller than the implicit constant here, concluding the proof.
\end{proof}

We fix $\epsilon_G$ and $C_G$ to be the ones from \cref{lem:LieTheoreticBoundsI} for the rest of the paper.

For the results in the rest of the section, the triangle inequality gives better bounds (see property~(2)(a) in \cref{lem:LieTheoreticBoundsI}) for $t \ll 1$. However, we exclude them since we obtain the right nontrivial bounds for $t \gg 1$ which is the regime of interest.

\begin{lemma}
\label{lem:LieTheoreticBoundsII}
Let $x \in G/K$, $v \in \LieA$ with $\|v\| = 1$, $u^\pm \in U^\pm$ with the unique decomposition
\begin{align*}
\log(u^\pm) = \sum_{\alpha \in \Phi^+} u_\alpha^\pm \in \LieU^\pm = \bigoplus_{\alpha \in \Phi^+} \LieG_{\mp\alpha},
\end{align*}
and $\Phi^+_{u^\pm} = \{\alpha \in \Phi^+: u_\alpha^\pm \neq 0\}$. If $\eta := \min_{\alpha \in \Phi^+_{u^\pm}}\alpha(v) > 0$, then
\begin{align*}
d(x, a_{\pm tv}u^\pm a_{\mp tv}x) \leq e^{c_{\Phi}d(o, x)} e^{O(d(o, u^\pm o))} e^{-\eta t} \qquad \text{for all $t \geq 0$}.
\end{align*}
\end{lemma}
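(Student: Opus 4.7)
The plan is to apply the toolkit of Lemma~\ref{lem:LieTheoreticBoundsI} in a single short chain, after computing $\log(a_{\pm tv}u^\pm a_{\mp tv})$ explicitly via the restricted root space decomposition. Set $g := a_{\pm tv}u^\pm a_{\mp tv}$; since $A$ normalizes $U^\pm$, we have $g \in U^\pm$, so $\log(g)$ is defined via the exp-log diffeomorphism on $U^\pm$.

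First I would reduce the inequality to a bound on $d(e,g)$ by invoking property (2)(a) of Lemma~\ref{lem:LieTheoreticBoundsI}, which gives
\begin{align*}
d(x, gx) \le e^{c_\Phi d(o,x)} d(e, g).
\end{align*}
This isolates the factor $e^{c_\Phi d(o,x)}$ in the desired bound, so the task becomes showing $d(e,g) \le e^{O(d(o,u^\pm o))} e^{-\eta t}$.

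Next I would bound $d(e,g)$ by $\|\log(g)\|$ using property (1)(b) of Lemma~\ref{lem:LieTheoreticBoundsI} with $\tilde G = G$ (valid since $g\in U^\pm \subset \exp(\LieG)$ with $\log$ canonically defined); crucially this does not require $g$ to be small. Then I would compute $\log(g) = \Ad_{a_{\pm tv}}(\log(u^\pm))$ using the identity $C_{a_{\pm tv}}\circ\exp = \exp\circ\Ad_{a_{\pm tv}}$, and apply the eigenvalue calculation \eqref{eqn:logOfConjugation}. Focusing on $u^+$ (the $u^-$ case is symmetric), since $u_\alpha^+ \in \LieG_{-\alpha}$, we get
\begin{align*}
\Ad_{a_{tv}}(\log(u^+)) = \sum_{\alpha \in \Phi^+_{u^+}} e^{-\alpha(v)t} u_\alpha^+.
\end{align*}
By orthogonality of the restricted root space decomposition (recalled in \cref{sec:Preliminaries}) and the hypothesis $\alpha(v) \ge \eta$ for all $\alpha \in \Phi^+_{u^+}$,
\begin{align*}
\|\log(g)\|^2 = \sum_{\alpha \in \Phi^+_{u^+}} e^{-2\alpha(v)t} \|u_\alpha^+\|^2 \le e^{-2\eta t}\|\log(u^+)\|^2,
\end{align*}
so $\|\log(g)\| \le e^{-\eta t}\|\log(u^\pm)\|$ for $t\ge 0$.

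Finally I would convert $\|\log(u^\pm)\|$ into a $d(o,u^\pm o)$-term via property (3) of Lemma~\ref{lem:LieTheoreticBoundsI}, which gives $\log(1+\|\log(u^\pm)\|) \asymp d(o,u^\pm o)$ and hence $\|\log(u^\pm)\| \le e^{O(d(o,u^\pm o))}$. Concatenating these three steps produces the claimed inequality. There is no real obstacle: the only subtlety worth flagging is using property (1)(b) rather than (1)(a), so that no smallness of $g$ is needed and the estimate is uniform for all $t\ge 0$.
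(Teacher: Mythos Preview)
Your proposal is correct and follows essentially the same route as the paper: reduce via property~(2)(a) to $d(e,g)$, bound that by $\|\log(g)\|$ via property~(1)(b), compute $\log(g)=\Ad_{a_{\pm tv}}(\log(u^\pm))$ using \eqref{eqn:logOfConjugation}, exploit the root-space structure to extract $e^{-\eta t}$, and finish with property~(3). Your use of orthogonality to bound $\|\log(g)\|^2$ directly is in fact slightly cleaner than the paper's intermediate triangle-inequality step.
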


\begin{proof}
Let $x$, $v$, $u^\pm$, and $\Phi^+_{u^\pm}$ be as in the lemma. We focus on the case $u^+ \in U^+$ since the case $u^- \in U^-$ is similar. Suppose $\eta := \frac{1}{2}\min_{\alpha \in \Phi^+_{u^+}} \alpha(v) > 0$. For all $t \in \R$, we calculate similar to \cref{eqn:logOfConjugation} that
\begin{align*}
\log\bigl(a_{tv}u^+a_{-tv}\bigr) = \Ad_{a_{tv}}\left(\sum_{\alpha \in \Phi^+_{u^+}} u_\alpha^+\right) = \sum_{\alpha \in \Phi^+_{u^+}}e^{-\alpha(v)t}u_\alpha^+.
\end{align*}
Thus, using properties~(2)(a) and (1)(b) in \cref{lem:LieTheoreticBoundsI} with the same notation, for all $t \geq 0$, we have
\begin{align*}
d(x, a_{tv}u^+a_{-tv}x) &\leq e^{c_{\Phi}d(o, x)}\left\|\sum_{\alpha \in \Phi^+_{u^+}} e^{-\alpha(v)t}u_\alpha^+ \right\| \\
&\leq e^{c_{\Phi}d(o, x)}e^{-\eta t} \sum_{\alpha \in \Phi^+_{u^+}} e^{(\eta - \alpha(v))t} \|u_\alpha^+\|.
\end{align*}
Thus, using orthogonality of the restricted root space decomposition, and then property~(3) in \cref{lem:LieTheoreticBoundsI}, we get
\begin{align*}
d(x, a_{tv}u^+a_{-tv}x) &\leq e^{c_{\Phi}d(o, x)} \|\log(u^+)\| e^{-\eta t} \\
&= e^{c_{\Phi}d(o, x)} e^{\log\|\log(u^+)\|} e^{-\eta t} \\
&\leq e^{c_{\Phi}d(o, x)} e^{O(d(o, u^+o))} e^{-\eta t}.
\end{align*}
\end{proof}

Let $v \in \partial\LieA^+ \setminus \{0\}$ which exists if and only if $\rankG \geq 2$. We introduce the following notations for the rest of this section. Define
\begin{align*}
\Phi(v) &= \{\alpha \in \Phi: \alpha(v) = 0\} \subset \Phi, & \Pi(v) &= \Pi \cap \Phi(v)^+ \subset \Phi(v)^+.
\end{align*}
Using the subspace $V(v) = \{\alpha \in \LieA^*: \alpha(v) = 0\} \subset \LieA^*$, it is clear that $\Phi(v) = \Phi \cap V(v) \subset \Phi$ is a proper root subsystem, where properness is due to the fact that there exists a simple root $\alpha_0 \in \Phi^+$ such that $\alpha_0(v) \neq 0$. By \cite[Chapter 6, \S 1.7, Corollary 3 and Proposition 24]{Bou02}, $\Pi(v) \subset \Phi(v)^+$ is the set of simple roots. We also use the common notation $\{h_\alpha\}_{\alpha \in \Pi} \subset \LieA$ for the set of dual roots of $\Pi$ such that $\alpha = \frac{2B(h_\alpha, \cdot)}{B(h_\alpha, h_\alpha)}$ for all $\alpha \in \Pi$.

\begin{proposition}
\label{pro:CommutingSemisimpleSubgroup}
Suppose $\rankG \geq 2$. Let $v \in \partial\LieA^+ \setminus \{0\}$. Then, the following holds.
\begin{enumerate}
\item Corresponding to the proper root subsystem $\Phi(v) \subset \Phi$, there exist a semisimple proper Lie subalgebra $\LieG(v) \subset \LieG$ and a corresponding connected semisimple proper Lie subgroup $G(v) \subset G$ such that $\bigoplus_{\alpha \in \Phi(v)} \LieG_\alpha \subset \LieG(v)$ and
\begin{align*}
a_{\dot{v}}g = ga_{\dot{v}} \qquad \text{for all $\dot{v} \in \bigcap_{\alpha \in \Pi(v)} \ker(\alpha)$ and $g \in G(v)$}.
\end{align*}
\item We have $\bigoplus_{\alpha \in \Pi(v)} \R h_\alpha \subset \LieG(v)$ and the decomposition
\begin{align*}
\LieA = \bigcap_{\alpha \in \Pi(v)} \ker(\alpha) \oplus \bigoplus_{\alpha \in \Pi(v)} \R h_\alpha.
\end{align*}
\item The subspaces $\bigcap_{\alpha \in \Pi(v)} \ker(\alpha) \subset \LieA \subset \LieG$ and $\LieG(v) \subset \LieG$ are orthogonal.
\end{enumerate}
\end{proposition}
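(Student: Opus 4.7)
The plan is to recognize $\LieG(v)$ as the semisimple derived subalgebra of the Levi component associated to the face of the Weyl chamber containing $v$; this is the standard construction attached to the closed root subsystem $\Phi(v) \subset \Phi$. Set $\mathfrak{z} := \bigcap_{\alpha \in \Pi(v)} \ker(\alpha) \subset \LieA$. Since $\Pi$ is a basis of $\LieA^*$ for the semisimple $G$, and since $v \in \partial\LieA^+ \setminus \{0\}$ forces $\emptyset \neq \Pi(v) \subsetneq \Pi$ (the first inclusion because $v$ lies on a wall, the second because $v \neq 0$), the subspace $\mathfrak{z}$ is a proper nontrivial subspace of $\LieA$ containing $v$. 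Define $\LieL(v) := Z_\LieG(\mathfrak{z})$. A root space $\LieG_\alpha$ is contained in $\LieL(v)$ if and only if $\alpha|_\mathfrak{z} = 0$, which is equivalent to $\alpha$ lying in the $\R$-span of $\Pi(v)$, i.e. $\alpha \in \Phi(v)$. Combined with $\LieA, \LieM \subset Z_\LieG(\mathfrak{z})$, the restricted root space decomposition then gives
\[
\LieL(v) = \LieA \oplus \LieM \oplus \bigoplus_{\alpha \in \Phi(v)} \LieG_\alpha.
\]

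Next, define $\LieG(v) := [\LieL(v), \LieL(v)]$. Since $\LieL(v)$ is reductive (it is the centralizer of a toral subalgebra), its derived subalgebra $\LieG(v)$ is semisimple; and since $\Phi(v) \subsetneq \Phi$, one has $\LieL(v) \subsetneq \LieG$, hence $\LieG(v) \subsetneq \LieG$. The inclusion $\bigoplus_{\alpha \in \Phi(v)} \LieG_\alpha \subset \LieG(v)$ follows from $\LieG_\alpha = \frac{1}{\alpha(h_\alpha)}[h_\alpha, \LieG_\alpha]$ with $h_\alpha \in \LieA \subset \LieL(v)$, while $\bigoplus_{\alpha \in \Pi(v)} \R h_\alpha \subset \LieG(v)$ follows from the fact that the $\LieA$-component of $[\LieG_\alpha, \LieG_{-\alpha}]$ equals $\R h_\alpha$. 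Take $G(v)$ to be the connected Lie subgroup of $G$ with Lie algebra $\LieG(v)$. Because $G$ is real algebraic and $\mathfrak{z}$ is algebraic, $Z_G(\mathfrak{z})$ and its derived subgroup are algebraic, so $G(v)$ is algebraic and in particular closed. The commutation $a_{\dot v} g = g a_{\dot v}$ for $\dot v \in \mathfrak{z}$ and $g \in G(v)$ then follows by exponentiating the inclusion $\LieG(v) \subset Z_\LieG(\mathfrak{z})$, which establishes (1).

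For (2), the inclusion $\bigoplus_{\alpha \in \Pi(v)} \R h_\alpha \subset \LieG(v)$ was shown above, and the decomposition $\LieA = \mathfrak{z} \oplus \bigoplus_{\alpha \in \Pi(v)} \R h_\alpha$ follows from the identity $\alpha(w) = \frac{2}{B(h_\alpha, h_\alpha)} B(h_\alpha, w)$, which exhibits $\mathfrak{z}$ as the $B$-orthogonal complement of $\mathrm{span}\{h_\alpha\}_{\alpha \in \Pi(v)}$ in $\LieA$, combined with the linear independence of $\{h_\alpha\}_{\alpha \in \Pi}$ inherited from that of $\Pi$. For (3), the restricted root space decomposition is $B_\theta$-orthogonal, so $\mathfrak{z} \subset \LieA$ is orthogonal to every $\LieG_\alpha$ with $\alpha \neq 0$, orthogonal to $\LieM \subset \LieK$ because $\LieK \perp \LieP \supset \LieA$, and orthogonal to $\bigoplus_{\alpha \in \Pi(v)} \R h_\alpha$ by the description just given in (2); since $\LieG(v)$ sits inside the sum of these three orthogonal complements, the subspaces $\mathfrak{z}$ and $\LieG(v)$ are orthogonal. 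The main obstacle I anticipate is the closedness of $G(v)$ (rather than merely being an immersed Lie subgroup), and the mild subtlety in the restricted root setting that $[\LieG_\alpha, \LieG_{-\alpha}]$ may carry a nontrivial $\LieM$-component along with the $\R h_\alpha$-component; both points are handled by appealing to the real algebraic structure of $G$ and the fact that we only need containments of root spaces and dual coroots, not a full description of $\LieG(v)$.
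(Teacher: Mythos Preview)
Your approach is correct and takes a genuinely different route from the paper. The paper constructs $\LieG(v)$ by passing to the complexification $\LieG^{\C}$, building the complex semisimple subalgebra $\LieG^{\C}(v)$ from the Chevalley basis attached to $\Phi^{\C}(v)$, checking it is stable under complex conjugation, and then taking real points. You instead realize $\LieG(v)$ as the derived subalgebra of the Levi factor $\LieL(v) = Z_{\LieG}(\mathfrak{z})$, which is more conceptual and avoids the complexification machinery entirely. Your argument for part~(3) is also cleaner: you observe that the $\LieA$-projection of $\LieG(v) = [\LieL(v), \LieL(v)]$ lands in $\Span\{h_\alpha : \alpha \in \Pi(v)\}$, which together with the orthogonality of the restricted root space decomposition and of $\LieK \perp \LieP$ gives $\mathfrak{z} \perp \LieG(v)$ immediately; the paper instead verifies $B_\theta(\dot v, \cdot) = 0$ basis element by basis element in $\LieG^{\C}(v)$.

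There is one small gap in your argument for $\bigoplus_{\alpha \in \Pi(v)} \R h_\alpha \subset \LieG(v)$. You write that ``the $\LieA$-component of $[\LieG_\alpha, \LieG_{-\alpha}]$ equals $\R h_\alpha$,'' but knowing the $\LieA$-\emph{projection} of this bracket space is $\R h_\alpha$ does not, by itself, place $h_\alpha$ inside $\LieG(v)$, precisely because of the possible $\LieM$-component you flag at the end. The fix is the one the paper uses: choose $e_\alpha \in \LieG_\alpha \setminus \{0\}$ and set $e_{-\alpha} = \theta(e_\alpha) \in \LieG_{-\alpha}$; then $\theta([e_\alpha, e_{-\alpha}]) = [e_{-\alpha}, e_\alpha] = -[e_\alpha, e_{-\alpha}]$, so $[e_\alpha, e_{-\alpha}] \in \LieP \cap (\LieA \oplus \LieM) = \LieA$, and the standard Killing form computation shows it is a nonzero multiple of $h_\alpha$. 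This gives $h_\alpha \in [\LieG_\alpha, \LieG_{-\alpha}] \subset \LieG(v)$ honestly, and your proof then goes through.
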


\begin{proof}
Suppose $\rankG \geq 2$. Let $v \in \partial\LieA^+ \setminus \{0\}$. We first prove property~(1). We will construct a semisimple proper Lie subalgebra $\LieG(v) \subset \LieG$ and a connected semisimple proper Lie subgroup $G(v) \subset G$ in the following fashion. Let $\Phi^\C$ be the root system of the complexification $\LieG^\C = \C \otimes_\R \LieG$ with respect to a Cartan subalgebra $\LieG_0^\C$ containing $\LieA$, whose ordering is compatible with that of $\Phi$. Then, we have the root space decomposition $\LieG^\C = \LieG_0^\C \oplus \bigoplus_{\beta \in \Phi^\C} \LieG_\beta^\C$ where it is well-known that the root spaces are of one complex dimension. Now, let
\begin{align*}
\Phi^\C(v) = \bigl\{\beta \in \Phi^\C: \beta|_\LieA \in \Phi(v)\bigr\} = \bigl\{\beta \in \Phi^\C: \beta(v) = 0\bigr\} \subset \Phi^\C
\end{align*}
be a proper root subsystem. Let $\Pi^\C(v) \subset \Phi^\C(v)^+$ be the set of simple roots. Using the Chevalley basis
\begin{align}
\label{eqn:ChevalleyBasis}
\bigl\{h_\beta^\C \in \LieG_0^\C: \beta \in \Pi^\C(v)\bigr\} \cup \bigl\{e_{\pm\beta}^\C \in \LieG_{\pm\beta}^\C: \beta \in \Phi^\C(v)^+\bigr\}
\end{align}
corresponding to $\Phi^\C(v)$, we obtain a complex semisimple proper Lie subalgebra $\LieG^\C(v) \subset \LieG^\C$. As before, it has the root space decomposition
\begin{align*}
\LieG^\C(v) = \LieG^\C(v)_0 \oplus \bigoplus_{\beta \in \Phi^\C(v)} \LieG^\C(v)_\beta
\end{align*}
where $\LieG^\C(v)_0 = \bigoplus_{\beta \in \Pi^\C(v)} \C h_\beta^\C$ is the Cartan subalgebra and $\LieG^\C(v)_{\pm\beta} = \LieG_{\pm\beta}^\C = \C e_{\pm\beta}^\C$ for all $\beta \in \Phi^\C(v)^+$. Denote by $\sigma: \LieG^\C \to \LieG^\C$ the real linear involutive automorphism given by complex conjugation, i.e., $\sigma(x + iy) = x - iy$ for all $x, y \in \LieG$. We claim that
\begin{align*}
\sigma\bigl(\LieG^\C(v)\bigr) = \LieG^\C(v)
\end{align*}
so that $\sigma$ restricts to a real linear involutive automorphism on $\LieG^\C(v)$. Firstly, for all $\beta \in \Phi^\C$, it is easy to check that $\sigma\bigl(\LieG_\beta^\C\bigr) = \LieG_{\beta^\sigma}^\C$ where we define $\beta^\sigma = \overline{\beta \circ \sigma}$ which we note satisfies $(\beta^\sigma)^\sigma = \beta$. Secondly, for all $\beta \in \Phi^\C(v)$, we have $\beta^\sigma|_\LieA = \beta|_\LieA \in \Phi(v)$ from definitions which implies $\beta^\sigma \in \Phi^\C(v)$ (cf. \cite[Chapter VI, \S 3, Theorem 3.4]{Hel01}). So indeed, $\Phi^\C(v)$ and hence also $\LieG^\C(v)$ are closed under $\sigma$. Thus, we can take the real part of $\LieG^\C(v)$, i.e., the fixed points of $\sigma|_{\LieG^\C(v)}$, and obtain the \emph{real} semisimple Lie algebra $\LieG(v) = \LieG \cap \LieG^\C(v)$ which is a proper real Lie subalgebra of both $\LieG$ and $\LieG^\C(v)$. Note that $\LieG(v)$ is indeed semisimple since the Killing form $B|_{\LieG(v) \times \LieG(v)}$ is nondegenerate, being the restriction of a nondegenerate complex-valued Killing form on $\LieG^\C(v)$. In fact, $\LieG(v)$ which we constructed has a (not necessarily connected) Satake diagram corresponding to the data $(\Phi^\C(v), \sigma)$ obtained by deleting the Galois orbits of some white nodes of the Satake diagram corresponding to the data $(\Phi^\C, \sigma)$. We finally take $G(v) \subset G$ to be the connected semisimple proper Lie subgroup corresponding to the semisimple proper Lie subalgebra $\LieG(v) \subset \LieG$.

Denote
\begin{align*}
\Phi^\C(v)_\alpha &= \bigl\{\beta \in \Phi^\C(v): \beta|_\LieA = \alpha\bigr\} \subset \Phi^\C(v) \qquad \text{for all $\alpha \in \Phi(v)$}, \\
\Phi^\C(v)_\LieA &= \bigl\{\beta \in \Phi^\C(v): \beta|_\LieA \neq 0\bigr\} \subset \Phi^\C(v).
\end{align*}
By similar arguments as above, $\Phi^\C(v)_\alpha$ is closed under $\sigma$ for all $\alpha \in \Phi(v)$, and hence so is $\Phi^\C(v)_\LieA$. Consequently,
\begin{align*}
\LieG_\alpha = \LieG \cap \bigoplus_{\beta \in \Phi^\C(v)_\alpha} \LieG_\beta^\C.
\end{align*}
Therefore, we have (cf. \cite[Chapter VI, \S 3, Theorem 3.4]{Hel01})
\begin{align*}
\bigoplus_{\alpha \in \Phi(v)} \LieG_\alpha = \LieG \cap \bigoplus_{\beta \in \Phi^\C(v)_\LieA} \LieG_\beta^\C \subset \LieG(v).
\end{align*}

Now, let $\dot{v} \in \bigcap_{\alpha \in \Pi(v)} \ker(\alpha) \subset \LieA$. We have $[\dot{v}, x] = \beta(v)x = 0$ for all $x \in \LieG_\beta^\C$ and $\beta \in \Phi^\C(v)$. Using the Chevalley basis from \cref{eqn:ChevalleyBasis} and the Jacobi identity, it extends to $[\dot{v}, x] = 0$ for all $x \in \LieG^\C(v)$. In particular,
\begin{align*}
[\dot{v}, x] = 0 \qquad \text{for all $x \in \LieG(v)$}.
\end{align*}
Calculating as in \cref{eqn:logOfConjugation} on any neighborhood of the identity element $e \in G(v)$ such as $\exp\bigl(B_{\epsilon_G}^{\LieG(v)}(0)\bigr)$ and recalling that it generates $G(v)$ by connectedness, we conclude that
\begin{align*}
a_{\dot{v}}g = ga_{\dot{v}} \qquad \text{for all $\dot{v} \in \bigcap_{\alpha \in \Pi(v)} \ker(\alpha)$ and $g \in G(v)$}.
\end{align*}

Next, we prove property~(2). The decomposition is a simple exercise in linear algebra and for the containment, it suffices to show that $h_\alpha \in \LieG(v)$ for all $\alpha \in \Phi(v)$. Let $\alpha \in \Phi(v)$. We take any nonzero $e_\alpha \in \LieG_\alpha \subset \LieG(v)$ and $e_{-\alpha} = \theta(e_\alpha) \in \LieG_{-\alpha} \subset \LieG(v)$ and use the element $[e_\alpha, e_{-\alpha}] \in \LieG(v)$. We compute using the Jacobi identity that $[e_\alpha, e_{-\alpha}] \in \LieG_0 = \LieA \oplus \LieM$ and by applying $\theta$ we see that $[e_\alpha, e_{-\alpha}] \in \LieP$. Hence $[e_\alpha, e_{-\alpha}] \in (\LieA \oplus \LieM) \cap \LieP = \LieA$ Finally, for all $w \in \LieA$, we compute that
\begin{align*}
B(w, [e_\alpha, e_{-\alpha}]) &= B([w, e_\alpha], e_{-\alpha}) = \alpha(w) B(e_\alpha, e_{-\alpha}) \\
&= -\alpha(w)B_\theta(e_\alpha, e_\alpha) = -\|e_\alpha\|^2\alpha(w)
\end{align*}
and since $e_\alpha \neq 0$, this shows that $[e_\alpha, e_{-\alpha}] \in \R h_\alpha \setminus \{0\}$.

Finally, we prove property~(3). Let $\dot{v} \in \bigcap_{\alpha \in \Pi(v)} \ker(\alpha) \subset \LieA$. We will show that $\langle \dot{v}, x\rangle = B_\theta(\dot{v}, x) = 0$ for all $x \in \LieG(v)$. Extend the Cartan involution to $\theta: \LieG^\C \to \LieG^\C$ and the associated negative definite bilinear form to $B_\theta: \LieG^\C \times \LieG^\C \to \C$ by complex linearity. Since $\LieG(v) \subset \LieG^\C(v)$, it suffices to show by linearity that $B_\theta(\dot{v}, x) = 0$ for all $x \in \LieG^\C(v)_\beta$ and $\beta \in \Phi^\C(v) \cup \{0\}$. To this end, first let $\beta \in \Phi^\C(v)$. As $\beta \neq 0$, there exists $w_\beta \in \LieG^\C(v)_0$ such that $\beta(w_\beta) \neq 0$. Since $\dot{v}, w_\beta \in \LieG_0^\C$, for all $x \in \LieG_\beta^\C$, we have
\begin{align*}
\beta(w_\beta)B_\theta(\dot{v}, x) &= \beta(w_\beta)B(\dot{v}, x) = B(\dot{v}, [w_\beta, x]) = B([\dot{v}, w_\beta], x) = 0
\end{align*}
which implies $B_\theta(\dot{v}, x) = 0$. Next, recalling that $\LieG^\C(v)_0 = \bigoplus_{\beta \in \Pi^\C(v)} \C h_\beta^\C$, we have
\begin{align*}
B_\theta\bigl(\dot{v}, h_\beta^\C\bigr) &= B\bigl(\dot{v}, \bigl[e_\beta^\C, e_{-\beta}^\C\bigr]\bigr) = B\bigl(\bigl[\dot{v}, e_\beta^\C\bigr], e_{-\beta}^\C\bigr) = \beta(\dot{v})B\bigl(e_\beta^\C, e_{-\beta}^\C\bigr) = 0
\end{align*}
for all $\beta \in \Pi^\C(v)$.
\end{proof}

For all $v \in \partial\LieA^+ \setminus \{0\}$, we continue to denote by $\LieG(v)$ and $G(v)$ to be the ones provided by \cref{pro:CommutingSemisimpleSubgroup} for the rest of the section.

Suppose $\rankG \geq 2$. For all $\alpha \in \Pi$, it determines a closed half-space $H^\LieA_\alpha \subset \LieA$ on which $\alpha$ is nonnegative. The closed positive Weyl chamber can then be written as
\begin{align*}
\LieA^+ = \bigcap_{\alpha \in \Pi} H^\LieA_\alpha.
\end{align*}
Its walls are
\begin{align*}
W^\LieA_\alpha = \ker(\alpha) \cap \bigcap_{\alpha' \in \Pi \setminus \{\alpha\}} H^\LieA_{\alpha'} \qquad \text{for all $\alpha \in \Pi$}.
\end{align*}
There exists $\epsilon_\Pi > 0$ such that the spherical simplex $\LieA^+ \cap \partial B_1^\LieA(0)$ is not covered by $\{B_{\epsilon_\Pi}^\LieA(W^\LieA_\alpha)\}_{\alpha \in \Pi}$ which we fix for the rest of this section. In fact, the optimal $\epsilon_\Pi > 0$ can be found using the incenter of the spherical simplex $\LieA^+ \cap \partial B_1^\LieA(0)$. In any case, we can guarantee that $\bigcap_{\alpha \in \Pi} B_{\epsilon_\Pi}^\LieA(W^\LieA_{\alpha}) = \varnothing$. We now prove the following proposition. It is crucial that we treat the region $\bigcup_{\alpha \in \Pi} B_{\epsilon_\Pi}^\LieA(W^\LieA_\alpha)$ uniformly rather than the walls $\partial\LieA^+ = \bigcup_{\alpha \in \Pi} W^\LieA_\alpha$ separately in order to ensure that the obtained constant $C_{w, u^+, u^-}$ is truly independent of $v \in \bigcup_{\alpha \in \Pi} B_{\epsilon_\Pi}^\LieA(W^\LieA_\alpha)$. The obstruction in the later approach is that the exponential decay obtained for $v \in \bigcup_{\alpha \in \Pi} B_{\epsilon_\Pi}^\LieA(W^\LieA_\alpha) \setminus \bigcup_{\alpha \in \Pi} W^\LieA_\alpha$ has an exponential rate \emph{which goes to $0$} as $v$ approaches the walls $\partial\LieA^+ = \bigcup_{\alpha \in \Pi} W^\LieA_\alpha$.

\begin{proposition}
\label{pro:LieTheoreticBoundsIII}
Suppose $\rankG \geq 2$. Let $v \in \bigcup_{\alpha \in \Pi} B_{\epsilon_\Pi}^\LieA(W^\LieA_\alpha)$ with $\|v\| = 1$, $w \in \LieA$, $u^+ \in U^+$, and $u^- \in U^-$. Then, we have
\begin{align*}
|d(o, u^-a_{tv}a_wu^+o) - (t + \langle w, v\rangle)| \ll e^{O(\|w\| + d(o, u^+ o) + d(o, u^- o))} t^{-1}
\end{align*}
for all $t > 0$.
\end{proposition}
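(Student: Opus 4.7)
The plan is to reduce the estimate to the geometrically transparent case $u^\pm = e$ via the root-space decomposition of $U^\pm$, then exploit a Pythagorean identity in the symmetric space arising from the commuting subgroup $G(\dot{v})$ for $\dot{v}$ on the nearby wall. First, by Taylor expansion using $\|v\| = 1$, one has $\|tv + w\| = t + \langle w, v\rangle + O(\|w\|^2/t)$, and since $\|w\|^2 \leq e^{O(\|w\|)}$, it suffices to prove $|d(o, u^- a_V u^+ o) - \|V\|| \ll e^{O(\cdot)}/t$ with $V := tv + w$. For $t$ bounded the triangle inequality is already sufficient, so I assume $t$ large throughout.

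Next, choose $\alpha_0 \in \Pi$ with $v \in B_{\epsilon_\Pi}^\LieA(W_{\alpha_0}^\LieA)$ and let $\dot{v}$ be a nearby boundary point of $\LieA^+$. Partition $\Phi^+ = \Phi^+_s \sqcup \Phi^+_b$ with $\Phi^+_s := \Phi(\dot{v})^+$ (roots vanishing on $\dot{v}$) and $\Phi^+_b$ its complement in $\Phi^+$. Since $\Phi^+_b$ is closed under addition within $\Phi^+$, $\bigoplus_{\beta \in \Phi^+_b} \LieG_{-\beta}$ is a Lie ideal in $\LieU^+$, so $U^+_b := \exp(\bigoplus_{\beta \in \Phi^+_b} \LieG_{-\beta})$ is normal in $U^+$ with complementary subgroup $U^+_s := U^+ \cap G(\dot{v})$. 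Decompose $u^+ = u^+_s u^+_b$ and symmetrically $u^- = u^-_b u^-_s$, with the factor sizes controlled: $d(o, u^\pm_{s,b} o) \ll d(o, u^\pm o)$. The choice of $\epsilon_\Pi$ guarantees $\min_{\beta \in \Phi^+_b} \beta(v) \geq \eta' > 0$ uniformly, so by \cref{lem:LieTheoreticBoundsII} the conjugates $a_V u^+_b a_{-V}$ and $a_{-V} u^-_b a_V$ lie within $e^{O(\cdot)} e^{-\eta' t}$ of $e$; peeling the big factors off via triangle inequalities, it suffices to estimate $d(o, u^-_s a_V u^+_s o)$.

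Now decompose $V = V_\parallel + V_\perp$ orthogonally in $\LieA$, with $V_\parallel \in \bigcap_{\alpha \in \Pi(\dot{v})} \ker(\alpha)$ and $V_\perp \in \LieA_{G(\dot{v})}$ (which embeds into $\LieG(\dot{v})$ by \cref{pro:CommutingSemisimpleSubgroup}(2)). By property~(1) of the same proposition, $a_{V_\parallel}$ commutes with $G(\dot{v})$, giving $u^-_s a_V u^+_s = a_{V_\parallel} M'$ where $M' := u^-_s a_{V_\perp} u^+_s \in G(\dot{v})$. Cartan-decompose $M' = k_1 a_{\mu_{M'}} k_2$ in $G(\dot{v})$ with $k_1, k_2 \in K(\dot{v}) \subset Z_K(V_\parallel)$; then $a_{V_\parallel} M' = k_1 a_{V_\parallel + \mu_{M'}} k_2$, and for $t$ sufficiently large this is the Cartan decomposition in $G$ (the $V_\parallel$-components in roots $\alpha \notin \Pi(\dot{v})$ grow linearly in $t$ while $\mu_{M'}$ stays comparatively small). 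By the orthogonality \cref{pro:CommutingSemisimpleSubgroup}(3), we obtain the Pythagorean identity
\begin{align*}
d(o, u^-_s a_V u^+_s o)^2 = \|V_\parallel + \mu_{M'}\|^2 = \|V_\parallel\|^2 + \|\mu_{M'}\|^2.
\end{align*}

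It remains to bound $\|\mu_{M'}\|^2 - \|V_\perp\|^2$. In the generic case $\Pi(\dot{v}) = \{\alpha_0\}$, $G(\dot{v})$ has rank $1$, and a direct analysis of the sandwich $M' = u^-_s a_{V_\perp} u^+_s$ (exploiting that $U^+_s$ contracts and $U^-_s$ expands under $a_{V_\perp}$ via \cref{lem:LieTheoreticBoundsII}, followed by an explicit singular-value computation) yields $\|\mu_{M'}\| - \|V_\perp\| = O(e^{O(d(o, u^\pm o))} e^{-\alpha_0(V_\perp)})$; combined with the elementary fact that $\|V_\perp\| e^{-\alpha_0(V_\perp)} = O(1)$, this gives $\|\mu_{M'}\|^2 - \|V_\perp\|^2 = O(e^{O(\cdot)})$. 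Using $\|V\|^2 = \|V_\parallel\|^2 + \|V_\perp\|^2$, we conclude $\|V_\parallel + \mu_{M'}\|^2 - \|V\|^2 = O(e^{O(\cdot)})$, hence $\|V_\parallel + \mu_{M'}\| - \|V\| = O(e^{O(\cdot)}/\|V\|) = O(e^{O(\cdot)}/t)$ as required. The main obstacle is the corner case $|\Pi(\dot{v})| \geq 2$, when $G(\dot{v})$ has rank $\geq 2$ and $V_\perp$ may itself be near walls of $\LieA^+_{G(\dot{v})}$; this is handled by induction on $\rank G$, recursively applying \cref{pro:LieTheoreticBoundsIII} within $G(\dot{v})$, with the base case $\rank G = 2$ directly covered by the rank-$1$ analysis above.
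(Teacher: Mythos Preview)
Your argument mirrors the paper's step for step through the Pythagorean identity: project $v$ toward the nearby wall(s), split $u^\pm$ into a $G(\dot v)$-piece (your $u^\pm_s$, the paper's $\dot u^\pm$) and a big-root remainder, peel off the remainder via \cref{lem:LieTheoreticBoundsII} using the uniform lower bound on $\beta(v)$ for $\beta \in \Phi^+_b$, commute the central direction $a_{V_\parallel}$ past $G(\dot v)$ using \cref{pro:CommutingSemisimpleSubgroup}, and Cartan-decompose inside $G(\dot v)$ to obtain $d^2 = \|V_\parallel\|^2 + \|\mu_{M'}\|^2$. The one substantive difference is how you bound $\bigl|\|\mu_{M'}\|^2 - \|V_\perp\|^2\bigr|$. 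The paper does \emph{not} induct: it peels $u^\pm_s$ off once more inside $G(\dot v)$, obtaining $\|\mu_{M'}\| = \|V_\perp\| + O_{w,u^\pm}(e^{-c\|\hat v\|t})$, and then invokes the key observation that the map $v' \mapsto \|v' + \hat w\|e^{-c\|v'\|}$ is uniformly bounded on $\LieA$ (applied with $v' = t\hat v$), so the cross term $\|V_\perp\| \cdot e^{-c\|t\hat v\|}$ is $O_w(1)$ regardless of how small $\|\hat v\|$ is. This single-shot argument replaces your recursion; your inductive route is a legitimate alternative (the cancellation $t' \cdot (t')^{-1} = O(1)$ you implicitly rely on is the same phenomenon in disguise), and arguably more robust when $\hat v/\|\hat v\|$ is itself very close to a wall of $\LieA^+_{G(\dot v)}$.

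Two imprecisions in your sketch are worth flagging. First, the recursive application of the proposition inside $G(\dot v)$ carries the hypothesis that the direction $\hat v/\|\hat v\|$ lies near a wall of $\LieA^+_{G(\dot v)}$; when it does not, you must fall back on the direct exponential-decay argument via \cref{lem:LieTheoreticBoundsII}, which your outline does not mention. Second, and more seriously, you begin by selecting a \emph{single} $\alpha_0$ and projecting to $W^\LieA_{\alpha_0}$, but this does not deliver the claimed uniformity $\min_{\beta \in \Phi^+_b}\beta(v) \geq \eta' > 0$ when $v$ lies in $B_{\epsilon_\Pi}^\LieA(W^\LieA_{\alpha_0}) \cap B_{\epsilon_\Pi}^\LieA(W^\LieA_{\alpha_1})$ for some other $\alpha_1 \in \Pi$: then $\alpha_1 \in \Phi^+_b$ yet $\alpha_1(v)$ is small. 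You must instead (as the paper does) take $\dot\Pi(v) = \{\alpha \in \Pi : v \in B_{\epsilon_\Pi}^\LieA(W^\LieA_\alpha)\}$ and let $\dot v$ be the orthogonal projection of $v$ onto $\bigcap_{\alpha \in \dot\Pi(v)}\ker\alpha$; this is also what makes your case distinction $|\Pi(\dot v)| = 1$ versus $|\Pi(\dot v)| \geq 2$ arise at all.
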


\begin{proof}
Suppose $\rankG \geq 2$. We fix the constant
\begin{align*}
c = \min_{\alpha \in \Pi} \min_{v' \in \ker(\alpha)^\perp, \|v'\| = 1} \alpha(v').
\end{align*}
Let $v$, $w$, $u^+$, and $u^-$ be as in the proposition. Note that $c$ is independent of $v$. For the sake of readability, we focus on the decay rate at first pass. Throughout the proof, the dependence of implicit constants on $w$, $u^+$, and $u^-$ are all continuous. In fact, at the end, we show that the final constant coefficient depending continuously on $w$, $u^+$, and $u^-$ can indeed be taken to be $O\bigl(e^{O(\|w\| + d(o, u^+ o) + d(o, u^- o))}\bigr)$. Let
\begin{align*}
\dot{\Pi}(v) = \{\alpha \in \Pi: v \in B_{\epsilon_\Pi}^\LieA(W^\LieA_\alpha)\} \subset \Pi
\end{align*}
which is a proper subset due to the choice of $\epsilon_\Pi$. Let $\dot{\Phi}(v) \subset \Phi$ be the maximal root subsystem generated by $\dot{\Pi}(v)$ which is automatically proper. Then by construction, we have
\begin{align*}
\alpha(v) \geq c\epsilon_\Pi \qquad \text{for all $\alpha \in \Phi^+ \setminus \dot{\Phi}(v)^+$}.
\end{align*}
Denote by $\dot{v}$ the orthogonal projection of $v$ onto $\bigcap_{\alpha \in \dot{\Pi}(v)} \ker(\alpha)$. Then in fact, $\dot{\Pi}(v) = \Pi(\dot{v})$ and $\dot{\Phi}(v) = \Phi(\dot{v})$ and we can write
\begin{align*}
v &= \dot{v} + \hat{v}, & w &= \dot{w} + \hat{w}
\end{align*}
with $\hat{v}, \hat{w} \in \LieG(\dot{v})$ using the decomposition $\LieA = \bigcap_{\alpha \in \dot{\Pi}(v)} \ker(\alpha) \oplus \bigoplus_{\alpha \in \dot{\Pi}(v)} \R h_\alpha$ given by property~(2) in \cref{pro:CommutingSemisimpleSubgroup}. Write $\log(u^\pm) = \sum_{\alpha \in \Phi^+} u_\alpha^\pm \in \bigoplus_{\alpha \in \Phi^+} \LieG_{\mp \alpha}$. We make the decompositions
\begin{align*}
u^+ &= \hat{u}^+\dot{u}^+, & u^- &= \dot{u}^-\hat{u}^-
\end{align*}
such that $\log(\dot{u}^\pm) = \sum_{\alpha \in \dot{\Phi}(v)^+} u_\alpha^\pm \in \LieG(\dot{v})$ by property~(1) of \cref{pro:CommutingSemisimpleSubgroup}. By the Baker--Campbell--Hausdorff formula, we necessarily have
\begin{align*}
\log(\hat{u}^\pm) \in \bigoplus_{\alpha \in \Phi^+ \setminus \dot{\Phi}(v)^+} \LieG_{\mp \alpha}.
\end{align*}
Then, by the reverse triangle inequality and \cref{lem:LieTheoreticBoundsII}, for all $t \geq 0$, we have
\begin{align*}
&|d(o, u^-a_{tv}a_wu^+o) - d(o, \dot{u}^-a_{tv}a_wu^+o)| \\
\leq{}&d(\dot{u}^-a_{tv}a_wu^+o, \dot{u}^-\hat{u}^-a_{tv}a_wu^+o) \\
={}&d(a_wu^+o, a_{-tv}\hat{u}^-a_{tv}a_wu^+o) \\
\leq{}&C_1e^{-c\epsilon_\Pi t}
\end{align*}
and also
\begin{align*}
&|d(o, \dot{u}^-a_{tv}a_wu^+o) - d(o, \dot{u}^-a_{tv}a_w\dot{u}^+o)| \\
={}&|d(o, \dot{u}^-a_{tv}a_w\hat{u}^+\dot{u}^+o) - d(\dot{u}^-a_{tv}a_w\hat{u}^+a_{-w}a_{-tv}(\dot{u}^-)^{-1}o, \dot{u}^-a_{tv}a_w\hat{u}^+\dot{u}^+o)| \\
\leq{}&d(o, \dot{u}^-a_{tv}a_w\hat{u}^+a_{-w}a_{-tv}(\dot{u}^-)^{-1}o) \\
={}&d(a_{-w}(\dot{u}^-)^{-1}o, a_{tv}\hat{u}^+a_{-tv}a_{-w}(\dot{u}^-)^{-1}o) \\
\leq{}&C_2e^{-c\epsilon_\Pi t}
\end{align*}
where $C_1, C_2 > 0$ are constants which depend continuously on $w$, $u^+$, and $u^-$, but independent of $v$.

Let $t \geq 0$. In light of the above estimates, it suffices to treat $d(o, \dot{u}^-a_{tv}a_w\dot{u}^+o)$. We first study the Cartan decomposition of $\dot{u}^-a_{tv}a_w\dot{u}^+$ using \cref{pro:CommutingSemisimpleSubgroup}. Recalling that $\dot{u}^\pm \in G(\dot{v})$, we use property~(1) in \cref{pro:CommutingSemisimpleSubgroup} to obtain
\begin{align*}
\dot{u}^-a_{tv}a_w\dot{u}^+ = a_{t\dot{v}}a_{\dot{w}}\dot{u}^-a_{t\hat{v}}a_{\hat{w}}\dot{u}^+.
\end{align*}
Then, $a_{t\hat{v}}, a_{\hat{w}} \in G(\dot{v})$ also and hence $\dot{u}^-a_{t\hat{v}}a_{\hat{w}}\dot{u}^+ \in G(\dot{v})$. Now, as $G(\dot{v}) \subset G$ is semisimple, $G(\dot{v})$ itself has a Cartan decomposition $G(\dot{v}) = K(\dot{v})A(\dot{v})^+K(\dot{v})$ and as in \cref{sec:Preliminaries}, we can arrange it so that
\begin{align*}
A(\dot{v}) &< A, & A(\dot{v})^+ &\subset A^+, & K(\dot{v}) &< K.
\end{align*}
Denote by $\LieA(\dot{v})^+$ the Lie algebra of $A(\dot{v})^+$. So there exist $k_1, k_2 \in K(\dot{v}) < K$ and $w_0 \in \LieA(\dot{v})^+$ such that $\dot{u}^-a_{t\hat{v}}a_{\hat{w}}\dot{u}^+ = k_1 a_{w_0} k_2$. Again using property~(1) in \cref{pro:CommutingSemisimpleSubgroup}, we obtain the Cartan decomposition
\begin{align}
\label{eqn:a_tvCommutesWithKTilde}
\dot{u}^-a_{tv}a_w\dot{u}^+ = a_{t\dot{v}}a_{\dot{w}}\dot{u}^-a_{t\hat{v}}a_{\hat{w}}\dot{u}^+ = a_{t\dot{v}}a_{\dot{w}}k_1 a_{w_0} k_2 = k_1 a_{t\dot{v}}a_{\dot{w}}a_{w_0} k_2
\end{align}
where by property~(3) in \cref{pro:CommutingSemisimpleSubgroup}, we have
\begin{align}
\label{eqn:OrthogonalityProperties}
\langle \dot{v}, \hat{v}\rangle = \langle \dot{v}, \hat{w}\rangle = \langle \dot{v}, w_0\rangle = \langle \dot{w}, \hat{v}\rangle = \langle \dot{w}, \hat{w}\rangle = \langle \dot{w}, w_0\rangle = 0.
\end{align}
Recall that the decomposition $\dot{u}^-a_{t\hat{v}}a_{\hat{w}}\dot{u}^+ = k_1 a_{w_0} k_2$ with respect to the Cartan decomposition of $G$ is unique up to the equivalence relation $(k_1, k_2) \sim (k_1m, m^{-1}k_2)$ for all $m \in M$. Since $M = Z_K(A)$, the same \cref{eqn:a_tvCommutesWithKTilde} actually holds for any decomposition $\dot{u}^-a_{t\hat{v}}a_{\hat{w}}\dot{u}^+ = k_1 a_{w_0} k_2$ with $k_1, k_2 \in K$ and $w_0 \in \LieA^+$ (which automatically implies $w_0 \in \LieA(v)^+$).

Now, we estimate $\|w_0\| = d(o, k_1 a_{w_0} k_2o) = d(o, \dot{u}^-a_{t\hat{v}}a_{\hat{w}}\dot{u}^+o)$.
Using similar techniques as before, for all $t \geq 0$, we get
\begin{align*}
&|d(o, \dot{u}^-a_{t\hat{v}}a_{\hat{w}}\dot{u}^+o) - d(o, a_{t\hat{v}}a_{\hat{w}}\dot{u}^+o)| \\
\leq{}&d(a_{t\hat{v}}a_{\hat{w}}\dot{u}^+o, \dot{u}^-a_{t\hat{v}}a_{\hat{w}}\dot{u}^+o) \\
={}&d(a_{\hat{w}}\dot{u}^+o, a_{-t\hat{v}}\dot{u}^-a_{t\hat{v}}a_{\hat{w}}\dot{u}^+o) \\
\leq{}&C_3e^{-c\|\hat{v}\| t}
\end{align*}
and also
\begin{align*}
&|d(o, a_{t\hat{v}}a_{\hat{w}}\dot{u}^+o) - d(o, a_{t\hat{v}}a_{\hat{w}}o)| \\
={}&|d(o, a_{t\hat{v}}a_{\hat{w}}\dot{u}^+o) - d(a_{t\hat{v}}a_{\hat{w}}\dot{u}^+a_{-\hat{w}}a_{-t\hat{v}}o, a_{t\hat{v}}a_{\hat{w}}\dot{u}^+o)| \\
\leq{}&d(o, a_{t\hat{v}}a_{\hat{w}}\dot{u}^+a_{-\hat{w}}a_{-t\hat{v}}o) \\
={}&d(a_{-\hat{w}}o, a_{t\hat{v}}\dot{u}^+a_{-t\hat{v}}a_{-\hat{w}}o) \\
\leq{}&C_4e^{-c\|\hat{v}\| t}
\end{align*}
where $C_3, C_4 > 0$ are constants which depend continuously on $w$, $u^+$, and $u^-$, but independent of $v$. Using the above estimates, for all $t \geq 0$, we have
\begin{align*}
\|w_0\| &= d(o, \dot{u}^-a_{t\hat{v}}a_{\hat{w}}\dot{u}^+o) = d(o, a_{t\hat{v}}a_{\hat{w}}o) + O_{w, u^+, u^-}\bigl(e^{-c\|\hat{v}\| t}\bigr) \\
&= \|t\hat{v} + \hat{w}\| + O_{w, u^+, u^-}\bigl(e^{-c\|\hat{v}\| t}\bigr).
\end{align*}
Using the Cartan decomposition in \cref{eqn:a_tvCommutesWithKTilde}, the orthogonality properties in \cref{eqn:OrthogonalityProperties}, and the estimate for $\|w_0\|$, for all $t \geq 0$, we obtain
\begin{align*}
&d(o, \dot{u}^-a_{tv}a_w\dot{u}^+o) = \|t\dot{v} + \dot{w} + w_0\| = \sqrt{\|t\dot{v} + \dot{w}\|^2 + \|w_0\|^2} \\
={}&\sqrt{\|t\dot{v} + \dot{w}\|^2 + \bigl(\|t\hat{v} + \hat{w}\| + O_{w, u^+, u^-}\bigl(e^{-c\|\hat{v}\| t}\bigr)\bigr)^2} \\
={}&\sqrt{\substack{t^2(\|\dot{v}\|^2 + \|\hat{v}\|^2) + 2t(\langle \dot{w}, \dot{v}\rangle + \langle \hat{w}, \hat{v}\rangle) + (\|\dot{w}\|^2 + \|\hat{w}\|^2) \\ + O_{w, u^+, u^-}\bigl(\|t\hat{v} + \hat{w}\|e^{-c\|\hat{v}\| t}\bigr)}} \\
={}&\sqrt{(t^2 + 2t\langle w, v\rangle + \langle w, v\rangle^2) + (\|w\|^2 - \langle w, v\rangle^2) + O_{w, u^+, u^-}\bigl(\|t\hat{v} + \hat{w}\|e^{-c\|t\hat{v}\|}\bigr)}.
\end{align*}
Clearly, $\|w\|^2 - \langle w, v\rangle^2 \leq \|w\|^2 = O_w(1)$. Now, consider the nonnegative continuous map $\LieA \to \R_{\geq 0}$ given by $v' \mapsto \|v' + \hat{w}\| e^{-c\|v'\|} \leq \|v'\| e^{-c\|v'\|} + \|\hat{w}\|$ which varies continuously in $w$. Along each ray $\R_{\geq 0} v_0'$ for some $v_0' \in \LieA$ with $\|v_0'\| = 1$, the map attains some maximum $M_{v_0', w}$ which depends continuously on $v_0'$ and $w$. Therefore, by compactness of the unit sphere centered at $0$ in $\LieA$, the map attains a maximum $M_w = \max_{v_0' \in \LieA, \|v_0'\| = 1} M_{v_0', w}$ on $\LieA$ which depends continuously on $w$, but is independent of $v$. In fact, $M_w \ll (ce)^{-1} + \|\hat{w}\| \leq c^{-1} + \|w\|$. Consequently, $O_{w, u^+, u^-}\bigl(\|t\hat{v} + \hat{w}\|e^{-c\|t\hat{v}\|}\bigr) = O_{w, u^+, u^-}(1)$. Note that the vector $t\hat{v}$ with exactly that scaling in the first factor was crucial for this. Thus, continuing the calculation using Taylor's theorem, for all $t \gg_{w, u^+, u^-} 1$ (where we can take the implicit constant to be the same as that of $O_{w, u^+, u^-}(t^{-2})$ in the second equality below), we obtain
\begin{align}
\label{eqn:DistanceEstimate_udot-atvawudot+}
\begin{aligned}
d(o, \dot{u}^-a_{tv}a_w\dot{u}^+o) &= \sqrt{(t + \langle w, v \rangle)^2 + O_{w, u^+, u^-}(1)} \\
&= (t + \langle w, v \rangle) \sqrt{1 + O_{w, u^+, u^-}(t^{-2})} \\
&= (t + \langle w, v \rangle) \bigl(1 + O_{w, u^+, u^-}(t^{-2})\bigr) \\
&= (t + \langle w, v \rangle) + O_{w, u^+, u^-}(t^{-1})
\end{aligned}
\end{align}
as desired.

Let us now focus on the constant coefficients. By \cref{lem:LieTheoreticBoundsII}, triangle inequality, and left $G$-invariance of the metric, we have
\begin{align*}
C_1 &= O\bigl(e^{O(d(o, a_wu^+o) + d(o, \hat{u}^-o))}\bigr) \leq O\bigl(e^{O(\|w\| + d(o, u^+o) + d(o, \hat{u}^-o))}\bigr), \\
C_2 &= O\bigl(e^{O(d(o, \dot{u}^-a_wo) + d(o, \hat{u}^+o))}\bigr) \leq O\bigl(e^{O(\|w\| + d(o, \dot{u}^-o) + d(o, \hat{u}^+o))}\bigr), \\
C_3 &= O\bigl(e^{O(d(o, a_{\hat{w}}\dot{u}^+o) + d(o, \dot{u}^-o))}\bigr) \leq O\bigl(e^{O(\|\hat{w}\| + d(o, \dot{u}^+o) + d(o, \dot{u}^-o))}\bigr), \\
C_4 &= O\bigl(e^{O(d(o, a_{\hat{w}}o) + d(o, \dot{u}^+o))}\bigr) \leq O\bigl(e^{O(\|\hat{w}\| + d(o, \dot{u}^+o))}\bigr).
\end{align*}
We have $\|\hat{w}\| \leq \|w\|$. We also have $\|\log(\dot{u}^\pm)\| \leq \|\log(u^\pm)\|$ from definitions and the orthogonality of the restricted root space decomposition and so together with property~(3) in \cref{lem:LieTheoreticBoundsI}, we get
\begin{align*}
d(o, \dot{u}^\pm o) \ll \log(1 + \|\log(\dot{u}^\pm)\|) \leq \log(1 + \|\log(u^\pm)\|) \ll d(o, u^\pm o).
\end{align*}
Again by triangle inequality as above, we also get
\begin{align*}
d(o, \hat{u}^\pm o) = d(o, u^\pm (\dot{u}^\pm)^{-1} o) \leq d(o, u^\pm o) + d(o, \dot{u}^\pm o) \ll d(o, u^\pm o).
\end{align*}
Thus, $C_j = O\bigl(e^{O(\|w\| + d(o, u^+ o) + d(o, u^- o))}\bigr)$ for all $1 \leq j \leq 4$. These constants propagate through the rest of the proof, possibly increasing only by factors of absolute constants. Two other implicit constants which additionally appear towards the end of the argument above are both $O\bigl(e^{O(\|w\|)}\bigr)$. We carry these constants through in \cref{eqn:DistanceEstimate_udot-atvawudot+}. Thus, we arrive at the final constant coefficient of
\begin{align*}
O\bigl(e^{O(\|w\| + d(o, u^+ o) + d(o, u^- o))}\bigr)
\end{align*}
as desired.
\end{proof}

\subsection{Statement and derivation of the main estimates}
In \cref{pro:PreciseGeometricBusemannFunction,cor:PreciseGeometricBusemannFunction}, we denote by $d_{\LieA}: G/K \times G/K \to \LieA^+$ the left $G$-invariant and right $K$-invariant vector-valued metric on the symmetric space $G/K$, i.e., for all $x = go \in G/K$ and $y = ho \in G/K$, we define $d_{\LieA}(x, y) \in \LieA^+$ to be the Cartan projection of $g^{-1}h$ which is the unique element such that $g^{-1}h \in Ka_{d_{\LieA}(x, y)}K$. Consequently, $d(x, y) = \|d_{\LieA}(x, y)\|$ for all $x, y \in G/K$. Also, we allow $\xi_{\boldsymbol{\cdot}}: \R \to G/K$ to be in a larger class of curves than geodesics. In fact, it is a geodesic if and \emph{only} if $g_2 \in K$. Although we do not need the vector-valued metric version of property~(1) in \cref{pro:PreciseGeometricBusemannFunction}, we provide it for completeness and also for independent interest.

\begin{proposition}
\label{pro:PreciseGeometricBusemannFunction}
Let $x = g_xo \in G/K$ and $y = g_yo \in G/K$. Let $\xi = kM \in \Fboundary \cong K/M$, $g_1 \in kAU^-$, $g_2 \in G$, and $v \in \LieA^+$ with $\|v\| = 1$. Define the curve $\xi_{\boldsymbol{\cdot}}: \R \to G/K$ by $\xi_t = g_1a_{tv}g_2o$ for all $t \in \R$. By the Iwasawa decomposition, there exist unique elements with unique decompositions
\begin{align}
\label{eqn:RootSpaceDecomposition}
\sum_{\alpha \in \Phi^+} u_{x, \alpha}^- &\in \bigoplus_{\alpha \in \Phi^+} \LieG_\alpha, & \sum_{\alpha \in \Phi^+} u_{y, \alpha} &\in \bigoplus_{\alpha \in \Phi^+} \LieG_\alpha, & \sum_{\alpha \in \Phi^+} u_{g_2, \alpha} &\in \bigoplus_{\alpha \in \Phi^+} \LieG_{-\alpha}
\end{align}
and $w_x, w_y \in \LieA$ such that
\begin{align*}
g_1^{-1}g_x &\in e^{\sum_{\alpha \in \Phi^+} u_{x, \alpha}}a_{w_x}K, & g_1^{-1}g_y &\in e^{\sum_{\alpha \in \Phi^+} u_{y, \alpha}}a_{w_y}K, & g_2 &\in e^{\sum_{\alpha \in \Phi^+} u_{g_2, \alpha}}AK.
\end{align*}
Let
\begin{align}
\label{eqn:NonzeroRootSpaceComponents}
\Phi^+_z = \{\alpha \in \Phi^+: u_{z, \alpha} \neq 0\} \qquad \text{for all $z \in \{x, y, g_2\}$}.
\end{align}
Then, there exists $C = O\bigl(e^{O(d(o, g_1^{-1}x) + d(o, g_1^{-1}y) + d(o, g_2o))}\bigr)$, such that the following holds.
\begin{enumerate}
\item If either of the alternatives
\begin{enumerate}
\item $\eta := \min_{\alpha \in \Phi^+_x \cup \Phi^+_y} \alpha(v) > 0$ and $\beta_\xi(x, y) = w_y - w_x = 0$;
\item $\eta := \min_{\alpha \in \Phi^+_x \cup \Phi^+_y \cup \Phi^+_{g_2}} \alpha(v) > 0$;
\end{enumerate}
is satisfied, then we have
\begin{align*}
\|(d_{\LieA}(x, \xi_t) - d_{\LieA}(y, \xi_t)) - \beta_\xi(x, y)\| \leq Ce^{-\eta t} \qquad \text{for all $t \geq 0$}.
\end{align*}
\item We have
\begin{align*}
|(d(x, \xi_t) - d(y, \xi_t)) - \langle d_{\LieA}(x, \xi_t) - d_{\LieA}(y, \xi_t), v \rangle| \leq
\begin{cases}
0, & \rankG = 1 \\
Ct^{-1}, & \rankG \geq 2
\end{cases}
\end{align*}
for all $t > 0$.
\item We have
\begin{align*}
|(d(x, \xi_t) - d(y, \xi_t)) - \langle \beta_\xi(x, y), v \rangle| \leq Ct^{-1} \qquad \text{for all $t > 0$}.
\end{align*}
\end{enumerate}
\end{proposition}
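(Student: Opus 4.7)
The plan is to reduce all three claims to explicit computations with the group element
\begin{equation*}
M_z(t) := a_{-w_z}\, u_z^{-1}\, a_{tv}\, u_{g_2}^+\, a_{c_2}, \qquad z \in \{x, y\},
\end{equation*}
where $u_z := \exp\bigl(\sum_\alpha u_{z,\alpha}\bigr) \in U^-$ and $u_{g_2}^+ := \exp\bigl(\sum_\alpha u_{g_2, \alpha}\bigr) \in U^+$ are read off from the Iwasawa-type decompositions. Left $G$-invariance of $d$ and $d_\LieA$ together with $Ko = o$ gives $d_\LieA(z, \xi_t) = d_\LieA(o, M_z(t)o)$ and $d(z, \xi_t) = d(o, M_z(t)o)$, and since $A$ normalizes $U^\pm$ we may rewrite
\begin{equation*}
M_z(t) = \hat u_z\, a_{tv}\, a_{c_2 - w_z}\, \check u^+, \qquad \hat u_z := a_{-w_z} u_z^{-1} a_{w_z} \in U^-,\ \check u^+ := a_{-c_2} u_{g_2}^+ a_{c_2} \in U^+,
\end{equation*}
with $\check u^+$ \emph{independent of} $z$. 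The Busemann identity $\beta_\xi(x,y) = w_y - w_x$ then follows from a direct computation of the Iwasawa cocycle: writing $g_1 = ka_{b_1}u_1^-$ (since $g_1 \in kAU^-$) and using that $A$ normalizes $U^-$, one obtains $\sigma(g_z^{-1}, \xi) = -w_z - b_1$.

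For property (1), I commute $u_z^{-1}$ past $a_{tv}$, which conjugates it inside $U^-$ and yields exponential contraction by a factor $O(e^{-\eta t})$ through \cref{lem:LieTheoreticBoundsII} under the hypothesis $\alpha(v) > 0$ for $\alpha \in \Phi^+_z$. Applying the Lipschitz bound $\|d_\LieA(o, g h_1 o) - d_\LieA(o, g h_2 o)\| \le d(h_1 o, h_2 o)$ (from left $G$-invariance plus $1$-Lipschitz continuity of the Cartan projection) together with \cref{lem:LieTheoreticBoundsI}(2)(a) reduces $d_\LieA(o, M_z(t)o)$ to $d_\LieA(o, a_{tv-w_z}u_{g_2}^+ a_{c_2}o) + O(e^{-\eta t})$. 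Under alternative (a), the identity $w_x = w_y$ makes this reduced expression independent of $z$, so it cancels in the difference. Under alternative (b), I further commute $u_{g_2}^+$ past $a_{tv-w_z}$, contracting it exponentially via \cref{lem:LieTheoreticBoundsII} applied to $U^+$ and using $\min_{\Phi^+_{g_2}} \alpha(v) > 0$, reducing to $d_\LieA(o, a_{tv - w_z + c_2}o) = tv - w_z + c_2$ for $t$ sufficiently large; subtracting the $z = x, y$ versions yields $w_y - w_x + O(e^{-\eta t}) = \beta_\xi(x,y) + O(e^{-\eta t})$.

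For property (3), I apply \cref{pro:LieTheoreticBoundsIII} directly to the rewritten form $\hat u_z a_{tv} a_{c_2 - w_z} \check u^+$ when $v \in \bigcup_\alpha B_{\epsilon_\Pi}^\LieA(W^\LieA_\alpha)$, and \cref{lem:LieTheoreticBoundsII} in the complementary interior region (where one obtains even exponential error, bypassing the whole apparatus of \cref{pro:LieTheoreticBoundsIII}). Both cases produce
\begin{equation*}
d(z, \xi_t) = t + \langle c_2 - w_z, v\rangle + O(t^{-1}),
\end{equation*}
and subtracting yields $d(x, \xi_t) - d(y, \xi_t) = \langle w_y - w_x, v\rangle + O(t^{-1}) = \langle \beta_\xi(x,y), v\rangle + O(t^{-1})$. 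For property (2), the rank-one case is trivial since $\LieA = \R v$ forces $d_\LieA = d \cdot v$. For $\rankG \ge 2$, decompose $d_\LieA(z, \xi_t) = \lambda_z v + w'_z$ with $w'_z \perp v$; from the above analysis together with Lipschitz, $\lambda_z = t + O(1)$ and $\|w'_z\| = O(1)$, so the Pythagorean expansion $\sqrt{\lambda_z^2 + \|w'_z\|^2} = \lambda_z + \|w'_z\|^2/(2\lambda_z) + O(\lambda_z^{-3})$ gives $\|d_\LieA(z, \xi_t)\| - \langle d_\LieA(z, \xi_t), v\rangle = O(t^{-1})$, yielding (2) by subtraction.

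The main obstacle is the careful bookkeeping of implicit multiplicative constants so they combine into the claimed form $C = O\bigl(e^{O(d(o, g_1^{-1}x) + d(o, g_1^{-1}y) + d(o, g_2 o))}\bigr)$: each application of \cref{lem:LieTheoreticBoundsI}(2)(a) introduces an $e^{c_\Phi d(o, \cdot)}$ prefactor that must be tamed using the Iwasawa-derived distance bounds $d(o, \hat u_z o) \ll d(o, g_1^{-1} z)$, $d(o, \check u^+ o) \ll d(o, g_2 o)$, $\|w_z\| \ll d(o, g_1^{-1}z)$, and $\|c_2\| \ll d(o, g_2 o)$, the unipotent ones flowing through property (3) of \cref{lem:LieTheoreticBoundsI}. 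A secondary wrinkle in alternative (b) is that $tv - w_z + c_2$ may fail to lie in $\LieA^+$ when $v$ sits on a wall $W^\LieA_\alpha$ with $\alpha \notin \Phi^+_z \cup \Phi^+_{g_2}$; the Weyl-reflection correction this induces is bounded, acts identically on $x$ and $y$ along directions fixed by the stabilizer of $v$, and so cancels in the difference up to the stated exponential error.
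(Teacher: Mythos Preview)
Your proposal is correct and follows essentially the same route as the paper. The paper differs only organizationally: it first applies the isometry $g_1^{-1}$ and then uses the cocycle property of the Busemann function to reduce to $x = o$, so that only one Iwasawa decomposition $g = u_g^- a_{w_g}$ (your $z = y$) is in play rather than two symmetric ones; the resulting one-sided computation is otherwise the same use of \cref{lem:LieTheoreticBoundsII}, the reverse triangle inequality for $d_\LieA$, the Pythagorean expansion for property~(2), and the near-wall/far-wall split via \cref{pro:LieTheoreticBoundsIII} for property~(3). Your ``Weyl-reflection correction'' paragraph flags a genuine edge case (when $v$ lies on a wall and the shifted vector $tv + w$ leaves $\LieA^+$) that the paper also passes over silently; neither treatment gives a fully explicit argument here, so this is not a discrepancy between your approach and the paper's.
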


\begin{proof}
Let $x = g_xo$, $y = g_yo$, $\xi$, $g_1$, $g_2$, $v$, and $\xi_{\boldsymbol{\cdot}}$ be as in the proposition. Applying the isometry $g_1^{-1} \in G$, without loss of generality, we may assume that $\xi_t = a_{tv}g_2o$ for all $t \in \R$. Using property~(3) for the Busemann function in \cref{subsec:BusemannFunction}, it suffices to prove properties~(1) and (2) of the proposition for $x = o \in G/K$, i.e., we may assume $g_x = e$. For convenience, let us relabel $g := g_y$ and $h := g_2$. Using the Iwasawa decomposition in the form $G = U^-AK$, without loss of generality, we may assume that $g = u_g^-a_{w_g} \in U^-A$. Then, $g^{-1} = a_{-w_g}(u_g^-)^{-1} \in KAU^-$, and hence $\sigma(g^{-1}, \xi) = -w_g$. Thus, $\beta_\xi(x, y) = w_g$. Similarly, using the Iwasawa decomposition in the form $G = U^+AK$, without loss of generality, we may assume that $h = u_h^+a_{w_h} \in U^+A$. As in the proof of \cref{pro:LieTheoreticBoundsIII}, we focus on the decay rate at first pass and then take care of the constant coefficients at the very end.

Let us first prove property~(1). Suppose either property~(1)(a) or property~(1)(b) holds and let $\eta > 0$ be fixed accordingly. We will use several times the usual triangle inequality for the norm $\|\cdot \|$ on $\LieA$ as well as the reverse triangle inequalities for $d_{\LieA}$:
\begin{align*}
\|d_{\LieA}(x, y') - d_{\LieA}(x, y)\| &\leq \|d_{\LieA}(y, y')\| & \text{and} &  &\|d_{\LieA}(x, y) - d_{\LieA}(x', y)\| &\leq \|d_{\LieA}(x, x')\|
\end{align*}
for all $x, x', y, y' \in G/K$. They follow from \cite[Lemma 2.3]{Kas08} (see also \cite[Eq. (2-7)]{KLP18} and \cite[Section 3.8]{KLM09}). We calculate for all $t \in \R$ that
\begin{align}
\label{eqn:BusemannFirstInequality}
\begin{aligned}
&\|(d_{\LieA}(x, \xi_t) - d_{\LieA}(y, \xi_t)) - \beta_\xi(x, y)\| \\
={}&\|d_{\LieA}(o, a_{tv}ho) - d_{\LieA}(go, a_{tv}ho) - w_g\| \\
\leq{}&\|d_{\LieA}(o, a_{tv}ho) - d_{\LieA}(o, a_{tv - w_g}ho) - w_g\| \\
{}&+\|d_{\LieA}(o, a_{tv - w_g}ho) - d_{\LieA}(go, a_{tv}ho)\|.
\end{aligned}
\end{align}
Using \cref{lem:LieTheoreticBoundsII}, we bound the second term on the right hand side of \cref{eqn:BusemannFirstInequality} for all $t \geq 0$ as
\begin{align*}
&\|d_{\LieA}(o, a_{tv - w_g}ho) - d_{\LieA}(go, a_{tv}ho)\| \\
={}&\bigl\|d_{\LieA}\bigl(u_g^-a_{w_g}o, u_g^-a_{tv}ho\bigr) - d_{\LieA}\bigl(u_g^-a_{w_g} o, a_{tv}ho\bigr)\bigr\| \\
\leq{}&\bigl\|d_{\LieA}\bigl(a_{tv}ho, u_g^-a_{tv}ho\bigr)\bigr\| \\
={}&\bigl\|d_{\LieA}\bigl(ho, a_{-tv}u_g^-a_{tv}ho\bigr)\bigr\| \\
={}&d\bigl(ho, a_{-tv}u_g^-a_{tv}ho\bigr) \\
\leq{}&C_1e^{-\eta t}
\end{align*}
where $C_1 \geq 0$ is a constant depending continuously on $g$ and $h$, but independent of $v$. We bound the first term on the right hand side of \cref{eqn:BusemannFirstInequality} in the following fashion. For all $t \in \R$, if $w_g = 0$, then clearly
\begin{align*}
\|d_{\LieA}(o, a_{tv}ho) - d_{\LieA}(o, a_{tv - w_g}ho) - w_g\| = 0
\end{align*}
and if $w_g \neq 0$, then
\begin{align}
\label{eqn:eqn:BusemannSecondInequality}
\begin{aligned}
&\|d_{\LieA}(o, a_{tv}ho) - d_{\LieA}(o, a_{tv - w_g}ho) - w_g\| \\
\leq{}&\|d_{\LieA}(o, a_{tv}ho) - (tv + w_h)\| + \|(tv + w_h - w_g) - d_{\LieA}(o, a_{tv - w_g}ho)\|.
\end{aligned}
\end{align}
Again using \cref{lem:LieTheoreticBoundsII}, we bound the first term on the right hand side of \cref{eqn:eqn:BusemannSecondInequality} for all $t \geq 0$ as
\begin{align*}
&\|d_{\LieA}(o, a_{tv}ho) - (tv + w_h)\| \\
={}&\bigl\|d_{\LieA}\bigl(o, a_{tv}u_h^+a_{w_h}o\bigr) - d_{\LieA}(o, a_{tv + w_h}o)\bigr\| \\
={}&\bigl\|d_{\LieA}\bigl(o, a_{tv}u_h^+a_{w_h}o\bigr) - d_{\LieA}\bigl(a_{tv}u_h^+a_{-tv}o, a_{tv}u_h^+a_{w_h}o\bigr)\bigr\| \\
\leq{}&\bigl\|d_{\LieA}\bigl(o, a_{tv}u_h^+a_{-tv}o\bigr)\bigr\| \\
={}&d\bigl(o, a_{tv}u_h^+a_{-tv}o\bigr) \\
\leq{}&C_2e^{-\eta t}
\end{align*}
where $C_2 \geq 0$ is a constant depending continuously on $h$, but independent of $v$. By the same calculation with $tv - w_g$ in place of $tv$, we bound the second term on the right hand side of \cref{eqn:eqn:BusemannSecondInequality} as
\begin{align*}
\|(tv + w_h - w_g) - d_{\LieA}(o, a_{tv - w_g}ho)\| &\leq \bigl\|d_{\LieA}\bigl(o, a_{tv - w_g}u_h^+a_{-(tv - w_g)}o\bigr)\bigr\| \\
&\leq d\bigl(a_{w_g}o, a_{tv}u_h^+a_{-tv}a_{w_g}o\bigr) \\
&\leq C_3e^{-\eta t}
\end{align*}
where $C_3 \geq 0$ is a constant depending continuously on $g$ and $h$, but independent of $v$. Combining the inequalities concludes the proof of property~(1).

Next, we prove property~(2). When $\rankG = \dim(\LieA) = 1$, it is immediate. Now suppose $\rankG \geq 2$. The vectors $d_{\LieA}(x, \xi_t)$ and $d_{\LieA}(y, \xi_t)$ are Cartan projections of $a_{tv}h$ and $g^{-1}a_{tv}h$, respectively. By \cite[Lemma 4.6]{Ben97}, there exists a compact subset $\mathcal{C}_{g, h} \subset \LieA$ such that
\begin{align*}
d_{\LieA}(x, \xi_t) &= tv + e_x(t), & d_{\LieA}(y, \xi_t) &= tv + e_y(t),
\end{align*}
for some $e_x(t), e_y(t) \in \mathcal{C}_{g, h}$, for all $t \in \R$. In fact, for all $t \geq 0$, we have the precise bounds
\begin{align*}
\|e_x(t)\| &= \|d_{\LieA}(x, \xi_t) - tv\| = \|d_{\LieA}(o, a_{tv}ho) - d_{\LieA}(o, a_{tv}o)\| \\
&\leq \|d_{\LieA}(a_{tv}o, a_{tv}ho)\| = \|d_{\LieA}(o, ho)\| = d(o, ho)
\end{align*}
and similarly
\begin{align*}
\|e_y(t)\| &\leq d(o, a_{-tv}ga_{tv}o) + d(o, ho) \\
&\leq d(o, a_{-tv}u_g^-a_{tv}o) + d(a_{-tv}u_g^-a_{tv}o, a_{-tv}u_g^-a_{tv}a_{w_g}o) + d(o, ho) \\
&\leq d\bigl(o, u_g^-o\bigr) + \|w_g\| + d(o, ho)
\end{align*}
using \cref{eqn:logOfConjugation} property~(3) in \cref{lem:LieTheoreticBoundsI}. For all $t \gg_{g, h} 1$, decomposing $e_x(t) = \langle e_x(t), v \rangle v + e_{x, v^\perp}(t)$ in an orthogonal fashion for some $e_{x, v^\perp}(t) \in \LieA$, we have
\begin{align*}
d(x, \xi_t) &= \|d_{\LieA}(x, \xi_t)\| = \sqrt{(t + \langle e_x(t), v\rangle)^2 + e_{x, v^\perp}(t)^2} \\
&= (t + \langle e_x(t), v\rangle)\sqrt{1 + O_{g, h}(t^{-2})} \\
&= (t + \langle e_x(t), v\rangle)(1 + O_{g, h}(t^{-2})) \\
&= (t + \langle e_x(t), v\rangle) + O_{g, h}(t^{-1}) \\
&= \langle d_{\LieA}(x, \xi_t), v\rangle + O_{g, h}(t^{-1})
\end{align*}
where the implicit constant depends continuously on $g$ and $h$, but is independent on $v$. The same calculation holds for $y$ in place of $x$. Applying the triangle inequality finishes the proof of property~(2).

Finally, we prove property~(3). If $\rankG = 1$, then it is just a weakening of property~(1) which has been proven. Now suppose $\rankG \geq 2$. It suffices to treat the regions $\LieA^+ \cap \bigcup_{\alpha \in \Pi} B_{\epsilon_\Pi}^\LieA(W^\LieA_\alpha)$ and $\LieA^+ \setminus \bigcup_{\alpha \in \Pi} B_{\epsilon_\Pi}^\LieA(W^\LieA_\alpha)$ separately. If $v \in \LieA^+ \setminus \bigcup_{\alpha \in \Pi} B_{\epsilon_\Pi}^\LieA(W^\LieA_\alpha)$, then property~(3) follows from properties~(1) and (2) since the corresponding $\eta > 0$ is bounded away from $0$ uniformly in $v$. Now suppose $v \in \bigcup_{\alpha \in \Pi} B_{\epsilon_\Pi}^\LieA(W^\LieA_\alpha)$. By appropriate conjugations, we write $g = u_g^-a_{w_g} = a_{w_g} (\tilde{u}_g^-)^{-1} \in AU^-$ and $h = u_h^+a_{w_h} = a_{w_h} \tilde{u}_h^+ \in AU^+$. For all $t > 0$, we get
\begin{align*}
&|(d(x, \xi_t) - d(y, \xi_t)) - \langle \beta_\xi(x, y), v \rangle| \\
={}&|d(o, a_{tv}ho) - d(go, a_{tv}ho) - \langle w_g, v\rangle| \\
\leq{}&|d(o, a_{tv}ho) - (t + \langle w_h, v\rangle)| + |d(o, g^{-1}a_{tv}ho) - (t + \langle w_h - w_g, v\rangle)| \\
={}&\bigl|d\bigl(o, a_{tv}a_{w_h}\tilde{u}_h^+o\bigr) - (t + \langle w_h, v\rangle)\bigr| + \bigl|d\bigl(o, \tilde{u}_g^- a_{tv}a_{w_h - w_g}\tilde{u}_h^+o\bigr) - (t + \langle w_h - w_g, v\rangle)\bigr|.
\end{align*}
Applying \cref{pro:LieTheoreticBoundsIII} to both terms finishes the proof of property~(3).

Let us now deal with the constant coefficients. First, we establish some useful bounds. By Harish-Chandra's inequality from \cite[Chapter I, \S 6, Theorem 6.2]{JL01} (cf. \cite[Chapter I, \S 6, Theorem 6.1]{JL01} and \cite[Lemma 35]{Har58}), we have $\|w_g\| \leq \|d_{\LieA}(o, go)\| = d(o, go)$ and $\|w_h\| \leq \|d_{\LieA}(o, ho)\| = d(o, ho)$. Using this inequality, triangle inequality, and left $G$-invariance of the metric, we also get
\begin{align*}
d\bigl(o, u_g^-o\bigr) \leq d\bigl(o, u_g^-a_{w_g}o\bigr) + d\bigl(u_g^-a_{w_g}o, u_g^-o\bigr) = d(o, go) + \|w_g\| \ll d(o, go).
\end{align*}
Similarly
\begin{align*}
d\bigl(o, u_h^+o\bigr) &\leq d(o, ho), & d\bigl(o, \tilde{u}_g^-o\bigr) &\leq d(o, go), & d\bigl(o, \tilde{u}_h^+o\bigr) &\leq d(o, ho).
\end{align*}
We will utilize these bounds to derive the formulas for the constant coefficients. For property~(1), we use \cref{lem:LieTheoreticBoundsII} and triangle inequality as before to get
\begin{align*}
C_1 &= O\bigl(e^{O(d(o, ho) + d(o, u_g^-o))}\bigr) \leq O\bigl(e^{O(d(o, go) + d(o, ho))}\bigr), \\
C_2 &= O\bigl(e^{O(d(o, u_h^+o))}\bigr) \leq O\bigl(e^{O(d(o, ho))}\bigr), \\
C_3 &= O\bigl(e^{O(d(o, a_{w_g}o) + d(o, u_h^+o))}\bigr) \leq O\bigl(e^{O(d(o, go) + d(o, ho))}\bigr),
\end{align*}
giving the final constant coefficient of $O\bigl(e^{O(d(o, go) + d(o, ho))}\bigr)$. Similarly, for property~(2), we obtain the final constant coefficient of $O\bigl(e^{O(d(o, go) + d(o, ho))}\bigr)$ since both $\|e_x(t)\|$ and $\|e_y(t)\|$ are of the same order which we simply carry through. The constant provided by \cref{pro:LieTheoreticBoundsIII} in the proof of property~(3) is
\begin{align*}
O\bigl(e^{O(\|w_g\| + \|w_h\| + d(o, \tilde{u}_g^-o)) + d(o, \tilde{u}_h^+o))}\bigr)
\end{align*}
which again gives the final constant coefficient of $O\bigl(e^{O(d(o, go) + d(o, ho))}\bigr)$. Finally, for all three properties, reverting the reductions from the beginning of the proof and recalling $x = g_xo$ and $y = g_yo$ gives the final constant coefficient of
\begin{align*}
O\bigl(e^{O(d(o, g_1^{-1}x) + d(o, g_1^{-1}y) + d(o, g_2o))}\bigr).
\end{align*}
\end{proof}

For the following corollary, recall the constant $\eta_1 > 0$ from \cref{eqn:Eta1}.

\begin{corollary}
\label{cor:PreciseGeometricBusemannFunction}
Let $x, y \in G/K$. Let $\xi = kM \in \Fboundary \cong K/M$, $g_1 \in kAU^-$, $g_2 \in G$, and $v \in \LieA^+$ with $\|v\| = 1$. Define the curve $\xi_{\boldsymbol{\cdot}}: \R \to G/K$ by $\xi_t = g_1a_{tv}g_2o$ for all $t \in \R$. Then, there exists $C = O\bigl(e^{O(d(o, g_1^{-1}x) + d(o, g_1^{-1}y) + d(o, g_2o))}\bigr)$ such that
\begin{align*}
|(d(x, \xi_t) - d(y, \xi_t)) - \langle \beta_\xi(x, y), v\rangle| \leq
\begin{cases}
Ce^{-\eta_1 t}, & \rankG = 1 \\
Ct^{-1}, & \rankG \geq 2
\end{cases}
\qquad \text{for all $t > 0$}.
\end{align*}
\end{corollary}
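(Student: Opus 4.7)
The plan is to deduce the corollary directly by invoking the appropriate parts of Proposition~\ref{pro:PreciseGeometricBusemannFunction}. In the $\rankG \geq 2$ case, the statement is literally property~(3) of the proposition, and nothing further is required; in particular the constant provided by the proposition already has the advertised dependence $O\bigl(e^{O(d(o, g_1^{-1}x) + d(o, g_1^{-1}y) + d(o, g_2o))}\bigr)$.

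For the $\rankG = 1$ case my strategy is to combine property~(1)(b) with property~(2), since property~(3) only yields a polynomial rate. The key observation is that when $\rankG = 1$ the closed Weyl chamber $\LieA^+$ is a single ray and $v$ is its unique unit vector, so for every $\alpha \in \Phi^+$ one has $\alpha(v) = \|\alpha\| \geq \eta_1$. Consequently the quantity $\eta = \min_{\alpha \in \Phi^+_x \cup \Phi^+_y \cup \Phi^+_{g_2}} \alpha(v)$ appearing in alternative~(1)(b) always satisfies $\eta \geq \eta_1 > 0$, with the convention that a minimum over an empty index set is $+\infty$ (in which case (1)(b) gives a conclusion trivially stronger than what is needed). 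Property~(1)(b) then yields
\[
\bigl\|(d_{\LieA}(x, \xi_t) - d_{\LieA}(y, \xi_t)) - \beta_\xi(x, y)\bigr\| \leq Ce^{-\eta_1 t} \qquad \text{for all } t \geq 0.
\]

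Next I invoke property~(2), which in the $\rankG = 1$ case is an exact identity
\[
d(x, \xi_t) - d(y, \xi_t) = \langle d_{\LieA}(x, \xi_t) - d_{\LieA}(y, \xi_t), v \rangle.
\]
Subtracting $\langle \beta_\xi(x, y), v\rangle$ from both sides and applying Cauchy--Schwarz together with $\|v\| = 1$ converts the vector-valued bound above into the desired scalar bound, with the same constant $C$ inherited from the proposition, which is exactly of the advertised form.

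Since all of the substantive analytic work has already been packaged into Proposition~\ref{pro:PreciseGeometricBusemannFunction}, I do not anticipate any real obstacle. The only conceptual subtlety worth flagging is that in the rank one setting one might be tempted to appeal to alternative~(1)(a), but that carries the additional hypothesis $\beta_\xi(x, y) = 0$, which is not available here; routing through~(1)(b) sidesteps this at no cost, since (1)(b) delivers at least the same exponential rate $\eta_1$ with no assumption on $\beta_\xi(x, y)$.
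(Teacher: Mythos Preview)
Your proposal is correct and matches the paper's approach: the paper simply states that the corollary ``follows immediately from the proposition,'' and your argument spells out exactly how---using property~(3) directly for $\rankG \geq 2$, and combining property~(1)(b) with the exact identity in property~(2) to extract the exponential rate $\eta_1$ when $\rankG = 1$. The only point worth noting is that your observation $\alpha(v) \geq \eta_1$ for all $\alpha \in \Phi^+$ in rank one is precisely what makes alternative~(1)(b) applicable with $\eta \geq \eta_1$, and this is indeed the mechanism the paper has in mind when it remarks (inside the proof of property~(3)) that the rank-one case ``is just a weakening of property~(1).''
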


\begin{remark}
It is evident in the proof of \cref{pro:PreciseGeometricBusemannFunction} that it is greatly simplified when the unit vectors $v \in \LieA^+$ are uniformly bounded away from the walls $\partial\LieA^+$ since \cref{pro:LieTheoreticBoundsIII} is not needed in that case. For our purposes later in the paper (see the derivation of \cref{eqn:SecondLieASkewBallContainment}), we actually take unit vectors $v \in \LieA_H^+ \subset \LieA^+$ which are bounded away from the walls $\partial\LieA_H^+$ and so the simpler proof of \cref{pro:PreciseGeometricBusemannFunction} suffices if $\LieA_H^+ \not\subset \partial \LieA^+$. However, this is not the case in many interesting settings. The simplest example for which $\LieA_H^+ \subset \partial \LieA^+$ occurs is $(G, H) = (\SL_4(\R), \SO_Q(\R)^\circ)$ where we take the quadratic form $Q(x_1, x_2, x_3, x_4) = x_2^2 + x_3^2 - 2x_1x_4$ so that $\SO_Q(\R)^\circ \cong \SO(3, 1)^\circ$. More generally, $\LieA_H^+ \subset \partial \LieA^+$ for $(G, H) = (\SL_n(\R), \SO_Q(\R)^\circ)$ where we take the quadratic form $Q(x_1, x_2, \dotsc, x_{p + q}) = \sum_{j = q + 1}^p x_j^2 - 2\sum_{j = 1}^q x_jx_{p + q + 1 - j}$ so that $\SO_Q(\R)^\circ \cong \SO(p, q)^\circ$, whenever $p, q, n \in \N$ with $n = p + q$ and $p \geq q$ satisfies $n - 2q \geq 2$. This is because in this case
\begin{align*}
\LieA_H^+ = \{\diag(t_1, \dotsc, t_q, 1, \dotsc, 1, -t_q, \dotsc, -t_1): t_1, \dotsc, t_q \in \R\}
\end{align*}
and one of the roots in $\Phi$ vanishes on $\LieA_H^+$ since there are at least two entries with 1's. Note also that such subgroups $H$ are noncompact semisimple symmetric maximal proper subgroups (see \cref{subsec:TheSubgroupH}).
\end{remark}

\section{Effective volume calculations for Riemannian skew balls}
\label{sec:VolumeCalculations}
This section is valid for any noncompact semisimple Lie subgroup $H < G$. We prove \cref{thm:SkewBallVolumeAsymptotic} which gives precise asymptotic formulas for the volume of Riemannian skew balls of $H$ in $G$, strengthening \cite[Theorem 9.3]{GW07}. \Cref{cor:RiemannianVolumeRatioAsymptotic} immediately follows. Related volume formulas appear as early as in \cite{Kni97} and also in \cite{Mau07,GOS09,BO12}.

\begin{notation}
As we restrict our attention to the subgroup $H < G$ in this subsection, we \emph{drop the subscript $H$} for all the objects introduced in \cref{sec:Preliminaries} (except measures and ranks) for convenience, unless otherwise mentioned. We warn the reader that these objects should not be confused with those associated to $G$.
\end{notation}

For the rest of the paper, we fix the following. Fix $\beta_r := \pi^{\frac{r}{2}} \Gamma\bigl(\frac{r}{2} + 1\bigr)^{-1} > 0$ to be the volume of a unit ball of dimension $r \geq 0$. Fix $v_{2\rho} \in \LieA$ to be the unit vector in the direction of maximal growth for $2\rho \in \LieA^*$ so that we have the growth rate $\delta_{2\rho} := \|2\rho\| = \max_{v \in \LieA, \|v\| = 1} 2\rho(v) = 2\rho(v_{2\rho})$. It is well-known that $v_{2\rho} \in \interior(\LieA^+)$ (see for example \cite[Lemma 9.2]{GW07} and its proof). Also, recall the constant $\eta_1 \in (0, \delta_{2\rho})$ from \cref{eqn:Eta1}.

\begin{theorem}
\label{thm:SkewBallVolumeAsymptotic}
Fix the constant
\begin{align*}
\omega_0 &:= 2^{\frac{\rankH - 1}{2}}\delta_{2\rho}^{-\frac{\rankH + 1}{2}} \beta_{\rankH - 1} \int_0^{+\infty} e^{-x} x^{\frac{\rankH - 1}{2}} \, dx.
\end{align*}
Let $g_1, g_2 \in G$. There exist
\begin{align*}
\varpi[g_1, g_2] &:= \beta_{e^+}(g_1^{-1}o, o) + \involution(\beta_{e^-}(g_2o, o)) = O(d(o, g_1o) + d(o, g_2o)), \\
C[g_1, g_2] &:= \frac{\omega_0}{2^{m_{\Phi^+}} \mu_M(M)}\int_K \int_K e^{-\delta_{2\rho}\langle \varpi[g_1k_1, k_2g_2], v_{2\rho}\rangle} \, d\mu_K(k_1) \, d\mu_K(k_2) \\
E[g_1, g_2] &= O\bigl(e^{O(d(o, g_1o) + d(o, g_2o))}\bigr)
\end{align*}
varying continuously in $g_1$ and $g_2$ such that
\begin{align*}
\mu_H(H_T[g_1, g_2]) =
\begin{cases}
C[g_1, g_2] e^{\delta_{2\rho} T} + E[g_1, g_2] e^{(\delta_{2\rho} - \eta_1)T}, & \rankG = 1 \\
C[g_1, g_2] T^{\frac{\rankH - 1}{2}} e^{\delta_{2\rho}T} + E[g_1, g_2] \log(T)^{\frac{1}{2}}T^{\frac{\rankH - 2}{2}}e^{\delta_{2\rho}T}, & \rankG \geq 2
\end{cases}
\end{align*}
for all $T \geq 0$ in the $\rankG = 1$ case and for all $T \geq \Omega\bigl(e^{\Omega(d(o, g_1o) + d(o, g_2o))}\bigr)$ in the $\rankG \geq 2$ case.
\end{theorem}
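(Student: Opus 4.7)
The plan is to start from the Cartan integral formula \cref{eqn:IntegralFormulaForH} applied to $f = \mathds{1}_{H_T[g_1,g_2]}$. Since $h = k_1 a_v k_2$ satisfies $d(o, g_1 h g_2 o) = d(o,(g_1k_1)a_v(k_2g_2)o)$, the volume becomes
\begin{align*}
\mu_H(H_T[g_1,g_2]) = \frac{1}{\mu_{M}(M)}\int_{K}\int_{K}\int_{\mathcal{A}_T[g_1k_1,k_2g_2]} \xi(v)\,dv\,d\mu_K(k_1)\,d\mu_K(k_2),
\end{align*}
where $\mathcal{A}_T[g'_1,g'_2]:=\{v\in\LieA^+:d(o,g'_1 a_v g'_2 o)<T\}$. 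I would next replace $\xi(v)$ by $2^{-m_{\Phi^+}}e^{2\rho(v)}$, using $\sinh^{m}(x)=2^{-m}e^{mx}(1+O(e^{-2x}))$, which is accurate only away from the walls; this is what will force the restriction to a cone.

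The key geometric input is to convert the skew ball $\mathcal{A}_T[g'_1,g'_2]$ into a shifted honest ball via the Busemann function. Parametrizing $v=tv_0$ with $v_0\in\LieA^+$ of unit norm and applying \cref{cor:PreciseGeometricBusemannFunction} twice (once with $x=(g'_1)^{-1}o$, $y=o$, $\xi=e^+$ and curve $\xi_t=a_{tv_0}g'_2 o$; once with $x=g'_2 o$, $y=o$, $\xi=e^-$ and curve $\xi_t=w_0 a_{t\iota(v_0)}o=a_{-tv_0}o$), one gets
\begin{align*}
d(o,g'_1 a_{tv_0}g'_2 o)=t+\langle \varpi[g'_1,g'_2],v_0\rangle+\mathrm{err}(t),
\end{align*}
with $\mathrm{err}(t)=O(e^{-\eta_1 t})$ in rank $1$ and $O(t^{-1})$ in rank $\geq 2$, using the identity $\langle \iota(w),v\rangle=\langle w,\iota(v)\rangle$ coming from orthogonality of $\Ad_{w_0}$. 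This both explains the form of $\varpi$ and lets me sandwich
\begin{align*}
B^\LieA_{T-\langle \varpi,v_{2\rho}\rangle-\delta}(0)\cap \LieA^+\cap \mathcal{C}_\tau \ \subset\ \mathcal{A}_T[g'_1,g'_2]\cap \mathcal{C}_\tau \ \subset\ B^\LieA_{T-\langle \varpi,v_{2\rho}\rangle+\delta}(0)\cap \LieA^+\cap \mathcal{C}_\tau,
\end{align*}
where $\mathcal{C}_\tau\subset\interior(\LieA^+)$ is a narrow cone of opening $\tau$ around $\R_{>0}v_{2\rho}$ and $\delta=\delta(\tau,T)$ absorbs both the Busemann remainder and the variation of $\langle \varpi[\,\cdot\,],v_0\rangle$ over $v_0\in\mathcal{C}_\tau\cap S^{\rankH-1}$.

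On the cone, the inner integral reduces (as in \cite[Theorem 9.3]{GW07}) to a Laplace-method computation for $\int_{B_{T'}^\LieA(0)\cap\mathcal{C}_\tau}e^{2\rho(v)}\,dv$: writing $v=(T'-s)v_{2\rho}+w$ with $w\perp v_{2\rho}$, the constraint becomes $\|w\|^2\lesssim 2sT'$, so the integral equals
\begin{align*}
\beta_{\rankH-1}(2T')^{\frac{\rankH-1}{2}}e^{\delta_{2\rho}T'}\int_0^{+\infty} e^{-\delta_{2\rho}s}s^{\frac{\rankH-1}{2}}\,ds\ (1+o(1)) = \omega_0\, T'^{\frac{\rankH-1}{2}}e^{\delta_{2\rho}T'}(1+o(1)),
\end{align*}
with $T'=T-\langle \varpi[g_1k_1,k_2g_2],v_{2\rho}\rangle\pm\delta$. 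Expanding $e^{\delta_{2\rho}T'}=e^{\delta_{2\rho}T}e^{-\delta_{2\rho}\langle \varpi,v_{2\rho}\rangle}e^{\pm\delta_{2\rho}\delta}$ and integrating over $K\times K$ produces the main term $C[g_1,g_2]T^{\frac{\rankH-1}{2}}e^{\delta_{2\rho}T}$. What remains is to bound three error sources: (i) the contribution of $\LieA^+\setminus\mathcal{C}_\tau$ to $\int e^{2\rho(v)}\,dv$, which is exponentially smaller in $\tau^2 T$; (ii) the multiplicative error $e^{O(\delta_{2\rho}\delta)}-1\asymp \delta$ from the sandwich; (iii) the approximation $\xi(v)\approx 2^{-m_{\Phi^+}}e^{2\rho(v)}$, which costs $O(e^{-2\eta_\Pi \tau\, T})$ on the cone.

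The hard part, and where most of the bookkeeping lies, is the joint optimization of $\tau=\tau(T)$ and the control of $\delta(\tau,T)$. Choosing $\tau\asymp\sqrt{\log(T)/T}$ balances the Gaussian tail of (i) against the cone-dependent factors in (ii)--(iii); this is what introduces the $\log(T)^{1/2}T^{(\rankH-2)/2}e^{\delta_{2\rho}T}$ error in rank $\geq 2$. In rank $1$ both $\tau$ and the Weyl-chamber complication disappear, and the Busemann error $e^{-\eta_1 t}$ directly yields the claimed $e^{(\delta_{2\rho}-\eta_1)T}$ remainder. The continuous dependence of $C[g_1,g_2]$, $\varpi[g_1,g_2]$, and $E[g_1,g_2]$ on $(g_1,g_2)$, as well as the polynomial-in-$d(o,g_i o)$ bound on $\varpi$ and the exponential bound on $E$, are inherited from the explicit formulas in \cref{cor:PreciseGeometricBusemannFunction} and from Harish-Chandra-type inequalities for the Iwasawa components used there.
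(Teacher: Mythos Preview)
Your proposal is correct and follows essentially the same approach as the paper: Cartan integral formula, Busemann-function sandwich of $\mathcal{A}_T[g'_1,g'_2]\cap\mathcal{C}_\tau$ between two honest balls (the paper's \cref{eqn:SecondLieASkewBallContainment}), Laplace-type asymptotics for $\int_{B_{T'}\cap\mathcal{C}_\tau}e^{2\rho(v)}\,dv$ (the paper packages this as \cref{lem:LinearAlgebraIntegralExpansion,cor:LinearAlgebraConeIntegralExpansion}), and the same choice $\tau\asymp\sqrt{\log T/T}$ balancing the cone-complement tail against the sandwich gap. One small imprecision: your bound for error (iii) should be $O(e^{(\delta_{2\rho}-c)T})$ for a \emph{fixed} $c>0$ rather than $O(e^{-2\eta_\Pi\tau T})$, since on a shrinking cone around $v_{2\rho}$ the roots stay uniformly bounded below; this term is in any case negligible compared to (i) and (ii), so the argument goes through unchanged.
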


As a corollary, we also obtain a precise asymptotic formula for the ratio of the volume of Riemannian skew balls, which is an effective version of property~D2 in \cite{GW07}.

\begin{corollary}
\label{cor:RiemannianVolumeRatioAsymptotic}
Let $g_1, g_2 \in G$. Fix the positive constants $C[g_1, g_2^{-1}]$ and $C[e, e]$ provided by \cref{thm:SkewBallVolumeAsymptotic}. Fix
\begin{align*}
\tilde{\alpha}(g_1, g_2) := \frac{C[g_1, g_2^{-1}]}{C[e, e]} = \frac{2^{m_{\Phi^+}} \mu_M(M)}{\omega_0 \mu_K(K)^2} C[g_1, g_2^{-1}].
\end{align*}
There exists $\tilde{\alpha}'(g_1, g_2) = O\bigl(e^{O(d(o, g_1o) + d(o, g_2o))}\bigr)$ varying continuously in $g_1$ and $g_2$ such that
\begin{align*}
\frac{\mu_H(H_T[g_1, g_2^{-1}])}{\mu_H(H_T)} =
\begin{cases}
\tilde{\alpha}(g_1, g_2) + \tilde{\alpha}'(g_1, g_2) e^{-\eta_1 T}, & \rankG = 1 \\
\tilde{\alpha}(g_1, g_2) + \tilde{\alpha}'(g_1, g_2) T^{-\frac{1}{2}}\log(T)^{\frac{1}{2}}, & \rankG \geq 2
\end{cases}
\end{align*}
for all $T \geq 0$ in the $\rankG = 1$ case and for all $T \geq \Omega\bigl(e^{\Omega(d(o, g_1o) + d(o, g_2o))}\bigr)$ in the $\rankG \geq 2$ case.
\end{corollary}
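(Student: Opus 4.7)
The plan is to derive the corollary by dividing the two precise asymptotic formulas from \cref{thm:SkewBallVolumeAsymptotic} applied to $\mu_H(H_T[g_1, g_2^{-1}])$ and $\mu_H(H_T) = \mu_H(H_T[e, e])$. First I would factor out the common growth factor $e^{\delta_{2\rho}T}$ (together with $T^{\frac{\rankH - 1}{2}}$ when $\rankG \geq 2$) from numerator and denominator, reducing the problem to
\begin{align*}
\frac{\mu_H(H_T[g_1, g_2^{-1}])}{\mu_H(H_T)} = \frac{C[g_1, g_2^{-1}] + E[g_1, g_2^{-1}] \epsilon(T)}{C[e, e] + E[e, e] \epsilon(T)},
\end{align*}
where $\epsilon(T) = e^{-\eta_1 T}$ in the $\rankG = 1$ case and $\epsilon(T) = T^{-\frac{1}{2}}\log(T)^{\frac{1}{2}}$ in the $\rankG \geq 2$ case. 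The standard algebraic identity
\begin{align*}
\frac{A + B\epsilon}{A' + B'\epsilon} = \frac{A}{A'} + \frac{A'B - AB'}{A'(A' + B'\epsilon)}\, \epsilon
\end{align*}
cleanly separates the main term $C[g_1, g_2^{-1}]/C[e, e]$ from a remainder of order $\epsilon(T)$.

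Next I would identify the main term with the explicit $\tilde{\alpha}(g_1, g_2)$ from the corollary. Since each $k \in K = K_H$ fixes the basepoint $o$, property~(1) of the Busemann function in \cref{subsec:BusemannFunction} yields $\varpi[k_1, k_2] = \beta_{e^+}(o, o) + \involution(\beta_{e^-}(o, o)) = 0$ for all $k_1, k_2 \in K$. Specializing the integral formula for $C[g_1, g_2]$ then gives $C[e, e] = \omega_0 \mu_K(K)^2 / (2^{m_{\Phi^+}}\mu_M(M))$, so $C[g_1, g_2^{-1}]/C[e, e]$ matches the claimed form of $\tilde{\alpha}(g_1, g_2)$. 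Defining $\tilde{\alpha}'(g_1, g_2)$ as the coefficient of $\epsilon(T)$ produced by the identity, continuity in $(g_1, g_2)$ is inherited from that of $C[\cdot, \cdot]$ and $E[\cdot, \cdot]$ guaranteed by \cref{thm:SkewBallVolumeAsymptotic}.

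The hard part, though still routine, is bounding $\tilde{\alpha}'(g_1, g_2) = (C[e, e]\, E[g_1, g_2^{-1}] - C[g_1, g_2^{-1}]\, E[e, e])/(C[e, e](C[e, e] + E[e, e]\epsilon(T)))$ by $O(e^{O(d(o, g_1o) + d(o, g_2o))})$ uniformly in $T$. For the numerator, \cref{thm:SkewBallVolumeAsymptotic} gives $E[g_1, g_2^{-1}] = O(e^{O(d(o, g_1o) + d(o, g_2o))})$ directly, and the same bound on $C[g_1, g_2^{-1}]$ follows from its integral formula together with $\varpi[g_1 k_1, k_2 g_2^{-1}] = O(d(o, g_1 k_1 o) + d(o, k_2 g_2^{-1}o)) = O(d(o, g_1o) + d(o, g_2o))$, where the last equality uses that $k_1, k_2 \in K$ fix $o$ and that the metric on $G/K$ is left $G$-invariant; the constants $C[e, e]$ and $E[e, e]$ are absolute. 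For the denominator, $C[e, e] > 0$ is an absolute positive constant, and $C[e, e] + E[e, e]\epsilon(T) \geq C[e, e]/2$ as soon as $|E[e, e]/C[e, e]|\epsilon(T) \leq 1/2$, which holds beyond an absolute threshold in $T$; in the $\rankG \geq 2$ case this is subsumed by the stated hypothesis $T \geq \Omega(e^{\Omega(d(o, g_1o) + d(o, g_2o))})$, and in the $\rankG = 1$ case the remaining compact range of $T$ is handled by absorbing the bounded discrepancy into $\tilde{\alpha}'(g_1, g_2)$, since $\epsilon(T) = e^{-\eta_1 T}$ is bounded below by an absolute constant there.
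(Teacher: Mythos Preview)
Your proposal is correct and follows essentially the same approach as the paper's proof: divide the two asymptotic formulas from \cref{thm:SkewBallVolumeAsymptotic} and extract the leading ratio $C[g_1,g_2^{-1}]/C[e,e]$ with an $\epsilon(T)$-sized remainder. The only cosmetic difference is that the paper writes the denominator as $1 + O(\epsilon(T))$ and invokes Taylor's theorem, whereas you use the exact algebraic identity for $(A+B\epsilon)/(A'+B'\epsilon)$; you are also more explicit than the paper in verifying the closed form of $C[e,e]$ and in handling the small-$T$ range in the $\rankG = 1$ case.
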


\begin{proof}
Let $g_1$, $g_2$, $C[g_1, g_2^{-1}]$, $C[e, e]$, and $T$ be as in the corollary. Let $E[g_1, g_2^{-1}] = O\bigl(e^{O(d(o, g_1o) + d(o, g_2o))}\bigr)$ and $E[e, e]$ be the constants provided by \cref{thm:SkewBallVolumeAsymptotic}. Suppose $\rankG = 1$. Using \cref{thm:SkewBallVolumeAsymptotic} and Taylor's theorem, we calculate that
\begin{align*}
\frac{\mu_H(H_T[g_1, g_2^{-1}])}{\mu_H(H_T)} &= \frac{C[g_1, g_2^{-1}] e^{\delta_{2\rho} T} + E[g_1, g_2^{-1}] e^{(\delta_{2\rho} - \eta_1)T}}{C[e, e] e^{\delta_{2\rho} T} + E[e, e] e^{(\delta_{2\rho} - \eta_1)T}} \\
&= \frac{\tilde{\alpha}(g_1, g_2) + O\bigl(e^{O(d(o, g_1o) + d(o, g_2o))}\bigr) e^{-\eta_1 T}}{1 + O\bigl(e^{-\eta_1 T}\bigr)} \\
&= \tilde{\alpha}(g_1, g_2) + O\bigl(e^{O(d(o, g_1o) + d(o, g_2o))}\bigr) e^{-\eta_1 T}.
\end{align*}
Suppose $\rankG \geq 2$. Again, using \cref{thm:SkewBallVolumeAsymptotic} and Taylor's theorem, we calculate that
\begin{align*}
\frac{\mu_H(H_T[g_1, g_2^{-1}])}{\mu_H(H_T)} &= \frac{C[g_1, g_2^{-1}] T^{\frac{\rankH - 1}{2}}e^{\delta_{2\rho} T} + E[g_1, g_2^{-1}] \log(T)^{\frac{1}{2}}T^{\frac{\rankH - 2}{2}}e^{\delta_{2\rho}T}}{C[e, e] T^{\frac{\rankH - 1}{2}}e^{\delta_{2\rho} T} + E[e, e] \log(T)^{\frac{1}{2}}T^{\frac{\rankH - 2}{2}}e^{\delta_{2\rho}T}} \\
&= \frac{\tilde{\alpha}(g_1, g_2) + O\bigl(e^{O(d(o, g_1o) + d(o, g_2o))}\bigr) T^{-\frac{1}{2}}\log(T)^{\frac{1}{2}}}{1 + O\bigl(T^{-\frac{1}{2}}\log(T)^{\frac{1}{2}}\bigr)}\\
&= \tilde{\alpha}(g_1, g_2) + O\bigl(e^{O(d(o, g_1o) + d(o, g_2o))}\bigr) T^{-\frac{1}{2}}\log(T)^{\frac{1}{2}}.
\end{align*}
\end{proof}

We need some preparation to prove \cref{thm:SkewBallVolumeAsymptotic}. Let $g_1, g_2 \in G$ and $T > 0$. Define
\begin{align*}
\LieA^+_T[g_1, g_2] &:= \{v \in \LieA^+: d(o, g_1 a_v g_2o) < T\}, & \LieA^+_T &:= \LieA^+_T[e, e].
\end{align*}
Then by the Cartan decomposition, we can write
\begin{align}
\label{eqn:CartanDecompositionOfRiemannianSkewBalls}
H_T[g_1, g_2] = \bigcup_{k_1, k_2 \in K} k_1\LieA^+_T[g_1k_1, k_2g_2]k_2.
\end{align}
As such, the problem of finding asymptotic formulas for $\mu_H(H_T[g_1, g_2])$ as $T \to +\infty$ reduces to the same for $\int_{\LieA^+_T[g_1, g_2]} \xi(v) \, dv$. For the latter problem, we need \cref{lem:LinearAlgebraIntegralExpansion,cor:LinearAlgebraConeIntegralExpansion} which are general results in linear algebra giving precise expansions for certain integrals. This strengthens the asymptotic formula in \cite[Lemma 9.4]{GW07}.

\subsection{Asymptotic formulas for integrals of exponentials of linear forms}
We first record a lemma which will be used in the proof of \cref{lem:LinearAlgebraIntegralExpansion}.

\begin{lemma}
\label{lem:IntegralTailExponentialDecay}
Let $P, Q: \R \to \R$ be polynomials and $\alpha \in \R$. Suppose that the leading term of $Q$ is larger than $T$, i.e., $\lim_{T \to +\infty} (Q(T) - T) = +\infty$. Then, there exists $\eta > 0$ such that for all $T \gg_{P, Q, \alpha} 1$, we have
\begin{align*}
\left|P(T)e^T\int_{Q(T)}^{+\infty} e^{-x} x^\alpha \, dx\right| \leq e^{-\eta T}.
\end{align*}
\end{lemma}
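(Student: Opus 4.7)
The strategy is to bound the upper incomplete gamma-type integral by $e^{-(1-\epsilon)Q(T)}$ up to a constant, and then absorb the polynomial prefactor $P(T)$ and the factor $e^T$ into an exponential with slightly smaller rate, using the fact that $Q(T)/T$ eventually exceeds $1$ by a fixed amount.

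First I would establish the elementary tail bound: for every $\epsilon > 0$, the function $x \mapsto e^{-\epsilon x}x^\alpha$ is bounded on $[1,+\infty)$ by some constant $C_{\alpha,\epsilon} > 0$, and therefore
\begin{align*}
\int_{y}^{+\infty} e^{-x} x^\alpha \, dx \;\leq\; C_{\alpha,\epsilon} \int_y^{+\infty} e^{-(1-\epsilon)x}\,dx \;=\; \frac{C_{\alpha,\epsilon}}{1-\epsilon}\,e^{-(1-\epsilon)y}
\end{align*}
for every $y \geq 1$. This reduces the problem to controlling the quantity $|P(T)|\,e^{T - (1-\epsilon)Q(T)}$ for large $T$.

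Next I would extract the crucial fact from the hypothesis $\lim_{T\to+\infty}(Q(T)-T)=+\infty$: the polynomial $Q$ must have degree at least $1$ with positive leading coefficient, and either $\deg Q \geq 2$, or $\deg Q = 1$ with leading coefficient $c_Q > 1$. In either case, there exists $c > 1$ (depending only on $Q$) such that $Q(T) \geq cT$ for all $T$ sufficiently large. Choosing $\epsilon \in \bigl(0, 1 - \tfrac{1}{c}\bigr)$, set $\eta_0 := (1-\epsilon)c - 1 > 0$. Then for all $T$ large enough (depending on $Q$ and $\epsilon$),
\begin{align*}
(1-\epsilon)Q(T) - T \;\geq\; \eta_0 T.
\end{align*}
Since $|P(T)| \leq e^{(\eta_0/2)T}$ for all $T \gg_{P} 1$, setting $\eta := \eta_0/4$ and taking $T$ large enough to absorb the constant $C_{\alpha,\epsilon}/(1-\epsilon)$ into $e^{-(\eta_0/4)T}$ yields
\begin{align*}
\left| P(T) e^T \int_{Q(T)}^{+\infty} e^{-x} x^\alpha \, dx \right|
\;\leq\; \frac{C_{\alpha,\epsilon}}{1-\epsilon}\,|P(T)|\,e^{T - (1-\epsilon)Q(T)}
\;\leq\; e^{-\eta T}
\end{align*}
for all $T \gg_{P,Q,\alpha} 1$, as desired.

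There is no real obstacle here; the only point requiring modest care is the linear case $\deg Q = 1$ with leading coefficient only slightly larger than $1$, where one must \emph{not} use the crude bound $\int_y^{+\infty}e^{-x}x^\alpha\,dx \lesssim e^{-y/2}$ (which would demand $Q(T)/T \to c > 2$), but instead the sharper bound $e^{-(1-\epsilon)y}$ with $\epsilon$ chosen small relative to $c_Q - 1$.
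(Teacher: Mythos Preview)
Your proof is correct. The paper takes a slightly different route to the tail bound: it replaces $x^\alpha$ by $x^n$ for an integer $n \geq \alpha$ (valid since $x \geq Q(T) \geq 1$ for large $T$), then integrates $\int_{Q(T)}^{+\infty} e^{-x} x^n \, dx$ by parts explicitly to obtain $R(Q(T))\,e^{-Q(T)}$ for some polynomial $R$, arriving at $|P(T)|\,R(Q(T))\,e^{T - Q(T)}$. Your splitting $e^{-x} = e^{-(1-\epsilon)x}e^{-\epsilon x}$ with $e^{-\epsilon x}x^\alpha$ bounded gives instead $C_{\alpha,\epsilon}\,|P(T)|\,e^{T - (1-\epsilon)Q(T)}$. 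The paper's version keeps the full exponent $e^{-Q(T)}$ at the cost of an extra polynomial factor $R(Q(T))$, while yours sacrifices a fraction $\epsilon$ of the exponent to avoid integration by parts entirely and to handle arbitrary real $\alpha$ directly; both then absorb polynomial prefactors into the exponential using the same observation that $Q(T)-T$ grows at least linearly. Your discussion of the borderline case $\deg Q = 1$, $c_Q$ barely above $1$, and the need to pick $\epsilon$ small relative to $c_Q - 1$, is exactly the care that the paper's approach sidesteps by retaining the full $e^{-Q(T)}$.
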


\begin{proof}
Let $P, Q: \R \to \R$ be polynomials and $\alpha \in \R$. Take any $n \in \Z_{\geq 0}$ such that $n \geq \alpha$. We calculate that
\begin{align*}
\left|P(T)e^T\int_{Q(T)}^{+\infty} e^{-x} x^\alpha \, dx\right| &\leq |P(T)|e^T\int_{Q(T)}^{+\infty} e^{-x} x^n \, dx \\
&= |P(T)|e^T [-e^{-x}R(x)]_{Q(T)}^{+\infty} \\
&= |P(T)|e^T \cdot R(Q(T)) e^{-Q(T)} \\
&= |P(T)|R(Q(T)) e^{T - Q(T)}
\end{align*}
for all $T \gg_Q 1$, where $R: \R \to \R$ is some polynomial resulting from iterated integration by parts. The lemma follows.
\end{proof}

In analogy with the notations related to Riemannian skew balls, for an inner product space $V$ over $\R$, we denote by $V_T \subset V$ the open ball of radius $T > 0$ centered at $0 \in V$.

\begin{lemma}
\label{lem:LinearAlgebraIntegralExpansion}
Let $V$ be an inner product space over $\R$ of dimension $r := \dim(V)$, $\lambda \in V^* \setminus \{0\}$, and $\delta_\lambda := \max_{v \in \overline{V_1}} \lambda(v) > 0$. Then, we have the following:
\begin{enumerate}
\item if $r = 1$, then for all $T > 0$, we have
\begin{align*}
\int_{V_T} e^{\lambda(v)} \, dv = \frac{1}{\delta_\lambda}\bigl(e^{\delta_\lambda T} - e^{-\delta_\lambda T}\bigr) = \frac{2}{\delta_\lambda}\sinh(\delta_\lambda T);
\end{align*}
\item if $r \geq 2$, then there exists $\{\omega_k\}_{k = 0}^\infty \subset \R$ defined by
\begin{align}
\label{eqn:omega_k}
\omega_k := (-1)^k \frac{2^{\frac{r - 1}{2} - k}}{\delta_\lambda^{\frac{r + 1}{2} + k}} \binom{\frac{r - 1}{2}}{k} \beta_{r - 1} \int_0^{+\infty} e^{-x} x^{\frac{r - 1}{2} + k} \, dx
\end{align}
such that for all $n \in \N$, there exists $C > 0$ such that for all $T > 0$, we have
\begin{align*}
\left|\int_{V_T} e^{\lambda(v)} \, dv - \sum_{k = 0}^n \omega_k T^{\frac{r - 1}{2} - k} e^{\delta_\lambda T}\right| \leq CT^{\frac{r - 1}{2} - (n + 1)} e^{\delta_\lambda T}.
\end{align*}
\end{enumerate}
\end{lemma}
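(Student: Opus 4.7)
The plan is to reduce the $r$-dimensional integral to a one-dimensional integral via an orthogonal decomposition, isolate the exponential factor by a linear substitution, and then obtain the expansion by a Taylor expansion of a binomial factor, with tails controlled by \cref{lem:IntegralTailExponentialDecay}.

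For property~(1), when $r = 1$, we have $\lambda(v) = \pm\delta_\lambda v$ and the formula is immediate from $\int_{-T}^T e^{\delta_\lambda s}\,ds = \frac{2}{\delta_\lambda}\sinh(\delta_\lambda T)$. For property~(2), I would first choose an orthonormal basis whose first vector is the unique unit vector $v_\lambda \in V$ realizing $\lambda(v_\lambda) = \delta_\lambda$, so $\lambda$ vanishes on $v_\lambda^\perp$. Writing $v = sv_\lambda + w$ with $w \in v_\lambda^\perp$ and using that the $(r-1)$-dimensional volume of a radius $\sqrt{T^2 - s^2}$ ball in $v_\lambda^\perp$ is $\beta_{r-1}(T^2 - s^2)^{\frac{r-1}{2}}$, Fubini gives
\begin{equation*}
\int_{V_T} e^{\lambda(v)}\,dv = \beta_{r-1}\int_{-T}^T e^{\delta_\lambda s}(T^2 - s^2)^{\frac{r-1}{2}}\,ds.
\end{equation*}
Next, the substitution $s = T - x/\delta_\lambda$ converts $e^{\delta_\lambda s}$ into $e^{\delta_\lambda T}e^{-x}$ and factorizes $T^2 - s^2 = (x/\delta_\lambda)(2T - x/\delta_\lambda)$, yielding
\begin{equation*}
\int_{V_T} e^{\lambda(v)}\,dv = \frac{\beta_{r-1}(2T)^{\frac{r-1}{2}}}{\delta_\lambda^{\frac{r+1}{2}}}\,e^{\delta_\lambda T}\int_0^{2\delta_\lambda T} e^{-x} x^{\frac{r-1}{2}}\Bigl(1 - \frac{x}{2\delta_\lambda T}\Bigr)^{\frac{r-1}{2}}\,dx.
\end{equation*}

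I would then split the $x$-integral at $x = \tfrac{3}{2}\delta_\lambda T$. On the tail $\bigl[\tfrac{3}{2}\delta_\lambda T, 2\delta_\lambda T\bigr]$, the factor $(1 - x/(2\delta_\lambda T))^{\frac{r-1}{2}} \leq 1$, so the tail is bounded by $\int_{3\delta_\lambda T/2}^{+\infty} e^{-x}x^{\frac{r-1}{2}}\,dx$; after multiplying by the outer factor, \cref{lem:IntegralTailExponentialDecay} (applied with $T$ replaced by $\delta_\lambda T$ and $Q(T) = \tfrac{3}{2}\delta_\lambda T$, which satisfies $Q(T) - \delta_\lambda T \to +\infty$) shows this is $O(e^{-\eta T})$ for some $\eta > 0$, hence absorbed into any polynomial error. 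On the main piece $\bigl[0, \tfrac{3}{2}\delta_\lambda T\bigr]$, the variable $y := x/(2\delta_\lambda T)$ lies in $\bigl[0, \tfrac{3}{4}\bigr]$, where $(1-y)^{\frac{r-1}{2}}$ is $C^\infty$, so Taylor's theorem on this compact subinterval gives
\begin{equation*}
(1-y)^{\frac{r-1}{2}} = \sum_{k=0}^n \binom{\frac{r-1}{2}}{k}(-y)^k + R_n(y), \qquad |R_n(y)| \ll y^{n+1}.
\end{equation*}

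Substituting back, each main term for $0 \leq k \leq n$ contributes, after re-extending the $x$-integral to $[0, +\infty)$ (the added tail is again exponentially small by \cref{lem:IntegralTailExponentialDecay}),
\begin{equation*}
\frac{\beta_{r-1}(2T)^{\frac{r-1}{2}}}{\delta_\lambda^{\frac{r+1}{2}}}\,e^{\delta_\lambda T}\cdot\binom{\frac{r-1}{2}}{k}\frac{(-1)^k}{(2\delta_\lambda T)^k}\int_0^{+\infty} e^{-x}x^{\frac{r-1}{2}+k}\,dx = \omega_k T^{\frac{r-1}{2}-k} e^{\delta_\lambda T},
\end{equation*}
matching the formula in \cref{eqn:omega_k}, while the Taylor remainder is bounded by
\begin{equation*}
\frac{\beta_{r-1}(2T)^{\frac{r-1}{2}}}{\delta_\lambda^{\frac{r+1}{2}}}\,e^{\delta_\lambda T}\cdot\frac{C}{(2\delta_\lambda T)^{n+1}}\int_0^{+\infty} e^{-x}x^{\frac{r-1}{2}+n+1}\,dx = O\bigl(T^{\frac{r-1}{2}-(n+1)} e^{\delta_\lambda T}\bigr),
\end{equation*}
which is exactly the error bound claimed. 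The only subtle point, and the one to get right, is the choice of cutoff $\tfrac{3}{2}\delta_\lambda T$: it must lie strictly between $\delta_\lambda T$ (so that the tail bound from \cref{lem:IntegralTailExponentialDecay} applies with positive rate) and $2\delta_\lambda T$ (so that Taylor remains valid uniformly on the main piece), and the constant $\tfrac{3}{4}$ enters only through an absolute bound on the Taylor remainder.
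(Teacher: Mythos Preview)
Your proof is correct and follows essentially the same approach as the paper's: reduce to a one-dimensional integral by slicing orthogonally to $v_\lambda$, substitute to isolate $e^{\delta_\lambda T}$, split the resulting integral at $\tfrac{3}{2}\delta_\lambda T$, Taylor expand $(1-y)^{\frac{r-1}{2}}$ on the main piece, and control both tails via \cref{lem:IntegralTailExponentialDecay}. The only cosmetic difference is that the paper first rescales to $\delta_\lambda = 1$, whereas you keep $\delta_\lambda$ explicit throughout; your bookkeeping is correct and recovers the stated $\omega_k$.
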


\begin{proof}
Let $V$, $r$, $\lambda$, and $\delta_\lambda$ be as in the lemma. By rescaling $\lambda$, we may assume without loss of generality that $\delta_\lambda = 1$. Fix $v_\lambda \in V$ to be the unit vector in the direction of maximal growth so that $\lambda(v_\lambda) = \delta_\lambda = 1$.

First suppose that $r = 1$. We can directly calculate that
\begin{align*}
\int_{V_T} e^{\lambda(v)} \, dv = \int_{-T}^T e^{\lambda(xv_\lambda)} \, dx = \int_{-T}^T e^x \, dx = e^T - e^{-T} \qquad \text{for all $T > 0$}.
\end{align*}

Now suppose $r \geq 2$. We have
\begin{align*}
\vol(V_T \cap (xv_\lambda + \ker(\lambda))) = \beta_{r - 1} (T^2 - x^2)^{\frac{r - 1}{2}}
\end{align*}
for all $T > 0$ and $-T \leq x \leq T$. We calculate that for all $T > 0$, we have
\begin{align*}
\int_{V_T} e^{\lambda(v)} \, dv ={}&\int_{-T}^T e^{\lambda(xv_\lambda)} \vol(V_T \cap (xv_\lambda + \ker(\lambda))) \, dx \\
={}&\beta_{r - 1}\int_{-T}^T e^x (T^2 - x^2)^{\frac{r - 1}{2}} \, dx \\
={}&\beta_{r - 1}e^T\int_0^{2T} e^{-x} x^{\frac{r - 1}{2}} (2T - x)^{\frac{r - 1}{2}} \, dx \\
={}&2^{\frac{r - 1}{2}} \beta_{r - 1} T^{\frac{r - 1}{2}} e^T\int_0^{2T} e^{-x} x^{\frac{r - 1}{2}} \left(1 - \frac{x}{2T}\right)^{\frac{r - 1}{2}} \, dx \\
={}&2^{\frac{r - 1}{2}} \beta_{r - 1} T^{\frac{r - 1}{2}} e^T\int_0^{\frac{3T}{2}} e^{-x} x^{\frac{r - 1}{2}} \left(1 - \frac{x}{2T}\right)^{\frac{r - 1}{2}} \, dx \\
{}&+ 2^{\frac{r - 1}{2}} \beta_{r - 1} T^{\frac{r - 1}{2}} e^T\int_{\frac{3T}{2}}^{2T} e^{-x} x^{\frac{r - 1}{2}} \left(1 - \frac{x}{2T}\right)^{\frac{r - 1}{2}} \, dx.
\end{align*}
We call the second term $E_1$ and continue the calculation. For any $n \in \N$ and $T > 0$, we have
\begin{align*}
&\int_{V_T} e^{\lambda(v)} \, dv \\
={}&2^{\frac{r - 1}{2}} \beta_{r - 1} T^{\frac{r - 1}{2}} e^T\int_0^{\frac{3T}{2}} e^{-x} x^{\frac{r - 1}{2}} \left(\sum_{k = 0}^n (-1)^k \binom{\frac{r - 1}{2}}{k} \left(\frac{x}{2T}\right)^k + O\biggl(\left(\frac{x}{2T}\right)^{n + 1}\biggr)\right) \, dx \\
{}&+ E_1 \\
={}&2^{\frac{r - 1}{2}} \beta_{r - 1} T^{\frac{r - 1}{2}} e^T\int_0^{\frac{3T}{2}} e^{-x} x^{\frac{r - 1}{2}} \sum_{k = 0}^n (-1)^k \binom{\frac{r - 1}{2}}{k} \left(\frac{x}{2T}\right)^k \, dx \\
{}&+ 2^{\frac{r - 1}{2}} \beta_{r - 1} T^{\frac{r - 1}{2}} e^T\int_0^{\frac{3T}{2}} e^{-x} x^{\frac{r - 1}{2}} O\biggl(\left(\frac{x}{2T}\right)^{n + 1}\biggr) \, dx + E_1 \\
={}&\sum_{k = 0}^n (-1)^k 2^{\frac{r - 1}{2} - k} \binom{\frac{r - 1}{2}}{k} \beta_{r - 1} T^{\frac{r - 1}{2} - k} e^T\int_0^{\frac{3T}{2}} e^{-x} x^{\frac{r - 1}{2} + k} \, dx + E_1 + E_2
\end{align*}
where the implicit constant is an absolute constant depending only on $r$ and $n$ and we call the second integral term $E_2$. We can define the constants
\begin{align*}
\eta_k = \int_0^{+\infty} e^{-x} x^{\frac{r - 1}{2} + k} \, dx \qquad \text{for all $k \in \Z_{\geq 0}$}
\end{align*}
since the integrals converge. We then continue the calculation and get
\begin{align*}
&\int_{V_T} e^{\lambda(v)} \, dv \\
={}&\sum_{k = 0}^n (-1)^k 2^{\frac{r - 1}{2} - k} \binom{\frac{r - 1}{2}}{k} \beta_{r - 1} T^{\frac{r - 1}{2} - k} e^T\int_0^{+\infty} e^{-x} x^{\frac{r - 1}{2} + k} \, dx \\
&{}- \sum_{k = 0}^n (-1)^k 2^{\frac{r - 1}{2} - k} \binom{\frac{r - 1}{2}}{k} \beta_{r - 1} T^{\frac{r - 1}{2} - k} e^T\int_{\frac{3T}{2}}^{+\infty} e^{-x} x^{\frac{r - 1}{2} + k} \, dx + E_1 + E_2 \\
={}&\sum_{k = 0}^n (-1)^k 2^{\frac{r - 1}{2} - k} \binom{\frac{r - 1}{2}}{k} \beta_{r - 1} \eta_k T^{\frac{r - 1}{2} - k} e^T + E_1 + E_2 + E_3
\end{align*}
where we call the second integral term $E_3$. Thus, it suffices to prove that there exists a constant $C > 0$ depending only on $r$ and $n$ such that for all $T > 0$, we have
\begin{align}
\label{eqn:E1E2E3Estimate}
|E_1 + E_2 + E_3| \leq CT^{\frac{r - 1}{2} - (n + 1)} e^T.
\end{align}

We estimate $|E_1|$ using \cref{lem:IntegralTailExponentialDecay} with
\begin{align*}
P(T) &= 2^{\frac{r - 1}{2}} \beta_{r - 1} T^{\frac{r - 1}{2}}, & Q(T) &= \frac{3T}{2}, & \alpha &= \frac{r - 1}{2}.
\end{align*}
We then obtain constants $\eta_1 > 0$ and $T_1 > 0$ both depending only on $r$ such that for all $T \geq T_1$, we have
\begin{align*}
|E_1| &= \left|2^{\frac{r - 1}{2}} \beta_{r - 1} T^{\frac{r - 1}{2}} e^T\int_{\frac{3T}{2}}^{2T} e^{-x} x^{\frac{r - 1}{2}} \left(1 - \frac{x}{2T}\right)^{\frac{r - 1}{2}} \, dx\right| \\
&\leq 2^{-\frac{r - 1}{2}} \beta_{r - 1} T^{\frac{r - 1}{2}} e^T\int_{\frac{3T}{2}}^{2T} e^{-x} x^{\frac{r - 1}{2}} \, dx \\
&\leq 2^{-\frac{r - 1}{2}} \beta_{r - 1} T^{\frac{r - 1}{2}} e^T\int_{\frac{3T}{2}}^{+\infty} e^{-x} x^{\frac{r - 1}{2}} \, dx \\
&\leq e^{-\eta_1 T}.
\end{align*}

We estimate $|E_2|$ as
\begin{align*}
|E_2| &= \left|2^{\frac{r - 1}{2}} \beta_{r - 1} T^{\frac{r - 1}{2}} e^T\int_0^{\frac{3T}{2}} e^{-x} x^{\frac{r - 1}{2}} O\biggl(\left(\frac{x}{2T}\right)^{n + 1}\biggr) \, dx\right| \\
&\leq C'2^{\frac{r - 1}{2} - (n + 1)} \beta_{r - 1} T^{\frac{r - 1}{2} - (n + 1)} e^T\int_0^{+\infty} e^{-x} x^{\frac{r - 1}{2} + (n + 1)} \, dx \\
&= C'2^{\frac{r - 1}{2} - (n + 1)} \beta_{r - 1} \eta_{n + 1}T^{\frac{r - 1}{2} - (n + 1)} e^T \\
&\leq C''T^{\frac{r - 1}{2} - (n + 1)}e^T
\end{align*}
for all $T > 0$, where $C' > 0$ and $C'' > 0$ are constants depending only on $r$ and $n$.

We estimate $|E_3|$ using \cref{lem:IntegralTailExponentialDecay} for all $0 \leq k \leq n$ with
\begin{align*}
P(T) &= (-1)^k 2^{\frac{r - 1}{2} - k} \binom{\frac{r - 1}{2}}{k} \beta_{r - 1} T^{\frac{r - 1}{2} - k}, & Q(T) &= \frac{3T}{2}, & \alpha &= \frac{r - 1}{2} + k.
\end{align*}
We then obtain constants $\eta_3 > 0$ and $T_3 > 0$ both depending only on $r$ and $n$ such that for all $T \geq T_3$, we have
\begin{align*}
|E_3| \leq \sum_{k = 0}^n \left|2^{\frac{r - 1}{2} - k} \binom{\frac{r - 1}{2}}{k} \beta_{r - 1} T^{\frac{r - 1}{2} - k} e^T\int_{\frac{3T}{2}}^{+\infty} e^{-x} x^{\frac{r - 1}{2} + k} \, dx\right| \leq e^{-\eta_3 T}.
\end{align*}

Combining the above three estimates gives \cref{eqn:E1E2E3Estimate}, concluding the proof.
\end{proof}

For an inner product space $V$ over $\R$, a fixed $v_0 \in V$ with $\|v_0\| = 1$, and $\tau \in (0, 2]$, we denote a circular open cone about $v_0$ by
\begin{align}
\label{eqn:ConeDefinition}
\mathcal{C}^V_{v_0, \tau} = \biggl\{v \in V \setminus \{0\}: \biggl\|\frac{v}{\|v\|} - v_0\biggr\| < \tau\biggr\}.
\end{align}
Note that $\mathcal{C}^V_{v_0, \tau}$ is an open convex cone if and only if $\tau \in (0, \sqrt{2}]$. If $\dim(V) = 1$, then we omit the parameter $\tau$ since it is redundant and simply write $\mathcal{C}^V_{v_0}$.

\begin{corollary}
\label{cor:LinearAlgebraConeIntegralExpansion}
Let $V$ be an inner product space over $\R$ of dimension $r := \dim(V)$, $\lambda \in V^* \setminus \{0\}$, and $\delta_\lambda := \lambda(v_\lambda) = \max_{v \in \overline{V_1}} \lambda(v) > 0$ for some $v_\lambda \in V$ with $\|v_\lambda\| = 1$, and $\tau \in (0, \sqrt{2}]$. Then, we have the following:
\begin{enumerate}
\item if $r = 1$, then for all $T > 0$, we have
\begin{align*}
\int_{\mathcal{C}^V_{v_\lambda} \cap V_T} e^{\lambda(v)} \, dv = \frac{1}{\delta_\lambda}\bigl(e^{\delta_\lambda T} - 1\bigr);
\end{align*}
\item if $r \geq 2$, then
\begin{enumerate}
\item for all $T > 0$, we have
\begin{align*}
\left|\int_{V_T \setminus \mathcal{C}^V_{v_\lambda, \tau}} e^{\lambda(v)} \, dv\right| \leq \beta_rT^re^{\delta_\lambda(1 - \frac{\tau^2}{2})T};
\end{align*}
\item there exists $\{\omega_k\}_{k = 0}^\infty \subset \R$ defined as in \cref{eqn:omega_k} such that for all $n \in \N$, there exists $C > 0$ independent of $\tau$ such that for all $T > 0$, we have
\begin{multline*}
\left|\int_{\mathcal{C}^V_{v_\lambda, \tau} \cap V_T} e^{\lambda(v)} \, dv - \sum_{k = 0}^n \omega_k T^{\frac{r - 1}{2} - k} e^{\delta_\lambda T}\right| \\
\leq CT^{\frac{r - 1}{2} - (n + 1)} e^{\delta_\lambda T} + \beta_r T^re^{\delta_\lambda(1 - \frac{\tau^2}{2})T}.
\end{multline*}
\end{enumerate}
\end{enumerate}
\end{corollary}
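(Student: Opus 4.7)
The plan is to deduce this directly from \cref{lem:LinearAlgebraIntegralExpansion} together with an elementary angular estimate on $\lambda$ outside the cone $\mathcal{C}^V_{v_\lambda, \tau}$. The two items in the $r \geq 2$ case are complementary: (2a) handles the cone's complement by brute force (volume times a pointwise exponential bound), and (2b) then follows by subtracting from the full ball integral of \cref{lem:LinearAlgebraIntegralExpansion}(2).

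For part (1), the cone $\mathcal{C}^V_{v_\lambda}$ is literally the open ray $\R_{>0}\,v_\lambda$, so parametrizing $v = xv_\lambda$ with $x \in (0, T)$ collapses the integral to $\int_0^T e^{\delta_\lambda x}\,dx = \delta_\lambda^{-1}(e^{\delta_\lambda T} - 1)$, as claimed.

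For part (2a), identifying $V^* \cong V$ via the inner product together with the hypothesis $\delta_\lambda = \lambda(v_\lambda) = \max_{\|v\|=1} \lambda(v)$ forces $\lambda(v) = \delta_\lambda \langle v, v_\lambda \rangle$. The polarization identity gives $\langle u, v_\lambda \rangle = 1 - \tfrac{1}{2}\|u - v_\lambda\|^2$ for unit vectors $u \in V$, so $u \notin \mathcal{C}^V_{v_\lambda, \tau}$ forces $\langle u, v_\lambda \rangle \leq 1 - \tau^2/2$, a quantity that is nonnegative under the assumption $\tau \leq \sqrt{2}$. Writing $v = \|v\|\,u$ for $v \in V_T \setminus \mathcal{C}^V_{v_\lambda, \tau}$, one obtains
\begin{align*}
\lambda(v) = \delta_\lambda \|v\| \langle u, v_\lambda \rangle \leq \delta_\lambda T\bigl(1 - \tfrac{\tau^2}{2}\bigr),
\end{align*}
where the inequality remains valid even when $\langle u, v_\lambda \rangle < 0$ since its right-hand side is nonnegative. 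Bounding the integrand pointwise by $e^{\delta_\lambda(1 - \tau^2/2)T}$ and the volume of the domain by $\vol(V_T) = \beta_r T^r$ yields the claimed estimate.

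For part (2b), decompose
\begin{align*}
\int_{\mathcal{C}^V_{v_\lambda, \tau} \cap V_T} e^{\lambda(v)}\,dv = \int_{V_T} e^{\lambda(v)}\,dv \;-\; \int_{V_T \setminus \mathcal{C}^V_{v_\lambda, \tau}} e^{\lambda(v)}\,dv,
\end{align*}
apply \cref{lem:LinearAlgebraIntegralExpansion}(2) to the first term (so that the constant $C$ is inherited from the lemma and depends on $r$ and $n$ but not on $\tau$), and apply part (2a) to bound the second. Summing the two error terms produces the stated bound. No genuine obstacle arises beyond the angular observation in (2a); all the analytic work has already been carried out in the preceding lemma.
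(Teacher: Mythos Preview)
Your proof is correct and follows essentially the same route as the paper: a direct one-variable computation for (1), the angular bound $\lambda(v) \le \delta_\lambda(1-\tau^2/2)T$ on the complement of the cone for (2a), and subtraction from \cref{lem:LinearAlgebraIntegralExpansion}(2) for (2b). Your normalization $v = \|v\|u$ and explicit handling of the case $\langle u, v_\lambda\rangle < 0$ are if anything slightly cleaner than the paper's phrasing.
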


\begin{proof}
Let $V$, $r$, $\lambda$, $\delta_\lambda$, $v_\lambda$, $\tau$, and $T$ be as in the lemma. First suppose that $r = 1$ for property~(1). We can directly calculate that
\begin{align*}
\int_{\mathcal{C}^V_{v_\lambda} \cap V_T} e^{\lambda(v)} \, dv = \int_0^T e^{\lambda(xv_\lambda)} \, dx = \int_0^T e^{\delta_\lambda x} \, dx = \frac{1}{\delta_\lambda}\bigl(e^{\delta_\lambda T} - 1\bigr).
\end{align*}
Now suppose $r \geq 2$. For all $v \in \overline{V_1} \setminus \mathcal{C}^V_{v_\lambda, \tau}$, we have
\begin{align*}
\lambda(v) = \delta_\lambda\langle v, v_\lambda\rangle = \delta_\lambda\|v\|\cos(\theta(v, v_\lambda)) = \delta_\lambda\|v\|\left(1 - \frac{\|v - v_\lambda\|^2}{2}\right) \leq \delta_\lambda\left(1 - \frac{\tau^2}{2}\right)
\end{align*}
where $\theta(v, v_\lambda) \in (0, \frac{\pi}{2}]$ is the angle between the vectors $v$ and $v_\lambda$. Here, we have used the cosine rule to obtain the third equality. The final inequality is valid since $\tau \in (0, \sqrt{2}]$ or equivalently since $\mathcal{C}^V_{v_\lambda, \tau}$ is an open convex cone. Moreover, the inequality is sharp since it is an equality if and only if $v \in \partial\mathcal{C}^V_{v_\lambda, \tau}$ with $\|v\| = 1$. Hence, we calculate that
\begin{align*}
\left|\int_{V_T \setminus \mathcal{C}^V_{v_\lambda, \tau}} e^{\lambda(v)} \, dv\right| &\leq \vol\bigl(V_T \setminus \mathcal{C}^V_{v_\lambda, \tau}\bigr) \max_{v \in \overline{V_T} \setminus \mathcal{C}^V_{v_\lambda, \tau}} e^{\lambda(v)} \\
&\leq \vol(V_T) e^{\delta_\lambda(1 - \frac{\tau^2}{2})T} \\
&\leq \beta_rT^re^{\delta_\lambda(1 - \frac{\tau^2}{2})T}
\end{align*}
which establishes property~(2)(a) of the corollary. Property~(2)(b) of the corollary follows by combining this estimate with \cref{lem:LinearAlgebraIntegralExpansion}.
\end{proof}

\subsection{Proof of the asymptotic formulas for Riemannian skew balls}
We will use \cref{cor:LinearAlgebraConeIntegralExpansion} to prove \cref{thm:SkewBallVolumeAsymptotic}.

\begin{proof}[Proof of \cref{thm:SkewBallVolumeAsymptotic}]
Let $g_1, g_2 \in G$ and $T > 0$. Throughout the proof, the dependence of implicit constants on $g_1$ and $g_2$ are all continuous. In light of the integral formula from \cref{eqn:IntegralFormulaForH} and \cref{eqn:CartanDecompositionOfRiemannianSkewBalls}, we first focus on
\begin{align*}
\int_{\LieA^+_T[g_1, g_2]} \xi(v) \, dv.
\end{align*}
For all $v \in \LieA^+$, we have
\begin{align}
\label{eqn:xiFormula}
\xi(v) = \prod_{\alpha \in \Phi^+} \sinh^{m_\alpha}(\alpha(v)) = \frac{e^{\sum_{\alpha \in \Phi^+} m_\alpha \alpha(v)}}{2^{\sum_{\alpha \in \Phi ^+} m_\alpha}} + \sum_{j = 1}^N c_je^{\lambda_j(v)} = \frac{e^{2\rho(v)}}{2^{m_{\Phi^+}}} + \sum_{j = 1}^N c_je^{\lambda_j(v)}
\end{align}
for some $N \in \N$, $\{c_j\}_{j = 1}^N \subset \R$, and $\{\lambda_j\}_{j = 1}^N \subset \LieA^*$. Recall the unit vector $v_{2\rho} \in \interior(\LieA^+)$ in the direction of maximal growth and the growth rate $\delta_{2\rho} > 0$ for $2\rho$. Note that
\begin{align}
\label{eqn:lambdaLessThanTworho}
\lambda_j(v) < 2\rho(v) \qquad \text{for all $v \in \interior(\LieA^+)$ and $1 \leq j \leq N$}.
\end{align}
In fact, if $\rankG = 1$, identifying $\LieA \cong \R$ as inner product spaces, we have a stronger statement: for $\eta_H := \min_{\alpha \in \Phi^+} \|\alpha\| \in (0, \delta_{2\rho})$, we have
\begin{align}
\label{eqn:RankOnelambdaLessThanTworho}
\lambda_j(v) \leq 2\rho(v) - \eta_H v \leq (\delta_{2\rho} - \eta_1)v \qquad \text{for all $v \in \LieA$ and $1 \leq j \leq N$}.
\end{align}
Using the above formula gives
\begin{align}
\label{eqn:IntegralOfXi}
\int_{\LieA^+_T[g_1, g_2]} \xi(v) \, dv = \frac{1}{2^{m_{\Phi^+}}}\int_{\LieA^+_T[g_1, g_2]} e^{2\rho(v)} \, dv + \sum_{j = 1}^N c_j \int_{\LieA^+_T[g_1, g_2]} e^{\lambda_j(v)} \, dv.
\end{align}
We mainly focus on the asymptotic of $\int_{\LieA_T[g_1, g_2]} e^{2\rho(v)} \, dv$ since we will see later that it contributes the main term and the summation term is negligible.

Let $v \in \LieA^+$ with $\|v\| = 1$. Define
\begin{align*}
s_v(T) &= \inf\{s > 0: sv \notin \LieA_T^+[g_1, g_2]\} \\
S_v(T) &= \sup\{s > 0: sv \in \LieA_T^+[g_1, g_2]\}
\end{align*}
which records the first and last times at which the radial line $\{sv: s > 0\}$ leaves the set $\LieA_T^+[g_1, g_2]$. Whenever $T > d(o, g_1g_2o)$, we have
\begin{align}
\label{eqn:FirstLieASkewBallContainment}
\{tv: v \in \LieA^+, 0 \leq t < s_v(T)\} \subset \LieA_T^+[g_1, g_2] \subset \{tv: v \in \LieA^+, 0 \leq t < S_v(T)\}.
\end{align}
Observe that by continuity, we have
\begin{align*}
d(o, g_1a_{s_v(T) v}g_2o) = d(o, g_1a_{S_v(T) v}g_2o) = T.
\end{align*}
By the triangle and reverse triangle inequalities and left $G$-invariance of the metric, we immediately get the estimates
\begin{align}
\label{eqn:T_sv_Sv_Estimate}
|T - s_v(T)| &\leq d(o, g_1o) + d(o, g_2o), & |T - S_v(T)| &\leq d(o, g_1o) + d(o, g_2o).
\end{align}
We will find the precise asymptotic formula for $s_v(T)$ and $S_v(T)$ in terms of $T$. We define the vector (see \cref{sec:Preliminaries} for the definition of $\involution$)
\begin{align*}
\varpi := \beta_{e^+}(g_1^{-1}o, o) + \involution(\beta_{e^-}(g_2o, o)).
\end{align*}
The bound $\|\varpi\| \ll d(o, g_1o) + d(o, g_2o)$ is useful. It can be obtained using Harish-Chandra's inequality as in the end of the proof of \cref{pro:PreciseGeometricBusemannFunction}. We calculate that
\begin{align*}
&|T - s_v(T) - \langle \varpi, v\rangle| \\
={}&|d(o, g_1a_{s_v(T) v}g_2o) - d(o, a_{s_v(T) v}o) - \langle \beta_{e^+}(g_1^{-1}o, o), v\rangle - \langle \involution(\beta_{e^-}(g_2o, o)), v\rangle| \\
\leq{}&|d(o, g_1a_{s_v(T) v}g_2o) - d(o, a_{s_v(T) v}g_2o) - \langle \beta_{e^+}(g_1^{-1}o, o), v\rangle| \\
{}&+ |d(o, a_{s_v(T) v}g_2o) - d(o, a_{s_v(T) v}o) - \langle \involution(\beta_{e^-}(g_2o, o)), v\rangle| \\
={}&|d(g_1^{-1}o, a_{s_v(T) v}g_2o) - d(o, a_{s_v(T) v}g_2o) - \langle \beta_{e^+}(g_1^{-1}o, o), v\rangle| \\
{}&+ |d(g_2o, a_{-s_v(T) v}o) - d(o, a_{-s_v(T) v}o) - \langle \involution(\beta_{e^-}(g_2o, o)), v\rangle| \\
={}&|d(g_1^{-1}o, a_{s_v(T) v}g_2o) - d(o, a_{s_v(T) v}g_2o) - \langle \beta_{e^+}(g_1^{-1}o, o), v\rangle| \\
{}&+ |d(g_2o, w_0a_{\Ad_{w_0}(-s_v(T) v)}w_0^{-1}o) - d(o, w_0a_{\Ad_{w_0}(-s_v(T) v)}w_0^{-1}o) \\
{}&- \langle \involution(\beta_{w_0e^+}(g_2o, o)), v\rangle| \\
={}&|d(g_1^{-1}o, a_{s_v(T) v}g_2o) - d(o, a_{s_v(T) v}g_2o) - \langle \beta_{e^+}(g_1^{-1}o, o), v\rangle| \\
{}&+ |d(w_0^{-1}g_2o, a_{s_v(T)\involution(v)}o) - d(o, a_{s_v(T)\involution(v)}o) - \langle \beta_{e^+}(w_0^{-1}g_2o, o), \involution(v)\rangle|
\end{align*}
where we have used properties of the Busemann function and $w_0 \in K$. Thus, applying \cref{cor:PreciseGeometricBusemannFunction} gives
\begin{align*}
|T - s_v(T) - \langle \varpi, v\rangle| \leq
\begin{cases}
O\bigl(e^{O(d(o, g_1o) + d(o, g_2o))}\bigr) e^{-\eta_1 s_v(T)}, & \rankG = 1 \\
O\bigl(e^{O(d(o, g_1o) + d(o, g_2o))}\bigr) s_v(T)^{-1}, & \rankG \geq 2
\end{cases}
\end{align*}
where we recall the constant $\eta_1 > 0$ from \cref{eqn:Eta1}. The same calculations hold for $S_v(T)$ in place of $s_v(T)$. Thus, using the estimates from \cref{eqn:T_sv_Sv_Estimate} in the right hand side of the above bound, for all $T \gg d(o, g_1o) + d(o, g_2o)$, we get
\begin{align}
\label{eqn:sv_Sv_Formula}
|T - s_v(T) - \langle \varpi, v\rangle| &\leq E_\rankG(T), & |T - S_v(T) - \langle \varpi, v\rangle| &\leq E_\rankG(T)
\end{align}
where for some appropriate $C_1 = O\bigl(e^{O(d(o, g_1o) + d(o, g_2o))}\bigr)$, we define
\begin{align*}
E_\rankG(T) :=
\begin{cases}
C_1 e^{-\eta_1 T}, & \rankG = 1 \\
C_1 T^{-1}, & \rankG \geq 2.
\end{cases}
\end{align*}

Now, we prepare by introducing the open convex cone $\mathcal{C}^\LieA_{v_{2\rho}, \tau} \subset \interior(\LieA^+)$ for some sufficiently small parameter $\tau \in (0, \sqrt{2}]$ so that the containment holds. Recall that the parameter $\tau$ is only relevant when $\rankG \geq 2$. For all $v \in \mathcal{C}^\LieA_{v_{2\rho}, \tau}$ with $\|v\| = 1$, the Cauchy--Schwarz inequality and the definition of the open convex cone from \cref{eqn:ConeDefinition} gives the bound
\begin{align}
\label{eqn:varpiInnerProductBound}
|\langle \varpi, v\rangle - \langle \varpi, v_{2\rho}\rangle| \leq \|\varpi\| \cdot \|v - v_{2\rho}\| < \|\varpi\|\tau.
\end{align}
Thus, when $T > d(o, g_1g_2o)$, combining \cref{eqn:FirstLieASkewBallContainment} with \cref{eqn:sv_Sv_Formula,eqn:varpiInnerProductBound}, we deduce the containments
\begin{align}
\label{eqn:SecondLieASkewBallContainment}
\begin{aligned}
&\mathcal{C}^\LieA_{v_{2\rho}, \tau} \cap \LieA_{T - \langle \varpi, v_{2\rho}\rangle - \|\varpi\|\tau - E_\rankG(T)} \\
\subset{}&\mathcal{C}^\LieA_{v_{2\rho}, \tau} \cap \LieA_T^+[g_1, g_2] \\
\subset{}&\mathcal{C}^\LieA_{v_{2\rho}, \tau} \cap \LieA_{T - \langle \varpi, v_{2\rho}\rangle + \|\varpi\|\tau + E_\rankG(T)}.
\end{aligned}
\end{align}
Notice that the radius in the subscript has a main term $T - \langle \varpi, v_{2\rho}\rangle$ which is fixed on both sides of the containment and an error term $\|\varpi\|\tau + E_\rankG(T)$ which changes sign on both sides of the containment. See \cref{fig:SkewBallVolumeCalculation}.

\begin{figure}
\centering
\includegraphics{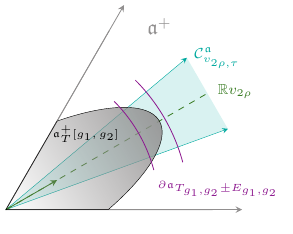}
\caption{This depicts the shrinking cone within which (the $\LieA^+$-component of) the Riemannian skew ball is sandwiched between two balls. Here, $T_{g_1, g_2} = T - \langle \varpi, v_{2\rho}\rangle$ and $E_{g_1, g_2} = \|\varpi\|\tau + E_\rankG(T)$.}
\label{fig:SkewBallVolumeCalculation}
\end{figure}

Now, we will focus on deriving the asymptotic formula for the integral in question in \cref{eqn:IntegralOfXi}. For the rest of the proof the implicit constant in the notation $O_{g_1, g_2}$ \emph{is always $O\bigl(e^{O(d(o, g_1o) + d(o, g_2o))}\bigr)$}. We do not explicitly write it for the sake of readability.

First suppose $\rankG = 1$. In this case, $\mathcal{C}^\LieA_{v'} \cap \LieA_{T'}[g_1, g_2] = \mathcal{C}^\LieA_{v'} \cap \LieA_{T'}^+[g_1, g_2] = \LieA_{T'}^+[g_1, g_2] \setminus \{0\}$ for all nonzero $v' \in \LieA^+$ and $T' > 0$. For $T \ll d(o, g_1o) + d(o, g_2o)$, the integral in question in \cref{eqn:IntegralOfXi} and the main term given by the theorem are just some uniformly bounded constants depending continuously on $g_1$ and $g_2$ of order $O\bigl(e^{O(d(o, g_1o) + d(o, g_2o))}\bigr)$ using \cref{eqn:T_sv_Sv_Estimate}, so assume $T \gg d(o, g_1o) + d(o, g_2o)$ with the same appropriate implicit constant of our choice as in the reverse inequality. The main term for the integral is contributed by the first term on the right hand side. Set $C_1' = C_1 + \|\varpi\| = O\bigl(e^{O(d(o, g_1o) + d(o, g_2o))}\bigr)$. Using \cref{eqn:SecondLieASkewBallContainment} and property~(1) in \cref{cor:LinearAlgebraConeIntegralExpansion}, setting the parameter $\tau = e^{-\eta_1 T}$, and using Taylor's theorem, we have
\begin{align}
\label{eqn:RankOneIntegralOfExpTwoRho}
\begin{aligned}
&\int_{\LieA_T^+[g_1, g_2]} e^{2\rho(v)} \, dv \leq \int_{\LieA_{T - \langle \varpi, v_{2\rho}\rangle + \|\varpi\|\tau + C_1e^{-\eta_1 T}}^+} e^{2\rho(v)} \, dv \\
={}&\frac{1}{\delta_{2\rho}}\bigl(e^{\delta_{2\rho}(T - \langle \varpi, v_{2\rho}\rangle + \|\varpi\|\tau + C_1e^{-\eta_1 T})} - 1\bigr) \\
\leq{}&\delta_{2\rho}^{-1}e^{-\delta_{2\rho}\langle \varpi, v_{2\rho}\rangle} e^{\delta_{2\rho}T} e^{\delta_{2\rho}C_1'e^{-\eta_1 T}} \\
={}&\delta_{2\rho}^{-1}e^{-\delta_{2\rho}\langle \varpi, v_{2\rho}\rangle} e^{\delta_{2\rho}T} \bigl(1 + O_{g_1, g_2}\bigl(e^{-\eta_1 T}\bigr)\bigr).
\end{aligned}
\end{align}
Henceforth, we introduce the notation of the form $C[g_1, g_2]$ for some varying symbol $C$ to indicate significant constants which depend continuously on $g_1$ and $g_2$. Thus, taking
\begin{align*}
\tilde{C}[g_1, g_2] := 2^{-m_{\Phi^+}}\delta_{2\rho}^{-1}e^{-\delta_{2\rho}\langle \varpi, v_{2\rho}\rangle} > 0
\end{align*}
and some other appropriate constant $\tilde{E}[g_1, g_2] = O\bigl(e^{O(d(o, g_1o) + d(o, g_2o))}\bigr)$, we obtain
\begin{align*}
\frac{1}{2^{m_{\Phi^+}}}\int_{\LieA_T^+[g_1, g_2]} e^{2\rho(v)} \, dv \leq \tilde{C}[g_1, g_2] e^{\delta_{2\rho}T} + \tilde{E}[g_1, g_2] e^{(\delta_{2\rho} - \eta_1)T}.
\end{align*}
We can calculate the reverse inequality in a similar fashion and adjust $\tilde{E}[g_1, g_2] = O\bigl(e^{O(d(o, g_1o) + d(o, g_2o))}\bigr)$ if necessary to obtain
\begin{align*}
\frac{1}{2^{m_{\Phi^+}}}\int_{\LieA_T^+[g_1, g_2]} e^{2\rho(v)} \, dv \geq \tilde{C}[g_1, g_2] e^{\delta_{2\rho}T} - \tilde{E}[g_1, g_2] e^{(\delta_{2\rho} - \eta_1)T}.
\end{align*}
Combining the two gives
\begin{align*}
\Biggl|\frac{1}{2^{m_{\Phi^+}}}\int_{\LieA_T^+[g_1, g_2]} e^{2\rho(v)} \, dv - \tilde{C}[g_1, g_2] e^{\delta_{2\rho}T}\Biggr| \leq \tilde{E}[g_1, g_2] e^{(\delta_{2\rho} - \eta_1)T}.
\end{align*}
The sum of integrals in \cref{eqn:IntegralOfXi} is negligible compared to the rest for the following reason. Using \cref{eqn:RankOnelambdaLessThanTworho} and calculating as in \cref{eqn:RankOneIntegralOfExpTwoRho} gives
\begin{align*}
\Biggl|\int_{\LieA^+_T[g_1, g_2]} e^{\lambda_j(v)} \, dv\Biggr| \leq \int_{\LieA^+_T[g_1, g_2]} e^{2\rho(v) - \eta_1 v} \, dv \leq \tilde{E}'[g_1, g_2]e^{(\delta_{2\rho} - \eta_1)T}
\end{align*}
for some constant $\tilde{E}'[g_1, g_2] > 0$ (in fact, we could have used the cruder upper bound $\vol(\LieA^+_T[g_1, g_2]) \max_{1 \leq j \leq N} \max_{v \in \overline{\LieA^+_T[g_1, g_2]}} e^{\lambda_j(v)}$ along with \cref{eqn:T_sv_Sv_Estimate}). Thus, combining the previous two inequalities and adjusting the constant $\tilde{E}[g_1, g_2] = O\bigl(e^{O(d(o, g_1o) + d(o, g_2o))}\bigr)$ gives
\begin{align*}
\Biggl|\int_{\LieA_T^+[g_1, g_2]} \xi(v) \, dv - \tilde{C}[g_1, g_2] e^{\delta_{2\rho}T}\Biggr| \leq \tilde{E}[g_1, g_2] e^{(\delta_{2\rho} - \eta_1)T}.
\end{align*}

Now suppose $\rankG \geq 2$. In this case we assume $T \geq \Omega\bigl(e^{\Omega(d(o, g_1o) + d(o, g_2o))}\bigr)$ for any appropriate implicit constants of our choice. The main term for the integral in \cref{eqn:IntegralOfXi} is contributed by the first term on the right hand side over the set $\mathcal{C}^\LieA_{v_{2\rho}, \tau} \cap \LieA_T^+[g_1, g_2]$. Using \cref{eqn:SecondLieASkewBallContainment} and property~(2)(b) in \cref{cor:LinearAlgebraConeIntegralExpansion}, and writing
\begin{align*}
\omega_0 := 2^{\frac{\rankH - 1}{2}}\delta_{2\rho}^{-\frac{\rankH + 1}{2}} \beta_{\rankH - 1} \int_0^{+\infty} e^{-x} x^{\frac{\rankH - 1}{2}} \, dx > 0
\end{align*}
as defined in \cref{eqn:omega_k} and $C_2 > 0$ for the constant $C$ from the corollary, we have
\begin{align}
\label{eqn:IntegralOfExpTwoRho}
\begin{aligned}
&\int_{\mathcal{C}^\LieA_{v_{2\rho}, \tau} \cap \LieA_T^+[g_1, g_2]} e^{2\rho(v)} \, dv \leq \int_{\mathcal{C}^\LieA_{v_{2\rho}, \tau} \cap \LieA_{T - \langle \varpi, v_{2\rho}\rangle + \|\varpi\|\tau + C_1T^{-1}}} e^{2\rho(v)} \, dv \\
\leq{}&\omega_0 \bigl(T - \langle \varpi, v_{2\rho}\rangle + \|\varpi\|\tau + C_1T^{-1}\bigr)^{\frac{\rankH - 1}{2}} e^{\delta_{2\rho} (T - \langle \varpi, v_{2\rho}\rangle + \|\varpi\|\tau + C_1T^{-1})} \\
{}&+ C_2\bigl(T - \langle \varpi, v_{2\rho}\rangle + \|\varpi\|\tau + C_1T^{-1}\bigr)^{\frac{\rankH - 3}{2}} e^{\delta_{2\rho} (T - \langle \varpi, v_{2\rho}\rangle + \|\varpi\|\tau + C_1T^{-1})} \\
{}&+ \beta_\rankH \bigl(T - \langle \varpi, v_{2\rho}\rangle + \|\varpi\|\tau + C_1T^{-1}\bigr)^\rankH e^{\delta_{2\rho}(1 - \frac{\tau^2}{2})(T - \langle \varpi, v_{2\rho}\rangle + \|\varpi\|\tau + C_1T^{-1})}.
\end{aligned}
\end{align}
Similarly, for the inequality in the other direction, we have
\begin{align}
\label{eqn:IntegralOfExpTwoRhoReverse}
\begin{aligned}
&\int_{\mathcal{C}^\LieA_{v_{2\rho}, \tau} \cap \LieA_T^+[g_1, g_2]} e^{2\rho(v)} \, dv \geq \int_{\mathcal{C}^\LieA_{v_{2\rho}, \tau} \cap \LieA_{T - \langle \varpi, v_{2\rho}\rangle - \|\varpi\|\tau - C_1T^{-1}}} e^{2\rho(v)} \, dv \\
\geq{}&\omega_0 \bigl(T - \langle \varpi, v_{2\rho}\rangle - \|\varpi\|\tau - C_1T^{-1}\bigr)^{\frac{\rankH - 1}{2}} e^{\delta_{2\rho} (T - \langle \varpi, v_{2\rho}\rangle - \|\varpi\|\tau - C_1T^{-1})} \\
{}&- C_2\bigl(T - \langle \varpi, v_{2\rho}\rangle - \|\varpi\|\tau - C_1T^{-1}\bigr)^{\frac{\rankH - 3}{2}} e^{\delta_{2\rho} (T - \langle \varpi, v_{2\rho}\rangle - \|\varpi\|\tau - C_1T^{-1})} \\
{}&- \beta_\rankH \bigl(T - \langle \varpi, v_{2\rho}\rangle - \|\varpi\|\tau - C_1T^{-1}\bigr)^\rankH e^{\delta_{2\rho}(1 - \frac{\tau^2}{2})(T - \langle \varpi, v_{2\rho}\rangle - \|\varpi\|\tau - C_1T^{-1})}.
\end{aligned}
\end{align}
In \cref{eqn:IntegralOfExpTwoRho,eqn:IntegralOfExpTwoRhoReverse}, we wish to show that by choosing $\tau$ appropriately, the main term of the asymptotic behavior is contributed simply by the leading term $T$ both in the middle factor and in the exponent of the last factor of the first term, and that all other terms contribute to either the error term or are negligible even compared to the error term. The term $\|\varpi\|\tau$ in the exponent is especially delicate and the factor $\tau$ is essential. If we trivially bound $|\langle \varpi, v\rangle| \leq \|\varpi\|$ instead of \cref{eqn:varpiInnerProductBound}, then due to its acquired \emph{sign difference} in \cref{eqn:IntegralOfExpTwoRho,eqn:IntegralOfExpTwoRhoReverse}, its two exponentials are constants, but each a reciprocal of the other. This is detrimental for the calculation of the precise main term. Thus, it was crucial to restrict to the open convex cone $\mathcal{C}^\LieA_{v_{2\rho}, \tau}$ and estimate the Busemann function. It was also crucial to use the precise form of the Busemann function and the simple but explicit estimate in \cref{eqn:varpiInnerProductBound}. Now, we can control the exponential contribution of the term $\|\varpi\|\tau$ by choosing $\tau$ appropriately. Namely, we will see later that it is sufficient to impose the criteria that $\tau$ is decreasing in $T$ in the fashion
\begin{align}
\label{eqn:tauConstriantI}
\tau = O(T^{-\epsilon})
\end{align}
for some $\epsilon > 0$ and an absolute implicit constant. However, we are not completely free to take any $\epsilon$ of our choice due to the last term in \cref{eqn:IntegralOfExpTwoRho,eqn:IntegralOfExpTwoRhoReverse} which is delicate to deal with.

Let us first focus on the last term in \cref{eqn:IntegralOfExpTwoRho} before returning to the first two terms. We wish to make it negligible compared to the rest by choosing $\tau$ appropriately. For this purpose also, it was crucial to restrict to the cone $\mathcal{C}^\LieA_{v_{2\rho}, \tau}$ and keep track of the precise improved factor $\bigl(1 - \frac{\tau^2}{2}\bigr)$ in the exponent of the last term in \cref{eqn:IntegralOfExpTwoRho} via property~(2)(b) in \cref{cor:LinearAlgebraConeIntegralExpansion}. Expanding the exponent of the last term in \cref{eqn:IntegralOfExpTwoRho}, it reads
\begin{align*}
&\delta_{2\rho}\biggl(1 - \frac{\tau^2}{2}\biggr)\bigl(T - \langle \varpi, v_{2\rho}\rangle + \|\varpi\|\tau + C_1T^{-1}\bigr) \\
={}&\delta_{2\rho}T - \delta_{2\rho}\langle \varpi, v_{2\rho}\rangle + \delta_{2\rho}\|\varpi\|\tau + \delta_{2\rho}C_1T^{-1} \\
{}&-2^{-1}\delta_{2\rho}\tau^2T + 2^{-1}\delta_{2\rho}\langle \varpi, v_{2\rho}\rangle\tau^2 - 2^{-1}\delta_{2\rho}\|\varpi\|\tau^3 - 2^{-1}\delta_{2\rho}C_1\tau^2T^{-1}.
\end{align*}
Thus, the last term in \cref{eqn:IntegralOfExpTwoRho} becomes
\begin{align*}
&\beta_\rankH e^{- \delta_{2\rho}\langle \varpi, v_{2\rho}\rangle} \cdot \underbrace{T^\rankH e^{-2^{-1}\delta_{2\rho}\tau^2T}}_{*} \cdot e^{\delta_{2\rho}T} \cdot \underbrace{\bigl(1 - \langle \varpi, v_{2\rho}\rangle T^{-1} + \|\varpi\|\tau T^{-1} + C_1T^{-2}\bigr)^\rankH}_{**}\\
{}&\cdot \underbrace{e^{\delta_{2\rho}\|\varpi\|\tau + \delta_{2\rho}C_1T^{-1} + 2^{-1}\delta_{2\rho}\langle \varpi, v_{2\rho}\rangle\tau^2 - 2^{-1}\delta_{2\rho}\|\varpi\|\tau^3 - 2^{-1}\delta_{2\rho}C_1\tau^2T^{-1}}}_{***}.
\end{align*}
Observe that in the * factor, we can control the exponent of the factor $T^\rankH$ using the other factor $e^{-2^{-1}\delta_{2\rho}\tau^2T}$. We do so in such a way that
\begin{align}
\label{eqn:StarFactor}
T^\rankH e^{-2^{-1}\delta_{2\rho}\tau^2T} \leq T^{\frac{\rankH - 3 - c}{2}}
\end{align}
for some $c > 0$, thereby achieving the objective that the last term in \cref{eqn:IntegralOfExpTwoRho} is negligible compared to the rest. We simply use $c = \rankH - 3$ for convenience; we see below that the choice of $c$ makes no difference up to a constant factor. Solving the above inequality, it holds provided that
\begin{align}
\label{eqn:tauConstriantII}
\tau \geq 2^{\frac{1}{2}}\rankH^{\frac{1}{2}}\delta_{2\rho}^{-\frac{1}{2}} T^{-\frac{1}{2}}\log(T)^{\frac{1}{2}}.
\end{align}
For the \emph{optimal} choice of parameter $\tau$ satisfying both the constraints in \cref{eqn:tauConstriantII,eqn:tauConstriantI} so that $\epsilon$ can be taken as large as possible, namely any $\epsilon \in \bigl(0, \frac{1}{2}\bigr)$, we set \cref{eqn:tauConstriantII} to equality:
\begin{align}
\label{eqn:tauConstriantIIEquality}
\tau = 2^{\frac{1}{2}}\rankH^{\frac{1}{2}}\delta_{2\rho}^{-\frac{1}{2}} T^{-\frac{1}{2}}\log(T)^{\frac{1}{2}}.
\end{align}
Note that we can still ensure that  $\tau \in (0, \sqrt{2}]$ is sufficiently small such that $\mathcal{C}^\LieA_{v_{2\rho}, \tau} \subset \interior(\LieA^+)$ since $T \geq \Omega\bigl(e^{\Omega(d(o, g_1o) + d(o, g_2o))}\bigr)$. Although this optimization does not affect the analysis of the last term in \cref{eqn:IntegralOfExpTwoRho}, it is of significance for the larger first two terms. We already have the estimate \cref{eqn:StarFactor} for the * factor by construction. Let us analyze the ** and *** factors. Using \cref{eqn:tauConstriantIIEquality} and Taylor's theorem, the ** factor is bounded above by
\begin{align*}
\bigl(1 + O_{g_1, g_2}\bigl(T^{-1}\bigr)\bigr)^\rankH = 1 + O_{g_1, g_2}\bigl(T^{-1}\bigr).
\end{align*}
Using \cref{eqn:tauConstriantIIEquality}, the worst term in the exponent of the *** factor is
\begin{align*}
\delta_{2\rho}\|\varpi\|\tau = O_{g_1, g_2}\bigl(T^{-\frac{1}{2}}\log(T)^{\frac{1}{2}}\bigr).
\end{align*}
Thus, we can guarantee that the exponent in the *** factor is sufficiently small by choice of the implicit constants in $T \geq \Omega\bigl(e^{\Omega(d(o, g_1o) + d(o, g_2o))}\bigr)$. Hence, by Taylor's theorem, the *** factor is bounded above by
\begin{align*}
e^{O_{g_1, g_2}(T^{-\frac{1}{2}}\log(T)^{\frac{1}{2}})} = 1 + O_{g_1, g_2}\bigl(T^{-\frac{1}{2}}\log(T)^{\frac{1}{2}}\bigr).
\end{align*}
Thus, compiling the components in the above discussion, the last term in \cref{eqn:IntegralOfExpTwoRho} is bounded above by
\begin{align*}
O_{g_1, g_2}\bigl(e^{\delta_{2\rho}T}\bigr).
\end{align*}

We now return to the first two terms in \cref{eqn:IntegralOfExpTwoRho}. Similar to above, using \cref{eqn:tauConstriantIIEquality} and Taylor's theorem, we calculate that the first term is bounded above by
\begin{align*}
&\omega_0 \Bigl(T - \langle \varpi, v_{2\rho}\rangle + 2^{\frac{1}{2}}\rankH^{\frac{1}{2}}\delta_{2\rho}^{-\frac{1}{2}}\|\varpi\| T^{-\frac{1}{2}}\log(T)^{\frac{1}{2}} + C_1T^{-1}\Bigr)^{\frac{\rankH - 1}{2}} \\
{}&\cdot e^{\delta_{2\rho}T - \delta_{2\rho}\langle \varpi, v_{2\rho}\rangle + 2^{\frac{1}{2}}\rankH^{\frac{1}{2}}\delta_{2\rho}^{\frac{1}{2}}\|\varpi\| T^{-\frac{1}{2}}\log(T)^{\frac{1}{2}} + \delta_{2\rho}C_1T^{-1}} \\
={}&\omega_0 e^{-\delta_{2\rho}\langle \varpi, v_{2\rho}\rangle} T^{\frac{\rankH - 1}{2}} e^{\delta_{2\rho}T} \bigl(1 + O_{g_1, g_2}\bigl(T^{-1}\bigr)\bigr)^{\frac{\rankH - 1}{2}} e^{O_{g_1, g_2}(T^{-\frac{1}{2}}\log(T)^{\frac{1}{2}})} \\
={}&\omega_0 e^{-\delta_{2\rho}\langle \varpi, v_{2\rho}\rangle} T^{\frac{\rankH - 1}{2}} e^{\delta_{2\rho}T} \bigl(1 + O_{g_1, g_2}\bigl(T^{-1}\bigr)\bigr) \bigl(1 + O_{g_1, g_2}\bigl(T^{-\frac{1}{2}}\log(T)^{\frac{1}{2}}\bigr)\bigr) \\
={}&\omega_0 e^{-\delta_{2\rho}\langle \varpi, v_{2\rho}\rangle} T^{\frac{\rankH - 1}{2}} e^{\delta_{2\rho}T} \bigl(1 + O_{g_1, g_2}\bigl(T^{-\frac{1}{2}}\log(T)^{\frac{1}{2}}\bigr)\bigr) \\
={}&\omega_0 e^{-\delta_{2\rho}\langle \varpi, v_{2\rho}\rangle} T^{\frac{\rankH - 1}{2}} e^{\delta_{2\rho}T} + O_{g_1, g_2}\bigl(\log(T)^{\frac{1}{2}}T^{\frac{\rankH - 2}{2}}e^{\delta_{2\rho}T}\bigr).
\end{align*}
By a similar calculation, the second term in \cref{eqn:IntegralOfExpTwoRho} is bounded above by
\begin{align*}
C_2 e^{-\delta_{2\rho}\langle \varpi, v_{2\rho}\rangle} T^{\frac{\rankH - 3}{2}} e^{\delta_{2\rho}T} + O_{g_1, g_2}\bigl(\log(T)^{\frac{1}{2}}T^{\frac{\rankH - 4}{2}}e^{\delta_{2\rho}T}\bigr).
\end{align*}

Combining the estimates and taking
\begin{align*}
\tilde{C}[g_1, g_2] := 2^{-m_{\Phi^+}}\omega_0 e^{-\delta_{2\rho}\langle \varpi, v_{2\rho}\rangle} > 0
\end{align*}
and some other appropriate constant $\tilde{E}[g_1, g_2] = O\bigl(e^{O(d(o, g_1o) + d(o, g_2o))}\bigr)$, \cref{eqn:IntegralOfExpTwoRho} becomes
\begin{align*}
\frac{1}{2^{m_{\Phi^+}}}\int_{\mathcal{C}^\LieA_{v_{2\rho}, \tau} \cap \LieA_T^+[g_1, g_2]} e^{2\rho(v)} \, dv \leq{}&\tilde{C}[g_1, g_2] T^{\frac{\rankH - 1}{2}} e^{\delta_{2\rho}T} \\
{}&+ \tilde{E}[g_1, g_2]\log(T)^{\frac{1}{2}}T^{\frac{\rankH - 2}{2}}e^{\delta_{2\rho}T}.
\end{align*}
Repeating calculations similar to above and adjusting the constant $\tilde{E}[g_1, g_2] = O\bigl(e^{O(d(o, g_1o) + d(o, g_2o))}\bigr)$ if necessary, \cref{eqn:IntegralOfExpTwoRhoReverse} becomes
\begin{align*}
\frac{1}{2^{m_{\Phi^+}}}\int_{\mathcal{C}^\LieA_{v_{2\rho}, \tau} \cap \LieA_T^+[g_1, g_2]} e^{2\rho(v)} \, dv \geq{}&\tilde{C}[g_1, g_2] T^{\frac{\rankH - 1}{2}} e^{\delta_{2\rho}T} \\
{}&- \tilde{E}[g_1, g_2]\log(T)^{\frac{1}{2}}T^{\frac{\rankH - 2}{2}}e^{\delta_{2\rho}T}.
\end{align*}
Combining the two gives
\begin{multline*}
\Biggl|\frac{1}{2^{m_{\Phi^+}}}\int_{\mathcal{C}^\LieA_{v_{2\rho}, \tau} \cap \LieA_T^+[g_1, g_2]} e^{2\rho(v)} \, dv - \tilde{C}[g_1, g_2] T^{\frac{\rankH - 1}{2}} e^{\delta_{2\rho}T}\Biggr| \\
\leq \tilde{E}[g_1, g_2]\log(T)^{\frac{1}{2}}T^{\frac{\rankH - 2}{2}}e^{\delta_{2\rho}T}.
\end{multline*}

Let us show that all the integrals on the right hand side of \cref{eqn:IntegralOfXi} are negligible over the set $\LieA_T^+[g_1, g_2] \setminus \mathcal{C}^\LieA_{v_{2\rho}, \tau}$ compared to the rest. Replacing \cref{eqn:varpiInnerProductBound} by the trivial bound $|\langle \varpi, v\rangle| \leq \|\varpi\|$ as we are no longer restricting to $\mathcal{C}^\LieA_{v_{2\rho}, \tau}$, and using \cref{eqn:tauConstriantIIEquality,eqn:StarFactor} and Taylor's theorem, it follows from property~(2)(a) in \cref{cor:LinearAlgebraConeIntegralExpansion} that
\begin{align}\label{eqn:estimate_out_cone}
\begin{aligned}
&\left|\int_{\LieA_T^+[g_1, g_2] \setminus \mathcal{C}^\LieA_{v_{2\rho}, \tau}} e^{2\rho(v)} \, dv\right| \leq \left|\int_{\LieA_{T + \|\varpi\| + C_1T^{-1}} \setminus \mathcal{C}^\LieA_{v_{2\rho}, \tau}} e^{2\rho(v)} \, dv\right| \\
\leq{}&\beta_\rankH (T + \|\varpi\| + C_1T^{-1})^\rankH e^{\delta_{2\rho}(1 - \frac{\tau^2}{2})(T + \|\varpi\| + C_1T^{-1})} \\
={}&\beta_\rankH e^{\delta_{2\rho}\|\varpi\|} e^{\delta_{2\rho} T}\bigl(1 + O_{g_1, g_2}\bigl(T^{-1}\bigr)\bigr)^\rankH \\
{}&\cdot e^{\delta_{2\rho} C_1T^{-1} - \rankH\|\varpi\|T^{-1}\log(T) - \rankH C_1T^{-2}\log(T)} \\
={}&\beta_\rankH e^{\delta_{2\rho}\|\varpi\|} e^{\delta_{2\rho} T}\bigl(1 + O_{g_1, g_2}\bigl(T^{-1}\bigr)\bigr) e^{O_{g_1, g_2}(T^{-1}\log(T))} \\
={}&\beta_\rankH e^{\delta_{2\rho}\|\varpi\|} e^{\delta_{2\rho} T}\bigl(1 + O_{g_1, g_2}\bigl(T^{-1}\bigr)\bigr) \bigl(1 + O_{g_1, g_2}\bigl(T^{-1}\log(T)\bigr)\bigr) \\
={}&O_{g_1, g_2}\bigl(e^{\delta_{2\rho} T}\bigr).
\end{aligned}
\end{align}

Let us also show that the integrals in the sum on the right hand side of \cref{eqn:IntegralOfXi} are negligible over the set $\LieA_T^+[g_1, g_2]$ compared to the rest. By \cref{eqn:lambdaLessThanTworho}, there exists $\eta \in (0, \delta_{2\rho})$ such that
\begin{align*}
\max_{1 \leq j \leq N} \max_{v \in \overline{\LieA_1^+}} \lambda_j(v) = \delta_{2\rho} - \eta.
\end{align*}
Again using $|\langle \varpi, v\rangle| \leq \|\varpi\|$ and using Taylor's theorem, for all $1 \leq j \leq N$, we get
\begin{align}\label{eqn:estimate_otherroot}
\begin{aligned}
&\left|\int_{\LieA_T^+[g_1, g_2]} e^{\lambda_j(v)} \, dv\right| \leq \int_{\LieA_{T + \|\varpi\| + C_1T^{-1}}^+} e^{\lambda_j(v)} \, dv \\
\leq{}&\vol\Bigl(\LieA_{T + \|\varpi\| + C_1T^{-1}}^+\Bigr) \max_{1 \leq j \leq N} \max_{v \in \overline{\LieA_{T + \|\varpi\| + C_1T^{-1}}^+}} e^{\lambda_j(v)} \\
\leq{}&\vol\bigl(\LieA_{T + \|\varpi\| + C_1T^{-1}}\bigr)e^{(\delta_{2\rho} - \eta)(T + \|\varpi\| + C_1T^{-1})} \\
={}&\beta_\rankH e^{(\delta_{2\rho} - \eta)\|\varpi\|} \bigl(T + \|\varpi\| + C_1T^{-1}\bigr)^\rankH e^{(\delta_{2\rho} - \eta)T} e^{O(T^{-1})} \\
={}&\beta_\rankH e^{(\delta_{2\rho} - \eta)\|\varpi\|} O_{g_1, g_2}(T^\rankH) e^{(\delta_{2\rho} - \eta)T} \bigl(1 + O\bigl(T^{-1}\bigr)\bigr) \\
={}&O_{g_1, g_2}\bigl(T^\rankH e^{(\delta_{2\rho} - \eta)T}\bigr).
\end{aligned}
\end{align}

Thus, combining the above inequalities and adjusting the constant $\tilde{E}[g_1, g_2] = O\bigl(e^{O(d(o, g_1o) + d(o, g_2o))}\bigr)$, we have
\begin{align*}
\Biggl|\int_{\LieA_T^+[g_1, g_2]} \xi(v) \, dv - \tilde{C}[g_1, g_2] T^{\frac{\rankH - 1}{2}} e^{\delta_{2\rho}T}\Biggr| \leq \tilde{E}[g_1, g_2]\log(T)^{\frac{1}{2}}T^{\frac{\rankH - 2}{2}}e^{\delta_{2\rho}T}.
\end{align*}

Finally, in both cases $\rankG = 1$ and $\rankG \geq 2$, defining
\begin{align*}
C[g_1, g_2] := \frac{1}{\mu_M(M)}\int_K \int_K \tilde{C}[g_1k_1, k_2g_2] \, d\mu_K(k_1) \, d\mu_K(k_2),
\end{align*}
and using \cref{eqn:IntegralFormulaForH,eqn:CartanDecompositionOfRiemannianSkewBalls} produces our desired formula of the theorem.
\end{proof}

\begin{remark}
\label{rem:OptimalErrorTerm}
Actually, when choosing $\tau$ as a function of $T$, we set the ``function to beat'' to be a polynomial which is at worst $T^{\frac{\rankH - 3}{2}}$ but we see later in the proof that the ``new function to beat'' is $\log(T)^{\frac{1}{2}}T^{\frac{\rankH - 2}{2}}$ which comes from the first term in \cref{eqn:IntegralOfExpTwoRho}, and one can repeat the process recursively. Alternatively, one can directly set the expression to beat to be $T^{\frac{\rankH - 1}{2}}\tau$. The most optimal choice for $\tau$ is then the solution to $T^\rankH e^{-2^{-1}\delta_{2\rho}\tau^2T} = T^{\frac{\rankH - 1}{2}}\tau$. Using the Lambert $W$ function, this can be solved explicitly as $\tau = \delta_{2\rho}^{-\frac{1}{2}}T^{-\frac{1}{2}} W(\delta_{2\rho}T^{\rankH + 2})^{\frac{1}{2}}$. Thus for the optimal error term in \cref{thm:SkewBallVolumeAsymptotic}, the factor $\log(T)^{\frac{1}{2}}$ is to be replaced with $W(\delta_{2\rho}T^{\rankH + 2})^{\frac{1}{2}}$. This can be carried through all the way to \cref{thm:MainTheorem}.
\end{remark}

\section{\texorpdfstring{Effective equidistribution of $K_H$-orbits from that of $U_H$-orbits}{Effective equidistribution of K\unichar{"005F}H-orbits from that of U\unichar{"005F}H-orbits}}
\label{sec:K-Equidistribution}
In this section, we prove \cref{thm:K_EffectiveEquidistribution} regarding effective equidistribution of $K_H$-orbits using the effective equidistribution of $U_H$-orbits given by \ShahEE, generalizing a theorem of Lindenstrauss--Mohammadi--Wang \cite[Theorem 1.4]{LMW23} which is in the special case $(G, H) = (\SL_2(\R) \times \SL_2(\R), \Delta(\SL_2(\R)))$. Its proof is also inspired by that of the latter. We also note that this effectivizes the argument for the passage from Shah's theorem \cite[Theorem 1.4]{Sha96} (proved using Ratner's theorem) to its corollary \cite[Corollary 1.2]{Sha96}.

\begin{notation}
As in \cref{sec:VolumeCalculations}, we again \emph{drop the subscript $H$} for all the objects introduced in \cref{sec:Preliminaries} (except measures and ranks) for convenience, unless otherwise mentioned.
\end{notation}

Recall the positive constant $c_{\Phi} = \max_{\alpha \in \Phi} \|\alpha\| > 0$ from \cref{eqn:Constant_cPhi}.

\begin{theorem}
\label{thm:K_EffectiveEquidistribution}
Suppose \ShahEE holds. Then, there exist $\kappa \in (0, \kappa_0)$, $\varrho \in (\varrho_0, 2\varrho_0)$, and a decreasing family $\{C_\varsigma := c_\varsigma + c_{\Phi} + 1\}_{\varsigma > 0}$ such that for all $x_0 \in \Gamma \backslash G$, $R \gg \inj_{\Gamma \backslash G}(x_0)^{-\varrho}$, $v \in \interior(\LieA^+)$ with $\|v\| = 1$ and $\varsigma := \min_{\alpha \in \Phi^+} \alpha(v)$, and $t \geq C_\varsigma\log(R)$, at least one of the following holds.
\begin{enumerate}
\item For all $\phi \in C_{\mathrm{c}}^\infty(\Gamma \backslash G)$ and $\varphi \in C^\infty(K)$, we have
\begin{align*}
\left|\int_K \phi(x_0ka_{tv}) \varphi(k) \, d\mu_K(k) - \int_{\Gamma \backslash G} \phi \, d\hat{\mu}_{\Gamma \backslash G} \cdot \int_K \varphi \, d\mu_K\right| \leq \mathcal{S}^\ell(\phi) \mathcal{S}^\ell(\varphi) R^{-\varsigma\kappa}.
\end{align*}
\item There exists $x \in \Gamma \backslash G$ with
\begin{align*}
d(x_0, x) \leq R^{C_\varsigma} t^{C_\varsigma} e^{-\varsigma t}
\end{align*}
such that $xH$ is periodic with $\vol(xH) \leq R$.
\end{enumerate}
\end{theorem}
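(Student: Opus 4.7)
The plan is to exploit that a $K_H$-orbit translated by $a_{tv}$ for large $t$ closely resembles a $U_H^+$-orbit, then reduce to Hypothesis~\ref{hyp:EffectiveEquidistribution}. Throughout I drop the subscript $H$, and use the orthogonal decomposition $\LieK = \LieK^\star \oplus \LieM$ together with the linear isomorphism $\iota \colon \LieU^+ \to \LieK^\star$, $u \mapsto u - \theta(u)$, which identifies $K$-directions (modulo $M$) with $U^+$-directions up to $U^-$-contributions that contract under $\Ad_{a_{-tv}}$ at rate at least $e^{-\epsilon t}$.

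First I would localize. Pick $\delta = R^{-\alpha}$ with $\alpha > 0$ to be chosen at the end, cover $K$ by balls $k_j \exp(B_\delta^\LieK(0))$ with an associated smooth partition of unity $\{\chi_j\}$ satisfying $\mathcal{S}^\ell(\chi_j) \ll \delta^{-\ell - \dim(K)/2}$. On each piece write $k = k_j \exp(\iota(u^+) + w_m)$ with $u^+ \in B_{O(\delta)}^{\LieU^+}(0)$ and $w_m \in B_\delta^\LieM(0)$. By the Baker--Campbell--Hausdorff formula, using that $\LieM$ commutes with $\LieA$ (so that $\exp(w_m)$ passes through $a_{tv}$) and invoking Lemma~\ref{lem:LieTheoreticBoundsII} to bound $a_{-tv}\exp(-\theta(u^+))a_{tv}$ by $O(\delta e^{-\epsilon t})$, one obtains
\[
d\bigl(k a_{tv},\ k_j\exp(u^+)a_{tv}\bigr) \ll \delta + \delta^2 e^{c_\Phi t},
\]
where the second summand accounts for higher-order BCH corrections with $\LieU^+$-components that expand under $\Ad_{a_{-tv}}$. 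Using the Sobolev embedding to produce a Lipschitz bound gives a pointwise error $\mathcal{S}^\ell(\phi)\mathcal{S}^\ell(\varphi)(\delta + \delta^2 e^{c_\Phi t})$. Integrating out the $w_m$- and $u^-$-directions with a Jacobian bounded above and below, the $K$-integral reduces on each piece to a $U^+$-ball integral at the small scale $\delta$.

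Next I would invoke the hypothesis after rescaling. Conjugate $u^+ = a_{-sv} u' a_{sv}$ with $s = \log(1/\delta)/\delta_{2\rho}$ so that $a_{sv} B_{O(\delta)}^{U^+} a_{-sv} \subseteq B_1^{U^+}(e)$, producing a Jacobian $e^{-2\rho(sv)}$. I then apply Hypothesis~\ref{hyp:EffectiveEquidistribution} to $\phi$ at basepoint $x_0 k_j a_{-sv}$ at time $t + s$ (using that $k_j \in K$ is bounded and $\inj_{\Gamma\backslash G}(x_0 k_j a_{-sv}) \gg \inj_{\Gamma\backslash G}(x_0)e^{-O(s)}$, so that taking $\varrho \in (\varrho_0, 2\varrho_0)$ absorbs the $e^{O(s)}$-factor when $R \gg \inj(x_0)^{-\varrho}$). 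This produces the main term $\hat\mu_{\Gamma\backslash G}(\phi) \mu_{U^+}(\text{relevant set})$ and an error $\mathcal{S}^\ell(\phi) R^{-\epsilon \kappa_0}$. Summing over $k_j$ and noting $\sum_j \chi_j \equiv 1$, the main term assembles to $\hat\mu_{\Gamma\backslash G}(\phi) \int_K \varphi \, d\mu_K$. Balancing the per-piece errors across $\sim \delta^{-\dim K}$ pieces, plus the $\delta^{-O(\ell)}$ inflation from smoothing the indicator of the rescaled ball, by choosing $\alpha$ so that $\delta + \delta^{-O(\dim K + \ell)} R^{-\epsilon\kappa_0} \leq R^{-\epsilon \kappa}$, yields any $\kappa \in (0, \kappa_0)$.

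For the dichotomy, if Alternative~(2) of the hypothesis triggers at some $y_j = x_0 k_j a_{-sv}$, giving a periodic $xH$-orbit with $d(y_j, x_{\text{per}}) \leq R^{c_0}(t+s)^{c_0}e^{-(t+s)}$, then because $a_{sv} \in A \subset H$ and $k_j \in K \subset H$, the point $x := x_{\text{per}} a_{sv} k_j^{-1}$ lies on the same periodic $H$-orbit (so $\vol(xH) \leq R$ still); by Lemma~\ref{lem:LieTheoreticBoundsI}(2)(a) one has $d(x_0, x) \leq e^{c_\Phi s}\cdot R^{c_0}(t+s)^{c_0}e^{-(t+s)}$, and inserting $s \leq \log(R)/\delta_{2\rho}$ absorbs the $e^{(c_\Phi-1)s}$ factor into $R^{c_\Phi+1}$, yielding $C_0 = c_0 + c_\Phi + 1$. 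The main obstacle is the interplay between (i) making $\delta$ small enough that the higher-order BCH term $\delta^2 e^{c_\Phi t}$ is negligible (which forces $\delta$ to decrease with $t$), (ii) not losing too much when $\delta^{-\dim K}$ pieces each contribute the hypothesis error $R^{-\epsilon\kappa_0}$, and (iii) properly smoothing the anisotropically-rescaled subballs of $B_1^{U^+}$ to interface with the scalar hypothesis; a careful layered partition and exponent optimization is required to produce the final $\kappa \in (0, \kappa_0)$ and $C_0 = c_0 + c_\Phi + 1$.
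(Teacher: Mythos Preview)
Your overall strategy matches the paper's: approximate small $K$-pieces by $U^+$-pieces and invoke the hypothesis. However, there is a genuine gap, and a couple of sign/direction errors.

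\textbf{The main gap.} The term $\delta^2 e^{c_\Phi t}$ you flag as the ``main obstacle'' is fatal as written: the theorem must hold for \emph{all} $t \geq C_0\log R + c_\epsilon$, so for fixed $\delta = R^{-\alpha}$ this error is unbounded. The paper does not optimize this away---it eliminates it entirely. Instead of approximating $\exp(\iota(u^+) + w_m)$ via BCH (which, as you note, produces $O(\delta^2)$ corrections with $\LieU^+$-components that expand under $\Ad_{a_{-tv}}$), the paper uses the \emph{exact} local factorization
\[
\exp(\omega^\star) = u^+_{s^+(\omega^\star)}\, u^-_{s^-(\omega^\star)}\, a_{\tau(\omega^\star)}\, m_{\xi(\omega^\star)}
\]
coming from the product structure $U^+U^-AM \subset H$ (Lemma~\ref{lem:Lie_transverse}). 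With this ordering, $a_\tau$ and $m_\xi$ commute with $a_{tv}$ and pass through harmlessly (error $O(\zeta^\epsilon)$ since $\tau,\xi = O(\|\omega^\star\|^2)$), and $a_{-tv} u^-_{s^-} a_{tv}$ contracts; there is no residual $\LieU^+$-component at all. This is the key idea you are missing.

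\textbf{Direction of the rescaling.} Since $\Ad_{a_{sv}}$ \emph{contracts} $\LieU^+$, your conjugation $u^+ = a_{-sv} u' a_{sv}$ makes $u'$ \emph{smaller}, not unit-scale, and the resulting integral is over a tiny sub-ball of $B_1^{U^+}$, which does not match the hypothesis. The correct move (paper's) is to write $u^+_{s^+(\omega^\star)} a_{tv} = a_{Zv}\, u^+_w\, a_{(t-Z)v}$ with $w = \Ad_{a_{-Zv}}s^+(\omega^\star)$, so the basepoint shifts \emph{forward} to $x_0 k_0 a_{Zv}$ and the time \emph{decreases} to $t-Z$. The paper engineers the $\LieK^\star$-ball as $(s^+)^{-1}(\Ad_{a_{Zv}}B_1^{\LieU^+}(0))$ precisely so that this change of variables lands exactly on $B_1^{\LieU^+}$. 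Your derivation of $C_0$ and the injectivity-radius bookkeeping are affected by this sign flip, though the shape of the argument survives once corrected.

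\textbf{Minor.} Your map $\iota(u) = u - \theta(u)$ lands in $\LieP$, not $\LieK$; the correct isomorphism onto $\LieK^\star$ is $u \mapsto u + \theta(u)$. Also, the paper replaces your partition of unity by a single averaging over a small neighborhood of $e\in K$ using left-invariance of $\mu_K$, which avoids the Sobolev blowup $\delta^{-\ell-\dim K/2}$ of the $\chi_j$ and yields the clean $\mathcal{S}^\ell(\varphi)$ factor directly.
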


\subsection{\texorpdfstring{Decomposition of $\LieK$ and estimates}{Decomposition of \unichar{"1D528} and estimates}}
To prove \cref{thm:K_EffectiveEquidistribution}, we need to define the following maps and establish a lemma on a Lie algebra decomposition of $\LieK$. Let $\pi_{\LieU^+}$, $\pi_{\LieU^-}$, $\pi_{\LieM}$, and $\pi_{\LieA}$ be the projections to each component with respect to the restricted root space decomposition $\LieG = \LieU^+ \oplus \LieU^- \oplus \LieA \oplus \LieM$. We introduce the subspace
\begin{align*}
\LieK^\star = \LieK \cap (\LieU^+ \oplus \LieU^-) \subset \LieK
\end{align*}
which is of central importance. Let
\begin{align*}
\pi^{+} = \pi_{\LieU^+}|_{\LieK^\star}: \LieK^\star \to \LieU^+.
\end{align*}
Let $\iota^{+}: \LieU^+ \to \LieK^\star$ be the map defined by
\begin{align*}
\iota^{+}(w) = w + \theta(w) \qquad \text{for all $w \in \LieU^+$}.
\end{align*}
Let us denote $\exp(s^+)$, $\exp(s^-)$, $\exp(\tau)$, and $\exp(\xi)$ by $u^+_{s^+}$, $u^-_{s^-}$, $a_{\tau}$, and $m_{\xi}$, for $s^+ \in \LieU^+$, $s^- \in \LieU^-$, $\tau \in \LieA$, and $\xi \in \LieM$, respectively.
Due to the restricted root space decomposition of $\LieG$ and the implicit function theorem, we obtain smooth maps
\begin{align*}
s^+&: \LieK^\star \to \LieU^+, & s^-&: \LieK^\star \to \LieU^-, \\
\tau&: \LieK^\star \to \LieA, & \xi&: \LieK^\star \to \LieM,
\end{align*}
such that
\begin{align*}
\exp(\omega) = u^{+}_{s^{+}(\omega)}u^{-}_{s^{-}(\omega)}a_{\tau(\omega)}m_{\xi(\omega)} \qquad \text{for all $\omega \in \LieK^\star$ with $\|\omega\| \leq \epsilon_G$}.
\end{align*}

\begin{lemma}\label{lem:Lie_transverse}
We have 
\begin{enumerate}
\item $\pi^+ \circ \iota^+ = \Id_{\LieU^+}$ and $\iota^{+} \circ \pi^{+} = \Id_{\LieK^\star}$, i.e., $\pi^+$ and $\iota^+$ are inverses of each other;
\item $\LieK = \LieK^\star \oplus \LieM$ orthogonally;
\item $\|\omega\| = \sqrt{2}\|\pi^+(\omega)\|$ for all $\omega \in \LieK^\star$.
\end{enumerate}
For all $\omega \in \LieK^\star$ with $\|\omega\| \ll 1$, we have
\begin{enumerate}\setcounter{enumi}{3}
\item $s^{+}(\omega) = \pi^{+}(\omega) + O(\|\omega\|^2) = \pi^{+}(\omega) + O(\|\pi^{+}(\omega)\|^2)$;
\item $s^{-}(\omega) = \theta(\pi^{+}(\omega)) + O(\|\omega\|^2) = \theta(\pi^{+}(\omega)) + O(\|\pi^{+}(\omega)\|^2)$;
\item $\tau(\omega) = O(\|\omega\|^2)$;
\item $\xi(\omega) = O(\|\omega\|^2)$.
\end{enumerate}
\end{lemma}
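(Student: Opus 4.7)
\emph{Plan.} Parts (1)--(3) are direct computations using the Cartan involution $\theta$, while parts (4)--(7) reduce to a single Taylor expansion of a Baker--Campbell--Hausdorff-type product.

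For (1), recall that $\theta$ fixes $\LieK$ pointwise, negates $\LieP$, and interchanges $\LieG_\alpha$ and $\LieG_{-\alpha}$, so in particular swaps $\LieU^+$ and $\LieU^-$. For $w \in \LieU^+$, the element $\iota^+(w) = w + \theta(w)$ is manifestly $\theta$-fixed and lies in $\LieU^+ \oplus \LieU^-$, hence in $\LieK^\star$; the identity $\pi^+ \circ \iota^+ = \Id_{\LieU^+}$ is immediate. Conversely, for $\omega \in \LieK^\star$ writing $\omega = \pi_{\LieU^+}(\omega) + \pi_{\LieU^-}(\omega)$ and equating components of $\omega = \theta(\omega)$ gives $\pi_{\LieU^-}(\omega) = \theta(\pi^+(\omega))$, whence $\iota^+ \circ \pi^+ = \Id_{\LieK^\star}$. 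For (2), for an arbitrary $\omega \in \LieK$ decompose along the orthogonal restricted root space decomposition $\LieG = \LieM \oplus \LieA \oplus \LieU^+ \oplus \LieU^-$; equating $\omega$ to $\theta(\omega)$ componentwise and using that $\theta$ preserves $\LieM$, negates $\LieA$, and swaps $\LieU^\pm$ forces $\pi_{\LieA}(\omega) = 0$ and $\pi_{\LieU^-}(\omega) = \theta(\pi_{\LieU^+}(\omega))$, yielding $\omega = \pi_{\LieM}(\omega) + \iota^+(\pi^+(\omega))$, with orthogonality inherited from the root space decomposition. For (3), the inner product $B_\theta$ is $\theta$-invariant since $B_\theta(\theta(x),\theta(y)) = -B(\theta(x),y) = -B(y,\theta(x)) = B_\theta(x,y)$, so the two summands in $\omega = \pi^+(\omega) + \theta(\pi^+(\omega))$ are orthogonal and of equal norm, giving $\|\omega\| = \sqrt{2}\,\|\pi^+(\omega)\|$.

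For (4)--(7), consider the smooth map
\begin{align*}
F: \LieU^+ \oplus \LieU^- \oplus \LieA \oplus \LieM &\to G, \\
(s^+, s^-, \tau, \xi) &\mapsto u^+_{s^+}\, u^-_{s^-}\, a_\tau\, m_\xi.
\end{align*}
Its differential at $0$ is the identity on $\LieG$ with respect to the given decomposition, so by the implicit function theorem the smooth functions $s^+(\omega), s^-(\omega), \tau(\omega), \xi(\omega)$ characterized by $\exp(\omega) = F(s^+(\omega),s^-(\omega),\tau(\omega),\xi(\omega))$ all vanish at $\omega = 0$. Applying $\log$ near $e$ and iterating the BCH formula gives
\begin{align*}
\omega \;=\; s^+(\omega) + s^-(\omega) + \tau(\omega) + \xi(\omega) + O\bigl(\|s^+(\omega)\|^2 + \|s^-(\omega)\|^2 + \|\tau(\omega)\|^2 + \|\xi(\omega)\|^2\bigr),
\end{align*}
with the implicit constant absolute on a fixed neighborhood of $0$. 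A first pass shows $s^+(\omega), s^-(\omega), \tau(\omega), \xi(\omega) = O(\|\omega\|)$, after which projecting the displayed identity onto the four summands of $\LieG = \LieU^+ \oplus \LieU^- \oplus \LieA \oplus \LieM$ and comparing with the decomposition $\omega = \pi^+(\omega) + \theta(\pi^+(\omega))$ from part (1) yields (4)--(7). The substitution of $O(\|\omega\|^2)$ by $O(\|\pi^+(\omega)\|^2)$ is immediate from (3).

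\emph{Main obstacle.} There is no serious obstruction: the lemma is really a collection of bookkeeping statements about the orthogonal decomposition induced by $\theta$ and a standard BCH expansion. The only care needed is in the bootstrap in the last paragraph, where one must first deduce $O(\|\omega\|)$ control on $s^\pm, \tau, \xi$ to justify that the BCH remainder is $O(\|\omega\|^2)$ before extracting the precise linear parts.
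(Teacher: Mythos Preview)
Your proof is correct and follows essentially the same approach as the paper. The only cosmetic difference is in (4)--(7): the paper packages the argument by defining the smooth map $\psi = (f|_{\mathcal{O}})^{-1} \circ \exp|_{\mathcal{O}'}$ (where $f$ is your $F$), observes $d\psi_0 = (df_0)^{-1} \circ d\exp_0 = \Id_\LieG$, and applies Taylor's theorem directly, thereby avoiding the BCH expansion and the bootstrap step you describe; but this is the same content as your argument.
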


\begin{proof}
First, $\pi^+ \circ \iota^+ = \Id_{\LieU^+}$ follows trivially from definitions and $\theta(\LieU^+) = \LieU^-$ \cite[Chapter VI, \S 3, Lemma 3.3]{Hel01}. Now we show $\iota^{+} \circ \pi^{+} = \Id_{\LieK^\star}$. Note that for all $\omega \in \LieK^\star = \LieK \cap (\LieU^+ \oplus \LieU^-)$, we have $\omega - \iota^{+}(\pi^{+}(\omega)) \in \LieK \cap \LieU^-$ again using $\theta(\LieU^+) = \LieU^-$. But $\LieK \cap \LieU^- = \theta(\LieK \cap \LieU^-) =  \LieK \cap \LieU^+$. Therefore, $\LieK \cap \LieU^-  \subset \LieU^+ \cap \LieU^- = 0$, which proves property~(1).

For property~(2), we write every element $\omega \in \LieK$ as $\omega = \omega_{\LieA} + \omega_{\LieM} + \omega_{\LieU^+} + \omega_{\LieU^-}$ uniquely with respect to the restricted root space decomposition. Now, $\omega \in \LieK = \Fix(\theta)$ implies
\begin{align*}
\omega_{\LieA} + \omega_{\LieM} + \omega_{\LieU^+} + \omega_{\LieU^-} = -\omega_{\LieA} + \omega_{\LieM} + \theta(\omega_{\LieU^-}) + \theta(\omega_{\LieU^+})
\end{align*}
where $\theta(\omega_{\LieU^{\pm}}) \in \LieU^{\mp}$. Therefore, we have $\omega_{\LieA} = 0$ and $\theta(\omega_{\LieU^+}) = \omega_{\LieU^-}$, which proves property~(2). 

For property~(3), we recall the fact that the restricted root space decomposition is orthogonal and $\theta$ is an orthogonal involution, both with respect to $\langle \cdot, \cdot \rangle = B_\theta$. Then, for all $\omega \in \LieK^\star = \LieK \cap (\LieU^+ \oplus \LieU^-)$, we have
\begin{align*}
\|\omega\|^2 = \|\pi^{+}(\omega)\|^2 + \|\theta(\pi^{+}(\omega))\|^2 = 2\|\pi^{+}(\omega)\|^2
\end{align*}
which proves property~(3). 

Now, we prove the estimates in properties~(4), (5), (6), and (7). Define the smooth map $f: \LieG \to G$ by
\begin{align*}
f(x) = \exp(x_{\LieU^+})\exp(x_{\LieU^-})\exp(x_{\LieA})\exp(x_{\LieM})
\end{align*}
for all $x = x_{\LieU^+} + x_{\LieU^-} + x_{\LieA} + x_{\LieM} \in \LieG$ with respect to the restricted root space decomposition. In fact, it is a diffeomorphism on some sufficiently small open neighborhood $\mathcal{O} \subset B_{\epsilon_G}^\LieG(0)$ of $0 \in \LieG$ since $df_0 = \Id_\LieG$. We may assume $f(\mathcal{O}) \subset \exp\bigl(B_{\epsilon_G}^\LieG(0)\bigr)$ and take $\mathcal{O}' = \exp^{-1}(f(\mathcal{O})) \subset B_{\epsilon_G}^\LieG(0)$. Define the smooth map $\psi = (f|_{\mathcal{O}})^{-1} \circ \exp|_{\mathcal{O}'}: \mathcal{O}' \to \LieG$. Its derivative at $0 \in \mathcal{O}'$ is
\begin{align*}
d \psi_0 = d((f|_{\mathcal{O}})^{-1})_e \circ d\exp_0 = (df_0)^{-1} \circ d \exp_0 = \Id_\LieG \circ \Id_\LieG = \Id_\LieG.
\end{align*}
Now, $\psi(\omega) = s^{+}(\omega) + s^{-}(\omega) + \tau(\omega) + \xi(\omega)$ for all $\omega \in \LieK^\star$ with $\|\omega\| \ll 1$. So, by Taylor's theorem, for all $\omega \in \LieK^\star$ with $\|\omega\| \ll 1$, we have 
\begin{align*}
s^+(\omega) ={}& \pi_{\LieU^+}(\omega) + O(\|\omega\|^2), & s^-(\omega) ={}& \pi_{\LieU^-}(\omega) + O(\|\omega\|^2), \\
\tau(\omega) ={}& \pi_{\LieA}(\omega) + O(\|\omega\|^2), & \xi(\omega) ={}& \pi_{\LieM}(\omega) + O(\|\omega\|^2).
\end{align*}
Using definitions and properties~(2) and (3), we get the estimates. 
\end{proof}

\subsection{\texorpdfstring{Proof of effective equidistribution of $K$-orbits}
{Proof of effective equidistribution of K-orbits}}
Before proving \cref{thm:K_EffectiveEquidistribution}, we need the following lemma on the estimate of the expanding rate under the flow $\{a_{tv}\}_{t \in \mathbb{R}}$. Although it is not required for our purposes, the lemma holds for objects associated to $G$ and hence we include subscript $G$ for clarification. Recall the constant $c_\Phi$ from \cref{eqn:Constant_cPhi}.

\begin{lemma}\label{lem:maxexpand}
For all $x_1, x_2 \in \Gamma \backslash G$, $v \in \LieA_G$ with $\|v\| = 1$, $k \in K_G$, and $\tau \geq 2$, the following inequality holds: 
\begin{align*}
d(x_1, x_2) \leq e^{c_{\Phi}\tau} d(x_1 k a_{\tau v}, x_2 k a_{\tau v}) \qquad \text{for all } k \in K_G.
\end{align*}
\end{lemma}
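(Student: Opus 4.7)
The plan is to reduce the statement to a Lipschitz bound for right translation by $a_{-\tau v}$ on $\Gamma \backslash G$. First, since the Riemannian metric is right $K_G$-invariant, right multiplication by $k^{-1}$ is an isometry on $\Gamma \backslash G$. Setting $y_i = x_i k a_{\tau v}$ for $i = 1, 2$, so that $x_i k = y_i a_{-\tau v}$, I would use this isometry to rewrite
\[
d(x_1, x_2) = d(x_1 k, x_2 k) = d(y_1 a_{-\tau v}, y_2 a_{-\tau v}).
\]
It therefore suffices to show that the right translation $R_{a_{-\tau v}}$ has Lipschitz constant at most $e^{c_\Phi \tau}$ on $\Gamma \backslash G$.

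Next, using left $G$-invariance of the metric, I would compute the operator norm of $dR_g$ pointwise in terms of the adjoint representation. The identity $R_g \circ L_y = L_{yg} \circ C_{g^{-1}}$, where $C_{g^{-1}}(h) = g^{-1} h g$, combined with $dC_{g^{-1}}|_e = \Ad_{g^{-1}}$ and left-invariance of the metric, yields
\[
\|dR_g\|_{\mathrm{op}} = \|\Ad_{g^{-1}}\|_{\mathrm{op}}
\]
at every point of $G$, and hence also on $\Gamma \backslash G$ via the local isometry $G \to \Gamma \backslash G$. Specializing to $g = a_{-\tau v}$ gives $\|dR_{a_{-\tau v}}\|_{\mathrm{op}} = \|\Ad_{a_{\tau v}}\|_{\mathrm{op}}$.

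To bound the right-hand side, I would either invoke \cref{lem:NormOfAdjointEstimate} together with the elementary bound $d(o, a_{\tau v} o) \leq \tau$ coming from the one-parameter subgroup $t \mapsto a_{tv}$ having unit speed, or argue directly from the restricted root space decomposition: $\Ad_{a_{\tau v}}$ acts trivially on $\LieA_G \oplus \LieM_G$ and by $e^{\alpha(\tau v)}$ on each root space $\LieG_\alpha$, so by Cauchy--Schwarz and the definition $c_\Phi = \max_{\alpha \in \Phi}\|\alpha\|$,
\[
\|\Ad_{a_{\tau v}}\|_{\mathrm{op}} = \max\bigl(1,\, e^{\tau \max_{\alpha \in \Phi} \alpha(v)}\bigr) \leq e^{c_\Phi \tau}.
\]

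Finally, I would integrate this pointwise operator bound along a minimizing (or near-minimizing) path from $y_1$ to $y_2$, lifting paths locally through the quotient map, and take the infimum to conclude $d(y_1 a_{-\tau v}, y_2 a_{-\tau v}) \leq e^{c_\Phi \tau} d(y_1, y_2)$; combined with the first display this gives the lemma. The argument is essentially routine left-invariant Riemannian geometry, so no serious obstacle is anticipated; the hypothesis $\tau \geq 2$ plays no role in the bound itself and seems included only to fix the regime of later applications.
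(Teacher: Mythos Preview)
Your proposal is correct and follows essentially the same approach as the paper: both reduce to bounding the Lipschitz constant of right translation by $a_{-\tau v}$ via $\|\Ad_{a_{\tau v}}\|_{\mathrm{op}} \le e^{c_\Phi\tau}$, integrating the derivative bound along a geodesic. The only cosmetic difference is that the paper first lifts to $G$ using $d(x_1,x_2)=\min_{\gamma\in\Gamma}d(g_1,\gamma g_2)$ and then conjugates a geodesic from $e$ to $a_{-\tau v}ga_{\tau v}$, whereas you compute $\|dR_{a_{-\tau v}}\|_{\mathrm{op}}$ directly via the identity $R_g\circ L_y = L_{yg}\circ C_{g^{-1}}$; your observation that the hypothesis $\tau\ge 2$ is unused is also correct.
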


\begin{proof}
Let $x_1 = \Gamma g_1$, $x_2 = \Gamma g_2$, $v$, $k$, and $\tau$ be as in the lemma. Since $d(x_1, x_2) = \min_{\gamma \in \Gamma} d(g_1, \gamma g_2)$, it suffices to show
\begin{align*}
d(g_1, g_2) \leq e^{c_{\Phi}\tau} d(g_1 k a_{\tau v}, g_2 k a_{\tau v}) \qquad \text{for all $k \in K_G$ and $\tau \geq 2$}.
\end{align*}
Since the metric is left $G$-invariant and right $K_G$-invariant, it suffices to show
\begin{align}\label{eqn:expansion}
d(e, g) \leq e^{c_{\Phi}\tau} d(e, a_{-\tau v} g a_{\tau v}) \qquad \text{for all $g \in G$ and $\tau \geq 2$}.
\end{align}
Let $g \in G$, $\tau \geq 2$, and $T = d(e, a_{-\tau v} g a_{\tau v})$. Let $\gamma:[0, T] \to G$ be a geodesic connecting $e$ and $a_{-\tau v} g a_{\tau v}$. Then, $\tilde{\gamma} = C_{a_{\tau v}} \circ \gamma$ is a smooth curve connecting $e$ and $g$, where $C_{a_{\tau v}}$ is the conjugation map by $a_{\tau v}$. Therefore, by left $G$-invariance of the metric, we have
\begin{align*}
d(e, g) \leq{}& \int_0^T \biggl\| \frac{d}{dt} \biggl|_{t = s} \tilde{\gamma}(t) \biggr\| \, ds = \int_0^T \biggl\| \frac{d}{dt}\biggl|_{t = s} {\tilde{\gamma}(s)}^{-1} \tilde{\gamma}(t) \biggr\| \, ds\\
={}& \int_0^T \biggl\| \frac{d}{dt}\biggl|_{t = s} C_{a_{\tau v}}\bigl( \gamma(s)^{-1} \gamma(t)\bigr) \biggr\| \, ds\\
\leq{}& \bigl\| \Ad_{a_{\tau v}} \bigr\|_{\mathrm{op}} d(e, a_{-\tau v} g a_{\tau v}).
\end{align*} 
Since $\bigl\| \Ad_{a_{\tau v}} \bigr\|_{\mathrm{op}} \leq e^{c_{\Phi}\tau}$ by \cref{lem:NormOfAdjointEstimate}, we have proven \cref{eqn:expansion} as desired.
\end{proof}

\begin{proof}[Proof of \cref{thm:K_EffectiveEquidistribution}]
Suppose \ShahEE holds. Recall the constants $\kappa_0$ and $\varrho_0$ from the hypothesis. Fix a sufficiently large $D \geq \max\{4, 2c_\Phi \varrho_0, 2c_\Phi + 1\}$ to be explicated later. Fix $\kappa = \frac{1}{2}\min\bigl\{\frac{1}{D}, \kappa_0\bigr\}$ and $\varrho = \bigl(1 - \frac{c_\Phi \varrho_0}{D}\bigr)^{-1}\varrho_0 \in (\varrho_0, 2\varrho_0)$ whose choices will be clear later. Let $C_\varsigma$, $x_0$, $R$, $v$, $t$, $\phi$, and $\varphi$ be as in the theorem. Let $\zeta = R^{-\frac{1}{D}} \in (0, \epsilon_G)$. For brevity, we write $Z = -\log(\zeta)$. It is clear from property~(4) in \cref{lem:Lie_transverse} that $s^+$ is a diffeomorphism on an open neighborhood $\mathcal{O}^\star \subset \LieK^\star$ of $0 \in \LieK^\star$. Define the open subsets
\begin{align*}
\tilde{B}_{\zeta}^{\LieK^\star}(0) &= (s^+|_{\mathcal{O}^\star})^{-1}\bigl(\Ad_{a_{Zv}} B^{\LieU^+}_{1}(0)\bigr) \subset \LieK^\star, \\
\tilde{B}_{\zeta}^{K} &= \exp\Bigl(\tilde{B}_{\zeta}^{\LieK^\star}(0)\Bigr) \cdot \exp\bigl(B^{\LieM}_{\zeta^{\varsigma}}(0)\bigr) \subset K.
\end{align*}
The latter is indeed an open subset due to the decomposition $\LieK = \LieK^\star \oplus \LieM$ from property~(2) in \cref{lem:Lie_transverse}. We have
\begin{align*}
&\int_K \phi(x_0 k a_{tv}) \varphi(k) \, d\mu_K(k) \\
={}&\frac{1}{\mu_K\bigl(\tilde{B}_{\zeta}^{K}\bigr)} \int_{\tilde{B}_{\zeta}^{K}} \int_K \phi(x_0 k_0 k a_{tv}) \varphi(k_0 k) \, d\mu_K(k_0) \, d\mu_K(k) \\
={}& \frac{1}{\mu_K\bigl(\tilde{B}_{\zeta}^{K}\bigr)} \int_{\tilde{B}_{\zeta}^{K}} \int_K \phi(x_0 k_0 k a_{tv}) \varphi(k_0) \, d\mu_K(k_0) \, d\mu_K(k) + O\bigl(\mathcal{S}^\ell(\phi)\mathcal{S}^\ell(\varphi)\zeta^{\varsigma}\bigr)\\
={}& \int_K \Biggl(\frac{1}{\mu_K\bigl(\tilde{B}_{\zeta}^{K}\bigr)} \int_{\tilde{B}_{\zeta}^{K}}  \phi(x_0 k_0 k a_{tv}) \, d\mu_K(k)\Biggr) \varphi(k_0) \, d\mu_K(k_0) + O\bigl(\mathcal{S}^\ell(\phi)\mathcal{S}^\ell(\varphi)\zeta^{\varsigma}\bigr).
\end{align*}

Now we study $\int_{\tilde{B}_{\zeta}^{K}}  \phi(x_0 k_0 k a_{tv}) \, d\mu_K(k)$. Using $\LieK = \LieK^\star \oplus \LieM$ and the fact that the Haar measure $\mu_K$ is induced by the (bi-invariant) Riemannian metric on $K$ (obtained by restricting the Riemannian metric on $G$), and shrinking $\mathcal{O}^\star$ if necessary, there exists a sufficiently small open neighborhood $\mathcal{O} = \mathcal{O}^\star \times \mathcal{O}' \subset \LieK^\star \times \LieM$ of $(0, 0) \in \LieK^\star \times \LieM$ and a positive smooth function $\varkappa \in C^\infty(\mathcal{O})$ bounded away from $0$ such that the pushforward of the measure $\varkappa(\omega^\star, \omega') \, d\omega^\star \, d\omega'$ on $\mathcal{O}$ under the smooth map $\LieK^\star \times \LieM \to K$ given by $(\omega^\star, \omega') \mapsto \exp(\omega^\star)\exp(\omega')$ gives the Haar measure $\mu_K$ restricted to the corresponding image open neighborhood of $e \in K$. We may assume that $D$ is sufficiently large so that $\tilde{B}_{\zeta}^{\LieK^\star}(0) \times B^{\LieM}_{\zeta^{\varsigma}}(0) \subset \mathcal{O}$. Using the definition of $\tilde{B}_{\zeta}^{K}$, the above formulation of the Haar measure $\mu_K$, and properties~(6) and (7) in \cref{lem:Lie_transverse}, we have
\begin{align*}
{}&\int_{\tilde{B}_{\zeta}^{K}}  \phi(x_0 k_0 k a_{tv}) \, d\mu_K(k) \\
={}& \int_{B^{\LieM}_{\zeta^{\varsigma}}(0)} \int_{\tilde{B}_{\zeta}^{\LieK^\star}(0)} \phi\bigl(x_0 k_0 u^{+}_{s^{+}(\omega^\star)} u^{-}_{s^{-}(\omega^\star)} a_{\tau(\omega^\star)} m_{\xi(\omega^\star)} \exp(\omega') a_{tv}\bigr) \cdot \varkappa(\omega^\star, \omega') \, d\omega^\star \, d\omega'\\
={}& \int_{B^{\LieM}_{\zeta^{\varsigma}}(0)} \int_{\tilde{B}_{\zeta}^{\LieK^\star}(0)} \phi\bigl(x_0 k_0 u^{+}_{s^{+}(\omega^\star)} u^{-}_{s^{-}(\omega^\star)} a_{tv} a_{\tau(\omega^\star)} m_{\xi(\omega^\star)} \exp(\omega')\bigr) \cdot \varkappa(\omega^\star, \omega') \, d\omega^\star \, d\omega'\\
={}& \int_{B^{\LieM}_{\zeta^{\varsigma}}(0)}  \int_{\tilde{B}_{\zeta}^{\LieK^\star}(0)} \phi\bigl(x_0 k_0 u^{+}_{s^{+}(\omega^\star)} u^{-}_{s^{-}(\omega^\star)} a_{tv}\bigr) \cdot \varkappa(\omega^\star, \omega') \, d\omega^\star \, d\omega' \\
&{}+ O\Bigl(\mu_{K}\bigl(\tilde{B}_{\zeta}^{K}\bigr)\mathcal{S}^\ell(\phi)\zeta^{\varsigma}\Bigr).
\end{align*}
Note that writing $\omega^\star = (s^{+}|_{\mathcal{O}^\star})^{-1}(w) \in \tilde{B}_{\zeta}^{\LieK^\star}(0)$, we have the following estimate by property~(5) in \cref{lem:Lie_transverse}: 
\begin{align*}
s^{-}(\omega^\star) = s^{-}((s^{+}|_{\mathcal{O}^\star})^{-1}(w)) = \theta(w) + O(\|w\|^2) = O(\zeta^{\varsigma}).
\end{align*}
We use similar techniques as in the proof of \cref{lem:LieTheoreticBoundsII} to get
\begin{align*}
\bigl\| \Ad_{a_{-tv}} s^{-}(\omega^\star) \bigr\| \leq e^{-\varsigma t} \| s^{-}(\omega^\star) \| = O(\zeta^{\varsigma})
\end{align*}
and continue the calculation of the above integral:
\begin{align}
\nonumber
{}&\int_{\tilde{B}_{\zeta}^{K}}  \phi(x_0 k_0 k a_{tv}) \, d\mu_K(k) \\
\nonumber
={}& \int_{B^{\LieM}_{\zeta^{\varsigma}}(0)}  \int_{\tilde{B}_{\zeta}^{\LieK^\star}(0)} \phi\Bigl(x_0 k_0 u^{+}_{s^{+}(\omega^\star)} a_{tv} u^{-}_{\Ad_{a_{-tv}} s^{-}(\omega^\star)}\Bigr) \cdot \varkappa(\omega^\star, \omega') \, d\omega^\star \, d\omega' \\
\nonumber
&{}+ O\Bigl(\mu_{K}\bigl(\tilde{B}_{\zeta}^{K}\bigr)\mathcal{S}^\ell(\phi)\zeta^{\varsigma}\Bigr) \\
={}& \int_{B^{\LieM}_{\zeta^{\varsigma}}(0)}  \int_{\tilde{B}_{\zeta}^{\LieK^\star}(0)} \phi\bigl(x_0 k_0 u^{+}_{s^{+}(\omega^\star)} a_{tv}\bigr) \cdot \varkappa(\omega^\star, \omega') \, d\omega^\star \, d\omega' + O\Bigl(\mu_{K}\bigl(\tilde{B}_{\zeta}^{K}\bigr)\mathcal{S}^\ell(\phi)\zeta^{\varsigma}\Bigr).\label{eqn:KtoU}
\end{align}

Later, we will focus on the main term in \cref{eqn:KtoU}. By abuse of notation, let us write $\omega^\star = (s^+|_{\mathcal{O}^\star})^{-1} \circ \Ad_{a_{Zv}}|_{B_1^{\LieU^+}(0)}$. Before we estimate the main term directly, we show the following estimates related to the Haar measure $\mu_K$:
\begin{enumerate}[label=(\roman*)]
\item $\displaystyle\bigl|\det \Ad_{a_{Zv}}|_{\LieU^+}\bigr| \mu_\LieM\bigl(B^{\LieM}_{\zeta^{\varsigma}}(0)\bigr) = O\bigl(\mu_{K}\bigl(\tilde{B}_{\zeta}^{K}\bigr)\bigr)$;
\item $\bigl|\det d((s^+|_{\mathcal{O}^\star})^{-1})_0\bigr| \cdot \varkappa(0, \omega') = \bigl|\det d((s^+|_{\mathcal{O}^\star})^{-1})_w\bigr| \cdot \varkappa(\omega^\star(w), \omega') \cdot (1 + O(\zeta^{\varsigma}))$ for all $(w, \omega') \in B^{\LieU^+}_{1}(0) \times B^{\LieM}_{\zeta^\varsigma}(0)$;
\item we have:
\begin{multline*}
\int_{B_{\zeta^{\varsigma}}^{\LieM}(0)} \bigl|\det d((s^{+}|_{\mathcal{O}^\star})^{-1})_0\bigr| \cdot \bigl|\det \Ad_{a_{Zv}}|_{\LieU^+}\bigr| \cdot \varkappa(0, \omega') \, d\omega' \\
= \frac{\mu_{K}\bigl(\tilde{B}_{\zeta}^{K}\bigr)}{\mu_{\LieU^+}\bigl(B^{\LieU^+}_{1}(0)\bigr)} \cdot (1 + O(\zeta^{\varsigma})).
\end{multline*}
\end{enumerate}
Firstly, by change of variables, we obtain
\begin{align}
\label{eqn:Haar on K}
\begin{aligned}
&\mu_K\bigl(\tilde{B}_{\zeta}^{K}\bigr) \\
={}&\int_{B_{\zeta^{\varsigma}}^{\LieM}(0)} \int_{\tilde{B}_{\zeta}^{\LieK^\star}(0)} \varkappa(\omega^\star, \omega') \, d\omega^\star \, d\omega'\\
={}&\int_{B_{\zeta^{\varsigma}}^{\LieM}(0)} \int_{B^{\LieU^+}_{1}(0)} \bigl|\det d((s^+|_{\mathcal{O}^\star})^{-1})_w \bigr| \cdot \bigl|\det \Ad_{a_{Zv}}|_{\LieU^+}\bigr| \cdot \varkappa(\omega^\star(w), \omega') \, dw \, d\omega'.
\end{aligned}
\end{align}
Property~(i) follows from the fact that
\begin{align*}
(w, \omega') \mapsto \bigl|\det d((s^+|_{\mathcal{O}^\star})^{-1})_w\bigr| \cdot \varkappa(\omega^\star(w), \omega')
\end{align*}
is a positive smooth function bounded away from $0$ and hence has a positive infimum on $B^{\LieU^+}_{1}(0) \times B^{\LieM}_{\zeta^\varsigma}(0)$, determined only by $G$. Similarly, property~(ii) follows from the fact that $\omega^\star(0) = 0$ and
\begin{align*}
(w, \omega') \mapsto \frac{\bigl|\det d((s^+|_{\mathcal{O}^\star})^{-1})_0\bigr| \cdot \varkappa(0, \omega')}{\bigl|\det d((s^+|_{\mathcal{O}^\star})^{-1})_w\bigr| \cdot \varkappa(\omega^\star(w), \omega')}
\end{align*}
is a positive smooth function bounded away from $0$ and $+\infty$ on $B^{\LieU^+}_{1}(0) \times B^{\LieM}_{\zeta^\varsigma}(0)$, determined only by $G$. Property~(iii) simply follows from property~(ii) and \cref{eqn:Haar on K}.

Now we estimate the main term in \cref{eqn:KtoU}. Using change of variables and properties~(i) and (ii), we have
\begin{align*}
{}& \int_{B^{\LieM}_{\zeta^{\varsigma}}(0)} \int_{\tilde{B}_{\zeta}^{\LieK^\star}(0)} \phi\bigl(x_0 k_0 u^{+}_{s^{+}(\omega^\star)} a_{tv}\bigr) \cdot \varkappa(\omega^\star, \omega') \, d\omega^\star \, d\omega' \\
={}& \int_{B^{\LieM}_{\zeta^{\varsigma}}(0)} \int_{B^{\LieU^+}_{1}(0)} \phi\Bigl(x_0 k_0 u^{+}_{\Ad_{a_{Zv}}(w)} a_{tv}\Bigr) \bigl|\det d((s^{+}|_{\mathcal{O}^\star})^{-1})_w\bigr| \cdot \bigl|\det \Ad_{a_{Zv}}|_{\LieU^+}\bigr| \\
&{}\cdot \varkappa(\omega^\star(w), \omega') \, dw \, d\omega' \\
={}& \int_{B^{\LieM}_{\zeta^{\varsigma}}(0)}  \int_{B^{\LieU^+}_{1}(0)} \phi\bigl(x_0 k_0 a_{Z v}u^{+}_{w} a_{(t - Z)v}\bigr) \bigl|\det d((s^{+}|_{\mathcal{O}^\star})^{-1})_0\bigr| \cdot \bigl|\det \Ad_{a_{Zv}}|_{\LieU^+}\bigr| \\
&{}\cdot \varkappa(0, \omega') \, dw \, d\omega' + O\Bigl(\mu_{K}\bigl(\tilde{B}_{\zeta}^{K}\bigr)\mathcal{S}^\ell(\phi)\zeta^{\varsigma}\Bigr)\\
={}&\int_{B^{\LieM}_{\zeta^{\varsigma}}(0)} \bigl|\det d((s^{+}|_{\mathcal{O}^\star})^{-1})_0\bigr| \cdot \bigl|\det \Ad_{a_{Zv}}|_{\LieU^+}\bigr| \cdot \varkappa(0, \omega') \, d\omega' \\
&{}\cdot \int_{B^{\LieU^+}_{1}(0)} \phi\bigl(x_0 k_0 a_{Z v}u^{+}_{w} a_{(t - Z)v}\bigr) \, dw + O\Bigl(\mu_{K}\bigl(\tilde{B}_{\zeta}^{K}\bigr)\mathcal{S}^\ell(\phi)\zeta^{\varsigma}\Bigr).
\end{align*}
Putting the above calculations together and using property~(iii), we have
\begin{multline*}
\frac{1}{\mu_K\bigl(\tilde{B}_{\zeta}^{K}\bigr)} \int_{B_{\zeta^\varsigma}^{\LieM}(0)}  \int_{\tilde{B}_{\zeta}^{\LieK^\star}(0)} \phi\bigl(x_0 k_0 u^{+}_{s^{+}(\omega^\star)} a_{tv}\bigr) \cdot \varkappa(\omega^\star, \omega') \, d\omega^\star \, d\omega'\\
= \frac{1}{\mu_{\LieU^+}\bigl(B^{\LieU^+}_{1}(0)\bigr)} \int_{B^{\LieU^+}_{1}(0)} \phi\bigl(x_0 k_0 a_{Z v}u^{+}_{w} a_{(t - Z)v}\bigr) \, dw + O\bigl(\mathcal{S}^\ell(\phi)\zeta^{\varsigma}\bigr).
\end{multline*}

Now, recall the constant $c_\varsigma$ from \ShahEE. Also recall $\zeta = R^{-\frac{1}{D}}$, and $C_\varsigma = c_\varsigma + c_{\Phi} + 1$ where the constant $c_\Phi$ is from \cref{eqn:Constant_cPhi}. Since $t \geq C_\varsigma \log(R)$, we first deduce
\begin{align*}
t' := t - Z = t - \frac{1}{D} \log(R) \geq{}& (C_\varsigma - 1) \log(R) \geq c_\varsigma \log(R).
\end{align*}
Now, note that the injectivity radius at any $x = \Gamma g \in \Gamma \backslash G$ can be written as
\begin{align*}
\inj_{\Gamma \backslash G}(x) &= \inf\{d(g, h): d(g, h) = d(\gamma g, h), \gamma \in \Gamma \setminus \{e\}, h \in G\} \\
&= \frac{1}{2}\inf_{\gamma \in \Gamma \setminus \{e\}} d(g, \gamma g).
\end{align*}
Hence, for $x_0' := x_0 k_0 a_{Zv}$, calculating as in \cref{lem:LieTheoreticBoundsII,lem:maxexpand} with the above formula, we derive
\begin{align*}
\inj_{\Gamma \backslash G}(x_0') = \inj_{\Gamma \backslash G}(x_0 k_0 a_{Zv}) \geq e^{-c_\Phi Z}\inj_{\Gamma \backslash G}(x_0) = \inj_{\Gamma \backslash G}(x_0)\zeta^{c_\Phi}.
\end{align*}
Since $R \gg \inj_{\Gamma \backslash G}(x_0)^{-\varrho}$, by our choice of $\varrho$ and the above inequality, we also deduce
\begin{align*}
R \gg \inj_{\Gamma \backslash G}(x_0)^{-\varrho_0}R^{\frac{c_\Phi \varrho_0}{D}} = \bigl(\inj_{\Gamma \backslash G}(x_0)\zeta^{c_\Phi}\bigr)^{-\varrho_0} \geq \inj_{\Gamma \backslash G}(x_0')^{-\varrho_0}.
\end{align*}
Now we apply \ShahEE to the point $x_0' = x_0 k_0 a_{Zv}$ with parameters $R \gg \inj_{\Gamma \backslash G}(x_0)^{-\varrho} \geq \inj_{\Gamma \backslash G}(x_0')^{-\varrho_0}$ and $t' = t - Z \geq c_\varsigma \log(R)$. We consider two separate cases.

\medskip
\noindent\textit{Case 1.} Suppose that for all $k_0 \in K$, property~(1) in \ShahEE holds, i.e.,
\begin{align*}
\frac{1}{\mu_{\LieU^+}\bigl(B^{\LieU^+}_{1}(0)\bigr)} \int_{B^{\LieU^+}_{1}(0)} \phi\bigl(x_0 k_0 a_{Z v}u^{+}_{w} a_{(t - Z)v}\bigr) \, dw = \int_{\Gamma \backslash G} \phi \, d\hat{\mu}_{\Gamma \backslash G} + O\bigl(\mathcal{S}^\ell(\phi)R^{-\varsigma \kappa_0}\bigr).
\end{align*}
Here, we have used $\mu_{\LieU^+}\bigl(B^{\LieU^+}_{1}(0)\bigr) = \mu_{U}\bigl(B^{U}_{1}(e)\bigr)$ where we recall $\mu_U = \exp_* \mu_{\LieU^+}$ from \cite[Chapter 1, \S 1.2, Theorem 1.2.10]{CG90}. Then, summarizing the above calculations, we have
\begin{align*}
{}&\int_{K} \phi(x_0 k a_{tv}) \varphi(k) \, d\mu_K(k)\\
={}&\int_K \Biggl(\frac{1}{\mu_K\bigl(\tilde{B}_{\zeta}^{K}\bigr)} \int_{\tilde{B}_{\zeta}^{K}} \phi(x_0 k_0 k a_{tv}) \, d\mu_K(k)\Biggr) \varphi(k_0) \, d\mu_K(k_0) + O\bigl(\mathcal{S}^\ell(\phi) \mathcal{S}^\ell(\varphi)\zeta^{\varsigma}\bigr)\\
={}&\int_K \Biggl(\frac{1}{\mu_{\LieU^+}\bigl(B^{\LieU^+}_{1}(0)\bigr)}\int_{B^{\LieU^+}_{1}(0)} \phi\bigl(x_0 k_0 a_{Z v} u_{w}^+ a_{(t - Z)v}\bigr) \, dw\Biggr) \varphi(k_0) \, d\mu_K(k_0) \\
&{}+ O\bigl(\mathcal{S}^\ell(\phi) \mathcal{S}^\ell(\varphi)\zeta^{\varsigma}\bigr)\\
={}&\int_{\Gamma \backslash G} \phi \, d\hat{\mu}_{\Gamma \backslash G} \cdot \int_K \varphi \, d\mu_K + O\bigl(\mathcal{S}^\ell(\phi) \mathcal{S}^\ell(\varphi)(\zeta^{\varsigma} + R^{-\varsigma \kappa_0})\bigr).
\end{align*}
By our choice of $\kappa$, we get $\zeta^{\varsigma} + R^{-\varsigma \kappa_0} \leq 2R^{-2\varsigma \kappa}$. We may now assume that the absolute implicit constant in the condition $R \gg \inj_{\Gamma \backslash G}(x_0)^{-\varrho}$ is sufficiently large so that $\frac{1}{2}R^{\varsigma \kappa}$ is larger than the absolute implicit constant in the above error term. Therefore, property~(1) of the theorem follows in this case.

\medskip
\noindent\textit{Case 2.} Now suppose that for some $k_0 \in K$, property~(2) in \ShahEE holds. Then, there exists $x \in \Gamma \backslash G$ with
\begin{align*}
d\bigl(x_0 k_0 a_{Zv}, x\bigr) \leq R^{c_\varsigma} (t')^{c_\varsigma} e^{-\varsigma t'}
\end{align*}
such that $xH$ is periodic with $\vol(xH) \leq R$. By \cref{lem:maxexpand}, we have
\begin{align*}
d\bigl(x_0, x(k_0 a_{Zv})^{-1}\bigr) \leq e^{c_{\Phi}Z} d\bigl(x_0 k_0 a_{Zv}, x\bigr) \leq e^{c_{\Phi}Z} R^{c_\varsigma} (t')^{c_\varsigma} e^{-\varsigma t'} \leq R^{C_\varsigma}t^{C_\varsigma}e^{-\varsigma t}
\end{align*}
where the last inequality is obtained using definitions, $\varsigma \leq c_{\Phi}$ and $D \geq 2c_{\Phi} + 1$. Therefore, property~(2) of the theorem holds in this case.
\end{proof}

\begin{remark}
\label{rem:DependenceOfInjectivityRadiusInProof}
Observe that when we invoke \ShahEE in Case 1 in the above proof, we need to verify the condition $R \gg_{\inj_{\Gamma \backslash G}(x_0k_0a_{Zv})} 1$ where the dependence on the injectivity radius must be exactly as in \ShahEE. This is delicate since $R$ then depends on $\zeta$ which in turn is taken to be $R^{-\frac{1}{D}}$. Nevertheless, it is possible to ensure the required condition precisely due to the \emph{explicit} dependence on the injectivity radius at hand in the condition $R \gg \inj_{\Gamma \backslash G}(x_0)^{-\varrho_0}$ in \ShahEE.
\end{remark}

\section{\texorpdfstring{Effective equidistribution of Riemannian skew balls from that of $K_H$-orbits}{Effective equidistribution of Riemannian skew balls from that of K\unichar{"005F}H-orbits}}
\label{sec:EquidistributionOfRiemannianSkewBalls}
In this section we prove \cref{pro:SkewballEffectiveEquidistribution} regarding effective equidistribution of Riemannian skew balls assuming effective equidistribution of $K_H$-orbits. Having established \cref{thm:K_EffectiveEquidistribution}, we can combine the two and derive \cref{thm:SkewballEffectiveEquidistributionDichotomy} which assumes \ShahEE and is stated in dichotomy form.

\begin{notation}
As in \cref{sec:VolumeCalculations}, we again \emph{drop the subscript $H$} for all the objects introduced in \cref{sec:Preliminaries} (except measures and ranks) for convenience.
\end{notation}

Fix $\varsigma_0 := \frac{1}{2}\min_{\alpha \in \Phi^+} \alpha(v_{2\rho}) > 0$ (see the beginning of \cref{sec:VolumeCalculations} for positivity) and recall $c_{\varsigma_0}$ from \ShahEE.

\begin{theorem}
\label{thm:SkewballEffectiveEquidistributionDichotomy}
Suppose \ShahEE holds. There exist $C_{\varsigma_0} \asymp c_{\varsigma_0}$, $\kappa \in (0, \kappa_0)$, and $\varrho \in (\varrho_0, 2\varrho_0)$ such that the following holds. For all $g_1, g_2 \in G$, there exists $M_{g_1, g_2} > 0$ such that for all $x_0 \in \Gamma \backslash G$, $R \gg \inj_{\Gamma \backslash G}(x_0)^{-\varrho}$, and $T \geq C_{\varsigma_0} \log(R) + M_{g_1, g_2}$, at least one of the following holds. 
\begin{enumerate}
\item For all $\phi \in C_{\mathrm{c}}^{\infty}(\Gamma \backslash G)$, we have
\begin{align*}
&\left| \frac{1}{\mu_{H}(H_{T}[g_1, g_2])} \int_{H_{T}[g_1, g_2]} \phi(x_0 h) \, d\mu_H(h) - \int_{\Gamma \backslash G} \phi \, d\hat{\mu}_{\Gamma \backslash G} \right| \\
\leq{}&
\begin{cases}
O\bigl(e^{O(d(o, g_1o))}\bigr) \mathcal{S}^{\ell}(\phi) R^{-\kappa}, & \rankG = 1 \\
O\bigl(e^{O(d(o, g_1o) + d(o, g_2o))}\bigr) \|\phi\|_{\infty} T^{-\frac{1}{2}} \log(T)^{\frac{1}{2}} + O\bigl(e^{O(d(o, g_1o))}\bigr) \mathcal{S}^{\ell}(\phi)R^{-\kappa}, & \rankG \geq 2.
\end{cases}
\end{align*}
\item There exists $x \in \Gamma \backslash G$ with
\begin{align*}
d(x_0, x) \leq R^{C_{\varsigma_0}} T^{C_{\varsigma_0}} e^{-\varsigma_0 T}
\end{align*}
such that $xH$ is periodic with $\vol(xH) \leq R$.
\end{enumerate}
Moreover, we can choose $M_{g_1, g_2} = C_1e^{C_2(d(o, g_1o) + d(o, g_2o))}$ for some absolute constants $C_1 > 0$ and $C_2 > 0$. 
\end{theorem}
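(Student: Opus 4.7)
The plan is to combine \cref{thm:K_EffectiveEquidistribution} on effective $K_H$-equidistribution with a radial--angular decomposition of $H_T[g_1, g_2]$ coming from the Cartan decomposition $H = K_HA_H^+K_H$, following the strategy sketched in Part~2 of \cref{subsec:Outline}. Concretely, I would first invoke the integral formula \cref{eqn:IntegralFormulaForH} to rewrite
\begin{align*}
&\int_{H_T[g_1, g_2]} \phi(x_0 h) \, d\mu_H(h) \\
={}&\frac{1}{\mu_{M_H}(M_H)} \iint_{K_H \times K_H} \int_{(\LieA_H^+)_T[g_1 k_1, k_2 g_2]} \phi(x_0 k_1 a_v k_2) \xi_H(v) \, dv \, d\mu_{K_H}(k_1) \, d\mu_{K_H}(k_2),
\end{align*}
and then restrict the $v$-integral to the cone $\mathcal{C}^{\LieA_H}_{v_{2\rho_H}, \tau}$ with the optimal parameter $\tau$ from the proof of \cref{thm:SkewBallVolumeAsymptotic} (a fixed constant when $\rankG = 1$, and $\tau \asymp T^{-1/2}\log(T)^{1/2}$ when $\rankG \geq 2$). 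The complement contributes a loss controlled by $\|\phi\|_\infty$ times the volume estimate from property~(2)(a) of \cref{cor:LinearAlgebraConeIntegralExpansion}, which is absorbed into the final error after normalization. Writing $v = tv'$, the choice of $\epsilon_0$ guarantees $\min_{\alpha \in \Phi_H^+} \alpha(v') > \epsilon_0$ for every $v'$ in the cone on the unit sphere, placing us precisely in the regime where \cref{thm:K_EffectiveEquidistribution} applies.

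The crucial step is handling the $k_1$-dependent radial cutoff $\mathbf{1}_{(\LieA_H^+)_T[g_1 k_1, k_2 g_2]}(tv')$. Applying \cref{cor:PreciseGeometricBusemannFunction} to the forward rays $t \mapsto g_1 k_1 a_{tv'} k_2 g_2 o$ yields, for $v'$ in the cone,
\begin{align*}
d(o, g_1 k_1 a_{tv'} k_2 g_2 o) = t + \langle \varpi(g_1 k_1, k_2 g_2), v' \rangle + O\bigl(e^{O(d(o, g_1 o) + d(o, g_2 o))}\bigr) E_\rankG(T),
\end{align*}
with $\varpi$ as in \cref{thm:SkewBallVolumeAsymptotic} and $E_\rankG(T)$ the corresponding Busemann remainder. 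Combining this with the bound $|\langle \varpi, v'\rangle - \langle \varpi, v_{2\rho_H}\rangle| \leq \|\varpi\|\tau$ on the cone sandwiches the cutoff between $k_1$-independent indicators $\mathbf{1}_{[0, T - \langle \varpi, v_{2\rho_H}\rangle \pm E]}(t)$, exactly as in the derivation of \cref{eqn:SecondLieASkewBallContainment}; the $\xi_H$-mass of the resulting sandwich annulus is negligible compared to $\mu_H(H_T[g_1, g_2])$ for the chosen $\tau$.

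Within the sandwich I would apply \cref{thm:K_EffectiveEquidistribution} to the inner $k_1$-integral with test function $\phi(\,\cdot\,k_2) \in C_{\mathrm{c}}^\infty(\Gamma\backslash G, \R)$ (whose Sobolev norm is comparable to $\mathcal{S}^\ell(\phi)$ by right $K$-invariance) and translation $a_{tv'}$. The ensuing dichotomy propagates as follows. In case~(1) we obtain the main term $\mu_{K_H}(K_H) \cdot \hat\mu_{\Gamma\backslash G}(\phi)$ with remainder $\mathcal{S}^\ell(\phi) R^{-\kappa}$; reintegrating against $\xi_H(tv')\, dv' \, d\mu_{K_H}(k_2)$ and using the volume asymptotics of \cref{thm:SkewBallVolumeAsymptotic}, then dividing by $\mu_H(H_T[g_1, g_2])$ via \cref{cor:RiemannianVolumeRatioAsymptotic}, recovers $\hat{\mu}_{\Gamma \backslash G}(\phi)$ with the claimed error. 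In case~(2), some triple $(v', k_2, t)$ with $t \asymp T$ witnesses a periodic $xH$ with $\vol(xH) \leq R$ satisfying $d(x_0 k_1 a_{Zv'}, x) \leq R^{c_0} t^{c_0} e^{-t}$ for the auxiliary scale $Z$ internal to \cref{thm:K_EffectiveEquidistribution}; \cref{lem:maxexpand} undoes the right translation by $k_1 a_{Zv'}$ at cost $e^{c_\Phi Z}$, and the passage $t \asymp T$ is absorbed into the output constant $C_0$.

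The main obstacle is the interplay between $\tau$ and the Busemann sandwich error: larger $\tau$ preserves more of the skew ball but introduces a less decouplable $k_1$-dependent radial range, while smaller $\tau$ loses too much volume. The same optimal $\tau \asymp T^{-1/2}\log(T)^{1/2}$ that balances these forces in \cref{thm:SkewBallVolumeAsymptotic} also governs the final $T^{-1/2}\log(T)^{1/2}$ term in the $\rankG \geq 2$ case. A secondary subtlety is that the exponential dependence on $d(o, g_1 o) + d(o, g_2 o)$ in $M_{g_1, g_2}$ and in the implicit constants has to be tracked through the Busemann remainders, the cone bound $\|\varpi\|\tau$, and the lower threshold on $T$ imposed by \cref{thm:SkewBallVolumeAsymptotic}.
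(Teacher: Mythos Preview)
Your overall architecture matches the paper's, but there is a genuine gap in what you call ``the crucial step.'' The indicators $\mathbf{1}_{[0,\,T - \langle \varpi, v_{2\rho_H}\rangle \pm E]}(t)$ are \emph{not} $k_1$-independent: you yourself write $\varpi = \varpi(g_1 k_1, k_2 g_2)$, and by definition $\varpi[g_1k_1, k_2g_2] = \beta_{e^+}(k_1^{-1}g_1^{-1}o, o) + \involution(\beta_{e^-}(k_2 g_2 o, o))$, whose first summand moves with $k_1$ over a range of size $O(d(o,g_1o))$. The cone trick $|\langle\varpi,v'\rangle - \langle\varpi,v_{2\rho_H}\rangle|\le\|\varpi\|\tau$ only removes the $v'$-dependence, not the $k_1$-dependence. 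If you instead try the crude $k_1$-independent bound $|\langle\varpi,v_{2\rho_H}\rangle|\le\|\varpi\|$, the resulting sandwich annulus has width $\asymp d(o,g_1o)+d(o,g_2o)$ and carries a \emph{positive fraction} of $\mu_H(H_T[g_1,g_2])$ (since $\xi_H$ grows exponentially), so the ``negligible'' claim fails and you lose the main term.

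The paper resolves this not via the Busemann sandwich but via a partition of unity on $K_H$ (\cref{lem:Sobolev}): cover $K_H$ by right-translates of a neighborhood $\mathcal{O}=\{k: d(e,g_1kg_1^{-1})<\delta\}$ with $\delta=R^{-1/D}$, take a subordinate partition $\{\varphi_j\}$, and use the triangle inequality to get $\LieA_T^+[g_1k_1,k_2g_2]\subset\LieA_{T+\delta}^+[g_1\tilde{k}_j,k_2g_2]\subset\LieA_{T+2\delta}^+[g_1k_1,k_2g_2]$ for $k_1\in\mathcal{O}\tilde{k}_j$. Now the radial domain depends on the fixed center $\tilde{k}_j$, not on $k_1$, and \cref{thm:K_EffectiveEquidistribution} applies to each $\int_K \phi(x_0 k_1 a_v k_2)\varphi_j(k_1)\,d\mu_K(k_1)$. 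The cost is $\mathcal{S}^\ell(\varphi_j)\ll(\delta/\|\Ad_{g_1}\|_{\mathrm{op}})^{-\ell'}$ together with the multiplicity bound $O(\|\Ad_{g_1}\|_{\mathrm{op}}^{\dim K\dim G})$ on the cover; these powers of $\delta^{-1}$ are then balanced against $R^{-\tilde\kappa}$ by the choice of $D$. This balancing is also where the asymmetric constant $O\bigl(e^{O(d(o,g_1o))}\bigr)$ (not $g_2$) in front of $\mathcal{S}^\ell(\phi)R^{-\kappa}$ comes from, something your outline does not account for.

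A secondary point: the paper takes $\tau = \epsilon_0/\max_{\alpha}\|\alpha\|$ \emph{fixed} in this argument, not the shrinking $\tau\asymp T^{-1/2}\log(T)^{1/2}$ from the proof of \cref{thm:SkewBallVolumeAsymptotic}. With fixed $\tau$ the cone complement already has exponentially small relative mass, absorbed into $R^{-\kappa_1}$; the $T^{-1/2}\log(T)^{1/2}$ term in the final error enters only through invoking \cref{thm:SkewBallVolumeAsymptotic} to control the ratios $\mu_H(H_{T\pm 2\delta}[g_1,g_2])/\mu_H(H_T[g_1,g_2])$ and $\mu_H(H_{T'}[g_1,g_2])/\mu_H(H_T[g_1,g_2])$, not through any cone choice made here.
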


\begin{remark}
In the $\rankG = 1$ case, if we replace the bound in property~(2) with $R^{C_{\varsigma_0}} T^{C_{\varsigma_0}} e^{-\frac{\varsigma_0}{2}T}$, then we can make the improvement $M_{g_1, g_2} = C(d(o, g_1o) + d(o, g_2o))$ for some $C > 0$. 
\end{remark}

\cref{thm:SkewballEffectiveEquidistributionDichotomy} follows from \cref{thm:K_EffectiveEquidistribution} and the following proposition. It says that once we have equidistribution of $K$-orbits in some Riemannian skew annulus, we have equidistribution of Riemannian skew balls.

\begin{proposition}
\label{pro:SkewballEffectiveEquidistribution}
For all $g_1, g_2 \in G$, there exists $M_{g_1, g_2} > 0$ such that the following holds:

Let $\phi \in C_{\mathrm{c}}^{\infty}(\Gamma \backslash G)$, $x_0 \in \Gamma \backslash G$, and $C > 0$. Suppose that there exist $\tilde{\kappa} > 0$, $R \gg 1$, $\tilde{C}_{\varsigma_0} > 0$, and $T \geq (C + \tilde{C}_{\varsigma_0}) \log(R) + M_{g_1, g_2}$ such that we have the following equidistribution of $K$-orbits:

For all $k_1, k_2 \in K$, and $v \in \interior(\LieA^+)$ satisfying:
\begin{enumerate}
\item $\min_{\alpha \in \Phi^+} \alpha(v/\|v\|) > \varsigma_0$,
\item $k_1\exp(v)k_2 \in H_{T + 1}[g_1, g_2] \backslash H_{T - C\log(R)}[g_1, g_2]$,
\end{enumerate}
and $\varphi \in C^{\infty}(K)$, we have
\begin{align*}
\left|\int_K \phi(x_0 k a_v k_2) \varphi(k) \, d\mu_K(k) - \int_{\Gamma \backslash G} \phi \, d\hat{\mu}_{\Gamma \backslash G} \cdot \int_K \varphi \, d\mu_K\right| \leq \mathcal{S}^\ell(\phi)\mathcal{S}^\ell(\varphi)R^{-\varsigma_0\tilde{\kappa}}.
\end{align*}

Then, there exist $\kappa_C \in (0, \tilde{\kappa})$ such that we also have
\begin{align*}
&\left|\frac{1}{\mu_H(H_T[g_1, g_2])}\int_{H_T[g_1, g_2]} \phi(x_0 h) \, d\mu_H(h) - \int_{\Gamma \backslash G} \phi \, d\hat{\mu}_{\Gamma \backslash G}\right| \\
\leq{}&
\begin{cases}
O\bigl(e^{O(d(o, g_1o))}\bigr) \mathcal{S}^{\ell}(\phi) R^{-\kappa_C}, & \rankG = 1 \\
O\bigl(e^{O(d(o, g_1o) + d(o, g_2o))}\bigr) \|\phi\|_{\infty} T^{-\frac{1}{2}} \log(T)^{\frac{1}{2}} + O\bigl(e^{O(d(o, g_1o))}\bigr) \mathcal{S}^{\ell}(\phi)R^{-\kappa_C}, & \rankG \geq 2.
\end{cases}
\end{align*}
Moreover, we can choose $M_{g_1, g_2} = C_1e^{C_2(d(o, g_1o) + d(o, g_2o))}$ for some absolute constants $C_1 > 0$ and $C_2 > 0$. 
\end{proposition}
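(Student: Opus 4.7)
The plan is to unfold $\int_{H_T[g_1,g_2]}\phi(x_0 h)\,d\mu_H(h)$ in Cartan coordinates on $H$, obtained by combining \cref{eqn:IntegralFormulaForH} with \cref{eqn:CartanDecompositionOfRiemannianSkewBalls}, as
\begin{align*}
\frac{1}{\mu_M(M)}\int_K\!\int_K\int_{\LieA^+_T[g_1 k_1,\,k_2 g_2]}\phi(x_0 k_1 a_v k_2)\,\xi(v)\,dv\,d\mu_K(k_1)\,d\mu_K(k_2),
\end{align*}
then to localize the inner $v$-integral as in the proof of \cref{thm:SkewBallVolumeAsymptotic}, and finally to invoke the hypothesized $K$-equidistribution on the innermost $k_1$-integral after interchanging the orders of $v$- and $k_1$-integration and mollifying the indicator in $k_1$.

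For the localization, I restrict $v$ to the cone $\mathcal{C}^{\LieA}_{v_{2\rho},\tau}$ with $\tau\asymp T^{-1/2}\log(T)^{1/2}$ (vacuous if $\rankG=1$), which automatically enforces condition~(1) of the hypothesis, $\min_{\alpha\in\Phi^+}\alpha(v/\|v\|)>\epsilon_0$; I also drop the inner skew ball $\LieA^+_{T-C\log R - M_{g_1,g_2}}[g_1 k_1,\,k_2 g_2]$. By \cref{eqn:estimate_out_cone} and \cref{thm:SkewBallVolumeAsymptotic}, these two truncations cost
\begin{align*}
O\bigl(e^{O(d(o,g_1 o)+d(o,g_2 o))}\bigr)\|\phi\|_\infty\bigl(T^{-1/2}\log(T)^{1/2}+R^{-\delta_{2\rho}C}\bigr)\mu_H(H_T[g_1,g_2]),
\end{align*}
with the first summand vanishing in rank $1$. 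The Busemann estimate \cref{eqn:sv_Sv_Formula} shows that, after taking $M_{g_1,g_2}=C_1 e^{C_2(d(o,g_1 o)+d(o,g_2 o))}$ large enough to absorb the Busemann drift $\|\varpi[g_1,g_2]\|=O(d(o,g_1 o)+d(o,g_2 o))$, for every $(k_2,v)$ in the localized region there exists $k_1\in K$ satisfying condition~(2) of the hypothesis.

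For the innermost integral, I interchange $v$- and $k_1$-integration (Fubini) and smooth the $k_1$-indicator $\chi_v^{(k_2)}(k_1):=\mathds{1}_{\LieA^+_T[g_1 k_1,\,k_2 g_2]}(v)$ by convolution on $K$ at scale $\epsilon>0$, obtaining $\tilde\chi_v^{(k_2)}\in C^\infty(K,\R)$ with $\|\chi_v^{(k_2)}-\tilde\chi_v^{(k_2)}\|_{L^1(K)}=O(\epsilon)$ and $\mathcal{S}^\ell(\tilde\chi_v^{(k_2)})=O(\epsilon^{-\ell})$. The hypothesis applied with test function $\tilde\chi_v^{(k_2)}$ replaces the inner integral by $\bigl(\int_K\tilde\chi_v^{(k_2)}\,d\mu_K\bigr)\int_{\Gamma\backslash G}\phi\,d\hat\mu_{\Gamma\backslash G}$ at cost $O(\mathcal{S}^\ell(\phi)\epsilon^{-\ell}R^{-\epsilon_0\tilde\kappa})$, and reverting to $\chi_v^{(k_2)}$ costs an additional $O(\|\phi\|_\infty\epsilon)$. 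Integrating back in $(v,k_2)$ with weight $\xi(v)$ reconstructs the main term plus the localization errors; then dividing by $\mu_H(H_T[g_1,g_2])$ via \cref{thm:SkewBallVolumeAsymptotic}, balancing $\epsilon=R^{-\epsilon_0\tilde\kappa/(\ell+1)}$, and setting $\kappa_C=\tfrac{1}{2}\min\{\delta_{2\rho}C,\,\epsilon_0\tilde\kappa/(\ell+1)\}$ for $C$ chosen large enough, yields the dichotomy in the statement.

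The main technical obstacle is the simultaneous coordination of the three geometric scales $\tau$, $C\log R$, and $\|\varpi[g_1,g_2]\|$ together with the mollification scale $\epsilon$, all while faithfully tracking the multiplicative factors $e^{O(d(o,g_1 o)+d(o,g_2 o))}$ inherited from \cref{cor:RiemannianVolumeRatioAsymptotic} and \cref{cor:PreciseGeometricBusemannFunction} through the final normalization by $\mu_H(H_T[g_1,g_2])$; the exponential form of $M_{g_1,g_2}$ in $d(o,g_1 o)+d(o,g_2 o)$ is forced precisely by the requirement that the localization and hypothesis errors remain strictly smaller than the main volume, while the asymmetric appearance of $d(o,g_1 o)$ versus $d(o,g_1 o)+d(o,g_2 o)$ in the two summands of the final error term reflects whether the prefactor is produced by the cone-truncation (both dependencies) or only by the Sobolev route through the hypothesis (only $g_1$ survives after the $k_2$-integration is averaged away).
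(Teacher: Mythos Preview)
Your approach follows the same architecture as the paper's proof: Cartan decomposition via \cref{eqn:IntegralFormulaForH} and \cref{eqn:CartanDecompositionOfRiemannianSkewBalls}, restriction to the cone $\mathcal{C}^{\LieA}_{v_{2\rho},\tau}$, removal of the inner skew ball, and then a smoothing in the $k_1$-variable to invoke the $K$-orbit hypothesis. The paper organizes the last step differently---it takes a partition of unity $\{\varphi_j\}$ on $K$ subordinate to balls of the form $\{k:d(e,g_1 k g_1^{-1})<\delta\}$ (\cref{lem:Sobolev}) and uses the sandwich $\LieA^+_T[g_1 k_1,k_2 g_2]\subset\LieA^+_{T+\delta}[g_1 \tilde k_j,k_2 g_2]\subset\LieA^+_{T+2\delta}[g_1 k_1,k_2 g_2]$ to replace the $k_1$-dependent region by a $k_1$-independent one before applying the hypothesis with test function $\varphi_j$---whereas you mollify the indicator $\chi_v^{(k_2)}$ directly. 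The paper also takes $\tau$ \emph{fixed} equal to $\epsilon_0/\max_{\alpha}\|\alpha\|$ rather than shrinking; this makes the cone-complement error $R^{-\kappa_1}$ (exponentially small in $T$) rather than $T^{-1/2}\log(T)^{1/2}$, though for the final statement the difference is immaterial in rank $\geq 2$.

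There is, however, a genuine gap in your mollification step. The claim $\|\chi_v^{(k_2)}-\tilde\chi_v^{(k_2)}\|_{L^1(K)}=O(\epsilon)$ for each fixed $v$ is not justified: it would require that the $\epsilon$-neighborhood in $K$ of the level set $\{k_1:d(o,g_1 k_1 a_v k_2 g_2 o)=T\}$ has $\mu_K$-measure $O(\epsilon)$, i.e.\ a lower bound on the gradient of $k_1\mapsto d(o,g_1 k_1 a_v k_2 g_2 o)$, which you do not establish and which can fail for special $v$. What \emph{is} available is the Lipschitz bound $|d(o,g_1 k_1 a_v k_2 g_2 o)-d(o,g_1 k_1' a_v k_2 g_2 o)|\leq e^{c_\Phi d(o,g_1 o)}d_K(k_1,k_1')$ (from \cref{lem:NormOfAdjointEstimate} and property~(2)(a) of \cref{lem:LieTheoreticBoundsI}), which shows that for each fixed $k_1$ the support of $v\mapsto(\chi_v-\tilde\chi_v)(k_1)$ lies in the thin annulus $\LieA^+_{T+L\epsilon}[g_1 k_1,k_2 g_2]\setminus\LieA^+_{T-L\epsilon}[g_1 k_1,k_2 g_2]$ with $L=e^{c_\Phi d(o,g_1 o)}$. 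Integrating this against $\xi(v)\,dv\,d\mu_K(k_1)\,d\mu_K(k_2)$ and applying \cref{thm:SkewBallVolumeAsymptotic} gives the $v$-\emph{integrated} mollification error $O(e^{O(d(o,g_1 o)+d(o,g_2 o))})\|\phi\|_\infty L\epsilon\cdot\mu_H(H_T[g_1,g_2])$, which is what you actually need. This is precisely the content of the paper's sandwich argument (its error $E_2$), and once you replace the pointwise $L^1$ claim by this integrated bound your argument goes through. You should similarly be careful when integrating the hypothesis error $\mathcal{S}^\ell(\phi)\mathcal{S}^\ell(\tilde\chi_v)R^{-\epsilon_0\tilde\kappa}$ over $v$: using $\mathcal{S}^\ell(\tilde\chi_v)\ll\|\chi_v\|_{L^1(K)}\epsilon^{-(\ell+\dim K/2)}$ (Young's inequality in the form $\|f*g\|_2\le\|f\|_1\|g\|_2$) lets you Fubini $\int\|\chi_v\|_{L^1}\xi(v)\,dv$ back to $\mu_H(H_T[g_1,g_2])$ and recover the asymmetric $g_1$-only dependence you describe.
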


\begin{remark}
The Riemannian skew annulus in \cref{pro:SkewballEffectiveEquidistribution} could have been chosen to be $H_{T+\delta}[g_1, g_2]\backslash H_{T - C\log(R) + \delta}[g_1, g_2]$ for some sufficiently small $\delta > 0$ depending on $R$. We need the radius to be slightly larger than $T$ for technical reasons. See \cref{eqn:decomposeskewball} in the proof. 
\end{remark}

\begin{proof}[Proof that \cref{pro:SkewballEffectiveEquidistribution} implies \cref{thm:SkewballEffectiveEquidistributionDichotomy}]
Suppose \ShahEE \linebreak holds. Let $g_1$, $g_2$, $x_0$, $T$, and $\phi$ be as in \cref{thm:SkewballEffectiveEquidistributionDichotomy}.

Fix $\tilde{\kappa} \in (0, \kappa_0)$, $\varrho \in (\varrho_0, 2\varrho_0)$, and $\tilde{C}_{\varsigma_0} := C := c_{\varsigma_0} + c_{\Phi} + 1$ to be the $\kappa$, $\varrho$, and $C_{\varsigma_0}$ provided by \cref{thm:K_EffectiveEquidistribution}. Let $R \gg \inj_{\Gamma \backslash G}(x_0)^{-\varrho}$ as in \cref{thm:K_EffectiveEquidistribution}. Fix $M_{g_1, g_2}$ to be the one from \cref{pro:SkewballEffectiveEquidistribution}. We will show that \cref{thm:SkewballEffectiveEquidistributionDichotomy} holds for $C_{\varsigma_0} := (2 + c_{\Phi})C + 1$. 

If the hypothesis in \cref{pro:SkewballEffectiveEquidistribution} holds, then the proposition gives $\kappa := \kappa_{C} \in (0, \tilde{\kappa})$ such that the following holds: 
\begin{multline*}
\left|\frac{1}{\mu_H(H_T[g_1, g_2])}\int_{H_T[g_1, g_2]} \phi(x_0 h) \, d\mu_H(h) - \int_{\Gamma \backslash G} \phi \, d\hat{\mu}_{\Gamma \backslash G}\right| \\
\leq 
\begin{cases}
O\bigl(e^{O(d(o, g_1o))}\bigr) \mathcal{S}^{\ell}(\phi) R^{-\kappa}, & \rankG = 1 \\
O\bigl(e^{O(d(o, g_1o) + d(o, g_2o))}\bigr) \|\phi\|_{\infty} T^{-\frac{1}{2}} \log(T)^{\frac{1}{2}} + O\bigl(e^{O(d(o, g_1o))}\bigr) \mathcal{S}^{\ell}(\phi)R^{-\kappa}, & \rankG \geq 2.
\end{cases}
\end{multline*} 
This is exactly property~(1) of the theorem.

Now, suppose the hypothesis in \cref{pro:SkewballEffectiveEquidistribution} does not hold. Then, there exists $k_1, k_2 \in K$ and $v \in \LieA_{T + 1}^+[g_1 k_1, k_2 g_2] \backslash \LieA_{T'}^+[g_1 k_1, k_2 g_2]$ with $\min_{\alpha \in \Phi^+} \alpha(v/\|v\|) > \varsigma_0$, where we write $T' := T - C\log(R)$, such that the equidistribution of $K$-orbits of the hypothesis fails. We will show that in this case property~(2) of the theorem holds. Let $t = \|v\| = d(o, a_v o)$. We have
\begin{align*}
T' - d(o, g_1o) - d(o, g_2o) \leq t \leq T + 1 + d(o, g_1o) + d(o, g_2o)
\end{align*}
due to the following calculations. Using the triangle inequality and left $G$-invariance of the metric, since $v \in \LieA_{T + 1}^+[g_1 k_1, k_2 g_2]$, we have
\begin{align*}
t ={}& d(o, \exp(v)o)\\
\leq{}& d(o, k_1^{-1}g_1^{-1}o) + d(k_1^{-1}g_1^{-1}o, \exp(v) k_2g_2o) + d(\exp(v) k_2 g_2o, \exp(v)o)\\
\leq{}& T + 1 + d(o, g_1o) + d(o, g_2o).
\end{align*}
Similarly, since $v \notin \LieA_{T'}^+[g_1 k_1, k_2 g_2]$, we have
\begin{align*}
t ={}& d(o, \exp(v)o)\\
\geq{}& d(o, g_1 k_1 \exp(v) k_2 g_2o) - d(\exp(v)k_2 g_2o, \exp(v)o)- d(o, k_1^{-1}g_1^{-1}o)\\
\geq{}& T' - d(o, g_1o) - d(o, g_2o).
\end{align*}
Now, since we may assume $M_{g_1, g_2} > d(o, g_1o) + d(o, g_2o)$, we have $t \geq C\log(R)$. By the decreasing property of the family $\{C_\varsigma\}_{\varsigma > 0}$ in \cref{thm:K_EffectiveEquidistribution}, we conclude that property~(1) in \cref{thm:K_EffectiveEquidistribution} does not hold for $\phi_{k_2} = \phi \circ m_{k_2}^{\mathrm{R}}$ where $m_{k_2}^{\mathrm{R}}$ is the right multiplication map by $k_2$. Here, we are also using the fact that $\mathcal{S}^{\ell}(\phi_{k_2}) = \mathcal{S}^{\ell}(\phi)$ by right $K$-invariance of the Riemannian metric on $G$. Thus, property~(2) in \cref{thm:K_EffectiveEquidistribution} holds, i.e., there exists $x \in \Gamma \backslash G$ with
\begin{align*}
d(x_0, x) \leq R^{C} t^{C} e^{-\varsigma_0t}
\end{align*}
such that $xH$ is periodic with $\vol(xH) \leq R$. We calculate that
\begin{align*}
R^C t^C e^{-\varsigma_0t} &\leq R^C t^C e^{-\varsigma_0T'} e^{d(o, g_1o) + d(o, g_2o)} = R^{(1 + c_{\Phi})C} t^{C} e^{-\varsigma_0 T} e^{d(o, g_1o) + d(o, g_2o)}\\
&\leq R^{(2 + c_{\Phi})C + 1} T^{(2 + c_{\Phi})C + 1} e^{-\varsigma_0 T} = R^{C_{\varsigma_0}} T^{C_{\varsigma_0}} e^{-\varsigma_0 T}
\end{align*}
where the last inequality follows from the fact that $T > e^{d(o, g_1o) + d(o, g_2o)}$. Therefore, property~(2) of the theorem holds. 
\end{proof}

\subsection{Outline of the proof and preparatory lemmas}
Due to the complicated calculations involving asymptotic formulas for the volume of Riemannian skew balls, we give a brief outline of the proof before starting it in earnest. We introduce the following notation corresponding to the $\LieA$-direction in the Riemannnian skew annulus $H_{T + \delta}[g_1, g_2] \backslash H_{T'}[g_1, g_2]$: 
For all $g_1, g_2 \in G$ and $T_2 > T_1 > 0$, we write
\begin{align*}
\LieA^+_{T_1, T_2}[g_1, g_2] := \LieA^+_{T_2}[g_1, g_2] \setminus \overline{\LieA^+_{T_1}[g_1, g_2]}
\end{align*}
throughout \cref{sec:EquidistributionOfRiemannianSkewBalls}. The proof will be divided into the following 4 steps:

\begin{enumerate}[label=Step \arabic*.]
\item We first apply the integral formula from \cref{eqn:IntegralFormulaForH}. In order to use the hypothesis in \cref{pro:SkewballEffectiveEquidistribution} we need to restrict the integral over the Weyl chamber $\LieA^+$ to an open convex cone $\mathcal{C}_{v_{2\rho}, \tau}^{\LieA}$ strictly contained in $\LieA^+$. \Cref{eqn:estimate_out_cone,eqn:estimate_otherroot} are heavily used in this step. The error term created in this step is denoted by $E_1$ in the proof. This step is not needed in the case $\rankH = 1$.
\item We decompose $K$ into small pieces so that one can approximate the Riemannian skew balls via Riemannian balls in a small sector. The main tool is \cref{lem:Sobolev}. The error term created in this step is denoted by $E_2$ in the proof.
\item In the process of proving \cref{thm:SkewBallVolumeAsymptotic}, we found that the volume of Riemannian skew balls are concentrated near its boundary. Therefore, it suffices to focus on the Riemannian skew annulus $H_{T + \delta}[g_1, g_2] \backslash H_{T'}[g_1, g_2]$ with a suitable choice of $T'$ and sufficiently small $\delta$. We remark here that $T'$ and $\delta$ will depends only on $R$. The error term created in this step is denoted by $E_3$ in the proof.
\item Focusing on the Riemannian skew annulus, we use the Cartan decomposition to write elements as $h = k a_v k_2 \in H_{T + \delta}[g_1, g_2] \backslash H_{T'}[g_1, g_2]$ and use the hypothesis in \cref{pro:SkewballEffectiveEquidistribution} regarding equidistribution of $K$-orbits. This produces our final error term $E_4$. Thus, combining this with the estimates from first three steps, we produce the integral of $\phi$ over $\Gamma \backslash G$ up to the total error term $\sum_{j = 1}^4 E_j$ where all the terms are shown to be of the desired form.
\end{enumerate}

\begin{figure}
\centering
\includegraphics{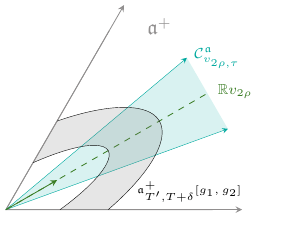}
\caption{This depicts the main part contributing to the Riemannian skew ball equidistribution in the $\LieA$-direction. The green part corresponds to the open cone in Step~1 and the gray part corresponds to the the Riemannian skew annulus in Step~3. The $T'$ and $\tau$ here will be explicated later in the proof. }
\label{fig:SkewBallEquidistribution}
\end{figure}

Now we prove a lemma regarding partition of unity on $K$ which is used in Step~2. Its proof uses similar techniques as in \cite[Lemma 2.3]{GS14}.

\begin{lemma}\label{lem:Sobolev}
Let $\delta > 0$, $C > 1$, and $\mathcal{O} \subset K$ be an open neighborhood of $e \in K$ such that
\begin{align*}
B_{\delta}^K(e) \subset \mathcal{O} \subset B_{C\delta}^K(e).
\end{align*}
Then, there exists a finite subset $\{k_j\}_{j = 1}^{N} \subset K$ for some $N \in \N$ such that: 
\begin{enumerate}
\item $\{\mathcal{O}k_j\}_{j = 1}^{N}$ is an open cover for $K$; 
\item there exists $m_K > 0$ depending only on $K$ such that the multiplicity of the open cover $\{\mathcal{O}k_j\}_{j = 1}^{N}$ is at most $C^{\dim(K)} m_K$;
\item there exists a partition of unity $\{\varphi_j\}_{j = 1}^{N} \subset C^\infty(K)$ subordinate to the open cover $\{\mathcal{O}k_j\}_{j = 1}^{N}$ such that $\mathcal{S}^\ell(\varphi_j) \ll \delta^{-(\ell + \frac{\dim(K)}{2})}$.
\end{enumerate}
\end{lemma}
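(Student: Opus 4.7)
The plan is to execute the standard maximal-packing construction adapted to $K$. First, I will pick $\{k_j\}_{j=1}^N \subset K$ to be a maximal $\frac{\delta}{2}$-separated subset, so that the balls $\{B_{\delta/2}^K(k_j)\}_{j=1}^N$ are pairwise disjoint. Compactness of $K$ forces $N$ to be finite, and maximality forces $\{B_\delta^K(k_j)\}_{j=1}^N$ to cover $K$; since the Riemannian metric is right $K$-invariant, $B_\delta^K(k_j) = B_\delta^K(e)k_j \subset \mathcal{O}k_j$, giving property~(1).

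For property~(2), I will bound the multiplicity by volume packing. If $k \in \mathcal{O}k_j \subset B_{C\delta}^K(e)k_j$, then $k_j \in B_{C\delta}^K(k)$, and the disjoint balls $B_{\delta/2}^K(k_j)$ for such $j$ all lie in $B_{(C + 1/2)\delta}^K(k)$. For $\delta$ below an absolute constant depending only on $K$, the exponential map $\exp \colon \LieK \to K$ is bi-Lipschitz at these scales, so volume comparison gives at most $\mu_K\bigl(B_{(C+1/2)\delta}^K(e)\bigr)/\mu_K\bigl(B_{\delta/2}^K(e)\bigr) \ll_K C^{\dim(K)}$ such indices $j$; for larger $\delta$ the cover can trivially be taken with $N = 1$.

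For property~(3), I will construct the partition from a single fixed bump on $\LieK$. Let $\chi_0 \in C_{\mathrm{c}}^\infty\bigl(B_1^\LieK(0), [0,1]\bigr)$ satisfy $\chi_0 \equiv 1$ on $B_{1/2}^\LieK(0)$, and define $\chi(k) := \chi_0\bigl(\delta^{-1}\log(k)\bigr)$, so that $\chi$ is supported in $B_\delta^K(e) \subset \mathcal{O}$, equals $1$ on $B_{\delta/2}^K(e)$, and $|\nabla^j \chi| \ll_K \delta^{-j}$ for $0 \leq j \leq \ell$ by the chain rule. Setting $\tilde{\varphi}_j(k) := \chi(kk_j^{-1})$ and $\Phi := \sum_{j=1}^N \tilde{\varphi}_j$, the lower cover bound gives $\Phi \geq 1$ and property~(2) gives $\Phi \leq C^{\dim(K)} m_K$, with derivative bounds $|\nabla^j \Phi| \ll_K C^{\dim(K)} \delta^{-j}$. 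Defining $\varphi_j := \tilde{\varphi}_j/\Phi$, the quotient and Leibniz rules then yield $|\nabla^j \varphi_j| \ll_{K,C} \delta^{-j}$ pointwise on $\mathcal{O}k_j$.

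Finally, integrating these pointwise bounds against $\mu_K(\mathcal{O}k_j) \ll_K (C\delta)^{\dim(K)}$ gives $\int_K |\nabla^j \varphi_j|^2 \, d\mu_K \ll_{K,C} \delta^{\dim(K) - 2j}$, so summing over $0 \leq j \leq \ell$ and taking a square root yields $\mathcal{S}^\ell(\varphi_j) \ll_{K,C} \delta^{\dim(K)/2 - \ell} \leq \delta^{-(\ell + \dim(K)/2)}$ whenever $\delta \leq 1$, as claimed (for $\delta$ bounded below the bound is trivial). There is no real obstacle in this argument; the one technical point is ensuring that all implicit constants depend only on $K$ and $C$, which is handled by restricting to small $\delta$ where Riemannian balls in $K$ are uniformly comparable to the corresponding Euclidean balls in $\LieK$ under the exponential map.
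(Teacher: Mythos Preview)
Your argument is correct and follows the same maximal $\tfrac{\delta}{2}$-separated set construction for properties~(1) and~(2) as the paper. The difference lies in property~(3): you build the partition of unity by hand as $\varphi_j = \tilde{\varphi}_j/\Phi$ from rescaled bumps and control derivatives via the quotient and Leibniz rules, whereas the paper takes an \emph{arbitrary} partition of unity $\{\tilde{\varphi}_j\}$ subordinate to $\{B_{\delta/2}^K(e)k_j\}$ (with no derivative control) and then mollifies, setting $\varphi_j = \psi_\delta * \tilde{\varphi}_j$ for a bump $\psi_\delta$ with $\supp(\psi_\delta)\subset B_{\delta/2}^K(e)$, $\int\psi_\delta=1$, and $\mathcal{S}^\ell(\psi_\delta)\ll\delta^{-(\ell+\dim(K)/2)}$; Young's inequality then gives $\mathcal{S}^\ell(\varphi_j)\leq \|\tilde{\varphi}_j\|_1\,\mathcal{S}^\ell(\psi_\delta)$ directly.

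The practical consequence is in the constants. Your quotient-rule estimate on $\varphi_j$ inherits powers of the multiplicity bound $C^{\dim(K)}$ from the derivatives of $\Phi$, so your Sobolev bound is $\ll_{K,C}$ rather than $\ll_K$. The paper's convolution trick avoids this entirely: since only $\|\tilde{\varphi}_j\|_1\leq \mu_K(K)$ enters, the Sobolev constant is independent of $C$. In the paper's application (Proposition~6.2) the lemma is invoked with $C$ of size $\|\Ad_{g_1}\|_{\mathrm{op}}^{\dim(G)}$, so your version would introduce extra powers of $\|\Ad_{g_1}\|_{\mathrm{op}}$ into the final exponent $\hat{\ell}$; this is harmless for the qualitative conclusion but does not match the lemma as stated. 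On the other hand, your pointwise approach actually yields the sharper exponent $\mathcal{S}^\ell(\varphi_j)\ll_{K,C}\delta^{\dim(K)/2-\ell}$ before you weaken it, which the convolution method does not see.
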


\begin{proof}
Let $\delta$, $C$, and $\mathcal{O}$ be as in the lemma. Let $\{k_j\}_{j = 1}^{N} \subset K$ for some $N \in \N$ be a finite maximal $\frac{\delta}{2}$-separated set which exists since $K$ is compact. Then, by right $K$-invariance of the metric, $K$ $\bigl\{B_{\delta/2}^{K}(e)k_j\bigr\}_{j = 1}^{N}$ is an open cover for $K$. Take a partition of unity $\{\tilde{\varphi_j}\}_{j = 1}^{N} \subset C^\infty(K)$ subordinate to this open cover. There exists $\psi_{\delta} \in C^\infty(K)$ such that
\begin{align*}
\supp(\psi_{\delta}) &\subset B_{\delta/2}^K(e), & \int_K \psi_{\delta} \, d\mu_K &= 1, & \mathcal{S}^{\ell}(\psi_{\delta}) &\ll \delta^{-(\ell + \frac{\dim(K)}{2})}.
\end{align*}
Let $\varphi_j = \psi_{\delta} * \tilde{\varphi_j}$ for all $1 \leq j \leq N$. Then we have the properties:
\begin{enumerate}
\item $\sum_{j = 1}^N \varphi_j = \sum_{j = 1}^N \psi_{\delta} * \tilde{\varphi_j}  = \psi_{\delta} * \mathds{1} = \mathds{1}$;
\item $\supp(\varphi_j) \subset \supp(\psi_{\delta}) \cdot \supp(\tilde{\varphi_j}) \subset B_{\delta/2}^{K}(e) \cdot B_{\delta/2}^{K}(e) k_j \subset B_{\delta}^{K}(e) k_j \subset \mathcal{O} k_j$ for all $1 \leq j \leq N$.
\end{enumerate}
Moreover, using Young's inequality for convolutions, we have
\begin{align*}
\mathcal{S}^\ell(\varphi_j)^2 ={}& \sum_{\mathcal{D} \in \mathcal{D}(K), \deg \mathcal{D} \leq \ell} \|\mathcal{D}(\psi_{\delta} * \tilde{\varphi_j})\|_2^{2}
= \sum_{\mathcal{D} \in \mathcal{D}(K), \deg \mathcal{D} \leq \ell} \|\mathcal{D}(\psi_{\delta}) * \tilde{\varphi_j} \|_2^{2}\\
\leq{}& \sum_{\mathcal{D} \in \mathcal{D}(K), \deg \mathcal{D} \leq \ell} \|\tilde{\varphi_j}\|_1^2 \cdot \|\mathcal{D}\psi_{\delta}\|_2^{2} = \|\varphi_j\|_1^2 \mathcal{S}^\ell(\psi_{\delta})^2 \ll \delta^{-(2\ell + \dim(K))}
\end{align*}
where $\mathcal{D}(K) \subset U(\LieK)$ is the subset of monomials in a fixed orthonormal basis $\{\mathcal{D}_j\}_{j = 1}^{\dim(K)} \in \LieK$.

It remains to check the multiplicity condition for the open cover $\bigl\{B_{C\delta}^{K}(e) k_j\bigr\}_{j = 1}^{N}$. Note that for all $1 \leq j, j', j'' \leq N$, we have that $B_{C\delta}^{K}(e)k_j \cap B_{C\delta}^{K}(e)k_{j'} \neq \varnothing$ implies $k_{j'}k_j^{-1} \in \bigl(B_{C\delta}^{K}(e)\bigr)^{-1} B_{C\delta}^{K}(e) \subset B_{2C\delta}^{K}(e)$ and again by right $K$-invariance of the metric, we also have $d\bigl(k_{j'}k_j^{-1}, k_{j''}k_j^{-1}\bigr) = d(k_{j'}, k_{j''})$. Therefore, the multiplicity is the cardinality of a $\frac{\delta}{2}$-separated set in $B_{2C\delta}^{K}$ which is at most $C^{\dim(K)}m_K$ for some constant $m_K$ depending only on $K$. 
\end{proof}

\subsection{Proof of effective equidistribution of Riemannian skew balls}
Now, we prove \cref{pro:SkewballEffectiveEquidistribution} following the outline given in the previous subsection.

\begin{proof}[Proof of \cref{pro:SkewballEffectiveEquidistribution}]
We first introduce some notations and conventions. Let $g_1$, $g_2$, $\phi$, $x_0$, $C$, $R$, $\tilde{\kappa}$, $\tilde{C}_{\varsigma_0}$, and $T$ be as in the proposition and suppose that the hypothesis of the proposition holds. Set $T' := T - C\log(R)$. Let $\tilde{E}_{g_1, g_2} = \frac{E[g_1, g_2]}{C[g_1, g_2]}$ where $E[g_1, g_2]$ and $C[g_1, g_2]$ come from \cref{thm:SkewBallVolumeAsymptotic}. Let $D_{g_1, g_2} = d(o, g_1o) + d(o, g_2o)$. Recall $\varpi[g_1, g_2] = \beta_{e^+}(g_1^{-1}o, o) + \involution(\beta_{e^-}(g_2o, o))$. By \cref{thm:SkewBallVolumeAsymptotic}, we have the following estimates: 
\begin{enumerate}
\item $\frac{1}{C[g_1, g_2]} = O\bigl(e^{O(d(o, g_1o) + d(o, g_2o))}\bigr) = O\bigl(e^{O(D_{g_1, g_2})}\bigr)$;
\item $\tilde{E}_{g_1, g_2} = O\bigl(e^{O(d(o, g_1o) + d(o, g_2o))}\bigr) = O\bigl(e^{O(D_{g_1, g_2})}\bigr)$;
\item $\|\varpi[g_1, g_2]\| = O(D_{g_1, g_2})$.
\end{enumerate}
We will also use the same convention as in the proof of \cref{thm:SkewBallVolumeAsymptotic}. The implicit constant in $O_{g_1, g_2}$ \emph{is always} $O\bigl(e^{O(d(o, g_1o) + d(o, g_2o))}\bigr)$. 

Now we prove the proposition following the outline we gave above. By \cref{lem:NormOfAdjointEstimate}, it suffices to show that there exists $\kappa_C > 0$ and $\hat{\ell} > 0$ such that for $T$ and $R$ as in the statement of the proposition, the error term is of the following form:
\begin{align}
\label{eqn:FullErrorTermWithAdjoint}
\begin{cases}
O\bigl(\|\Ad_{g_1}\|_{\mathrm{op}}^{\hat{\ell}}\bigr) \mathcal{S}^{\ell}(\phi) R^{-\kappa_C}, & \rankG = 1 \\
O\bigl(e^{O(d(o, g_1o) + d(o, g_2o))}\bigr) \|\phi\|_{\infty} T^{-\frac{1}{2}} \log(T)^{\frac{1}{2}} + O\bigl(\|\Ad_{g_1}\|_{\mathrm{op}}^{\hat{\ell}}\bigr) \mathcal{S}^{\ell}(\phi)R^{-\kappa_C}, & \rankG \geq 2.
\end{cases}
\end{align}

\medskip
\noindent
\textit{Step 1. Restricting to the cone $\mathcal{C}_{v_{2\rho}, \tau}^{\LieA}$ in the case $\rankH \geq 2$.}

We use the estimates from \cref{sec:VolumeCalculations} to reduce the calculation into a cone $\mathcal{C}_{v_{2\rho}, \tau}^{\LieA}$ in the $\rankH \geq 2$ case. \Cref{eqn:estimate_out_cone,eqn:estimate_otherroot} are heavily used. For the $\rankH = 1$ case, the reader may skip this part and go directly to Step 2.

Let us fix the parameter $\tau$. Recall $\varsigma_0 = \frac{1}{2}\min_{\alpha \in \Phi^+} \alpha(v_{2\rho}) > 0$. Note that for all $v \in \mathcal{C}_{v_{2\rho}, \tau}^{\LieA}$ with $\|v\| = 1$, we have: 
\begin{align*}
\min_{\alpha \in \Phi^+} \alpha(v) \geq \min_{\alpha \in \Phi^+} \alpha(v - v_{2\rho}) + \min_{\alpha \in \Phi^+} \alpha(v_{2\rho}) \geq \min_{\alpha \in \Phi^+} \alpha(v_{2\rho}) - \max_{\alpha \in \Phi^+} \|\alpha\| \tau.
\end{align*}
We fix $\tau = \frac{\varsigma_0}{\max_{\alpha \in \Phi^+} \|\alpha\|}$ once and for all in this proof. Therefore, $\min_{\alpha \in \Phi^+} \alpha(v) \geq \varsigma_0$ for all $v \in \mathcal{C}_{v_{2\rho}, \tau}^{\LieA}$ with $\|v\| = 1$. 

Now we split the integral with respect to the cone $\mathcal{C}_{v_{2\rho}, \tau}^{\LieA}$: 
\begin{align*}
{}&\mu_M(M) \int_{H_{T}[g_1, g_2]} \phi(x_0 h) \, d\mu_H(h)\\
={}& \int_K \int_K \int_{\LieA_{T}[g_1 k_1, k_2 g_2]} \phi(x_0 k_1 a_v k_2) \xi(v) \, dv \, d\mu_K(k_1) \, d\mu_K(k_2)\\
={}& \int_K \int_K \int_{\LieA_{T}[g_1 k_1, k_2 g_2] \cap \mathcal{C}^{\LieA}_{v_{2\rho}, \tau}} \phi(x_0 k_1 a_v k_2) \xi(v) \, dv \, d\mu_K(k_1) \, d\mu_K(k_2)\\ 
{}&+ \int_K \int_K \int_{\LieA_{T}[g_1 k_1, k_2 g_2] \backslash \mathcal{C}^{\LieA}_{v_{2\rho}, \tau}} \phi(x_0 k_1 a_v k_2) \xi(v) \, dv \, d\mu_K(k_1) \, d\mu_K(k_2). 
\end{align*}
Using \cref{eqn:xiFormula}, we could decompose $\xi$ into two parts: 
\begin{align*}
\xi(v) = \frac{e^{2\rho(v)}}{2^{m_{\Phi^+}}} + \sum_{j = 1}^N c_je^{\lambda_j(v)}
\end{align*}
for some $N \in \N$, $\{c_j\}_{j = 1}^N \subset \R$, and $\{\lambda_j\}_{j = 1}^N \subset \LieA^*$. By \cref{eqn:lambdaLessThanTworho}, there exists $\eta \in (0, \delta_{2\rho})$ such that
\begin{align*}
\max_{j \in \{1, 2, \dotsc, N\}}\max_{v \in \overline{\LieA_1^+}} \lambda_j(v) = \delta_{2\rho} - \eta.
\end{align*}
Using \cref{eqn:estimate_out_cone} and \cref{eqn:estimate_otherroot}, we have estimate of this integral at outside of the cone: 
\begin{align*}
{}&\biggl| \int_K \int_K \int_{\LieA_{T}[g_1 k_1, k_2 g_2] \backslash \mathcal{C}^{\LieA}_{v_{2\rho}, \tau}} \phi(x_0 k_1 a_v k_2) \xi(v) \, dv \, d\mu_K(k_1) \, d\mu_K(k_2) \biggr|\\
\leq{}& \|\phi\|_{\infty} \bigl[ \beta_{\rankH}(T + O_{g_1, g_2}(1) + O_{g_1, g_2}(T^{-1}))^{\rankH}e^{\delta_{2\rho} (1 - \frac{\tau^2}{2}) (T + O_{g_1, g_2}(1) + O_{g_1, g_2}(T^{-1}))}\\ 
{}&+ \beta_\rankH e^{O_{g_1, g_2}(\delta_{2\rho} - \eta)} \bigl(T + O_{g_1, g_2}(1) + O_{g_1, g_2}(T^{-1})\bigr)^\rankH e^{(\delta_{2\rho} - \eta)T} e^{O_{g_1, g_2}(T^{-1})} \bigr].
\end{align*}
Denote by $E_1$ the term on the right hand side without the factor $\|\phi\|_{\infty}$. 

Dividing by $\mu_H(H_T[g_1, g_2])$ and using \cref{thm:SkewBallVolumeAsymptotic} (note that $\rankG \geq 2$), we have the following estimate: 
\begin{align*}
&\frac{E_1}{\mu_H(H_T[g_1, g_2])} \\
\leq{}& \frac{\beta_{\rankH}(T + O_{g_1, g_2}(1) + O_{g_1, g_2}(T^{-1}))^{\rankH}e^{\delta_{2\rho} (1 - \frac{\tau^2}{2}) (T + O_{g_1, g_2}(1) + O_{g_1, g_2}(T^{-1}))}}{C[g_1, g_2] T^{\frac{\rankH - 1}{2}} e^{\delta_{2\rho}T} -E[g_1, g_2]\bigl(\log(T)^{\frac{1}{2}}T^{\frac{\rankH - 2}{2}}e^{\delta_{2\rho}T}\bigr)}\\ 
{}&+ \frac{\beta_\rankH e^{O_{g_1, g_2}(\delta_{2\rho} - \eta)} \bigl(T + O_{g_1, g_2}(1) + O_{g_1, g_2}(T^{-1})\bigr)^\rankH e^{(\delta_{2\rho} - \eta)T} e^{O_{g_1, g_2}(T^{-1})}}{C[g_1, g_2] T^{\frac{\rankH - 1}{2}} e^{\delta_{2\rho}T} -E[g_1, g_2]\bigl(\log(T)^{\frac{1}{2}}T^{\frac{\rankH - 2}{2}}e^{\delta_{2\rho}T}\bigr)} \\
\leq{}& e^{- \eta'T} \leq R^{-\kappa_1}
\end{align*}
for all $T - C\log(R) \geq \Omega\bigl(e^{\Omega(D_{g_1, g_2})}\bigr)$, $\eta' = \min\bigl\{\frac{\delta_{2\rho}\tau^2}{4}, \frac{\eta}{2}\bigr\}$, and $\kappa_1 := C\eta'$. 

Therefore, for all $T - C\log(R) \geq \Omega\bigl(e^{\Omega(D_{g_1, g_2})}\bigr)$, we have: 
\begin{align}\label{eqn:equi_estimate_higherrankH}
\begin{aligned}
{}&\frac{\mu_M(M)}{\mu_H(H_T[g_1, g_2])}\int_{H_{T}[g_1, g_2]} \phi(x_0 h) \, d\mu_H(h)\\
={}& \frac{1}{\mu_H(H_T[g_1, g_2])}\int_K \int_K \int_{\LieA_{T}[g_1 k_1, k_2 g_2] \cap \mathcal{C}^{\LieA}_{v_{2\rho}, \tau}} \phi(x_0 k_1 a_v k_2) \xi(v) \, dv \, d\mu_K(k_1) \, d\mu_K(k_2)\\ 
{}&+ O(\mathcal{S}^\ell(\phi)R^{-\kappa_1}).
\end{aligned}
\end{align}

\medskip
\noindent
\textit{Step 2. Locally approximating the Riemannian skew ball via Riemannian balls.}

Now we focus on the integral inside the cone. If $\rankH = 1$, the cone is just $\interior(\LieA^+)$. Let $D > 0$ be a sufficiently large constant which will be fixed later. Let $R \gg_D 1$ so that $\delta := R^{-1/D} \in (0, \epsilon_G)$. Take the open symmetric neighborhood
\begin{align*}
\mathcal{O} = \{k \in K: d(e, g_1 k g_1^{-1}) < \delta\} \subset K
\end{align*}
of $e \in K$. Calculating as in \cref{eqn:ConjugationDifferential}, we have
\begin{align*}
B_{\frac{\delta}{\|\Ad_{g_1}\|_{\mathrm{op}}}}^{K}(e) \subset \mathcal{O} \subset B_{\|(\Ad_{g_1})^{-1}\|_{\mathrm{op}}\delta}^K(e).
\end{align*}
For all $\tilde{k} \in \mathcal{O}$, using the triangle inequality and left $G$-invariance of the metric, we have 
\begin{align*}
d(o, g_1 \tilde{k} k_1 \exp(v) k_2 g_2o ) &< d(o, g_1 k_1 \exp(v) k_2 g_2o) + d(o, g_1 \tilde{k} g_1^{-1}o) \\
&< d(o, g_1 k_1 \exp(v) k_2 g_2o) + \delta.
\end{align*}
Therefore, for all $\tilde{k} \in \mathcal{O}$, we have the following containments:
\begin{align}\label{eqn:SandwichInSkewBall}
\LieA_{T}^{+}[g_1 k_1, k_2 g_2] \subset \LieA_{T + \delta}^{+}[g_1 \tilde{k} k_1, k_2 g_2] \subset \LieA_{T + 2\delta}^{+}[g_1 k_1, k_2 g_2].
\end{align}
Using \cref{lem:Sobolev}, we have a finite open cover $\{\mathcal{O}\tilde{k}_j \}_{j = 1}^{N}$ for $K$, for some $\{\tilde{k}_j\}_{j = 1}^N \subset K$ and $N \in \N$, whose multiplicity is $O\bigl((\|\Ad_{g_1}\|_{\mathrm{op}} \cdot \|(\Ad_{g_1})^{-1}\|_{\mathrm{op}})^{\dim(K)}\bigr)$ and a partition of unity $\{\varphi_j\}_{j = 1}^{N} \subset C^\infty(K)$ subordinate to the open cover such that:
\begin{align*}
\mathcal{S}^\ell(\varphi_j) &\ll \left(\frac{\delta}{\|\Ad_{g_1}\|_{\mathrm{op}}}\right)^{-\ell'}, & \ell' := \ell + \frac{\dim(K)}{2}.
\end{align*}
We have $\|(\Ad_{g_1})^{-1}\|_{\mathrm{op}} \leq \|\Ad_{g_1}\|_{\mathrm{op}}^{\dim(G) - 1}$ using the relation to singular values in \cref{eqn:DistanceFormulaByOperatorNorm} and $\det \Ad_{g_1} = 1$ as $G$ is semisimple. Therefore, the multiplicity of the open cover is $O\bigl(\|\Ad_{g_1}\|_{\mathrm{op}}^{\dim(K)\dim(G)}\bigr)$.

Using \cref{eqn:SandwichInSkewBall}, we can locally approximate the Riemannian skew ball via Riemannian ball:
\begin{align}\label{eqn:equiskewH_in_cone}
\begin{aligned}
{}&\int_K \int_K \int_{\LieA_{T}[g_1 k_1, k_2 g_2] \cap \mathcal{C}^{\LieA}_{v_{2\rho}, \tau}} \phi(x_0 k_1 a_v k_2) \xi(v) \, dv \, d\mu_K(k_1) \, d\mu_K(k_2)\\
={}& \int_K \sum_{j = 1}^{N} \int_K \varphi_j(k_1) \int_{\LieA_{T}[g_1 k_1, k_2 g_2] \cap \mathcal{C}^{\LieA}_{v_{2\rho}, \tau}} \phi(x_0 k_1 a_v k_2) \xi(v) \, dv \, d\mu_K(k_1) \, d\mu_K(k_2)\\
={}& \int_K \sum_{j = 1}^{N} \int_K \varphi_j(k_1) \int_{\LieA_{T + \delta}[g_1 \tilde{k}_j, k_2 g_2] \cap \mathcal{C}^{\LieA}_{v_{2\rho}, \tau}} \phi(x_0 k_1 a_v k_2) \xi(v) \, dv \, d\mu_K(k_1) \, d\mu_K(k_2)\\
{}&+ \|\phi\|_{\infty} O(\mu_H(H_{T + 2\delta}[g_1, g_2]) - \mu_H(H_T[g_1, g_2])).
\end{aligned}
\end{align}
Denote by $E_2$ the error term without the factor $\|\phi\|_{\infty}$. Dividing by $\mu_H(H_{T}[g_1, g_2])$ and using \cref{thm:SkewBallVolumeAsymptotic}, we have the following two cases according to $\rankG$. 

If $\rankG = 1$, we have: 
\begin{align*}
\frac{E_2}{\mu_H(H_T[g_1, g_2])} \leq{}& \frac{C[g_1, g_2] e^{\delta_{2\rho} (T + 2\delta)} + E[g_1, g_2]\bigl(e^{(\delta_{2\rho} - \eta_1)(T + 2\delta)}\bigr)}{C[g_1, g_2] e^{\delta_{2\rho} T} - E[g_1, g_2]\bigl(e^{(\delta_{2\rho} - \eta_1)T}\bigr)} - 1\\
\leq{}&  e^{2\delta_{2\rho}\delta} \frac{1 + \tilde{E}_{g_1, g_2}e^{-\eta_1 (T + 2\delta)}}{1 - \tilde{E}_{g_1, g_2}e^{-\eta_1 T}} - 1\\
\leq{}& e^{2\delta_{2\rho}\delta} - 1 + e^{2\delta_{2\rho}\delta} \frac{2\tilde{E}_{g_1, g_2}e^{-\eta_1 T}}{1 - \tilde{E}_{g_1, g_2}e^{-\eta_1 T}}\\
\ll{}& R^{-\kappa_2}
\end{align*}
where $\kappa_2 = \min\bigl\{\frac{C\eta_1}{2}, 1/D\bigr\}$ and for all $T - C\log(R) \gg \frac{2}{\eta_1} \log(4 \tilde{E}_{g_1, g_2})$. 

If $\rankG \geq 2$, we have: 
\begin{align*}
\frac{E_2}{\mu_H(H_T[g_1, g_2])} \leq{}& \frac{C[g_1, g_2] (T + 2\delta)^{\frac{\rankH - 1}{2}} e^{\delta_{2\rho}(T + 2\delta)}}{C[g_1, g_2] T^{\frac{\rankH - 1}{2}} e^{\delta_{2\rho}T} - E[g_1, g_2]\bigl(\log(T)^{\frac{1}{2}}T^{\frac{\rankH - 2}{2}}e^{\delta_{2\rho}T}\bigr)}\\
{}&+ \frac{E[g_1, g_2]\bigl(\log(T + 2\delta)^{\frac{1}{2}}(T + 2\delta)^{\frac{\rankH - 2}{2}}e^{\delta_{2\rho}(T + 2\delta)}\bigr)}{C[g_1, g_2] T^{\frac{\rankH - 1}{2}} e^{\delta_{2\rho}T} - E[g_1, g_2]\bigl(\log(T)^{\frac{1}{2}}T^{\frac{\rankH - 2}{2}}e^{\delta_{2\rho}T}\bigr)} - 1 \\
\leq{}& (1 + \frac{2\delta}{T})^{\frac{\rankH - 1}{2}} e^{2\delta_{2\rho}\delta} \frac{1 + \tilde{E}_{g_1, g_2}\bigl( \log(T + 2\delta)^{\frac{1}{2}}T^{-\frac{1}{2}}\bigr)}{1 - \tilde{E}_{g_1, g_2}\bigl(\log(T)^{\frac{1}{2}}T^{-\frac{1}{2}}\bigr)} - 1\\
\leq{}& (1 + \frac{2\delta}{T})^{\frac{\rankH - 1}{2}} e^{2\delta_{2\rho}\delta} - 1\\
{}&+ (1 + \frac{2\delta}{T})^{\frac{\rankH - 1}{2}} e^{2\delta_{2\rho}\delta}\frac{2\tilde{E}_{g_1, g_2}\bigl(\log(T + 2\delta)^{\frac{1}{2}}T^{-\frac{1}{2}}\bigr)}{1 - \tilde{E}_{g_1, g_2}\bigl(\log(T)^{\frac{1}{2}}T^{-\frac{1}{2}}\bigr)}\\
\leq{}& O(R^{-\kappa_3}) + O\bigl(e^{O(d(o, g_1o) + d(o, g_2o))}\bigr) T^{-\frac{1}{2}} \log(T)^{\frac{1}{2}}
\end{align*}
where $\kappa_3 = 1/D$ and for all $T \gg  \tilde{E}_{g_1, g_2}^2$. 

\medskip
\noindent
\textit{Step 3. Restricting to the outermost annulus of the Riemannian skew ball.}

Now we study the outermost annulus $H_{T + \delta}[g_1, g_2] \backslash H_{T' + \delta}[g_1, g_2]$ of the Riemannian skew ball and show that it occupies most of the mass.  

We decompose the integral in the main term in \cref{eqn:equiskewH_in_cone} into two pieces according to $H_{T' + \delta}[g_1, g_2] \subset H_{T + \delta}[g_1, g_2]$. We have: 
\begin{align}
\begin{aligned}\label{eqn:decomposeskewball}
{}&\int_K \sum_{j = 1}^{N} \int_K \varphi_j(k_1) \int_{\LieA_{T + \delta}[g_1 \tilde{k}_j, k_2 g_2] \cap \mathcal{C}^{\LieA}_{v_{2\rho}, \tau}} \phi(x_0 k_1 a_v k_2) \xi(v) \, dv \, d\mu_K(k_1) \, d\mu_K(k_2)\\
={}& \int_K \sum_{j = 1}^{N} \int_{\LieA_{T + \delta}[g_1 \tilde{k}_j, k_2 g_2] \cap \mathcal{C}^{\LieA}_{v_{2\rho}, \tau}} \int_K \phi(x_0 k_1 a_v k_2) \varphi_j(k_1) \, d\mu_K(k_1) \, \xi(v) \, dv \, d\mu_K(k_2)\\
={}& \int_K \sum_{j = 1}^{N} \int_{\LieA_{T', T + \delta}[g_1\tilde{k}_j, k_2g_2] \cap \mathcal{C}^{\LieA}_{v_{2\rho}, \tau}} \int_K \phi(x_0 k_1 a_v k_2) \varphi_j(k_1) \, d\mu_K(k_1) \, \xi(v) \, dv \, d\mu_K(k_2)\\
{}&+ \int_K \sum_{j = 1}^{N} \int_{\LieA_{T'}[g_1\tilde{k}_j, k_2 g_2] \cap \mathcal{C}^{\LieA}_{v_{2\rho}, \tau}} \int_K \phi(x_0 k_1 a_v k_2) \varphi_j(k_1) \, d\mu_K(k_1) \, \xi(v) \, dv \, d\mu_K(k_2)\\
={}& \int_K \sum_{j = 1}^{N} \int_{\LieA_{T', T + \delta}[g_1\tilde{k}_j, k_2g_2] \cap \mathcal{C}^{\LieA}_{v_{2\rho}, \tau}} \int_K \phi(x_0 k_1 a_v k_2) \varphi_j(k_1) \, d\mu_K(k_1) \, \xi(v) \, dv \, d\mu_K(k_2)\\
{}&+ O(\|\phi\|_{\infty}\mu_H(H_{T' + \delta}[g_1,g_2])).
\end{aligned}
\end{align}
The last inequality follows from \cref{eqn:SandwichInSkewBall}. Let $E_3 = \mu_H(H_{T' + \delta}[g_1,g_2]))$. We will show that $H_{T' + \delta}[g_1, g_2]$ does not occupy much measure. Dividing by $\mu_H(H_T[g_1, g_2])$ and using \cref{thm:SkewBallVolumeAsymptotic}, we have the following two cases according to $\rankG$.  

If $\rankG = 1$, we have: 
\begin{align*}
\frac{E_3}{\mu_H(H_T[g_1, g_2])} \leq \frac{C[g_1, g_2] e^{\delta_{2\rho} (T' + \delta)} + E[g_1, g_2]\bigl(e^{(\delta_{2\rho} - \eta_1)(T' + \delta)}\bigr)}{C[g_1, g_2] e^{\delta_{2\rho} T} - E[g_1, g_2]\bigl(e^{(\delta_{2\rho} - \eta_1)T}\bigr)} \leq R^{-\delta_{2\rho}C}
\end{align*}
for $T - C\log(R) \gg \frac{1}{\eta_1}\log(2\tilde{E}_{g_1, g_2})$.

If $\rankG \geq 2$, we have: 
\begin{align*}
\frac{E_3}{\mu_H(H_T[g_1, g_2])} \leq{}& \frac{C[g_1, g_2] (T' + \delta)^{\frac{\rankH - 1}{2}} e^{\delta_{2\rho}(T' + \delta)}}{C[g_1, g_2] T^{\frac{\rankH - 1}{2}} e^{\delta_{2\rho}T} - E[g_1, g_2]\bigl(\log(T)^{\frac{1}{2}}T^{\frac{\rankH - 2}{2}}e^{\delta_{2\rho}T}\bigr)} \\
{}&+ \frac{E[g_1, g_2]\bigl(\log(T' + \delta)^{\frac{1}{2}}(T' + \delta)^{\frac{\rankH - 2}{2}}e^{\delta_{2\rho}(T' + \delta)}\bigr)}{C[g_1, g_2] T^{\frac{\rankH - 1}{2}} e^{\delta_{2\rho}T} - E[g_1, g_2]\bigl(\log(T)^{\frac{1}{2}}T^{\frac{\rankH - 2}{2}}e^{\delta_{2\rho}T}\bigr)}\\
\leq{}& R^{-\delta_{2\rho}C}
\end{align*}
for $T - C\log(R) \gg  \tilde{E}_{g_1, g_2}^4$.

\medskip
\noindent
\textit{Step 4. Using the hypothesis, i.e., equidistribution of $K$-orbits. }

Fix $\kappa' := \varsigma_0 \tilde{\kappa}$. Applying the hypothesis in \cref{pro:SkewballEffectiveEquidistribution} for $R$ and $v$, we get the following: 

For all $1 \leq j \leq N$, $k_2 \in K$, and $v \in \LieA^+_{T + \delta}[g_1\tilde{k}_j, k_2g_2] \cap \mathcal{C}_{v_{2\rho}, \tau} \backslash \LieA_{T'}[g_1\tilde{k}_j, k_2g_2]$, we have 
\begin{align*}
\biggl|\int_K \phi(x_0k_1 a_{v}k_2)\varphi_j(k_1) \, d\mu_K(k_1) - \int_{\Gamma \backslash G} \phi \, d\hat{\mu}_{\Gamma \backslash G} \cdot \int_K \varphi_j \, d\mu_K\biggr| \leq \mathcal{S}^\ell(\phi)\mathcal{S}^{\ell}(\varphi_j)R^{-\kappa'}.
\end{align*}

Therefore, we have the following estimate: 
\begin{align*}
{}&\int_K \sum_{j = 1}^{N} \int_{\LieA_{T', T + \delta}[g_1\tilde{k}_j, k_2g_2] \cap \mathcal{C}^{\LieA}_{v_{2\rho}, \tau}} \int_K \phi(x_0 k_1 a_v k_2) \varphi_j(k_1) \, d\mu_K(k_1) \, \xi(v) \, dv \, d\mu_K(k_2)\\
={}& \int_K \sum_{j = 1}^{N} \int_{\LieA_{T', T + \delta}[g_1\tilde{k}_j, k_2g_2] \cap \mathcal{C}^{\LieA}_{v_{2\rho}, \tau}} \int_{\Gamma \backslash G} \phi \, d\hat{\mu}_{\Gamma \backslash G} \cdot \int_K \varphi_j \, d\mu_K \, \xi(v) \, dv \, d\mu_K(k_2) \\
{}&+ \int_K \sum_{j = 1}^{N} \int_{\LieA_{T', T + \delta}[g_1\tilde{k}_j, k_2g_2] \cap \mathcal{C}^{\LieA}_{v_{2\rho}, \tau}} \mathcal{S}^\ell(\phi)\mathcal{S}^\ell(\varphi_j) R^{-\kappa'} \, \xi(v) \, dv \, d\mu_K(k_2).
\end{align*}
Call the second term $E_4$ without the factor $\mathcal{S}^\ell(\phi)$. It can be bounded in the following way: 
\begin{align*}
E_4 &\ll R^{-\kappa'} \biggl(\frac{\delta}{\|\Ad_{g_1}\|_{\mathrm{op}}}\biggr)^{-\ell'} \int_K \sum_{j = 1}^{N} \int_{\LieA_{T', T + \delta}[g_1\tilde{k}_j, k_2g_2]} \, \xi(v) \, dv \, d\mu_K(k_2)\\
&\leq \frac{R^{-\kappa'}\delta^{-\ell'}}{\|\Ad_{g_1}\|_{\mathrm{op}}^{-\ell'}} \int_K \sum_{j = 1}^N \frac{1}{\mu_K(\tilde{k}_j \mathcal{O})} \int_{\tilde{k}_j \mathcal{O}} \int_{\LieA_{T + 2 \delta}[g_1 k_1, k_2 g_2]} \xi(v) \, dv \, d\mu_K(k_1) d\mu_K(k_2)\\
&\ll \|\Ad_{g_1}\|_{\mathrm{op}}^{\ell' + \dim(K)(1 + \dim(G))} R^{-\kappa'} \delta^{-(\ell' + \dim(K))} \mu_H(H_{T + 2\delta}[g_1, g_2]).
\end{align*}
The last inequality is due to the bound on the multiplicity of the open cover. We finally choose $D = \frac{2(\ell' + \dim(K))}{\kappa'}$ so that $R^{-\kappa'} \delta^{-(\ell' + \dim(K))} \leq R^{-\kappa'/2}$, and so
\begin{align*}
E_4 \ll \|\Ad_{g_1}\|_{\mathrm{op}}^{\ell' + \dim(K)(1 + \dim(G))} R^{-\kappa'/2}\mu_H(H_{T}[g_1, g_2]).
\end{align*}

Now we deal with the first term. We factor out $\int_{\Gamma \backslash G} \phi \, d\hat{\mu}_{\Gamma \backslash G}$ and estimate the remaining expression. Using \cref{eqn:SandwichInSkewBall}, we have the following estimate on the upper bound: 
\begin{align*}
{}&\int_K \sum_{j = 1}^{N} \int_{\LieA_{T', T + \delta}[g_1 \tilde{k}_j, k_2 g_2] \cap \mathcal{C}^{\LieA}_{v_{2\rho}, \tau}}  \int_K \varphi_j \, d\mu_K \, \xi(v) \, dv \, d\mu_K(k_2)\\
\leq{}& \int_K \sum_{j = 1}^{N} \int_K \int_{\LieA_{T + 2\delta}[g_1 k_1, k_2 g_2]}  \, \xi(v) \, dv \, \varphi_j(k_1) \, d\mu_K(k_1) \, d\mu_K(k_2)\\
\leq{}& \mu_H(H_{T + 2\delta}[g_1, g_2]).
\end{align*}
Similarly, using \cref{eqn:SandwichInSkewBall}, we have the following estimate on lower bound: 
\begin{align*}
{}&\int_K \sum_{j = 1}^{N} \int_{\LieA_{T', T + \delta}[g_1 \tilde{k}_j, k_2 g_2] \cap \mathcal{C}^{\LieA}_{v_{2\rho}, \tau}}  \int_K \varphi_j \, d\mu_K \, \xi(v) \, dv \, d\mu_K(k_2)\\
\geq{}& \int_K \sum_{j = 1}^{N} \int_K \varphi_j(k_1) \int_{\LieA_{T}[g_1 k_1, k_2 g_2] \cap \mathcal{C}^{\LieA}_{v_{2\rho}, \tau} \backslash \LieA_{T'}[g_1 \tilde{k}_j, k_2g_2]}  \, \xi(v) \, dv \, d\mu_K(k_1) \, d\mu_K(k_2)\\
={}& \int_K \sum_{j = 1}^{N} \int_K \varphi_j(k_1) \int_{\LieA_{T}[g_1 k_1, k_2 g_2] \cap \mathcal{C}^{\LieA}_{v_{2\rho}, \tau}}  \, \xi(v) \, dv \, d\mu_K(k_1) \, d\mu_K(k_2) \\
{}&- \mu_H(H_{T' + \delta}[g_1, g_2])\\
\geq{}& \mu_H(H_{T}[g_1, g_2]) - \mu_H(H_{T' + \delta}[g_1, g_2]) - E_1.
\end{align*}
We find that the error terms are combination of $E_1$, $E_2$, and $E_3$. 

Finally, collecting all the error terms, we get \cref{eqn:FullErrorTermWithAdjoint} where we take
\begin{align*}
\hat{\ell} &:= \ell' + \dim(K)(1 + \dim(G)) = \ell + \dim(K)\biggl(\frac{3}{2} + \dim(G)\biggr), \\
\kappa_C &:= \min\left\{\kappa_1, \kappa_2, \kappa_3, \delta_{2\rho} C, \frac{\kappa'}{2}\right\}\\
&= \min\left\{\frac{\delta_{2\rho}\tau^2}{4}C, \frac{\eta C}{2}, \frac{\eta_1 C}{2}, \frac{\varsigma_0\tilde{\kappa}}{2(\ell' + \dim(K))}, \delta_{2\rho}C, \frac{\varsigma_0\tilde{\kappa}}{2}\right\}.
\end{align*} 
For $M_{g_1, g_2} > 0$, using the estimate $\tilde{E}_{g_1, g_2} = O\bigl(e^{O(d(o, g_1o) + d(o, g_2o))}\bigr)$, we can choose
\begin{align*}
M_{g_1, g_2} = \begin{cases}
C_1(d(o, g_1o) + d(o, g_2o)), & \rankG = 1 \\
C_1e^{C_2(d(o, g_1o) + d(o, g_2o))}, & \rankG \geq 2
\end{cases}
\end{align*}
where $C_1 > 0$ and $C_2 > 0$ are two constants depending only on $(G, H)$.  
\end{proof}

\section{Effective duality}
\label{sec:EffectiveDuality}
In this section we will prove \cref{thm:EffectiveDuality}. This uses the well-known duality between $H$-orbits in $\Gamma \backslash G$ and $\Gamma$-orbits in $G/H$. The key point here is that the relation between the two is \emph{effectivized}.

We fix the following for this section. We use the notation $G_T := \{g \in G: d(o, go) < T\}$ and $\Gamma_T := \{\gamma \in \Gamma: d(o, \gamma o) < T\}$ for any $T > 0$. For all $y \in G/H$, we fix an open neighborhood $\mathcal{U}_y \subset G/H$ of $y$ and a \emph{smooth} section $\sigma_y: \mathcal{U}_y \to G$ of the principal bundle $G \to G/H$ in the following way. For $H \in G/H$, we fix any smooth section $\sigma_H: \mathcal{U}_H \to G$ whose image is a connected smooth submanifold $\sigma_H(\mathcal{U}_H) \subset B_1^G(e)$ containing $e \in G$ with smooth topological boundary so that $\delta_{\mathcal{U}} := \inf\{d(o, go): g \in \partial\sigma_H(\mathcal{U}_H)\} > 0$. For all other $gH \in G/H$, we take any optimal lift $g \in G$ such that $d(o, go) = \min\{d(o, g'o): g'H = gH\}$ and we fix $\mathcal{U}_{gH} := g\mathcal{U}_H$ and $\sigma_{gH} := m_g^{\mathrm{L}} \circ \sigma_H \circ m_{g^{-1}}^{\mathrm{L}}: \mathcal{U}_{gH} \to G$ where $m_g^{\mathrm{L}}$ and $m_{g^{-1}}^{\mathrm{L}}$ are the left multiplication maps by $g$ and $g^{-1}$.

Recall the measure $\mu_{G/H}$ on $G/H$ satisfying $d\mu_G = d\mu_H \, d\mu_{G/H}$ from \cref{subsec:TheSubgroupH}. Since $\mu_G$ is the measure induced by the volume form on $G$, which can be disintegrated over the image of any section $\sigma_y(\mathcal{U}_y)$ along the fibers of the principal bundle $G \to G/H$, we conclude that $\mu_{G/H}$ is a measure induced by a top-dimensional differential form with a positive smooth density function. Moreover, we can fix an appropriate Riemannian metric on $G/H$ which is compatible with $\mu_{G/H}$ by taking any top-dimensional differential form and normalizing it by an appropriate positive smooth density function.

\begin{remark}
Although these sections are not canonical, some choice of a family of sections on an open cover which trivializes the principal bundle $G \to G/H$ over $\supp(\psi)$ is required for the statement and proof of \cref{thm:EffectiveDuality}. Similarly, there is no canonical Riemannian metric on $G/H$ but we need one to make sense of the space of H\"{o}lder functions $C^{0, \chi}(G/H)$.
\end{remark}

For all $\psi \in C_{\mathrm{c}}(G/H)$, we define the constant
\begin{align}
\label{eqn:Dpsi}
D_\psi := \inf\bigl\{r > 0: \supp(\psi) \subset B_r^G(e) \cdot H \subset G/H\bigr\} > 0.
\end{align}
Recall $c_{\varsigma_H}$ from \ShahEE where $\varsigma_H = \frac{1}{2}\min_{\alpha \in \Phi_H^+} \alpha(v_{2\rho_H})$.

\begin{theorem}
\label{thm:EffectiveDuality}
Suppose \ShahEE holds. There exist $C_{\varsigma_H} \asymp c_{\varsigma_H}$, $\kappa \in (0, \kappa_0)$, and $\varrho \in (\varrho_0, 2\varrho_0)$ such that the following holds. Let $\chi \in (0, 1]$ and $p := \dim(G) + 1 + \chi$. Let $\psi \in C_{\mathrm{c}}^{0, \chi}(G/H)$, $g_0 \in G$, $x_0 = \Gamma g_0 \in \Gamma \backslash G$, and $y_0 = g_0 H \in G/H$. There exists $M_{\psi, g_0} > 0$ such that for all $R \gg \inj_{\Gamma \backslash G}(x_0)^{-\varrho}$ and $T \geq C_{\varsigma_H} \log(R) + M_{\psi, g_0}$, at least one of the following holds. 
\begin{enumerate}
\item We have
\begin{align*}
&\left|\frac{1}{\mu_H(H_T)}\sum_{\gamma \in \Gamma_T} \psi(\gamma y_0) - \frac{1}{\mu_H(H_T)\mu_{\Gamma \backslash G}(\Gamma \backslash G)}\int_{G_T} \psi(g y_0) \, d\mu_G(g)\right| \\
\leq{}&
\begin{cases}
O\bigl(e^{O(D_\psi + d(o, g_0o))}\bigr) \|\psi\|_{C^{0, \chi}}R^{-\chi\kappa}, & \rankG = 1 \\
O\bigl(e^{O(D_\psi + d(o, g_0o))}\bigr) \|\psi\|_{C^{0, \chi}} \bigl(T^{-\frac{1}{2p}}\log(T)^{\frac{1}{2p}} + R^{-\chi\kappa}\bigr), & \rankG \geq 2.
\end{cases}
\end{align*}
\item There exists $x \in \Gamma \backslash G$ with
\begin{align*}
d(x_0, x) \leq R^{C_{\varsigma_H}} T^{C_{\varsigma_H}} e^{-\varsigma_HT}
\end{align*}
such that $xH$ is periodic with $\vol(xH) \leq R$.
\end{enumerate}
Moreover, we can choose $M_{\psi, g_0} = C_1e^{C_2(D_\psi + d(o, g_0o))}$ for some absolute constants $C_1 > 0$ and $C_2 > 0$.
\end{theorem}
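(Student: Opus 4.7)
The plan is to execute Part~1 of the outline in \cref{subsec:Outline} in a fully effective manner: convert the $\Gamma$-sum on the left of the target estimate into an $H$-orbit integral against a test function $\phi_\psi$ on $\Gamma \backslash G$, apply the Riemannian skew ball equidistribution \cref{thm:SkewballEffectiveEquidistributionDichotomy} to replace that integral by the $\hat{\mu}_{\Gamma \backslash G}$-average of $\phi_\psi$, and then use \cref{cor:RiemannianVolumeRatioAsymptotic} to normalize by $\mu_H(H_T)$. A parallel (and easier) computation handles the $G$-orbit integral on the right of the target. The alternative case of \cref{thm:SkewballEffectiveEquidistributionDichotomy} is inherited directly as alternative~(2) of the target. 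The key preparatory step is to smooth the $\chi$-H\"older $\psi$ at a scale $\delta > 0$ (to be optimized) into $\psi_\delta \in C_{\mathrm{c}}^\infty(G/H, \R)$ satisfying $\|\psi - \psi_\delta\|_\infty \ll |\psi|_{C^{0,\chi}} \delta^\chi$ together with standard Sobolev bounds scaling like $\delta^{-\ell - O(\dim(G))}$. Next construct $\phi_\psi \in C_{\mathrm{c}}^\infty(\Gamma \backslash G, \R)$ by spreading $\psi_\delta$ across the $H$-fibers of $G \to G/H$: using the fixed sections $\sigma_y$ from the start of \cref{sec:EffectiveDuality} and a smooth bump $\chi_H$ on $H$ near $e$ with $\int_H \chi_H \, d\mu_H = 1$, set
\begin{align*}
\phi_\psi(\Gamma g) := \sum_{\gamma \in \Gamma} \psi_\delta(\gamma g H) \cdot \chi_H\bigl(\sigma_{\gamma g H}(\gamma g H)^{-1} \gamma g\bigr),
\end{align*}
which is well-defined by $\Gamma$-invariance, locally finite by compact support of $\psi_\delta$, and satisfies $\int_{\Gamma \backslash G} \phi_\psi \, d\hat{\mu}_{\Gamma \backslash G} = \mu_{\Gamma \backslash G}(\Gamma \backslash G)^{-1} \int_{G/H} \psi_\delta \, d\mu_{G/H}$ by unfolding over $\Gamma$ and Fubini along $H$-fibers.

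The central identity will be the unfolding
\begin{align*}
\int_{H_T[g_0, g_0^{-1}]} \phi_\psi(x_0 h) \, d\mu_H(h) = \sum_{\gamma \in \Gamma_T} \psi_\delta(\gamma y_0) + (\textrm{boundary annulus error}),
\end{align*}
where the skew parameters $(g_1, g_2) = (g_0, g_0^{-1})$ are forced by matching, along the $H$-orbit $x_0 h = \Gamma g_0 h$, the truncation $d(o, \gamma o) < T$ on the $\Gamma$-side with the defining condition $d(o, g_0 h g_0^{-1} \cdot g_0 o) < T$ of the skew ball, up to a shift absorbed into $M_{\psi, g_0}$. Applying \cref{thm:SkewballEffectiveEquidistributionDichotomy} to $\phi_\psi$ at $(g_0, g_0^{-1})$ then either yields alternative~(2) of the target theorem directly, or gives an effective match with $\mu_H(H_T[g_0, g_0^{-1}]) \cdot \mu_{\Gamma \backslash G}(\Gamma \backslash G)^{-1} \int_{G/H} \psi_\delta \, d\mu_{G/H}$. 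An entirely parallel unfolding applied to $\int_{G_T} \psi(gy_0) \, d\mu_G(g)$ (replacing the $\Gamma$-sum by the Haar integral) produces the same quantity. Subtracting, dividing by $\mu_H(H_T)$, using \cref{cor:RiemannianVolumeRatioAsymptotic} to convert $\mu_H(H_T[g_0, g_0^{-1}])/\mu_H(H_T)$ into an explicit constant plus an error of the announced size, and finally reverting from $\psi_\delta$ to $\psi$ at the cost of $|\psi|_{C^{0,\chi}} \delta^\chi$ produces the main estimate.

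The main obstacle will be the joint optimization of three small parameters---the smoothing scale $\delta$, the width of the $H$-fiber bump $\chi_H$, and the boundary-annulus width---against four competing error sources: the H\"older smoothing cost $|\psi|_{C^{0,\chi}} \delta^\chi$; the Sobolev amplification $\mathcal{S}^\ell(\phi_\psi) \ll \delta^{-O(\ell + \dim(G))} \|\psi\|_{C^{0,\chi}}$ entering the $R^{-\chi\kappa_0}$ equidistribution error in \cref{thm:SkewballEffectiveEquidistributionDichotomy}; the $T^{-\frac{1}{2}}\log(T)^{\frac{1}{2}}$ volume-ratio term present in the $\rankG \geq 2$ case (of both \cref{thm:SkewballEffectiveEquidistributionDichotomy} and \cref{cor:RiemannianVolumeRatioAsymptotic}), which in turn is raised to an effective H\"older-smoothed exponent of order $T^{-\frac{1}{2p}}$ with $p = \dim(G) + 1 + \chi$; and the annulus error, bounded via \cref{thm:SkewBallVolumeAsymptotic}. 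The announced exponents $\kappa \in (0, \kappa_0)$ and $p$ will emerge from this optimization, while the factor $e^{O(D_\psi + d(o, g_0o))}$ is inherited from the $D_\psi$-controlled effective spread of $\phi_\psi$ in $\Gamma \backslash G$, from the $O\bigl(e^{O(d(o, g_1o) + d(o, g_2o))}\bigr)$ constants of \cref{thm:SkewballEffectiveEquidistributionDichotomy,cor:RiemannianVolumeRatioAsymptotic} at $(g_0, g_0^{-1})$, and from the $d(o, g_0o)$ discrepancy between $\Gamma_T$ and $H_T[g_0, g_0^{-1}]$. Likewise, the lower bound $M_{\psi, g_0} = C_1 e^{C_2(D_\psi + d(o, g_0o))}$ on $T$ inherits the common validity ranges $T \geq \Omega\bigl(e^{\Omega(d(o, g_1o) + d(o, g_2o))}\bigr)$ of these asymptotic expansions, together with an additive term to absorb the spread of $\supp(\psi)$ across the $H$-fibers.
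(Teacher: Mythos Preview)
Your overall architecture---Margulis thickening of $\psi$ along the $H$-fibers, unfolding against the $\Gamma$-sum, then feeding the resulting $\phi_\psi$ into \cref{thm:SkewballEffectiveEquidistributionDichotomy}---matches the paper. But there is a genuine gap in your choice of skew parameters.

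You claim the unfolding forces $(g_1,g_2)=(g_0,g_0^{-1})$. It does not. Unfold: for each $\gamma$ the inner integral becomes $\int \chi_H(h_\gamma h)\,d\mu_H(h)$ with $h_\gamma = \sigma_{\gamma y_0}(\gamma y_0)^{-1}\gamma g_0 \in H$, and $h_\gamma^{-1}h'\in H_T[g_0,g_0^{-1}]$ reads $d(o,\gamma^{-1}\sigma_{\gamma y_0}(\gamma y_0)h'g_0^{-1}o)<T$. The discrepancy with $d(o,\gamma o)<T$ is $d(o,\sigma_{\gamma y_0}(\gamma y_0)h'g_0^{-1}o)$, which is of size $D_\psi + d(o,g_0o)$, \emph{not} $\delta$. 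So your ``boundary annulus'' has width $O(D_\psi + d(o,g_0o))$; by \cref{thm:SkewBallVolumeAsymptotic} its $\mu_H$-mass is a fixed fraction of $\mu_H(H_T)$, and the error does not decay. This cannot be ``absorbed into $M_{\psi,g_0}$'': it contaminates the main term, not the threshold.

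What the paper does instead is decompose $\supp(\psi)$ by a partition of unity $\{\varphi_{\delta,j}^{G/H}\}$ into pieces $\mathcal{U}_j$ of diameter $\delta$, each carrying a section representative $b_j\in\sigma_j(\mathcal{U}_j)$, and then for each $j$ run the unfolding with skew ball $H_T[g_0,b_j^{-1}]$. On $\mathcal{U}_j$ the section value $\sigma_j(\gamma g_0)$ is within $\delta$ of $b_j$, so the annulus width really is $O(E_\psi\delta)$ with $E_\psi=e^{c_\Phi D_\psi}$; this is where the $\delta$-optimization you describe actually lives. Splitting $\psi=\psi^++\psi^-$ is also needed to turn the two-sided containment of skew balls into one-sided inequalities for the unfolding (your proposal omits this). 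Finally, the paper does not pre-smooth $\psi$ to $\psi_\delta$; it keeps $\psi$ H\"older and instead transfers the H\"older regularity into the equidistribution hypothesis via the standard convolution on $\Gamma\backslash G$---a cosmetic difference, but it explains why the Sobolev amplification and the $\chi$-dependence of $\kappa$ appear where they do.
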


\Cref{thm:EffectiveDuality} follows from \cref{thm:SkewballEffectiveEquidistributionDichotomy} and the following proposition. The proof of \cref{thm:EffectiveDuality} assuming \cref{pro:EffectiveDuality'} is very similar to the proof of \cref{thm:SkewballEffectiveEquidistributionDichotomy} assuming \cref{pro:SkewballEffectiveEquidistribution}. Hence, we omit the derivation.

\begin{proposition}
\label{pro:EffectiveDuality'}
Let $\chi \in (0, 1]$ and $\psi \in C_{\mathrm{c}}^{0, \chi}(G/H)$. Let $g_0 \in G$, $x_0 = \Gamma g_0 \in \Gamma \backslash G$, and $y_0 = g_0H \in G/H$. There exists $M_{\psi, g_0} > 0$ such that the following holds:

Suppose that there exists $\tilde{\kappa} > 0$, $p := \dim(G) + 1 + \chi$, $\kappa := \tilde{\kappa} (\dim(G) + 2)^{-1} \cdot \Bigl(\ell + \frac{\dim(G)}{2} + 1\Bigr)^{-1}$, $R > 0$, $\tilde{C}_{\varsigma_H} > 0$, and $T \geq \tilde{C}_{\varsigma_H} \log(R) + M_{\psi, g_0}$ such that for all $b \in G$, $\phi \in C_{\mathrm{c}}^\infty(\Gamma \backslash G)$, and $T' \in [T - 1, T + 1]$, we have
\begin{align*}
&\left|\frac{1}{\mu_H(H_{T'}[g_0, b])} \int_{H_{T'}[g_0, b]} \phi(x_0h) \, d\mu_H(h) - \int_{\Gamma \backslash G} \phi(x) \, d\hat{\mu}_{\Gamma \backslash G}(x)\right| \\
\leq{}&
\begin{cases}
O\bigl(e^{O(d(o, g_0o))}\bigr) \mathcal{S}^{\ell}(\phi) R^{-\tilde{\kappa}}, & \rankG = 1 \\
O\bigl(e^{O(d(o, g_0o) + d(o, bo))}\bigr) \|\phi\|_\infty T^{-\frac{1}{2}} \log(T)^{\frac{1}{2}} + O\bigl(e^{O(d(o, g_0o))}\bigr) \mathcal{S}^{\ell}(\phi)R^{-\tilde{\kappa}}, & \rankG \geq 2.
\end{cases}
\end{align*}
Then, we have
\begin{align*}
&\left|\frac{1}{\mu_H(H_T)}\sum_{\gamma \in \Gamma_T} \psi(\gamma y_0) - \frac{1}{\mu_H(H_T)\mu_{\Gamma \backslash G}(\Gamma \backslash G)}\int_{G_T} \psi(g y_0) \, d\mu_G(g)\right| \\
\leq{}&
\begin{cases}
O\bigl(e^{O(D_\psi + d(o, g_0o))}\bigr) \|\psi\|_{C^{0, \chi}} R^{-\chi\kappa}, & \rankG = 1 \\
O\bigl(e^{O(D_\psi + d(o, g_0o))}\bigr) \|\psi\|_{C^{0, \chi}} \bigl(T^{-\frac{1}{2p}}\log(T)^{\frac{1}{2p}} + R^{-\chi\kappa}\bigr), & \rankG \geq 2.
\end{cases}
\end{align*}
Moreover, we can choose $M_{\psi, g_0} = C_1e^{C_2(D_\psi + d(o, g_0o))}$ for some absolute constants $C_1 > 0$ and $C_2 > 0$.
\end{proposition}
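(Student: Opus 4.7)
\medskip

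The plan is to carry out the effective version of the Gorodnik--Weiss duality that relates $\Gamma$-orbits in $G/H$ to $H$-orbits in $\Gamma \backslash G$, balancing a regularity error (from smoothing point evaluations of $\psi$) against the equidistribution error supplied by the hypothesis. First I would fix a smooth bump $F_\epsilon \in C_{\mathrm{c}}^\infty(\Gamma \backslash G, \R)$ concentrated at $x_0$ at scale $\epsilon \in (0, 1)$ (to be optimized at the end), normalized so that $\int F_\epsilon \, d\hat\mu_{\Gamma \backslash G} = 1$ and satisfying the standard bound $\mathcal{S}^\ell(F_\epsilon) \ll \epsilon^{-\ell - \dim(G)/2}$ (this is constructed exactly as in \cref{lem:Sobolev}). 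Using $\chi$-H\"older continuity of $\psi$ on $G/H$ together with the precise volume asymptotics of \cref{thm:SkewBallVolumeAsymptotic} and \cref{cor:RiemannianVolumeRatioAsymptotic} to control the $\chi_{G_T}$-boundary, I would establish an approximate identity of the form
\begin{align*}
\sum_{\gamma \in \Gamma_T} \psi(\gamma y_0) = \int_G F_\epsilon\bigl(\Gamma g g_0^{-1}\bigr) \psi(gH) \chi_{G_T}\bigl(g g_0^{-1}\bigr) \, d\mu_G(g) + \mathrm{Err}_{\mathrm{reg}},
\end{align*}
where the substitution $g \mapsto g g_0$ has been made so that the integrand is right-$H$-invariant in $g$, and $\mathrm{Err}_{\mathrm{reg}} = O\bigl(e^{O(d(o, g_0o) + D_\psi)}\bigr) (\epsilon^\chi \|\psi\|_{C^{0,\chi}} + \epsilon \|\psi\|_\infty)\, \mu_H(H_T)$.

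Next I would disintegrate the above integral over the principal $H$-bundle $G \to G/H$ using the fixed local sections $\sigma_y$. This gives
\begin{align*}
\int_G F_\epsilon\bigl(\Gamma g g_0^{-1}\bigr) \psi(gH) \chi_{G_T}\bigl(g g_0^{-1}\bigr) \, d\mu_G(g) = \int_{G/H} \psi(y) J_\epsilon(y, T) \, d\mu_{G/H}(y),
\end{align*}
where $J_\epsilon(y, T) = \int_{H_T[\sigma_y(y), g_0^{-1}]} F_\epsilon\bigl(\Gamma \sigma_y(y) h g_0^{-1}\bigr) \, d\mu_H(h)$. The key manipulation (this is the hard step, see below) is to rewrite $J_\epsilon(y, T)$ in the form $\int_{H_{T'}[g_0, b(y)]} F_\epsilon(x_0 h') \, d\mu_H(h')$ up to a controlled $\epsilon$-scale discrepancy: since $F_\epsilon$ is supported in $B_\epsilon^{\Gamma \backslash G}(x_0)$, only $h \in H$ for which $\sigma_y(y) h g_0^{-1} \in \gamma g_0 \cdot B_\epsilon^G(e)$ for a unique $\gamma \in \Gamma$ contribute, and such $h$ can be reparametrized via a suitable $h' \in H$ associated to the unique nearby $\gamma y_0$.

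Having expressed $J_\epsilon(y, T)$ in the form required by the hypothesis, I would then apply it with $\phi = F_\epsilon$, $b = b(y)$ and $T'(y) \in [T-1, T+1]$ to obtain
\begin{align*}
J_\epsilon(y, T) = \frac{\mu_H\bigl(H_{T'(y)}[g_0, b(y)]\bigr)}{\mu_{\Gamma \backslash G}(\Gamma \backslash G)} + O\bigl(e^{O(d(o, g_0o) + d(o, b(y)o))} \mathcal{S}^\ell(F_\epsilon) R^{-\tilde\kappa}\bigr) \mu_H(H_T),
\end{align*}
with the additional $T^{-1/2}\log(T)^{1/2}$ term in the $\rankG \geq 2$ case, and then combine with the reverse change of variables
\begin{align*}
\frac{1}{\mu_{\Gamma \backslash G}(\Gamma \backslash G)} \int_{G_T} \psi(g y_0) \, d\mu_G(g) = \frac{1}{\mu_{\Gamma \backslash G}(\Gamma \backslash G)} \int_{G/H} \psi(y) \mu_H\bigl(H_T[\sigma_y(y), g_0^{-1}]\bigr) \, d\mu_{G/H}(y)
\end{align*}
and with the volume-ratio bound of \cref{cor:RiemannianVolumeRatioAsymptotic} (the source of the $T^{-1/(2p)}\log(T)^{1/(2p)}$ error in higher rank). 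After normalizing by $\mu_H(H_T)^{-1}$ and collecting all contributions, the total error takes the form
\begin{align*}
e^{O(d(o, g_0o) + D_\psi)} \|\psi\|_{C^{0,\chi}} \bigl(\epsilon^\chi + \epsilon^{-(\ell + \dim(G)/2)} R^{-\tilde\kappa}\bigr) + \text{(volume ratio)},
\end{align*}
and the choice of $\epsilon$ dictated by the formula $\kappa = \tilde\kappa (\dim(G)+2)^{-1}(\ell + \dim(G)/2 + 1)^{-1}$ (taking into account one further multiplicative loss coming from the Jacobian/section comparison in Paragraph~2) produces the claimed exponent $R^{-\chi\kappa}$; the constant $M_{\psi, g_0}$ absorbs the threshold needed to ensure the hypothesis applies with $T' \in [T-1, T+1]$.

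The hard part will be the reparametrization in Paragraph~2. The hypothesis of the proposition requires $H$-orbits based at the \emph{fixed} $x_0 = \Gamma g_0$ with the \emph{fixed} first skew-ball parameter $g_0$, while $J_\epsilon(y, T)$ naturally involves the $H$-orbit of $\Gamma \sigma_y(y)$ and first skew-ball parameter $\sigma_y(y)$, both of which vary with $y$. Identifying $b(y) \in G$ and $T'(y) \in [T-1, T+1]$ cleanly (using the fact that $F_\epsilon$ concentrates on a single $\Gamma$-preimage of a small neighborhood), and bounding the resulting discrepancy $|J_\epsilon(y, T) - \int_{H_{T'(y)}[g_0, b(y)]} F_\epsilon(x_0 h') \, d\mu_H(h')|$ by $O(e^{O(d(o, g_0 o))} \epsilon^\chi \|F_\epsilon\|_{C^{0,\chi}} \mu_H(H_T))$ uniformly in $y \in \supp(\psi)$, is where the careful bookkeeping of conjugations by $g_0^{-1}$, right-translations by $H$, and boundary effects all interact; this is also where the extra factor $\dim(G)+2$ in the formula for $\kappa$ arises from the additional Sobolev loss in transferring $F_\epsilon$ under the section change.
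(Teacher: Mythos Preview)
Your approach diverges from the paper's in a way that creates a genuine gap at exactly the step you flag as hard. The hypothesis of the proposition gives equidistribution only for $H$-orbits of the \emph{fixed} point $x_0=\Gamma g_0$, i.e.\ for integrals $\int_{H_{T'}[g_0,b]}\phi(x_0h)\,d\mu_H(h)$. Your $J_\epsilon(y,T)=\int_{H_T[\sigma_y(y),g_0^{-1}]}F_\epsilon(\Gamma\sigma_y(y)hg_0^{-1})\,d\mu_H(h)$ is, after the right-translation $g\mapsto gg_0^{-1}$, an $H$-orbit average based at the \emph{variable} point $\Gamma\sigma_y(y)$. The proposed ``reparametrization'' does not exist: when $\sigma_y(y)hg_0^{-1}=\gamma g_0u$ with $u\in B_\epsilon^G(e)$, the element $u$ that would have to play the role of $h'$ lies in $G$, not in $H$, and there is no change of variables in $H$ that converts $F_\epsilon(\Gamma\sigma_y(y)hg_0^{-1})$ into $F_\epsilon(x_0h')$ while preserving the skew-ball domain. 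Equivalently, $g\mapsto F_\epsilon(\Gamma\sigma_y(y)g^{-1})$ is not a well-defined function on $\Gamma\backslash G$, so you cannot package $J_\epsilon$ as $\int\phi(x_0h)\,d\mu_H$ for any admissible $\phi$. To carry your scheme through you would need the equidistribution hypothesis at every base point $\Gamma\sigma_y(y)$ for $y\in\supp(\psi)$, with uniform constants --- strictly more than what is assumed.

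The paper avoids this by reversing the roles of ``bump'' and ``test function'': rather than a bump $F_\epsilon$ at $x_0$, it takes a bump $\varphi_\delta^H$ on $H$ and, via a partition of unity $\{\varphi_{\delta,j}^{G/H}\}$ on $\supp(\psi)$ with local sections $\sigma_j$, builds $\Phi_{\delta,j}(g)=\psi_{\delta,j}(gH)\varphi_\delta^H(h_{j,g})$ and $\phi_{\delta,j}(\Gamma g)=\sum_{\gamma}\Phi_{\delta,j}(\gamma g)$. This is Margulis' thickening: by construction $\psi_{\delta,j}(\gamma y_0)=\int_H\Phi_{\delta,j}(\gamma g_0h)\,d\mu_H(h)$, so the $\Gamma$-sum unfolds directly into $\int_{H_{T\pm O(\delta)}[g_0,b_j^{-1}]}\phi_{\delta,j}(x_0h)\,d\mu_H(h)$ with the \emph{fixed} base point $x_0$ and only the second skew-ball parameter varying with $j$. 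The same unfolding applied to $\int_{G_T}\psi(gy_0)\,d\mu_G$ produces the matching term $\mu_H(H_{T\pm O(\delta)}[g_0,b_j^{-1}])\int\phi_{\delta,j}\,d\hat\mu_{\Gamma\backslash G}$, and the difference is exactly what the hypothesis controls. The parameter $\delta$ then plays the role of your $\epsilon$, and the extra $(\dim(G)+2)^{-1}$ in $\kappa$ comes not from a ``section change'' but from the optimization $\delta\sim R^{-\chi\kappa'/p}$ balancing the H\"older loss $\delta^\chi$ against the Sobolev-to-H\"older conversion cost $\delta^{-(\dim(G)+\chi)}R^{-\chi\kappa'}$ (with $\kappa'$ already incorporating the $(\ell+\dim(G)/2+1)^{-1}$ smoothing loss).
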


\begin{proof}
Let $\psi$, $g_0$, $x_0$, $y_0$, $\tilde{\kappa}$, $R$, $\tilde{C}_{\varsigma_H}$, and $T$ be as in the proposition and suppose that the hypothesis of the proposition holds.

By a standard convolution trick as in \cite[Appendix]{KM96} and \cite[Corollary 5.2]{MW12} and a Sobolev norm estimate as in property~(3) in \cref{lem:Sobolev}, the hypothesis also holds for all $\chi$-H\"{o}lder continuous functions $\phi \in C_{\mathrm{c}}^{0, \chi}(\Gamma \backslash G)$ with the error term
\begin{align*}
\begin{cases}
O\bigl(e^{O(d(o, g_0o))}\bigr) \mathcal{S}^{\ell}(\phi) R^{-\tilde{\kappa}}, & \rankG = 1 \\
O\bigl(e^{O(d(o, g_0o) + d(o, bo))}\bigr) \|\phi\|_{\infty} T^{-\frac{1}{2}} \log(T)^{\frac{1}{2}} + O\bigl(e^{O(d(o, g_0o))}\bigr) \mathcal{S}^{\ell}(\phi)R^{-\tilde{\kappa}}, & \rankG \geq 2
\end{cases}
\end{align*}
replaced with
\begin{align*}
\begin{cases}
O\bigl(e^{O(d(o, g_0o))}\bigr) \|\phi\|_{C^{0, \chi}} R^{-\chi\kappa'}, & \rankG = 1 \\
O\bigl(e^{O(d(o, g_0o) + d(o, bo))}\bigr) \|\phi\|_\infty T^{-\frac{1}{2}} \log(T)^{\frac{1}{2}} + O\bigl(e^{O(d(o, g_0o))}\bigr) \|\phi\|_{C^{0, \chi}} R^{-\chi\kappa'}, & \rankG \geq 2
\end{cases}
\end{align*}
where
\begin{align*}
\kappa' := \tilde{\kappa}\left(\ell + \frac{\dim(G)}{2} + 1\right)^{-1} \leq \tilde{\kappa}\left(\ell + \frac{\dim(G)}{2} + \chi\right)^{-1}.
\end{align*}
We outline the proof here for completeness. Convolving $\phi \in C_{\mathrm{c}}^{0, \chi}(\Gamma \backslash G)$ with an appropriate nonnegative smooth bump function $\varphi \in C^\infty(G)$ supported in $B_\delta^G(e)$ for some $\delta > 0$ with $\int_G \varphi \, d\mu_G = 1$ and $\mathcal{S}^\ell(\varphi) \ll \delta^{-(\ell + \frac{\dim G}{2})}$, one can approximate $\phi$ by a smooth compactly supported function $\phi_{\delta}$ such that
\begin{align*}
\|\phi - \phi_{\delta}\|_{\infty} &\leq \|\phi\|_{C^{0, \chi}} \delta^{\chi}, & \mathcal{S}^\ell(\phi_{\delta}) &\ll \|\phi\|_{C^{0, \chi}} \delta^{-(\ell + \frac{\dim G}{2})}.
\end{align*}
Now we simply take $\delta = R^{-\tilde{\kappa}(\ell + \frac{\dim(G)}{2} + \chi)^{-1}} \leq R^{-\kappa'}$.

\begin{figure}
\centering
\includegraphics{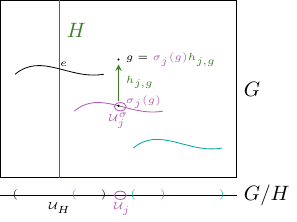}
\caption{Cover for $\supp(\psi)$}
\label{fig:PrincipalBundleTrivialization}
\end{figure}

We fix a finite open cover $\{\mathcal{U}_{y_k}\}_{k = 1}^n$ for the compact set $\supp(\psi)$ for some $\{y_k\}_{k = 1}^n \subset \supp(\psi)$ and $n \in \N$ so that $\bigl\{\sigma_{y_k}^{-1}\bigl(\sigma_{y_k}(y_k)B_{\delta_\mathcal{U}}^G(e)\bigr)\bigr\}_{k = 1}^n$ also covers $\supp(\psi)$ and $\bigcup_{k = 1}^n \overline{\mathcal{U}_{y_k}} \subset \{gH \in G/H: d(o, go) \leq D_\psi + 1\}$. We introduce a parameter $\delta \in \bigl(0, \min\bigl\{\frac{1}{2}, \frac{\delta_\mathcal{U}}{2}\bigr\}\bigr)$ which depends on $R$ and $T$ which we explicate later. Now, depending on $\supp(\psi)$ and $\delta$, take a finite open cover $\{\mathcal{U}_j\}_{j = 1}^N$ for $\supp(\psi)$ which is subordinate to $\{\mathcal{U}_{y_k}\}_{k = 1}^n$. Let $1 \leq j \leq N$ and fix a corresponding $1 \leq k \leq n$ such that $\mathcal{U}_j \subset \mathcal{U}_{y_k}$. For convenience, we set $\sigma_j := \sigma_{y_k}$ and extend it to a Borel section $\sigma_j: G/H \to G$. We may now assume that $\mathcal{U}_j^\sigma := \sigma_j(\mathcal{U}_j) = b_j B_\delta^G(e) \cap \sigma_{y_k}(\mathcal{U}_{y_k})$ for some $b_j \in \sigma_j(\mathcal{U}_j)$, which is compactly contained in $\sigma_{y_k}(\mathcal{U}_{y_k})$. Write $\check{\mathcal{U}}_j := \sigma_j^{-1}\bigl(b_j B_{\delta/2}^G(e) \cap \sigma_{y_k}(\mathcal{U}_{y_k})\bigr)$. We may also assume that $\{\check{\mathcal{U}}_j\}_{j = 1}^N$ is an open cover for $\supp(\psi)$. Fix the constant $E_\psi = e^{c_\Phi D_\psi} > 1$ and impose the condition
\begin{align}
\label{eqn:deltaCondition}
\delta < E_\psi^{-1}
\end{align}
so that using \cref{lem:NormOfAdjointEstimate} as in \cref{eqn:ConjugationDifferential}, we get the important containments
\begin{align}
\label{eqn:ContainmentOfSkewBallsInG}
b B_\delta^G(e) b^{-1} \subset B_{\|\Ad_b\|_{\mathrm{op}}\delta}^G(e) \subset B_{E_\psi\delta}^G(e) \subset B_1^G(e)
\end{align}
for all $b \in \bigcup_{j = 1}^N \overline{\mathcal{U}_j^\sigma}$. Moreover, using the left $G$-invariance of the metric on $G$ and a general Besicovitch covering theorem from \cite[Chapter 2, \S 2.8, Theorem 2.8.14]{Fed69} on $\sigma_H(\mathcal{U}_H)$ and induction on the finite open cover for $\supp(\psi)$, we can also ensure that the multiplicity of the open cover $\{\mathcal{U}_j\}_{j = 1}^N$ is bounded by a constant $m_\mathcal{U} \in \N$ depending only on $\mathcal{U}_H$ and in particular, uniform in $\psi$ and $\delta$. We deduce that the open sets in the cover have measures of the order $O\bigl(\delta^{\dim(G/H)}\bigr)$ again using left $G$-invariance of $\mu_G$ and the fixed bounded set $\sigma_H(\mathcal{U}_H)$.

We denote the positive and negative parts of $\psi$ by
\begin{align*}
\psi^+ &= \max(\psi, 0) \geq 0, & \psi^- &= \min(\psi, 0) \leq 0
\end{align*}
which are still $\chi$-H\"{o}lder continuous. We will use a set of nonnegative smooth bump functions $\bigl\{\varphi_{\delta, j}^{G/H}\bigr\}_{j = 1}^N$ on $\bigcup_{j = 1}^N \mathcal{U}_j$ subordinate to $\{\mathcal{U}_j\}_{j = 1}^N$, which is a partition of unity for $\bigcup_{j = 1}^N \check{\mathcal{U}}_j$. Clearly, we can ensure
\begin{align*}
\bigl|\varphi_{\delta, j}^{G/H}\bigr|_{C^1} &\ll \delta^{-1}, & \bigl|\varphi_{\delta, j}^{G/H}\bigr|_{C^{0, \chi}} &\ll \bigl|\varphi_{\delta, j}^{G/H}\bigr|_{C^1}\delta^{1 - \chi} \ll \delta^{-\chi}
\end{align*}
for all $1 \leq j \leq N$. For brevity, we write $\psi_{\delta, j} = \psi \cdot \varphi_{\delta, j}^{G/H}$ and $\psi_{\delta, j}^\pm = \psi^\pm \cdot \varphi_{\delta, j}^{G/H}$ for all $1 \leq j \leq N$.

Similar to above, take the open symmetric neighborhood $\mathcal{O} = B_\delta^H(e) \subset H$ so that by triangle inequality and left $G$-invariance of the metric and \cref{eqn:ConjugationDifferential}, we have
\begin{align}
\label{eqn:TriangleInequalityLemma}
d(o, ghb^{-1}o) < d(o, gb^{-1}o) + d(o, bhb^{-1}o) < d(o, gb^{-1}o) + E_\psi\delta
\end{align}
for all $g \in G$, $h \in \mathcal{O}$, and $b \in \bigcup_{j = 1}^N \overline{\mathcal{U}_j^\sigma}$. Take a smooth nonnegative bump function $\varphi_\delta^H \in C_{\mathrm{c}}^\infty(H)$ such that
\begin{align*}
\supp\bigl(\varphi_\delta^H\bigr) &\subset \mathcal{O}, & \int_H \varphi_\delta^H \, d\mu_H &= 1.
\end{align*}
This will be used to ``thicken'' the function $\psi$ akin to Margulis' thickening argument. We can also ensure that
\begin{align*}
\bigl\|\varphi_\delta^H\bigr\|_\infty &\ll \delta^{-\dim(H)}, \\
\bigl|\varphi_\delta^H\bigr|_{C^1} &\ll \delta^{-(\dim(H) + 1)}, \\
\bigl|\varphi_\delta^H\bigr|_{C^{0, \chi}} &\ll \bigl|\varphi_\delta^H\bigr|_{C^1} \delta^{1 - \chi} \ll \delta^{-(\dim(H) + \chi)}
\end{align*}
again using \cref{lem:NormOfAdjointEstimate}.

Let $1 \leq j \leq N$. Abusing notation, we define $\sigma_j(g) := \sigma_j(gH)$ and $h_{j, g} := \sigma_j(g)^{-1}g \in H$ so that $g = \sigma_j(g)h_{j, g}$, for all $g \in G$ (See \cref{fig:PrincipalBundleTrivialization}). We define the functions $\Phi_{\delta, j} \in C_{\mathrm{c}}^{0, \chi}(G)$ and $\phi_{\delta, j} \in C_{\mathrm{c}}^{0, \chi}(\Gamma \backslash G)$ by
\begin{align*}
\Phi_{\delta, j}(g) &= \psi_{\delta, j}(gH)\varphi_\delta^H(h_{j, g}), & \phi_{\delta, j}(\Gamma g) &= \sum_{\gamma \in \Gamma} \Phi_{\delta, j}(\gamma g), \qquad \text{for all $g \in G$}
\end{align*}
and define $\Phi_{\delta, j}^\pm \in C_{\mathrm{c}}^{0, \chi}(G)$ and $\phi_{\delta, j}^\pm \in C_{\mathrm{c}}^{0, \chi}(\Gamma \backslash G)$ analogously. The above sum is actually a finite sum for all $\Gamma g \in \Gamma \backslash G$ and in fact, the number of nonzero summands is bounded above by an absolute constant depending only on $\Gamma$. Using the above bounds on various seminorms and norms, we have
\begin{align*}
\|\Phi_{\delta, j}\|_{C^{0, \chi}} \ll{}& \|\psi\|_\infty\bigl\|\varphi_{\delta, j}^{G/H}\bigr\|_\infty\bigl\|\varphi_\delta^H\bigr\|_\infty + |\psi|_{C^{0, \chi}}\bigl\|\varphi_{\delta, j}^{G/H}\bigr\|_\infty\bigl\|\varphi_\delta^H\bigr\|_\infty \\
{}&+ \|\psi\|_\infty\bigl|\varphi_{\delta, j}^{G/H}\bigr|_{C^{0, \chi}}\bigl\|\varphi_\delta^H\bigr\|_\infty + \|\psi\|_\infty\bigl\|\varphi_{\delta, j}^{G/H}\bigr\|_\infty\bigl|\varphi_\delta^H\bigr|_{C^{0, \chi}} \\
\leq{}& \|\psi\|_\infty\delta^{-\dim(H)} + |\psi|_{C^{0, \chi}}\delta^{-\dim(H)} \\
{}&+ \|\psi\|_\infty\delta^{-\chi}\delta^{-\dim(H)} + \|\psi\|_\infty\delta^{-(\dim(H) + \chi)} \\
\leq{}&\|\psi\|_{C^{0, \chi}} \delta^{-(\dim(H) + \chi)}
\end{align*}
and similarly with superscripts $\pm$. Consequently, we have
\begin{align*}
\|\phi_{\delta, j}\|_\infty &\ll \|\psi\|_\infty\delta^{-\dim(H)}, & \|\phi_{\delta, j}\|_{C^{0, \chi}} &\ll \|\psi\|_{C^{0, \chi}}\delta^{-(\dim(H) + \chi)}
\end{align*}
and similarly with superscripts $\pm$.

Let $1 \leq j \leq N$ and $g \in G_T$ such that $gg_0 \in \mathcal{U}_j^\sigma H$. Following definitions, we get
\begin{align*}
d\bigl(o, g_0 h_{j, gg_0}^{-1} \sigma_j(gg_0)^{-1}o\bigr) = d(o, g_0 (gg_0)^{-1}o) = d(o, go) < T.
\end{align*}
Hence by \cref{eqn:TriangleInequalityLemma}, $h \in \mathcal{O}$ implies
\begin{align*}
d\bigl(o, g_0 h_{j, gg_0}^{-1}h \sigma_j(gg_0)^{-1}o\bigr) < T + E_\psi\delta.
\end{align*}
Consequently, $\supp\bigl(\varphi_\delta^H\bigr) \subset \mathcal{O} \subset h_{j, gg_0}H_{T + E_\psi\delta}[g_0, \sigma_j(gg_0)^{-1}]$. Calculating as in \cite[Section 4]{GW07} using the above, we get
\begin{align}
\label{eqn:psiAsIntegral}
\begin{aligned}
\psi_{\delta, j}(gg_0H) &= \int_{H_{T + E_\psi\delta}[g_0, \sigma_j(gg_0)^{-1}]} \psi_{\delta, j}(gg_0hH)\varphi_\delta^H(h_{j, gg_0h}) \, d\mu_H(h) \\
&= \int_{H_{T + E_\psi\delta}[g_0, \sigma_j(gg_0)^{-1}]} \Phi_{\delta, j}(gg_0h) \, d\mu_H(h)
\end{aligned}
\end{align}
and similarly with superscripts $\pm$. Using these formulas and \cref{eqn:TriangleInequalityLemma}, we obtain
\begin{align}
\label{eqn:psi+<=}
\begin{aligned}
\sum_{\gamma \in \Gamma_T} \psi_{\delta, j}^+(\gamma y_0) &= \sum_{\gamma \in \Gamma_T} \int_{H_{T + E_\psi\delta}[g_0, \sigma_j(\gamma g_0)^{-1}]} \Phi_{\delta, j}^+(\gamma g_0 h) \, d\mu_H(h) \\
&= \sum_{\gamma \in \Gamma_T} \int_{H_{T + 2E_\psi\delta}[g_0, b_j^{-1}]} \Phi_{\delta, j}^+(\gamma g_0 h) \, d\mu_H(h) \\
&= \int_{H_{T + 2E_\psi\delta}[g_0, b_j^{-1}]} \sum_{\gamma \in \Gamma_T} \Phi_{\delta, j}^+(\gamma g_0 h) \, d\mu_H(h) \\
&\leq \int_{H_{T + 2E_\psi\delta}[g_0, b_j^{-1}]} \phi_{\delta, j}^+(x_0 h) \, d\mu_H(h).
\end{aligned}
\end{align}
Similarly, we have
\begin{align}
\label{eqn:psi-<=}
\begin{aligned}
\sum_{\gamma \in \Gamma_T} \psi_{\delta, j}^-(\gamma y_0) &= \sum_{\gamma \in \Gamma_T} \int_{H_{T + E_\psi\delta}[g_0, \sigma_j(\gamma g_0)^{-1}]} \Phi_{\delta, j}^-(\gamma g_0 h) \, d\mu_H(h) \\
&\leq \sum_{\gamma \in \Gamma_T} \int_{H_{T - 2E_\psi\delta}[g_0, b_j^{-1}]} \Phi_{\delta, j}^-(\gamma g_0 h) \, d\mu_H(h) \\
&= \int_{H_{T - 2E_\psi\delta}[g_0, b_j^{-1}]} \sum_{\gamma \in \Gamma_T} \Phi_{\delta, j}^-(\gamma g_0 h) \, d\mu_H(h) \\
&= \int_{H_{T - 2E_\psi\delta}[g_0, b_j^{-1}]} \phi_{\delta, j}^-(x_0 h) \, d\mu_H(h).
\end{aligned}
\end{align}
The last equality holds for the following reason. First, $h \in H_{T - 2E_\psi\delta}[g_0, b_j^{-1}]$ means $d(o, g_0hb_j^{-1}o) < T - 2E_\psi\delta$ which, by a similar inequality as \cref{eqn:TriangleInequalityLemma} using \cref{eqn:ContainmentOfSkewBallsInG}, implies $d(o, g_0hb^{-1}o) < d(o, g_0hb_j^{-1}o) + E_\psi\delta < T - E_\psi\delta$ for all $b \in \mathcal{U}_j^\sigma$. Writing $\gamma g_0 h = bh' \in \mathcal{U}_j^\sigma \mathcal{O}$, we have $\gamma^{-1} = g_0h (h')^{-1}b^{-1}$ and again \cref{eqn:TriangleInequalityLemma} gives $d(o, \gamma^{-1} o) < d(o, g_0hb^{-1}o) + E_\psi\delta < T$, meaning that $\gamma \in \Gamma_T$. Combining the above two inequalities, we obtain
\begin{align*}
\sum_{\gamma \in \Gamma_T} \psi_{\delta, j}(\gamma y_0) \leq{}&\int_{H_{T - 2E_\psi\delta}[g_0, b_j^{-1}]} \phi_{\delta, j}(x_0 h) \, d\mu_H(h) \\
{}&+ \int_{H_{T + 2E_\psi\delta}[g_0, b_j^{-1}] \setminus H_{T - 2E_\psi\delta}[g_0, b_j^{-1}]} \phi_{\delta, j}^+(x_0 h) \, d\mu_H(h).
\end{align*}
Similar calculations for the reverse inequality yields
\begin{align*}
\sum_{\gamma \in \Gamma_T} \psi_{\delta, j}(\gamma y_0) \geq{}&\int_{H_{T - 2E_\psi\delta}[g_0, b_j^{-1}]} \phi_{\delta, j}(x_0 h) \, d\mu_H(h) \\
{}&+ \int_{H_{T + 2E_\psi\delta}[g_0, b_j^{-1}] \setminus H_{T - 2E_\psi\delta}[g_0, b_j^{-1}]} \phi_{\delta, j}^-(x_0 h) \, d\mu_H(h).
\end{align*}
We can combine and simplify the previous two inequalities to get
\begin{multline}
\label{eqn:psiSumApproximation}
\Biggl|\sum_{\gamma \in \Gamma_T} \psi_{\delta, j}(\gamma y_0) - \int_{H_{T - 2E_\psi\delta}[g_0, b_j^{-1}]} \phi_{\delta, j}(x_0 h) \, d\mu_H(h)\Biggr| \\
\leq \int_{H_{T + 2E_\psi\delta}[g_0, b_j^{-1}] \setminus H_{T - 2E_\psi\delta}[g_0, b_j^{-1}]} |\phi_{\delta, j}(x_0 h)| \, d\mu_H(h).
\end{multline}
Now, we deal with the error term in \cref{eqn:psiSumApproximation}. First, we rewrite the error term as
\begin{align*}
\int_{H_{T + 2E_\psi\delta}[g_0, b_j^{-1}]} |\phi_{\delta, j}(x_0 h)| \, d\mu_H(h) - \int_{H_{T - 2E_\psi\delta}[g_0, b_j^{-1}]} |\phi_{\delta, j}(x_0 h)| \, d\mu_H(h).
\end{align*}
Recalling that $|\phi_{\delta, j}|$ is $\chi$-H\"{o}lder continuous, we can apply the hypothesis regarding equidistribution of Riemannian skew balls to both integrals. The error term in \cref{eqn:psiSumApproximation} can then be bounded above by
\begin{align*}
&\bigl(\mu_H(H_{T + 2E_\psi\delta}[g_0, b_j^{-1}]) - \mu_H(H_{T - 2E_\psi\delta}[g_0, b_j^{-1}])\bigr)\int_{\Gamma \backslash G} |\phi_{\delta, j}| \, d\hat{\mu}_{\Gamma \backslash G} \\
+{}&\bigl(\mu_H(H_{T + 2E_\psi\delta}[g_0, b_j^{-1}]) + \mu_H(H_{T - 2E_\psi\delta}[g_0, b_j^{-1}])\bigr) \\
\cdot{}&
\begin{cases}
O\bigl(e^{O(d(o, g_0o) + d(o, b_jo))}\bigr) \|\phi_{\delta, j}\|_{C^{0, \chi}} R^{-\chi\kappa'}, & \rankG = 1 \\
O\bigl(e^{O(d(o, g_0o) + d(o, b_jo))}\bigr) \bigl(\|\phi_{\delta, j}\|_\infty T^{-\frac{1}{2}} \log(T)^{\frac{1}{2}} + \|\phi_{\delta, j}\|_{C^{0, \chi}} R^{-\chi\kappa'}\bigr), & \rankG \geq 2.
\end{cases}
\end{align*}

We can carry out analogous calculations using formulas indicated in \cref{eqn:psiAsIntegral} where sums over $\Gamma_T$ are replaced with integrals over $G_T$. Similar to \cref{eqn:psi+<=}, we have
\begin{align*}
\int_{G_T} \psi_{\delta, j}^+(gy_0) \, d\mu_G(g) &= \int_{G_T} \int_{H_{T + E_\psi\delta}[g_0, \sigma_j(\gamma g_0)^{-1}]} \Phi_{\delta, j}^+(g g_0 h) \, d\mu_H(h) \, d\mu_G(g) \\
&= \int_{H_{T + 2E_\psi\delta}[g_0, b_j^{-1}]} \int_{G_T} \Phi_{\delta, j}^+(g g_0 h) \, d\mu_G(g) \, d\mu_H(h) \\
&\leq \int_{H_{T + 2E_\psi\delta}[g_0, b_j^{-1}]} \int_G \Phi_{\delta, j}^+ \, d\mu_G \, d\mu_H(h) \\
&= \mu_H(H_{T + 2E_\psi\delta}[g_0, b_j^{-1}]) \int_{\Gamma \backslash G} \phi_{\delta, j}^+ \, d\mu_{\Gamma \backslash G}.
\end{align*}
Similar to \cref{eqn:psi-<=} and the above, we also have
\begin{align*}
\int_{G_T} \psi_{\delta, j}^-(gy_0) \, d\mu_G(g) \leq \mu_H(H_{T - 2E_\psi\delta}[g_0, b_j^{-1}]) \int_{\Gamma \backslash G} \phi_{\delta, j}^- \, d\mu_{\Gamma \backslash G}.
\end{align*}
Combining the above two inequalities gives
\begin{align*}
&\int_{G_T} \psi_{\delta, j}(gy_0) \, d\mu_G(g) \\
\leq{}&\mu_H(H_{T - 2E_\psi\delta}[g_0, b_j^{-1}]) \int_{\Gamma \backslash G} \phi_{\delta, j} \, d\mu_{\Gamma \backslash G} \\
&{}+\bigl(\mu_H(H_{T + 2E_\psi\delta}[g_0, b_j^{-1}]) - \mu_H(H_{T - 2E_\psi\delta}[g_0, b_j^{-1}])\bigr) \int_{\Gamma \backslash G} \phi_{\delta, j}^+ \, d\mu_{\Gamma \backslash G}.
\end{align*}
Similar calculations for the reverse inequality yields
\begin{align*}
&\int_{G_T} \psi_{\delta, j}(gy_0) \, d\mu_G(g) \\
\geq{}&\mu_H(H_{T - 2E_\psi\delta}[g_0, b_j^{-1}]) \int_{\Gamma \backslash G} \phi_{\delta, j} \, d\mu_{\Gamma \backslash G} \\
&{}+\bigl(\mu_H(H_{T + 2E_\psi\delta}[g_0, b_j^{-1}]) - \mu_H(H_{T - 2E_\psi\delta}[g_0, b_j^{-1}])\bigr) \int_{\Gamma \backslash G} \phi_{\delta, j}^- \, d\mu_{\Gamma \backslash G}.
\end{align*}
As before, we can combine the previous two inequalities to get
\begin{multline}
\label{eqn:psiIntegralApproximation}
\Biggl|\int_{G_T} \psi_{\delta, j}(gy_0) \, d\mu_G(g) - \mu_H(H_{T - 2E_\psi\delta}[g_0, b_j^{-1}]) \int_{\Gamma \backslash G} \phi_{\delta, j} \, d\mu_{\Gamma \backslash G}\Biggr| \\
\leq \bigl(\mu_H(H_{T + 2E_\psi\delta}[g_0, b_j^{-1}]) - \mu_H(H_{T - 2E_\psi\delta}[g_0, b_j^{-1}])\bigr) \int_{\Gamma \backslash G} |\phi_{\delta, j}| \, d\mu_{\Gamma \backslash G}.
\end{multline}

Now, we treat the integral $\int_{H_{T - 2E_\psi\delta}[g_0, b_j^{-1}]} \phi_{\delta, j}(x_0 h) \, d\mu_H(h)$. As before, using the hypothesis regarding equidistribution of Riemannian skew balls, we obtain
\begin{align}
\label{eqn:EquidistributionHypothesis}
\begin{aligned}
&\Biggl|\int_{H_{T - 2E_\psi\delta}[g_0, b_j^{-1}]} \phi_{\delta, j}(x_0 h) \, d\mu_H(h) - \mu_H(H_{T - 2E_\psi\delta}[g_0, b_j^{-1}]) \int_{\Gamma \backslash G} \phi_{\delta, j} \, d\hat{\mu}_{\Gamma \backslash G}\Biggr| \\
\leq{}&\mu_H(H_{T - 2E_\psi\delta}[g_0, b_j^{-1}]) \\
&{}\cdot
\begin{cases}
O\bigl(e^{O(d(o, g_0o) + d(o, b_jo))}\bigr) \|\phi_{\delta, j}\|_{C^{0, \chi}} R^{-\chi\kappa'}, & \rankG = 1 \\
O\bigl(e^{O(d(o, g_0o) + d(o, b_jo))}\bigr) \bigl(\|\phi_{\delta, j}\|_\infty T^{-\frac{1}{2}} \log(T)^{\frac{1}{2}} + \|\phi_{\delta, j}\|_{C^{0, \chi}} R^{-\chi\kappa'}\bigr), & \rankG \geq 2.
\end{cases}
\end{aligned}
\end{align}

Thus, summing over $1 \leq j \leq N$ and using the triangle inequality and \cref{eqn:psiSumApproximation,eqn:psiIntegralApproximation,eqn:EquidistributionHypothesis} gives
\begin{align*}
&\Biggl|\frac{1}{\mu_H(H_T)}\sum_{\gamma \in \Gamma_T} \psi(\gamma y_0) - \frac{1}{\mu_H(H_T)\mu_{\Gamma \backslash G}(\Gamma \backslash G)}\int_{G_T} \psi(g y_0) \, d\mu_G(g)\Biggr| \\
\leq{}&2\sum_{j = 1}^N \frac{\mu_H(H_{T + 2E_\psi\delta}[g_0, b_j^{-1}]) - \mu_H(H_{T - 2E_\psi\delta}[g_0, b_j^{-1}])}{\mu_H(H_T)} \int_{\Gamma \backslash G} |\phi_{\delta, j}| \, d\hat{\mu}_{\Gamma \backslash G} \\
&{}+ 3\sum_{j = 1}^N \frac{\mu_H(H_{T + 2E_\psi\delta}[g_0, b_j^{-1}])}{\mu_H(H_T)} \\
&{}\cdot
\begin{cases}
O\bigl(e^{O(D_\psi + d(o, g_0o))}\bigr) \|\phi_{\delta, j}\|_{C^{0, \chi}} R^{-\chi\kappa'}, & \rankG = 1 \\
O\bigl(e^{O(D_\psi + d(o, g_0o))}\bigr) \bigl(\|\phi_{\delta, j}\|_\infty T^{-\frac{1}{2}} \log(T)^{\frac{1}{2}} + \|\phi_{\delta, j}\|_{C^{0, \chi}} R^{-\chi\kappa'}\bigr), & \rankG \geq 2.
\end{cases}
\end{align*}
We call these two sums, $E_1$ and $E_2$ respectively, which we bound above using similar techniques as in the proof of \cref{pro:SkewballEffectiveEquidistribution}.

We first bound $E_2$ above. In the $\rankG = 1$ case, for all $1 \leq j \leq N$, we use \cref{thm:SkewBallVolumeAsymptotic} with the same notations to calculate that
\begin{align*}
&\frac{\mu_H(H_{T + 2E_\psi\delta}[g_0, b_j^{-1}])}{\mu_H(H_T)} = \frac{\mu_H(H_{T + 2E_\psi\delta}[g_0, b_j^{-1}])e^{-\delta_{2\rho} T}}{\mu_H(H_T)e^{-\delta_{2\rho} T}} \\
={}&\frac{C[g_0, b_j^{-1}] e^{2E_\psi\delta\delta_{2\rho}} + E[g_0, b_j^{-1}] e^{-\eta_1 T + 2E_\psi\delta(\delta_{2\rho} - \eta_1)}}{C[e, e] + E[e, e] e^{-\eta_1 T}} \\
={}&O\bigl(e^{O(D_\psi + d(o, g_0o))}\bigr).
\end{align*}
Similarly, in the $\rankG \geq 2$ case, for all $1 \leq j \leq N$, we get
\begin{align*}
&\frac{\mu_H(H_{T + 2E_\psi\delta}[g_0, b_j^{-1}])}{\mu_H(H_T)} = \frac{\mu_H(H_{T + 2E_\psi\delta}[g_0, b_j^{-1}])T^{-\frac{\rankH - 1}{2}} e^{-\delta_{2\rho}T}}{\mu_H(H_T)T^{-\frac{\rankH - 1}{2}} e^{-\delta_{2\rho}T}} \\
={}&\frac{\splitfrac{\scriptstyle C[g_0, b_j^{-1}] \bigl(1 + \frac{2E_\psi\delta}{T}\bigr)^{\frac{\rankH - 1}{2}} e^{2E_\psi\delta\delta_{2\rho}}}{\scriptstyle + E[g_0, b_j^{-1}] (T + 2E_\psi\delta)^{-\frac{1}{2}} \log(T + 2E_\psi\delta)^{\frac{1}{2}}\bigl(1 + \frac{2E_\psi\delta}{T}\bigr)^{\frac{\rankH - 1}{2}}e^{2E_\psi\delta\delta_{2\rho}}}}{C[e, e] + E[e, e] T^{-\frac{1}{2}} \log(T)^{\frac{1}{2}}} \\
={}&O\bigl(e^{O(D_\psi + d(o, g_0o))}\bigr).
\end{align*}
Next, recalling the bound on the nonzero summands, we have $\|\phi_{\delta, j}\|_\infty \ll \|\psi\|_\infty$ for all $1 \leq j \leq N$. Since the multiplicity of the open cover $\{\mathcal{U}_j\}_{j = 1}^N$ for $\supp(\psi)$, which we recall consists of open sets with measures of the order $O\bigl(\delta^{\dim(G/H)}\bigr)$, is bounded above by $m_\mathcal{U}$, the number of open sets is
\begin{align*}
N \ll \mu_{G/H}\Biggl(\bigcup_{j = 1}^N \mathcal{U}_j\Biggr) \delta^{-(\dim(G) - \dim(H))}.
\end{align*}
Let $\mathcal{P}$ denote the coarsest measurable partition of $\bigcup_{j = 1}^N \mathcal{U}_j$ formed by the open cover $\{\mathcal{U}_j\}_{j = 1}^N$. For each $P \in \mathcal{P}$, we can assign a choice of section $\sigma_P := \sigma_j$ for some $1 \leq j \leq N$ such that $P \subset \mathcal{U}_j$. We can then bound
\begin{align}
\label{eqn:VolumeBoundFor_supp_psi}
\begin{aligned}
\mu_{G/H}(\supp(\psi)) &\leq \mu_{G/H}\Biggl(\bigcup_{j = 1}^N \mathcal{U}_j\Biggr) = \sum_{P \in \mathcal{P}} \int_P \mathds{1} \, d(\sigma_P)_*(\mu_{G/H}) \\
&\ll \mu_G(G_{D_\psi + 2}) \leq O\bigl(e^{O(D_\psi + 2)}\bigr) \leq O\bigl(e^{O(D_\psi)}\bigr)
\end{aligned}
\end{align}
by applying \cref{thm:SkewBallVolumeAsymptotic} where we take $G$ itself for its subgroup. Thus, we have $N \leq O\bigl(e^{O(D_\psi)}\bigr)\delta^{-(\dim(G) - \dim(H))}$. Set $q := \dim(G)$. Recalling various norms and putting everything together, we get
\begin{align*}
E_2 =
\begin{cases}
O\bigl(e^{O(D_\psi + d(o, g_0o))}\bigr) \|\psi\|_{C^{0, \chi}} \delta^{-(q + \chi)} R^{-\chi\kappa'}, & \rankG = 1 \\
O\bigl(e^{O(D_\psi + d(o, g_0o))}\bigr) \bigl(\|\psi\|_\infty \delta^{-q} T^{-\frac{1}{2}} \log(T)^{\frac{1}{2}} + \|\psi\|_{C^{0, \chi}} \delta^{-(q + \chi)} R^{-\chi\kappa'}\bigr), & \rankG \geq 2.
\end{cases}
\end{align*}

Now, we bound $E_1$ above. In the $\rankG = 1$ case, for all $1 \leq j \leq N$, we use \cref{thm:SkewBallVolumeAsymptotic} with the same notations to calculate that
\begin{align*}
&\frac{\bigl(\mu_H(H_{T + 2E_\psi\delta}[g_0, b_j^{-1}]) - \mu_H(H_{T - 2E_\psi\delta}[g_0, b_j^{-1}])\bigr)e^{-\delta_{2\rho} T}}{\mu_H(H_T)e^{-\delta_{2\rho} T}} \\
={}&\frac{\substack{C[g_0, b_j^{-1}] \bigl(e^{2E_\psi\delta\delta_{2\rho}} - e^{-2E_\psi\delta\delta_{2\rho}}\bigr) + E[g_0, b_j^{-1}] \bigl(e^{-\eta_1 T + 2E_\psi\delta(\delta_{2\rho} - \eta_1)} - e^{-\eta_1 T - 2E_\psi\delta(\delta_{2\rho} - \eta_1)}\bigr)}}{C[e, e] + E[e, e] e^{-\eta_1 T}} \\
\leq{}&O\bigl(e^{O(D_\psi + d(o, g_0o))}\bigr) \cdot \bigl(\delta + e^{-\eta_1 T}\bigr)
\end{align*}
for all $T \gg 0$, where we used $E_\psi = O\bigl(e^{O(D_\psi)}\bigr)$. Similarly, in the $\rankG \geq 2$ case, for all $1 \leq j \leq N$, we get
\begin{align*}
&\frac{\bigl(\mu_H(H_{T + 2E_\psi\delta}[g_0, b_j^{-1}]) - \mu_H(H_{T - 2E_\psi\delta}[g_0, b_j^{-1}])\bigr)T^{-\frac{\rankH - 1}{2}} e^{-\delta_{2\rho}T}}{\mu_H(H_T)T^{-\frac{\rankH - 1}{2}} e^{-\delta_{2\rho}T}} \\
={}&\frac{C[g_0, b_j^{-1}] \Bigl(\bigl(1 + \frac{2E_\psi\delta}{T}\bigr)^{\frac{\rankH - 1}{2}} e^{2E_\psi\delta\delta_{2\rho}} - \bigl(1 - \frac{2E_\psi\delta}{T}\bigr)^{\frac{\rankH - 1}{2}} e^{-2E_\psi\delta\delta_{2\rho}}\Bigr)}{C[e, e] + E[e, e] T^{-\frac{1}{2}} \log(T)^{\frac{1}{2}}} \\
&{}+\frac{\splitfrac{\scriptstyle E[g_0, b_j^{-1}] (T + 2E_\psi\delta)^{-\frac{1}{2}}\log(T + 2E_\psi\delta)^{\frac{1}{2}}\bigl(1 + \frac{2E_\psi\delta}{T}\bigr)^{\frac{\rankH - 1}{2}} e^{2E_\psi\delta\delta_{2\rho}}}{\scriptstyle - E[g_0, b_j^{-1}] (T - 2E_\psi\delta)^{-\frac{1}{2}}\log(T - 2E_\psi\delta)^{\frac{1}{2}}\bigl(1 - \frac{2E_\psi\delta}{T}\bigr)^{\frac{\rankH - 1}{2}} e^{-2E_\psi\delta\delta_{2\rho}}}}{C[e, e] + E[e, e] T^{-\frac{1}{2}} \log(T)^{\frac{1}{2}}} \\
\leq{}&O\bigl(e^{O(D_\psi + d(o, g_0o))}\bigr) \cdot \bigl(\delta + T^{-\frac{1}{2}}\log(T)^{\frac{1}{2}}\bigr)
\end{align*}
for all $T \gg 0$, where we again used $E_\psi = O\bigl(e^{O(D_\psi)}\bigr)$. We also calculate that
\begin{align*}
\sum_{j = 1}^N \int_{\Gamma \backslash G} |\phi_{\delta, j}| \, d\mu_{\Gamma \backslash G} &= \int_G \sum_{j = 1}^N |\Phi_{\delta, j}| \, d\mu_G \\
&= \int_G \sum_{j = 1}^N |\psi(gH)| \varphi_{\delta, j}^{G/H}(gH) \varphi_\delta^H(h_{j, g}) \, d\mu_G(g) \\
&\leq \int_{G/H} |\psi| \, d\mu_{G/H} \cdot \int_H \mathds{1}_{B_1^H(e)} \, d\mu_H \\
&\ll \mu_{G/H}(\supp(\psi))\|\psi\|_\infty \\
&\leq O\bigl(e^{O(D_\psi)}\bigr)\|\psi\|_\infty
\end{align*}
using \cref{eqn:VolumeBoundFor_supp_psi} and in particular, there is no dependence on $N$ and hence on $\delta$. Thus, combining the above bounds, we get
\begin{align*}
E_1 =
\begin{cases}
O\bigl(e^{O(D_\psi + d(o, g_0o))}\bigr) \|\psi\|_\infty \bigl(\delta + e^{-\eta_1 T}\bigr), & \rankG = 1 \\
O\bigl(e^{O(D_\psi + d(o, g_0o))}\bigr) \|\psi\|_\infty \bigl(\delta + T^{-\frac{1}{2}} \log(T)^{\frac{1}{2}}\bigr), & \rankG \geq 2.
\end{cases}
\end{align*}

Worsening $\kappa'$ if necessary so that $\kappa' < \tilde{C}_{\varsigma_H}\eta_1$, we combine the two error terms to get
\begin{align*}
&E_1 + E_2 \\
\leq{}&
\begin{cases}
O\bigl(e^{O(D_\psi + d(o, g_0o))}\bigr) \|\psi\|_{C^{0, \chi}} \bigl(\delta + \delta^{-(q + \chi)} R^{-\chi\kappa'}\bigr), & \rankG = 1 \\
O\bigl(e^{O(D_\psi + d(o, g_0o))}\bigr) \|\psi\|_{C^{0, \chi}} \bigl(\delta + \delta^{-q} T^{-\frac{1}{2}} \log(T)^{\frac{1}{2}} + \delta^{-(q + \chi)} R^{-\chi\kappa'}\bigr), & \rankG \geq 2.
\end{cases}
\end{align*}
Fix $p := q + 1 + \chi$ and $\kappa := \frac{\kappa'}{q + 2}$. In the $\rankG = 1$ case, we choose $\delta$ in the optimal way up to a constant factor by setting $\delta = C^p\delta^{-(q + \chi)} R^{-\chi\kappa'}$ for some $C > 0$. This gives $\delta = CR^{-\chi\frac{\kappa'}{p}}$ and we choose $C = E_\psi^{-1} = e^{-c_\Phi D_\psi} = O\bigl(e^{O(D_\psi)}\bigr)$ so that it satisfies \cref{eqn:deltaCondition}. Then, $\delta^{-(q + \chi)} R^{-\chi\kappa'} = E_\psi^{q + \chi}R^{-\chi\frac{\kappa'}{p}} \leq O\bigl(e^{O(D_\psi)}\bigr)R^{-\chi\frac{\kappa'}{p}}$. Thus, we obtain the desired error term of the proposition after worsening the second factor of the exponent from $\frac{\kappa'}{p}$ to $\kappa$ to remove the dependence on $\chi$. In the $\rankG \geq 2$ case, we slightly worsen the second term by replacing the factor $\delta^{-q}$ with $\delta^{-(q + \chi)}$ and then we choose $\delta$ in the optimal way up to a constant factor by setting $\delta = C^p\delta^{-(q + \chi)} \bigl(T^{-\frac{1}{2}} \log(T)^{\frac{1}{2}} + R^{-\chi\kappa'}\bigr)$ for some $C > 0$. This gives
\begin{align*}
\delta = C\bigl(T^{-\frac{1}{2}} \log(T)^{\frac{1}{2}} + R^{-\chi\kappa'}\bigr)^{\frac{1}{p}} \leq C\max\Bigl\{\bigl(2T^{-\frac{1}{2}} \log(T)^{\frac{1}{2}}\bigr)^{\frac{1}{p}}, \bigl(2R^{-\chi\kappa'}\bigr)^{\frac{1}{p}}\Bigr\}.
\end{align*}
Similar to above, we choose $C = (2E_\psi)^{-1} = 2^{-1}e^{-c_\Phi D_\psi} = O\bigl(e^{O(D_\psi)}\bigr)$ so that it satisfies \cref{eqn:deltaCondition}.
We again worsen $\frac{\kappa'}{p}$ to $\kappa$ for the last term of the full resulting error term. Hence, we obtain the desired error term of the proposition.
\end{proof}

\section{Limiting density}
\label{sec:LimitingDensity}
We first recall from \cite[Subsection 2.5]{GW07} the limiting density $\nu_{y_0}$ associated to each $y_0 \in G/H$. We define them in a slightly different fashion described in loc. cit. which is possible since $H$ is semisimple and hence unimodular. Define the map $\tilde{\alpha}: G \times G \to \R_{\geq 0}$ by
\begin{align*}
\tilde{\alpha}(g_1, g_2) := \lim_{T \to +\infty} \frac{\mu_H(H_T[g_1, g_2^{-1}])}{\mu_H(H_T)} \qquad \text{for all $g_1, g_2 \in G$}.
\end{align*}
Observe that $\tilde{\alpha}$ is right $H$-invariant in both arguments since $\mu_H$ is bi-$H$-invariant. Thus, $\tilde{\alpha}$ descends to a map $\alpha: G/H \times G/H \to \R_{\geq 0}$. For all $y_0 \in G/H$, we define a measure $\nu_{y_0}$ on $G/H$ by
\begin{align*}
d\nu_{y_0} = \alpha(\cdot, y_0) \, d\mu_{G/H}.
\end{align*}

In this section we prove the following theorem, effectivizing \cite[Theorem 2.3]{GW07}. Its proof is also similar to that of \cite[Theorem 2.3]{GW07} in \cite[Section 5]{GW07} but we use \cref{cor:RiemannianVolumeRatioAsymptotic}. Recall the constant $D_\psi > 0$ associated to each $\psi \in C_{\mathrm{c}}(G/H)$ from \cref{eqn:Dpsi}, and the constant $\eta_1 > 0$ from \cref{eqn:Eta1}.

\begin{theorem}
\label{thm:LimitingDensity}
Let $\psi \in C_{\mathrm{c}}(G/H)$. Let $g_0 \in G$ and $y_0 = g_0H \in G/H$. Then, we have
\begin{multline*}
\left|\frac{1}{\mu_H(H_T)} \int_{G_T} \psi(g y_0) \, d\mu_G(g) - \int_{G/H} \psi \, d\nu_{y_0}\right| \\
\leq
\begin{cases}
O\bigl(e^{D_\psi + d(o, g_0o)}\bigr) \|\psi\|_\infty e^{-\eta_1 T}, & \rankG = 1 \\
O\bigl(e^{D_\psi + d(o, g_0o)}\bigr) \|\psi\|_\infty T^{-\frac{1}{2}}\log(T)^{\frac{1}{2}}, & \rankG \geq 2
\end{cases}
\end{multline*}
for all $T \geq \Omega(D_\psi + d(o, g_0o))$ in the $\rankG = 1$ case and for all $T \geq \Omega\bigl(e^{\Omega(D_\psi + d(o, g_0o))}\bigr)$ in the $\rankG \geq 2$ case.
\end{theorem}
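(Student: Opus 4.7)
The plan is to follow the noneffective strategy of Gorodnik--Weiss \cite[Section 5]{GW07} but to feed the effective ratio asymptotic \cref{cor:RiemannianVolumeRatioAsymptotic} into each step. First, I will use unimodularity of $G$ to perform the change of variables $g' = gg_0$ in $\int_{G_T} \psi(gy_0)\, d\mu_G(g)$, turning it into $\int_{G_T g_0} \psi(g'H)\, d\mu_G(g')$. The key advantage is that the integrand $\psi(g'H)$ now only depends on the coset $g'H$, so that disintegrating $d\mu_G = d\mu_H\, d\mu_{G/H}$ over the principal bundle $G \to G/H$ produces
\begin{align*}
\int_{G_T} \psi(gy_0)\, d\mu_G(g) = \int_{G/H} \psi(uH) \, \mu_H\bigl(H_T[\tilde u, g_0^{-1}]\bigr) \, d\mu_{G/H}(uH),
\end{align*}
where $\tilde u \in G$ denotes any lift of $uH$, using the identification $\tilde u h \in G_T g_0 \iff h \in H_T[\tilde u, g_0^{-1}]$.

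Next, I will divide by $\mu_H(H_T)$ and apply \cref{cor:RiemannianVolumeRatioAsymptotic} with $g_1 = \tilde u$ and $g_2 = g_0$, whose hypothesis matches the definition of $\tilde\alpha(\tilde u, g_0)$ at the start of this section. This yields
\begin{align*}
\frac{\mu_H(H_T[\tilde u, g_0^{-1}])}{\mu_H(H_T)} = \tilde{\alpha}(\tilde u, g_0) + \tilde{\alpha}'(\tilde u, g_0) \cdot \mathcal{E}_\rankG(T),
\end{align*}
where $\mathcal{E}_\rankG(T) = e^{-\eta_1 T}$ if $\rankG = 1$ and $T^{-\frac{1}{2}}\log(T)^{\frac{1}{2}}$ if $\rankG \geq 2$. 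Since $\tilde\alpha$ is bi-$H$-invariant it descends to $\alpha$ on $G/H \times G/H$, hence $\tilde\alpha(\tilde u, g_0) = \alpha(uH, y_0)$; the main term then becomes $\int_{G/H} \psi(uH)\, \alpha(uH, y_0)\, d\mu_{G/H}(uH) = \int_{G/H} \psi\, d\nu_{y_0}$, exactly as required.

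For the error term, I will choose $\tilde u$ to be an optimal lift of $uH$, so that for $uH \in \supp(\psi)$ one has $d(o, \tilde u o) \leq D_\psi$. The corollary then guarantees
\begin{align*}
|\tilde{\alpha}'(\tilde u, g_0)| \leq O\bigl(e^{O(d(o, \tilde u o) + d(o, g_0 o))}\bigr) \leq O\bigl(e^{O(D_\psi + d(o, g_0 o))}\bigr),
\end{align*}
uniformly in $uH \in \supp(\psi)$. Bounding $\mu_{G/H}(\supp(\psi)) \leq O(e^{O(D_\psi)})$ exactly as in \cref{eqn:VolumeBoundFor_supp_psi} (by covering $\supp(\psi)$ and applying \cref{thm:SkewBallVolumeAsymptotic} to $G$ itself), and pulling $\|\psi\|_\infty$ outside, the error integral is dominated by $O\bigl(e^{O(D_\psi + d(o, g_0 o))}\bigr) \|\psi\|_\infty \mathcal{E}_\rankG(T)$, which is the claimed bound.

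The only mild obstacle is verifying that the range of validity $T \geq \Omega\bigl(e^{\Omega(d(o, \tilde u o) + d(o, g_0 o))}\bigr)$ required by \cref{cor:RiemannianVolumeRatioAsymptotic} in the $\rankG \geq 2$ case holds \emph{uniformly} in $uH \in \supp(\psi)$; this is ensured precisely by the optimal-lift choice which bounds $d(o, \tilde u o)$ by $D_\psi$, so that imposing $T \geq \Omega\bigl(e^{\Omega(D_\psi + d(o, g_0 o))}\bigr)$ suffices. No additional analytic input is needed: the proof is a direct reduction of the $G$-integral to the ratio of Riemannian skew balls, with the effectivization fed in at the one place where a limit was previously invoked.
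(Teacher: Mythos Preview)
Your proposal is correct and follows essentially the same approach as the paper's proof: the right-translation $g' = gg_0$, disintegration of $\mu_G$ over $G/H$ to produce the skew-ball volume $\mu_H(H_T[\tilde u, g_0^{-1}])$, application of \cref{cor:RiemannianVolumeRatioAsymptotic} with optimal lifts so that $d(o,\tilde u o)\le D_\psi$, and the bound \cref{eqn:VolumeBoundFor_supp_psi} on $\mu_{G/H}(\supp(\psi))$ are exactly the steps the paper carries out. Your handling of the uniform-in-$uH$ validity range for $T$ is also precisely what the paper does implicitly.
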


\begin{proof}
Let $\psi$, $y_0 = g_0H$, and $T$ be as in the theorem. We have
\begin{align*}
\int_{G_T} \psi(gy_0) \, \mu_G(g) &= \int_{\{g \in G: d(o, gg_0^{-1}o) < T\}} \psi(g H) \, d\mu_G(g) \\
&= \int_{G/H} \int_{\{h \in H: d(o, ghg_0^{-1}o) < T\}} \psi(gH) \, d\mu_H(h) \, d\mu_{G/H}(gH) \\
&= \int_{G/H} \psi(gH) \cdot \mu_H(H_T[g, g_0^{-1}]) \, d\mu_{G/H}(gH).
\end{align*}
Now, we use the precise asymptotic formulas for the volume of Riemannian skew balls together with the above calculation. Let $E[g, g_0^{-1}] = O\bigl(e^{O(d(o, go) + d(o, g_0o))}\bigr)$ be the constant provided by \cref{cor:RiemannianVolumeRatioAsymptotic} which is continuous and hence measurable in $g \in G$. Then using \cref{cor:RiemannianVolumeRatioAsymptotic}, we get
\begin{align*}
&\left|\frac{1}{\mu_H(H_T)}\int_{G_T} \psi(gy_0) \, \mu_G(g) - \int_{G/H} \psi \, d\nu_{y_0}\right| \\
={}&\left|\int_{G/H} \psi(gH) \cdot \left(\frac{\mu_H(H_T[g, g_0^{-1}])}{\mu_H(H_T)} - \tilde{\alpha}(g, g_0)\right) \, d\mu_{G/H}(gH)\right| \\
\leq{}&\int_{G/H} |\psi(gH)| \cdot \left|\frac{\mu_H(H_T[g, g_0^{-1}])}{\mu_H(H_T)} - \tilde{\alpha}(g, g_0)\right| \, d\mu_{G/H}(gH) \\
\leq{}&
\begin{cases}
\int_{G/H} |\psi(gH)| \cdot E[g, g_0^{-1}] e^{-\eta_1 T} \, d\mu_{G/H}(gH), & \rankG = 1 \\
\int_{G/H} |\psi(gH)| \cdot E[g, g_0^{-1}] T^{-\frac{1}{2}}\log(T)^{\frac{1}{2}} \, d\mu_{G/H}(gH), & \rankG \geq 2
\end{cases}
\end{align*}
where we use the optimal lifts $g \in G$ for each $gH$ in the sense that $d(o, go) = \min\{d(o, g'o): g'H = gH\}$. Since $\psi$ is compactly supported, the theorem follows by using \cref{eqn:VolumeBoundFor_supp_psi}.
\end{proof}

\section{General theorems: full versions}
\label{sec:GeneralTheorems}
We finish the paper with this section where we state our general theorems with full details. Recall $c_{\varsigma_H}$ from \ShahEE where $\varsigma_H = \frac{1}{2}\min_{\alpha \in \Phi_H^+} \alpha(v_{2\rho_H})$.

\begin{theorem}
\label{thm:MainTheorem}
Suppose \ShahEE holds. There exist $C_{\varsigma_H} \asymp_{G, H} c_{\varsigma_H}$, $\kappa \in (0, \kappa_0)$, and $\varrho \in (\varrho_0, 2\varrho_0)$ such that the following holds. Let $\chi \in (0, 1]$. Fix $p := \dim(G) + 1 + \chi$. Let $\psi \in C_{\mathrm{c}}^{0, \chi}(G/H)$, $g_0 \in G$, $x_0 = \Gamma g_0 \in \Gamma \backslash G$, and $y_0 = g_0 H \in G/H$. There exists $M_{\psi, g_0} > 0$ such that for all $R \gg_{G, \Gamma} \inj_{\Gamma \backslash G}(x_0)^{-\varrho}$ and $T \geq C_{\varsigma_H} \log(R) + M_{\psi, g_0}$, at least one of the following holds.
\begin{enumerate}
\item We have
\begin{align*}
&\displaystyle\left|\frac{1}{\mu_H(H_T)} \sum_{\gamma \in \Gamma_T} \psi(\gamma y_0) - \int_{G/H} \psi \, d\hat{\nu}_{y_0} \right| \\
\leq{}&
\begin{cases}
O\bigl(e^{O(D_\psi + d(o, g_0o))}\bigr) \|\psi\|_{C^{0, \chi}} R^{-\chi\kappa}, & \rankG = 1 \\
O\bigl(e^{O(D_\psi + d(o, g_0o))}\bigr) \|\psi\|_{C^{0, \chi}} \bigl(T^{-\frac{1}{2p}}\log(T)^{\frac{1}{2p}} + R^{-\chi\kappa}\bigr), & \rankG \geq 2.
\end{cases}
\end{align*}
\item There exists $x \in \Gamma \backslash G$ with
\begin{align*}
d(x_0, x) \leq R^{C_{\varsigma_H}} T^{C_{\varsigma_H}} e^{-\varsigma_HT}
\end{align*}
such that $xH$ is periodic with $\vol(xH) \leq R$.
\end{enumerate}
Moreover, we can choose $M_{\psi, g_0} = C_2e^{C_1(D_\psi + d(o, g_0o))}$ for some constants $C_1, C_2 > 0$. These and the implicit constants depend only on $(G, H)$.
\end{theorem}

\begin{proof}
The theorem follows precisely by combining \cref{thm:EffectiveDuality,thm:LimitingDensity}. This is really a combination of \cref{thm:K_EffectiveEquidistribution,pro:SkewballEffectiveEquidistribution,pro:EffectiveDuality',thm:LimitingDensity} but the work to put the propositions in dichotomy form was already done in their respective sections.
\end{proof}

\begin{remark}
Observe that the dichotomy in the error term is according to the rank of $G$ and not $H$. In the full error term above for the $\rankG \geq 2$ case, the source of the first term is the error term in the formula for the ratio of the volume of Riemannian skew balls in \cref{cor:RiemannianVolumeRatioAsymptotic} and is the reason that the full error term is sensitive to the rank of $G$. The source of the second term is the error term in \ShahEE. In the $\rankG = 1$ case, writing both terms is redundant.
\end{remark}

\begin{remark}
For the optimal error term above (which is only a slight improvement), the factor $\log(T)^{\frac{1}{2p}}$ is to be replaced with $W(\delta_{2\rho}T^{\rankH + 2})^{\frac{1}{2p}}$ using the Lambert $W$ function. See \cref{rem:OptimalErrorTerm}.
\end{remark}

\begin{remark}
The theorem can also be formulated using well-approximable vectors associated to $G/H$. We will give a detailed discussion on this in a sequel paper. 
\end{remark}

The theorem below is a corollary of the theorem above. The derivation is simply by taking $R$ and $T$ sufficiently large and, in the $\rankG \geq 2$ case, also worsening the exponent $-\frac{1}{2p}$ in the error term in the theorem below to $-\frac{1 - \varepsilon}{2p}$ to eliminate the factor $\log(T)^{\frac{1}{2p}}$ and more importantly, the constant coefficient associated to the base point $y_0 = g_0H$.

\begin{theorem}
\label{thm:MainTheoremEpsilon}
Suppose \ShahEE holds. There exist $C_{\varsigma_H} \asymp_{G, H} c_{\varsigma_H}$, $\kappa \in (0, \kappa_0)$, and $\varrho \in (\varrho_0, 2\varrho_0)$ such that the following holds. Let $\varepsilon \in (0, 1)$, $\chi \in (0, 1]$, and $p := \dim(G) + 1 + \chi$. Let $\psi \in C_{\mathrm{c}}^{0, \chi}(G/H)$, $g_0 \in G$, $x_0 = \Gamma g_0 \in \Gamma \backslash G$, and $y_0 = g_0 H \in G/H$. There exist $L_{g_0, \chi} > 0$ and $M_{\psi, g_0, \varepsilon} > 0$ such that for all $R \gg_{G, \Gamma} \inj_{\Gamma \backslash G}(x_0)^{-\varrho} + L_{g_0, \chi}$ and $T \geq C_{\varsigma_H} \log(R) + M_{\psi, g_0, \varepsilon}$, at least one of the following holds.
\begin{enumerate}
\item We have:
\begin{enumerate}
\item if $\rankG = 1$, then
\begin{align*}
\left|\frac{1}{\mu_H(H_T)} \sum_{\gamma \in \Gamma_T} \psi(\gamma y_0) - \int_{G/H} \psi \, d\hat{\nu}_{y_0} \right| \leq O\bigl(e^{O(D_\psi)}\bigr) \|\psi\|_{C^{0, \chi}} R^{-\chi\kappa};
\end{align*}
\item if $\rankG \geq 2$, then
\begin{align*}
\left|\frac{1}{\mu_H(H_T)} \sum_{\gamma \in \Gamma_T} \psi(\gamma y_0) - \int_{G/H} \psi \, d\hat{\nu}_{y_0} \right| \leq O\bigl(e^{O(D_\psi)}\bigr) \|\psi\|_{C^{0, \chi}} \bigl(T^{-\frac{1 - \varepsilon}{2p}} + R^{-\chi\kappa}\bigr).
\end{align*}
\end{enumerate}
\item There exists $x \in \Gamma \backslash G$ with
\begin{align*}
d(x_0, x) \leq R^{C_{\varsigma_H}} T^{C_{\varsigma_H}} e^{-\varsigma_HT}
\end{align*}
such that $xH$ is periodic with $\vol(xH) \leq R$.
\end{enumerate}
Moreover, we can choose $L_{g_0, \chi} = e^{\frac{C_1}{\chi\kappa}d(o, g_0o)}$, and $M_{\psi, g_0, \varepsilon} = C_3e^{C_2(D_\psi + d(o, g_0o))}$ if $\rankG = 1$ and $M_{\psi, g_0, \varepsilon} = \frac{C_3}{\varepsilon^2}e^{\frac{C_2}{\varepsilon}(D_\psi + d(o, g_0o))}$ if $\rankG \geq 2$, for some constants $C_1, C_2, C_3 > 0$. These and the implicit constants depend only on $(G, H)$.
\end{theorem}

\begin{remark}
Observe that the constant $L_{g_0, \chi}$ associated to the Diophantine condition depends on the basepoint $y_0 = g_0H$ but not on $\psi$.
\end{remark}

\nocite{*}
\bibliographystyle{alpha_name-year-title}
\bibliography{References}
\end{document}